\begin{document}

\input amssym.def

\input amssym

\newcommand{\qbinomial}{\left[\begin{array}{c}n+k\\n\end{array} \right]_{q}}
\newcommand{\eqref}[1]{(\ref{#1})}
\newcommand{\aP}{\tilde{P}}
\newcommand{\bz}{\mbox{$\Bbb Z$}}
\newcommand{\bq}{\mbox{$\Bbb Q$}}
\newcommand{\bc}{\mbox{$\Bbb C$}}
\newcommand{\bh}{\mbox{$\Bbb H$}}
\newcommand{\br}{\mbox{$\Bbb R$}}
\newcommand{\bp}{\mbox{$\Bbb P$}}
\newcommand{\bres}{\mbox{$\Bbb F$}}

\newcommand{\ra}{\mbox{$\rightarrow$}}
\newcommand{\lra}{\mbox{$\longrightarrow$}}
\newcommand{\proof}{\noindent {\it Proof: }}
\newcommand{\remark}{\noindent {\bf Remark: }}

\newcommand{\cA}{\mbox{${\cal A}$}}
\newcommand{\cB}{\mbox{${\cal B}$}}
\newcommand{\cC}{\mbox{${\cal C}$}}
\newcommand{\cD}{\mbox{${D}$}}  %%% removed cal to match dtil
\newcommand{\cE}{\mbox{${\cal E}$}}
\newcommand{\cF}{\mbox{${\cal F}$}}
\newcommand{\cG}{\mbox{${\cal G}$}}
\newcommand{\cH}{\mbox{${\cal H}$}}
\newcommand{\cI}{\mbox{${\cal I}$}}
\newcommand{\cJ}{\mbox{${\cal J}$}}
\newcommand{\cK}{\mbox{${\cal K}$}}
\newcommand{\cL}{\mbox{${\cal L}$}}
\newcommand{\cM}{\mbox{${\cal M}$}}
\newcommand{\cN}{\mbox{${\cal N}$}}
\newcommand{\cO}{\mbox{${\cal O}$}}
\newcommand{\cP}{\mbox{${\cal P}$}}
\newcommand{\cQ}{\mbox{${\cal Q}$}}
\newcommand{\cR}{\mbox{${\cal R}$}}
\newcommand{\cS}{\mbox{${\cal S}$}}
\newcommand{\cT}{\mbox{${\cal T}$}}
\newcommand{\cU}{\mbox{${\cal U}$}}
\newcommand{\cV}{\mbox{${\cal V}$}}
\newcommand{\cW}{\mbox{${\cal W}$}}
\newcommand{\cX}{\mbox{${\cal X}$}}
\newcommand{\cY}{\mbox{${\cal Y}$}}
\newcommand{\cZ}{\mbox{${\cal Z}$}}

\setlength{\unitlength}{1cm} \thicklines \newcommand{\ci}{\circle*{.13}}
\renewcommand{\mp}{\multiput}
\newcommand{\num}[1]{{\raisebox{-.5\unitlength}{\makebox(0,0)[b]{${#1}$}}}}
\newcommand{\mynum}[1]{{\raisebox{-.1\unitlength}{\makebox(-.4,0)[r]{$Y_{#1}$}}}}

\newcommand{\da}{\downarrow}
\newcommand{\el}{\ell}

\newcommand{\aut}{automorphism}
\newcommand{\clg}{compact Lie group}
\newcommand{\diffeo}{diffeomorphism}
\newcommand{\ems}{Eilenberg-Maclane}
\newcommand{\fg}{finitely-generated} 
\newcommand{\fd}{finite dimensional}

\newcommand{\homeo}{homeomorphism}
\newcommand{\ho}{homomorphism}
\newcommand{\homeq}{homotopy equivalence}
\newcommand{\hty}{homotopy type}
\newcommand{\iso}{isomorphism}
\newcommand{\les}{long exact sequence}
\newcommand{\lfp}{Lefschetz fixed-point theorem}
\newcommand{\mvs}{Mayer-Vietoris sequence}
\newcommand{\ses}{short exact sequence}
\newcommand{\nbhd}{neighborhood}
\newcommand{\pid}{principal ideal domain}
\newcommand{\sss}{spectral sequence}
\newcommand{\we}{weak equivalence}

\newcommand{\wrt}{with respect to}

\newcommand{\rn}{\mbox{$\br ^n$}}
\newcommand{\cn}{\mbox{$\bc ^n$}}

\newcommand{\gc}{\mbox{$G_{\bc}$}}

\newcommand{\phiplus}{\mbox{$\Phi ^+$}}
\newcommand{\phiminus}{\mbox{$\Phi ^-$}}
\newcommand{\phii}{\mbox{$\Phi _I$}}
\newcommand{\phij}{\mbox{$\Phi _J$}}
\newcommand{\phiiplus}{\mbox{$\Phi _I ^+$}}
\newcommand{\phiiminus}{\mbox{$\Phi _I ^-$}}
\newcommand{\phijplus}{\mbox{$\Phi _J ^+$}}
\newcommand{\phijminus}{\mbox{$\Phi _J ^-$}}
\newcommand{\phiirad}{\mbox{$\Phi _I ^{rad}$}}
\newcommand{\phijrad}{\mbox{$\Phi _J ^{rad}$}}
\newcommand{\phiipar}{\mbox{$\Phi _I ^{par}$}}
\newcommand{\phijpar}{\mbox{$\Phi _J ^{par}$}}
\newcommand{\ujrad}{\mbox{$U _J ^{rad}$}}
\newcommand{\uirad}{\mbox{$U _I ^{rad}$}}

\newcommand{\lieuirad}{u_I ^{rad}}
\newcommand{\lieujrad}{u_J ^{rad}}
\newcommand{\lieualpha}{u_{\alpha}}
\newcommand{\lieqw}{q_w}
\newcommand{\liepi}{p_I}
\newcommand{\lieuw}{u_w}
\newcommand{\lieuwp}{u_w ^\prime}
\newcommand{\liegc}{\frak{g}_{\bc}}

\newcommand{\gctil}{\mbox{$\tilde{G} _{\bc}$}}
\newcommand{\gctilp}{\mbox{$\tilde{G} _{\bc}/P$}}
\newcommand{\gctilad}{\mbox{$\tilde{G} _{\bc}^{ad}$}}
\newcommand{\gctiladp}{\mbox{$\tilde{G} _{\bc} ^{ad}/P^{ad}$}}
\newcommand{\btil}{\mbox{$\tilde{B}$}}
\newcommand{\dtil}{\mbox{$\tilde{D}$}}
\newcommand{\wtil}{\mbox{$\tilde{W}$}}
\newcommand{\wtils}{\mbox{$\tilde{W}^S$}}
\newcommand{\minreps}{\wtils}
\newcommand{\ntil}{\mbox{$\tilde{N}$}}
\newcommand{\stil}{\mbox{$\tilde{S}$}}
\newcommand{\liegctil}{\mbox{$\tilde{\frak{g}} _{\bc}$}}
\newcommand{\phitil}{\mbox{$\tilde{\Phi}$}}
\newcommand{\phitilplus}{\mbox{$\tilde{\Phi} ^+$}}
\newcommand{\phitilminus}{\mbox{$\tilde{\Phi} ^-$}}
\newcommand{\coroot}{\mbox{$Q^\vee$}}
\newcommand{\antidom}{\mbox{$Q^\vee _-$}}
\newcommand{\chamber}{\overline{\cC}}

\newcommand{\cpo}{closed parabolic orbit}
\newcommand{\clr}{coroot lattice representative}  
\newcommand{\svar}{Schubert variety}
\newcommand{\svars}{Schubert varieties}
\newcommand{\pd}{Poincar\' e duality}
\newcommand{\pds}{Poincar\' e duality space}
\newcommand{\ppoly}{Poincar\' e polynomial}
\newcommand{\pser}{Poincar\' e series}
\newcommand{\fnk}{\mbox{$X_{n,k}$}}
\newcommand{\fnkp}{\mbox{$X_{n,k} ^\flat$}}
\newcommand{\ppc}{positive pair condition}
\newcommand{\npc}{negative pair condition}
\newcommand{\osc}{opposite sign condition}
\newcommand{\und}{\underline{0}}
\newcommand{\nope}{$\lambda \notin \cQ ^\vee$}
\newcommand{\onemin}{1- \alpha _0 (\lambda)} 
\newcommand{\owt}{(otherwise $\lambda$ is overweight)} 
\newcommand{\adp}{admissible palindromic}
\newcommand{\fork}{(otherwise $\lambda$ covers a fork)}

\newcommand{\gsm}{graph-splitting move} 
\newcommand{\aslam}{\alpha _s
(\lambda)} 
\newcommand{\azlam}{\alpha _0 (\lambda)}
\newcommand{\atlam}{\alpha _t (\lambda)} 
\newcommand{\aulam}{\alpha _u
(\lambda)} 
\newcommand{\astlam}{\alpha _s (\lambda) + \alpha _t
(\lambda)} 
\newcommand{\alam}{\alpha (\lambda)} 
\newcommand{\blam}{\beta
(\lambda)} 
\newcommand{\aplam}{\alpha ^\prime (\lambda)}
\newcommand{\bpp}{$\beta$-positive pair}
\newcommand{\bnp}{$\beta$-negative pair}
\newcommand{\lam}{\lambda}
\newcommand{\circular}{spiral}
\newcommand{\loops}{\mbox{$\cL _G$}}
\newcommand{\apal}{admissible palindromic}
\newcommand{\spd}{satisfies Poincar\'e duality}
\newcommand{\xlam}{\mbox{$X_\lambda$}}

%\numberwithin{equation}{section}

\newtheorem{theorem}{Theorem}[section]

\newtheorem{proposition}[theorem]{Proposition}

\newtheorem{lemma}[theorem]{Lemma}

\newtheorem{conjecture}[theorem]{Conjecture}

\newtheorem{example}[theorem]{Example}

\newtheorem{corollary}[theorem]{Corollary}

\newtheorem{definition}[theorem]{Definition}

\title{Smooth and palindromic Schubert varieties\\
 in affine Grassmannians}

\author{Sara C. Billey\thanks{The first author was supported by the
Royalty Research Fund.} and Stephen A. Mitchell\thanks{The second
author was supported by the National Science Foundation}
}

\date{December 14, 2007}

\maketitle

\setcounter{tocdepth}{2}  %  could be 1 to get only sections
\begin{small}
\tableofcontents
\end{small}

%\noindent {\it Contents:} 

%\bigskip

%\S 1 Introduction

%\S 2 Notation

%\S 3 The coroot lattice

%\S 4 Parabolic orbits

%\S 5 The Palindromy Game I: Weak order and the coroot lattice

%\S 6 Poincar\'e duality and the affine Chevalley formula

%\S 7 Chains

%\S 8 Proof of the Smoothness Theorem~\ref{smooth} 

%\S 9 The Palindromy Game II: Bruhat order and the coroot lattice

%\S 10 Proof of the Palindromy Theorem~\ref{pal}

%\S 11 The spiral varieties in type $A$ 

%\S 12 Hasse diagrams and Dynkin diagrams

\section{Introduction}\label{s:intro}

Let $G$ be a simply-connected simple compact Lie group, with
complexification \gc . The affine Grassmannian $\cL_G$ is a projective
ind-variety, homotopy-equivalent to the loop space $\Omega G$ and
closely analogous to a maximal flag variety of \gc . It has a Schubert
cell decomposition 

$$\cL _G =\coprod _{\lambda \in \cQ ^\vee} e_\lambda,$$

\noindent where $\cQ ^\vee$ is the coroot lattice. The closure $X_\lambda$ of
$e_\lambda$ is a \fd\ projective variety that we call an {\it affine
  Schubert variety.} In this paper we completely determine the smooth
and palindromic affine Schubert varieties. 

In any ordinary flag variety there is one obvious class of smooth \svars
: the closed orbits of the standard parabolic subgroups. In fact each
such parabolic subgroup has a unique closed orbit $\cO _0$, namely the
orbit of the basepoint, and $\cO _0$ is smooth because it is
homogeneous. Indeed $\cO _0$ is itself a flag variety of the Levi factor
of the parabolic. A similar construction works in the affine setting,
provided that we consider only proper parabolic subgroups. Then every
such closed parabolic orbit is a smooth \svar\ in $\cL _G$.

\begin{theorem} \label{smooth} \marginpar{}

Let $X_\lambda$ be an affine \svar . Then the following are equivalent: 

\bigskip

a) $X_\lambda$ is smooth; 

b) $X_\lambda$ satisfies \pd\ integrally;

c) $X_\lambda$ is a \cpo . 

\end{theorem}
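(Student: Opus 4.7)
\noindent\textit{Proof proposal.} The cyclic implication structure is forced: (c)$\Rightarrow$(a) is immediate because a closed parabolic orbit has the form $L/(L\cap Q)$ for some Levi $L$ and parabolic $Q$ of $\gctil$, hence is homogeneous and therefore smooth; while (a)$\Rightarrow$(b) is standard, since an affine \svar\ that is smooth is a closed oriented manifold in a projective ind-variety and so \spd\ integrally. So the real content lies in the implication (b)$\Rightarrow$(c), and my plan is to attack this by translating integral \pd\ into a combinatorial condition on the Bruhat interval $[e,\lam]$ in $\wtils$ and then classifying the $\lam$ that satisfy it.

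The first step is the reduction to combinatorics. Since $X_\lam$ has a Schubert cell decomposition with one cell in each even dimension $2\ell(\mu)$ for $\mu\le \lam$, its integral cohomology is torsion-free with \ppoly\ $P_\lam(q)=\sum_{\mu\le \lam}q^{\ell(\mu)}$. Integral \pd\ therefore implies first of all that $P_\lam(q)$ is palindromic, and combined with the known local structure of Schubert varieties it forces $X_\lam$ to be \emph{rationally smooth at every torus-fixed point}. I would invoke (or re-prove in the affine setting) a Carrell--Peterson-type criterion: $X_\lam$ \spd\ iff every sub-interval $[e,\mu]$ with $\mu\le\lam$ has palindromic length generating function; and integral \pd\ further requires that the tangent space to $X_\lam$ at each fixed point $\mu$ has dimension exactly $\ell(\mu)$, i.e.\ that $X_\lam$ is smooth at every fixed point, which (by homogeneity of the torus action) is equivalent to global smoothness.

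The second step is the combinatorial classification: show that the $\lam\in\cQ^\vee$ satisfying the above local palindromicity/integrality criterion are precisely the \clr s of \cpo s. Here one works on the level of the minimal coset representatives $\minreps$. A \cpo\ arises from a parabolic $\gctil\supset P_J\supset B$, and the associated $\lam$ is the maximum of the finite set $\minreps\cap W_J$; these $\lam$ form a distinguished subset that can be described concretely (for instance by their support on the affine Dynkin diagram). The strategy is to take any $\lam$ that is not such a representative, exhibit a $\mu\le\lam$ covering a ``forbidden'' sub-configuration — the kind of local pattern the authors presumably catalogue under labels such as \textbf{\ppc}, \textbf{\npc}, \textbf{\osc}, or \emph{\fork} — and use this configuration to produce either a non-palindromic sub-interval (obstructing rational \pd) or an excess of Bruhat covers at $\mu$ (obstructing integrality/smoothness at $\mu$).

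The main obstacle, and where the paper's real novelty must lie, is the second step: organizing this case analysis across all affine types in a uniform way. Unlike the finite-type Grassmannian case, the affine Weyl group has a rank-one nontrivial translation lattice direction and infinitely many elements, so one cannot simply tabulate bad patterns. I would expect the argument to exploit the spiral/word description of elements of $\cQ^\vee$ that the macros in the preamble (\texttt{\textbackslash circular}, \texttt{\textbackslash gsm}, \texttt{\textbackslash apal}) hint at, using a ``graph-splitting move'' or similar reduction to peel $\lam$ down to a cpo\ while tracking palindromicity. Everything else in the equivalence is formal; the genuine work is this combinatorial reduction.
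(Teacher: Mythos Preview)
Your reduction of (c)$\Rightarrow$(a) and (a)$\Rightarrow$(b) is fine and matches the paper. The gap is in your Step~1 for (b)$\Rightarrow$(c). You assert that integral \pd\ ``further requires that the tangent space to $X_\lam$ at each fixed point $\mu$ has dimension exactly $\ell(\mu)$, i.e.\ that $X_\lam$ is smooth at every fixed point.'' This is not a known general fact, and in effect it assumes (b)$\Rightarrow$(a) outright. Integral \pd\ is a global homological condition; it does not by itself bound tangent-space dimensions at torus-fixed points. Indeed, the entire point of the theorem is that for \emph{affine Schubert varieties} these conditions coincide---you cannot import that as an input. Your invocation of Carrell--Peterson gives only rational smoothness (equivalently palindromy), which is strictly weaker: the paper exhibits singular palindromics in every non-simply-laced type.

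The paper's actual mechanism for extracting extra information from \emph{integral} \pd\ is the affine Chevalley formula. Since $H^2 \cL_G\cong\bz$, integral \pd\ forces the cup product $H^{2d-2}X_\lam\to H^{2d}X_\lam$ by the generator $y$ to be an isomorphism of $\bz$'s; the Chevalley formula computes this coefficient as $c\cdot|\alpha_s(\lam)|$ (or $\alpha_0(\lam)-1$), and so forces $c=1$ and $\alpha_s(\lam)=-1$ for the unique negative node $s$. This is the definition of \emph{admissible}, and it is the crucial consequence of integrality that palindromy alone does not give. From there the paper runs a type-by-type count showing that the number of admissible palindromics is at most $p_G$, the number of \cpo s, so they must coincide (with the non-\cpo\ admissible palindromics all being singular chains, handled separately).

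One further correction: the machinery you anticipate (\ppc, \npc, \osc, graph-splitting moves, full Bruhat descents) is used for the \emph{palindromy} theorem, not this one. The smoothness theorem needs only the weak order---left descents $\lam\downarrow s\lam$ with $s\in\stil$---together with the Chevalley constraint and the coefficients $m_s$ of the highest root. The more elaborate Bruhat-order moves enter only when classifying palindromics that are not admissible.
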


Of course (a) $\Rightarrow $(b) and (c) $\Rightarrow$ (a) are
immediate; the significant point is (b) $\Rightarrow$ (c). 

\begin{corollary}  \marginpar{}

There are only finitely many smooth \svars\ in a fixed $\cL _G$. 

\end{corollary}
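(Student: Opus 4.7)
The plan is to derive the corollary directly from Theorem \ref{smooth}. By the equivalence of (a) and (c), a Schubert variety $X_\lambda$ is smooth if and only if it is a closed parabolic orbit, so it suffices to show that there are only finitely many closed parabolic orbits in $\cL_G$.

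Next I would parametrize the closed parabolic orbits. In the affine setting the standard parabolic subgroups of $\gctil$ are in bijection with the proper subsets $I \subsetneq \stil$ of the set of simple affine reflections, via $I \mapsto P_I$. For each proper standard parabolic $P_I$, there is a unique closed $P_I$-orbit $\cO_I$ in $\cL_G$, namely the orbit of the basepoint, and every closed parabolic orbit in $\cL_G$ has this form (up to conjugation, but Schubert varieties are by definition orbit closures under the standard Iwahori, so we may restrict to the standard parabolics).

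Since $\stil$ is a finite set (of cardinality one more than the rank of $G$), the collection of proper subsets $I \subsetneq \stil$ is finite. Hence the set $\{\cO_I : I \subsetneq \stil\}$ is finite, so there are only finitely many \cpo s, and therefore only finitely many smooth \svars\ in $\cL_G$.

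The argument is essentially a counting observation once Theorem \ref{smooth} is in hand; the only minor point to check is that distinct closed parabolic orbits yield distinct Schubert varieties, which follows because $\cO_I$ is the Schubert variety $X_{\lambda_I}$ attached to the longest minimal-length coset representative in $\wtil^I$ (equivalently, the unique $\lambda_I \in \cQ^\vee$ whose orbit closure is $P_I \cdot e_0$). No substantial obstacle is anticipated.
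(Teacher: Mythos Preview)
Your argument is correct and follows essentially the same route as the paper: invoke Theorem~\ref{smooth} to reduce to counting closed parabolic orbits, then observe that these are parametrized by a finite set of subsets of $\stil$. The paper phrases the parametrization slightly more precisely via Proposition~\ref{cpograph} (connected subgraphs of $\dtil$ containing $s_0$), but for the bare finiteness claim your coarser bound by all proper subsets of $\stil$ is enough; note also that your final ``minor point'' is unnecessary, since a closed parabolic orbit already \emph{is} a Schubert variety, and for finiteness you only need surjectivity of $I \mapsto \cO_I$, not injectivity.
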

 
In fact, it is easy to see that the non-trivial \cpo s are in bijective
correspondence with connected subgraphs of the affine Dynkin diagram
containing the special node $s_0$ (Proposition~\ref{cpograph}).

 A node of the Dynkin diagram is {\it minuscule} if there is an \aut\ of
the affine diagram carrying it to the special node $s_0$. A
\textit{minuscule flag variety} is a flag variety whose parabolic
isotropy group is the maximal parabolic obtained by deleting a minuscule
node. ({\it Warning:} Our ``minuscule'' flag varieties would be called
``co-minuscule'' in some sources.)  Similarly, if a variety $X$ is
isomorphic to $G'/P$ for some reductive algebraic group $G'$ and maximal
parabolic subgroup $P$, we will say $X$ is a \textit{maximal flag
variety}.

It turns out that every \cpo\ in $\cL_{G}$ is a minuscule flag variety of some
simple algebraic group, and that every minuscule flag variety occurs
as a \cpo\ in some affine Grassmannian (see
Proposition~\ref{mincpo}). Hence we obtain as a by-product:

\begin{corollary} \label{minflag} \marginpar{}

Let $X$ be a \svar\ in a minuscule flag variety. Then the following are
equivalent:

\bigskip  

a) $X$ is smooth;

b) $X$ satisfies \pd\ integrally;

c) $X$ is a \cpo .      

\end{corollary}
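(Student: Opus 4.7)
The implications (a) $\Rightarrow$ (b) and (c) $\Rightarrow$ (a) are immediate, exactly as for Theorem~\ref{smooth}: smoothness forces integral \pd, and any \cpo\ is homogeneous and therefore smooth. The substance is (b) $\Rightarrow$ (c), and the plan is to reduce it to the corresponding implication of Theorem~\ref{smooth} by realizing $X$ as a \svar\ in a suitable affine Grassmannian.

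By Proposition~\ref{mincpo}, every minuscule flag variety $Y$ is isomorphic to a \cpo\ $X_\mu$ inside some affine Grassmannian $\cL _G$. I would first check that the Schubert decomposition of $Y$, regarded as a flag variety of its own reductive automorphism group $G'$, coincides with the induced decomposition of $X_\mu$ as a union of affine Schubert cells of $\cL _G$. This should follow because the affine Iwahori, intersected with the parabolic whose orbit is $X_\mu$, projects onto a Borel of the Levi factor acting on $Y$, so the two Bruhat decompositions are compatible. Under this identification a \svar\ $X \subseteq Y$ in the sense of the corollary is precisely an affine \svar\ $X_\lambda$ in $\cL _G$ with $X_\lambda \subseteq X_\mu$.

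Now suppose $X = X_\lambda$ satisfies integral \pd. Theorem~\ref{smooth}, applied inside $\cL _G$, immediately gives that $X_\lambda$ is a \cpo\ in $\cL _G$. To finish, I need to verify that a \cpo\ of $\cL _G$ which happens to lie inside $X_\mu$ is also a \cpo\ of $Y$ viewed as the flag variety $G'/P'$. By Proposition~\ref{cpograph}, \cpo s in $\cL _G$ biject with connected subgraphs of the affine Dynkin diagram containing $s_0$; those lying inside $X_\mu$ are exactly the subgraphs contained in the subdiagram cut out by $\mu$, and under the identification $Y \cong G'/P'$ these correspond precisely to the \cpo s of $Y$ (again by the connected-subgraph description, this time applied to the finite Dynkin diagram of $G'$ with its distinguished node).

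The chief obstacle is this last bookkeeping step: explicitly matching the ``subgraph of a subgraph'' description of \cpo s sitting inside $X_\mu$ with the intrinsic \cpo\ structure of $Y$, together with the preliminary Bruhat-compatibility needed to regard Schubert varieties of $Y$ as affine Schubert varieties. Once those combinatorial identifications are in place, no further geometric input is required, and the corollary is a direct consequence of Theorem~\ref{smooth}.
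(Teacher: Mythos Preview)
Your proposal is correct and follows essentially the same route as the paper: after proving Proposition~\ref{mincpo}, the paper simply declares that ``Corollary~\ref{minflag} is now clear from Theorem~\ref{smooth}.'' The bookkeeping you spell out---compatibility of the two Bruhat decompositions, and matching \cpo s of $\cL _G$ lying inside $X_\mu$ with \cpo s of $Y$---is left implicit in the paper but is precisely what underlies its one-line proof.
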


%%%todo:  need to cite Lakshmibai-Weyman here -- check ref exactly. 
This corollary generalizes the fact that in the type $A$ Grassmannian, 
$G_k \bc ^{n}$,  the smooth Schubert varieties are the ones that are
themselves Grassmannians \cite[Cor. 9.3.3]{billeylak}.  However, for general flag varieties $G/Q$,  
even maximal ones, it isn't true that every smooth \svar\ is a \cpo .
In type $C_n$, for example, the maximal flag variety obtained by
deleting the node $s_1$ of the Dynkin diagram is a $\bp ^{2n-1}$. All
of its Schubert varieties $\bp ^k$ are smooth, but for $n\leq k <
2n-1$ they are not \cpo s.

\bigskip 

Suppose that $\lambda$  is anti-dominant (i.e., for every positive root
$\alpha$, $\alpha (\lam ) \leq 0$) and non-trivial. Then $X_\lambda$
cannot be a \cpo , because it is not invariant under the action of
$s_0$. Hence Theorem~\ref{smooth} implies that $X_\lambda$ is always
singular. This statement is already known; a theorem of Evens-Mirkovic
(\cite{em}; see also \cite{mov}) shows that the smooth locus of
$X_\lambda$ is precisely the $\aP$-orbit of $\lambda$ if $\lambda $ is
antidominant. Hence $X_\lambda$ is smooth if and only if it is the
unique closed $\aP$-orbit, namely the basepoint. More generally, for any
$\lambda \in Q^\vee$ the stabilizer group of $X_\lambda$ is a
parabolic subgroup $P_{I_\lambda}$. This suggests:

\bigskip

\noindent {\bf Conjecture:} The smooth locus of $X_\lambda$ is
$P_{I_\lambda} \lambda$. 

\bigskip

A \svar\ $X_\lambda$ is {\it palindromic} if it has palindromic
\ppoly\ $|X_\lambda| (t) = 1 + a_1 t + \ldots + a_{d-1} t^{{d-1}}
+t^{d}$, where $d$ is the complex dimension of $X_ \lambda$. In other
words, $X_\lambda$ satisfies \pd\ additively: $a_k =a_{d-k}$ for all
$0<k<d$. Here $t$ is assigned real dimension 2. We say that
$X_\lambda$ is a {\it chain} if $|X_\lambda| (t)=1 + t + t^2 +\ldots +
t^d$. In type $A_n$, there are two infinite families of palindromic
\svars\ (one family if $n=1$) $X_{n,k}$, $X_{n,k} ^\prime$ of
dimension $kn$, introduced by the second author in \cite{mfilt}. We
call these {\it spiral varieties}, for reasons to be explained in
Section~\ref{s:spiral.varieties}. The two families are conjugate under
the automorphism of the affine Dynkin diagram fixing the special node
$s_0$.

%\begin{theorem} \label{pal} \marginpar{}                                           
%$X_{\lam}$ is palindromic if and only if one of the following conditions
%  holds:
%  \bigskip 

%a) $X_\lambda $ is a \cpo\ (equivalently, $X_\lambda$ is smooth) 

%b) $X_\lambda$ is a chain

%c) $\Phi$ has type $A$ and $\lam$ is spiral

%d)$\Phi$ has type $B_3$ and $\lam =(3,0,-1)$.     

%\end{theorem}       

\begin{theorem} \label{pal} \marginpar{}                                           
$X_{\lam}$ is palindromic if and only if one of the following conditions
  holds:
\begin{enumerate}
\item [a)] $X_\lambda $ is a \cpo\ (in particular, $X_\lambda$ is smooth). 
\item [b)]$X_\lambda$ is a chain.
\item [c)]$G$ has type $A_{n}$ and $X_{\lam}$ is spiral.
\item [d)]$G$ has type $B_3$ and $\lam =(3,0,-1)$.     
\end{enumerate}
\end{theorem}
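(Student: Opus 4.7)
The plan is to split the theorem into the easy sufficiency direction and the substantial necessity direction. For sufficiency: (a) follows at once from Theorem~\ref{smooth}, since any smooth projective variety satisfies \pd\ additively and hence has a palindromic \ppoly ; (b) is immediate, as $1+t+t^{2}+\cdots+t^{d}$ is self-reciprocal; (c) reduces to an explicit computation of the \ppoly s of the spiral varieties \fnk\ and \fnkp, which I would carry out using the combinatorial description of spirals in the affine Dynkin diagram of type $A_{n}$ developed in Section~\ref{s:spiral.varieties}; and (d) reduces to a finite computation of the Bruhat interval below $\lam=(3,0,-1)$ in the affine Weyl group of type $B_{3}$, which I would handle by enumerating all $\mu\in\wtils$ with $\mu\le\lam$ and tabulating their lengths.

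For the necessity direction, I would express the \ppoly\ as the rank-generating function
$$|X_\lambda|(t)=\sum_{\mu\in\wtils,\ \mu\le\lam}t^{\ell(\mu)},$$
so that palindromicity of $X_\lam$ is the statement that the Bruhat interval $[e,\lam]$ in \wtils\ has a symmetric rank function. I would first peel off the trivial cases: if $[e,\lam]$ is a chain we are in (b), and if $X_\lam$ is a \cpo\ we are in (a). This isolates the interesting case of a non-chain, non-cpo palindromic $X_\lam$, whose occurrences must be shown to be exactly the spirals (c) and the sporadic $B_3$ example (d).

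The key intermediate step is to extract combinatorial necessary conditions on such a $\lam$. I expect three types of obstructions, all foreshadowed by macros in the preamble: $\lam$ is not overweight, $\lam$ does not cover a fork, and a balance condition holds between $\beta$-positive and $\beta$-negative pairs. Call these conditions collectively the \adp\ conditions. Having restricted to \adp\ $\lam$, the analysis becomes type-sensitive. I would proceed type by type, applying the graph-splitting move \gsm\ to reduce higher-rank diagrams to lower-rank ones: in types $C$, $D$, $E$, $F$, $G$ the hope is that the induction collapses every \adp\ $\lam$ to a \cpo\ or a chain; in type $B$ one is left with the single sporadic window producing (d); and in type $A$ the spirals (c) form the systematic family that survives the induction.

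The hard part will be the final classification of \adp\ $\lam$ in types $A$ and $B$. In type $A$ one must show that the only palindromics beyond chains and \cpo s are precisely \fnk\ and \fnkp, which requires matching the \adp\ conditions against an explicit presentation of the spirals and ruling out every non-spiral candidate. In type $B_n$ one must pin down the $B_3$ example and prove that no analogous sporadic example arises in higher rank, which is delicate because the distinguishing feature is numerical rather than structural. I expect this combinatorial elimination to be the main obstacle, and would hope to exploit the bijection between non-trivial \cpo s and connected subgraphs of the affine Dynkin diagram from Proposition~\ref{cpograph} to organize the search systematically.
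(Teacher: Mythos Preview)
Your high-level shape (sufficiency easy, necessity via combinatorial obstructions, then elimination) is right, and your handling of (a)--(d) in the sufficiency direction matches the paper. But there are two concrete gaps in the necessity plan.

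First, the organizing decomposition in the paper is not type-by-type with an inductive rank reduction; it is the trichotomy \emph{dominant / anti-dominant / mixed} (i.e., whether $s_0$ is the unique negative node, whether some $s\in S$ is, or neither). Each of these three cases is then dispatched separately (\S 10.1--10.3), with type-by-type arguments appearing only as subcases. Your ``induction via \gsm\ to reduce higher-rank diagrams to lower-rank ones'' misreads what graph-splitting moves do: a \gsm\ is not a rank reduction but a device for producing a Bruhat covering $\lambda\downarrow r_{\alpha_I}\lambda$ by a \emph{non-simple} affine reflection, thereby showing $\lambda$ covers a pair and so fails palindromy. There is no induction on rank anywhere in the argument.

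Second, and more seriously, you are missing the central technical point flagged at the end of \S\ref{s:intro}: Theorem~\ref{smooth} needs only the weak order (left descents by Coxeter generators), but Theorem~\ref{pal} genuinely requires the full Bruhat order, i.e., descents $\lambda\downarrow r\lambda$ with $r$ a reflection in a non-simple (affine or linear) root. The machinery of $\beta$-positive pairs, $\beta$-negative pairs, the opposite sign condition, and the positive/negative pair conditions (\S 9) is not a list of obstructions on $\lambda$; it is a set of \emph{criteria for when such a non-simple reflection gives a covering relation} (Propositions~\ref{p:linearmove} and \ref{affinemove}). These criteria are what power the linear and affine $ABC$-moves and the graph-splitting moves. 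Without this toolkit you cannot, for example, rule out the anti-dominant $\lambda=-\omega_n^\vee$ in type $B_n$ (which requires showing $\lambda$ covers a scepter via a specific $r_\beta$), nor carry out the delicate mixed-case lemmas (\ref{mix1}--\ref{mix5}) that reduce to $\alpha_s(\lambda)=-1$. Your proposal reads these macros as constraints on $\lambda$ rather than as the engine of the proof; that is the missing idea.
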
       

There is some overlap in conditions a)-d). For example, a chain is a \cpo\
if and only if it is a projective space, and these occur frequently. In
type $A_n$ the two spiral classes of minimal dimension $n$ are
projective spaces, but the others are neither smooth nor chains. The
peculiar exception in type $B_3$ is a singular 9-dimensional variety
with \ppoly\ $1+t+t^2 +2t^3 +2 t^4 +2 t^5 +2 t^6 +t^7 +t^8 +t^9$.

\begin{corollary}  \marginpar{}

If $G$ is not of type $A$, there are only finitely many palindromic
\svars\ in $\cL _G$.

\end{corollary}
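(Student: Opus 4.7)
\noindent\textit{Proof proposal.}
The plan is to invoke Theorem~\ref{pal} and handle each of its four classes separately, under the standing hypothesis that $G$ is not of type $A$.

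Three of the classes I would dispense with immediately. Class (c) is empty, since the spiral varieties occur only in type $A$. Class (d) contributes at most a single variety and only when $G$ has type $B_3$. Class (a), the \cpo s, is finite by the corollary to Theorem~\ref{smooth}: indeed Proposition~\ref{cpograph} puts these in bijection with the connected subgraphs of the affine Dynkin diagram containing $s_0$, a finite set.

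The real work lies in class (b), the chains. I would reformulate: $X_\lambda$ is a chain precisely when the Bruhat interval $[e,\lambda]$ in $\tilde{W}^S$ is totally ordered, so the set $C$ of such $\lambda$ is downward-closed in the Bruhat order on $\tilde{W}^S$. To show $C$ is finite it suffices to bound the length of its elements. I would try to build up a hypothetical chain $e < s_0 < \mu_2 < \mu_3 < \cdots$, at each step requiring $\mu_{k+1}$ to be the unique element of $\tilde{W}^S$ at rank $k+1$ lying above $\mu_k$; the chain breaks as soon as two distinct such covers exist, or as soon as the candidate $\mu_{k+1}$ admits a second coatom in $\tilde{W}^S$ different from $\mu_k$.

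The main obstacle is the case-by-case verification across the non-type-$A$ affine root systems $\tilde{B}_n$, $\tilde{C}_n$, $\tilde{D}_n$, $\tilde{E}_6$, $\tilde{E}_7$, $\tilde{E}_8$, $\tilde{F}_4$, and $\tilde{G}_2$. In each type one must pin down the bounded rank at which the chain is forced to split, and in most types this happens because the implicit walk through the affine Dynkin diagram reaches either a branch node or a multiply-laced bond where two distinct extensions in $\tilde{W}^S$ open up. By contrast, the affine type-$A$ diagram is a cycle admitting arbitrarily long unidirectional walks; this is what produces the infinite spiral family and what makes the hypothesis ``$G$ not of type $A$'' essential.
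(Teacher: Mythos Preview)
Your reduction via Theorem~\ref{pal} is exactly what the paper does, and your handling of classes (a), (c), (d) matches. The divergence is in class (b), the chains. The paper avoids any case analysis here: it invokes Corollary~\ref{finitechain}, whose proof rests on the general fact (Proposition~\ref{p:cofinal}) that every infinite subset of $\wtils$ is cofinal in the Bruhat order. Hence if there were infinitely many chains, $\wtils$ itself would be totally ordered, which happens only in type $A_1$. This is uniform across types and takes three lines.

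Your proposed route---building the chain from the bottom and arguing it must terminate via a type-by-type Dynkin-diagram walk---would also succeed, and is essentially the explicit chain classification the paper carries out later in \S\ref{s:chains} for other purposes. One caution if you pursue it: in type $C_n$ the affine diagram is a path with double bonds at both ends, and admissible walks can run back and forth indefinitely, producing infinitely many \emph{rigid} elements. The chain property breaks only because after one full traverse the element $s_0 s_1 \cdots s_n \cdots s_1 s_0$ covers a pair; this is not a purely local Dynkin obstruction of the kind you describe, so your sketch would need a separate check there. The cofinality argument sidesteps all of this.
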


It is easy to see that in all types there are only finitely many chains
(Corollary~\ref{finitechain}), so the corollary follows immediately from
the theorem.

By a special case of a theorem of Carrell and
Peterson (\cite{cp}; see also \cite{kumar}, XII, \S 2), an affine \svar\
is palindromic if and only if it is rationally smooth. This yields the
corollary: 
                               
\begin{corollary}  \marginpar{} Let $X_\lambda$ be an affine
  \svar . Then the following are equivalent:

a) $X_\lambda$ is palindromic;

b) $X_\lambda$ is rationally smooth; 

c) $X_\lambda$ satisfies rational \pd ; 

d) $X_\lambda$ satisfies one of the conditions (a)-(d) of
Theorem~\ref{pal}. 

\end{corollary}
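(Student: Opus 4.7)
The plan is a quick deduction from Theorem~\ref{pal} together with the Carrell--Peterson theorem quoted immediately above. The latter supplies the equivalence (a) $\Leftrightarrow$ (b), and Theorem~\ref{pal} is precisely (a) $\Leftrightarrow$ (d); what remains is to fit condition (c) into the picture. I would do this by establishing (b) $\Rightarrow$ (c) and (c) $\Rightarrow$ (a), so as to close the loop.

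The implication (b) $\Rightarrow$ (c) is standard: rational smoothness of $X_\lambda$ is, essentially by definition, a local version of rational \pd , and it globalizes, for example via the fact that the intersection cohomology complex of a rationally smooth variety coincides (up to shift) with the constant sheaf $\bq _{X_\lambda}$, so that Verdier duality for the former descends to rational \pd\ for $X_\lambda$ itself.

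For (c) $\Rightarrow$ (a), I would use the fact that any affine \svar\ admits a cell decomposition by Bruhat cells of even real dimension, so the rational cohomology of $X_\lambda$ is concentrated in even degrees and the Poincar\'e polynomial $|X_\lambda|(t) = \sum a_k t^k$ (with $t$ carrying real degree $2$) records the Betti numbers $a_k = \dim _\bq H^{2k}(X_\lambda;\bq)$. Rational \pd\ then forces $a_k = a_{d-k}$ for all $0 \leq k \leq d$, and connectedness of $X_\lambda$ (which is the closure of a single Bruhat cell) together with \pd\ yields $a_0 = a_d = 1$. This is exactly the palindromic condition.

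There is essentially no obstacle: all the substantive content of this corollary already lies in Theorem~\ref{pal}, whose proof occupies the body of the paper. The corollary itself is a one-paragraph bookkeeping deduction that merely repackages Theorem~\ref{pal} in terms of rational smoothness and rational \pd .
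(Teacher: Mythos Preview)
Your proof is correct and matches the paper's approach: both rely on Carrell--Peterson for (a) $\Leftrightarrow$ (b) and on Theorem~\ref{pal} for (a) $\Leftrightarrow$ (d). The paper handles (c) by citing McCrory \cite{mcrory} for the well-known equivalence of rational smoothness and rational \pd , whereas you close the loop via the direct observation (c) $\Rightarrow$ (a) from the even-dimensional cell decomposition; the paper also remarks that, alternatively, one can verify rational \pd\ ad hoc for each variety on the list produced by Theorem~\ref{pal}.
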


Since we have enumerated all the palindromic \svars , the corollary can
be proved {\it ad hoc} by checking that the singular ones satisfy
rational \pd . The equivalence of rational \pd\ and rational smoothness
is well-known \cite{mcrory}. 

In the simply-laced case (this excludes affine type $A_1$, which should
not be regarded as simply-laced) every chain is a projective space
(Corollary~\ref{chainproj}), and so in particular is smooth. Hence:

\begin{corollary} \label{c:simply-laced} \marginpar{}

In types $D$ and $E$, an affine \svar\ is smooth if and only if it is
palindromic. In all other types there are singular palindromics. 

\end{corollary}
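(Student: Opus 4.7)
The plan combines Theorem~\ref{smooth}, Theorem~\ref{pal}, and Corollary~\ref{chainproj}. The easy direction, smooth $\Rightarrow$ palindromic, holds in every type and requires nothing new: a smooth projective complex variety satisfies integral \pd, and integral \pd\ in particular forces additive \pd, i.e.\ palindromicity of the \ppoly. Equivalently one can appeal directly to the implication (a) $\Rightarrow$ (b) of Theorem~\ref{smooth}.

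For the converse in types $D$ and $E$, I argue by cases on Theorem~\ref{pal}. Since $G$ is neither of type $A_n$ nor of type $B_3$, conditions (c) and (d) of Theorem~\ref{pal} are vacuous, so any palindromic $X_\lambda$ must fall under (a) or (b). In case (a), $X_\lambda$ is a \cpo\ and is smooth by Theorem~\ref{smooth}. In case (b), $X_\lambda$ is a chain; since $G$ is simply-laced and the introduction explicitly excludes affine $A_1$ from the simply-laced class, Corollary~\ref{chainproj} identifies $X_\lambda$ with a $\bp ^d$, which is smooth. This gives the equivalence in types $D$ and $E$.

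For the second sentence I must exhibit a singular palindromic in every remaining type. In type $A_n$ with $n \ge 2$, Theorem~\ref{pal}(c) together with the remark following it supplies the spiral varieties $X_{n,k}$ for $k \ge 2$: they are palindromic and explicitly stated to be neither smooth nor chains. Type $B_3$ is handled by Theorem~\ref{pal}(d) together with the explicit remark that $X_{(3,0,-1)}$ is a singular 9-dimensional palindromic. For the remaining non-simply-laced types---affine $A_1$, $B_n$ with $n \ne 3$, $C_n$, $F_4$, and $G_2$---the presence of a multiple bond in the affine Dynkin diagram causes Corollary~\ref{chainproj} to fail, and the arguments behind Theorem~\ref{pal}(b) produce a chain $X_\lambda$ that is not a projective space; such a chain is a palindromic \svar\ that is not a \cpo, hence singular by Theorem~\ref{smooth}. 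The main obstacle is precisely this last point: certifying in each non-simply-laced type that a chain that is not $\bp ^d$ really exists. Once the classification of chains underlying Theorem~\ref{pal}(b) and the sharpness of Corollary~\ref{chainproj} are in hand, this reduces to a finite inspection over the affine Dynkin diagrams, but it is the only non-formal ingredient of the corollary.
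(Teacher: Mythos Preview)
Your proof is correct and follows essentially the same route as the paper. The paper leaves the argument implicit (the ``Hence:'' preceding the corollary), but the ingredients are exactly those you name: Theorem~\ref{pal} reduces the palindromic $X_\lambda$ in types $D,E$ to \cpo s or chains, Corollary~\ref{chainproj} makes the chains smooth, and for the remaining types the explicit chain classification in \S\ref{s:chains} (summarized in Proposition~\ref{chain}) together with the spiral varieties in type $A$ supplies the singular palindromics.
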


This contrasts with an unpublished theorem of Dale Peterson, which
asserts that for ordinary \svars\ the corollary holds in all
simply-laced types $ADE$.  Combining Peterson's result with
Corollary~\ref{c:simply-laced}, we get evidence for the following
conjecture. 

\begin{conjecture}\label{conj:simply-laced}
%% todo: check Kumar's wording for these Kacs-Moody groups.
In any flag manifold $\cG /\cQ$ of affine or classical type, smoothness is
equivalent to rational smoothness for all \textit{non-cyclic simply
laced} types.
\end{conjecture}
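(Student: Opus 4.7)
The plan is to reduce the conjecture to the case of the full flag variety $\cG/\cB$, and then handle the remaining cases by combining Peterson's theorem with an extension of Corollary~\ref{c:simply-laced}. The reduction uses the canonical projection $\pi \colon \cG/\cB \to \cG/\cQ$: for each minimal coset representative $w \in W^Q$, the restriction of $\pi$ to $X_w$ is an isomorphism onto its image $X_w^{\cQ}$. Both smoothness and rational smoothness are preserved by this isomorphism, so the equivalence for $\cG/\cB$ implies the equivalence for every $\cG/\cQ$. In finite type $ADE$ the case of $\cG/\cB$ is Peterson's theorem; in affine types $\dtil$ and $\tilde{E}$ the case of the affine Grassmannian $\loops$ is Corollary~\ref{c:simply-laced}. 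What remains is the full affine flag variety in those affine types.

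For this remaining case I would attempt to parallel the strategy used here for $\loops$. By Carrell-Peterson, rational smoothness of a Schubert variety $X_w$ in the affine flag variety is equivalent to palindromicity of its Poincar\'e polynomial. One would then classify the palindromic $X_w$ for $w \in \wtil$ in the affine $\dtil$ and $\tilde{E}$ flag varieties, and show that each is smooth by identifying it as a closed orbit of a suitable parahoric subgroup (the analogue of a \cpo). One naturally expects the homogeneous description to persist: a palindromic $X_w$ ought to be an iterated projective bundle, or more generally a minuscule-type flag variety of a Levi factor of some parahoric.

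The main obstacle is the combinatorial classification. In $\loops$, Schubert varieties are indexed by the coroot lattice $\cQ^\vee$, and the analysis in this paper proceeds via graph-splitting moves on subdiagrams of the affine Dynkin diagram. In the full affine flag variety, Schubert varieties are indexed by the entire affine Weyl group $\wtil$, so one must simultaneously track a translation component and a finite Weyl component. A natural inductive strategy is to use the projection $\cG/\cB \to \loops$: the base is handled by this paper, and the fibers are finite flag varieties where Peterson's theorem applies. Splicing these two pieces into a complete classification of palindromic Schubert varieties, and verifying that each such variety is actually a smooth parahoric orbit, would be the most technically demanding step; one would also need to rule out exotic singular palindromics of the type that appears in $B_3$ in Theorem~\ref{pal}(d), a phenomenon that the simply-laced hypothesis is presumably designed to suppress.
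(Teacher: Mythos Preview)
The statement you are addressing is labeled \emph{Conjecture} in the paper, and the paper does not prove it.  The sentence immediately preceding it says only that Peterson's theorem together with Corollary~\ref{c:simply-laced} gives \emph{evidence} for the conjecture.  So there is no ``paper's own proof'' to compare against; your proposal is a research outline for an open problem, and you acknowledge as much in your final paragraph.

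That said, your reduction step contains an error.  For $w \in W^{\cQ}$ a minimal coset representative, the projection $\pi \colon \cG/\cB \to \cG/\cQ$ does \emph{not} restrict to an isomorphism $X_w \to X_w^{\cQ}$.  The open cells $BwB/B$ and $BwQ/Q$ are isomorphic, but the closures are not: $X_w \subset \cG/\cB$ contains Schubert cells indexed by all $v \leq w$ in $W$, whereas $X_w^{\cQ}$ contains only those with $v \in W^{\cQ}$.  (Already in type $A_2$ with $\cQ = P_{s_1}$ and $w = s_1 s_2$, the variety $X_w$ has four cells while $X_w^{\cQ}$ has three.)  The correct reduction uses the full preimage $\pi^{-1}(X_w^{\cQ}) = X_{w w_{\cQ}}$, where $w_{\cQ}$ is the longest element of $W_{\cQ}$; since $\pi$ is a smooth fibration with fiber $\cQ/\cB$, smoothness and rational smoothness of $X_w^{\cQ}$ are equivalent to those of $X_{w w_{\cQ}}$.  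With this correction your reduction to the full flag variety is valid, but the remaining affine $\widetilde{D}$ and $\widetilde{E}$ cases for $\cG/\cB$ are, as you say, genuinely open and would require a classification well beyond what this paper provides.
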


Theorem~\ref{pal} gives a second proof of Theorem~\ref{smooth}: Having
listed all the palindromic \svars , we need only run through the list
and show that only the \cpo s satisfy \pd\ integrally. 

In a second forthcoming article \cite{bm2}, the authors consider an
alternative approach to the proof of Theorem 1.4.  In this work, we
associate a natural family of bounded partitions to each element in the
coroot lattice in such a way that the relations in Young's lattice on
partitions imply relations in Bruhat order on coroot lattice elements.
These relations are sufficient to differentiate all palindromic affine
Schubert varieties from the non-palindromic ones.
 
The spiral varieties in type $A$ have a number of interesting
properties (the first three are proved in \cite{mfilt}). 

\bigskip

(1) $H_* X_{n,k}$ realizes the ``degree filtration'' on $H_* \Omega
    SU(n+1)$;

(2) $X_{n,k}$ is the variety of $k$-dimensional submodules in a free
    module of rank $n+1$ over the truncated polynomial ring $\bc
    [z]/z^k$; 

(3) $H^* X_{n,k}$ and $H^* G_k \bc ^{n+k}$ are isomorphic as graded
    abelian groups, but not as rings unless $k=1$; 

(4) (Cohen-Lupercio-Segal \cite{cls}) $X_{n,k}$ is homotopy-equivalent
    to $Hol _k (\bp ^1, G_{n+1} \bc ^\infty)$, the space of holomorphic
    maps of degree $k$. 

\bigskip

In item (3) the ring structures are quite different for $k>1$. The
Bruhat orders of $X_{n,k}$ and $G_k \bc ^{n+k}$ are also different, and
in fact the Bruhat order associated to $X_{n,k}$ is not self-dual in
general.

\bigskip

\noindent {\it Outline of the proofs:} The proof of
Theorem~\ref{smooth} begins by considering some elementary
obstructions to palindromy. We call this the ``palindromy game''. The
point is simply that $\cL _G$ has only one cell of each of the first
few dimensions, and hence a palindromic $X_\lambda$ can't have too
many cells near the top. This already narrows down the possibilities
considerably. In particular, in any affine Grassmannian there is a
unique 2-cell, and hence a palindromic $X_\lambda$ of complex
dimension $d$ can only have one ($2d-2$)-cell. If $X_\lambda$ satisfies
\pd , then multiplication by the generator of $H^2 X_\lambda$ induces
an \iso\ $H^{2d-2} X_\lambda \stackrel{\cong}{\lra} H^{2d}
X_\lambda$. It is known that the classical formula of Chevalley for
this cup product generalizes to the affine case (indeed to any
Kac-Moody flag variety), putting further severe restrictions on which
$\lambda$ can occur. Along the way we also classify the chains
together with their cup product structure.  A type-by-type analysis
then completes the proof.

The proof of Theorem~\ref{smooth} uses only the weak order on the coroot
lattice; in other words, it only uses descents of the form $\lam
\downarrow s\lam$ with $s$ one of the Coxeter generators of the affine Weyl
group. Theorem~\ref{pal}, on the other hand, requires a more elaborate
version of the palindromy game incorporating the full Bruhat order; in
other words, it requires descents of the form $\lam \downarrow r\lam$ in which
$r$ is an affine reflection associated to a non-simple root. 

\section{Notation}\label{s:notation}

We follow the notation from \cite{bourbaki} whenever possible.

$G$: simple, simply-connected \clg\ of rank $n$ 

$T$: maximal torus, with Lie algebra $\frak{t}$

$W$: Weyl group

$S = \{s_{1},s_{2},\ldots , s_{n} \}$: set of Coxeter generators for $W$ 

$\Phi, \Phi ^+$: root system, positive roots

$\alpha _s$, $s \in S$: simple positive roots.  If $s=s_{i}$, we also denote $\alpha_{s}$ by $ \alpha_{i}$.

$m_s (\alpha)$, $\alpha \in \Phi $: $\alpha =\sum _{s \in S} m_s(
\alpha)\alpha _s$  

$\alpha _0$: highest root; set $m_s =m_s (\alpha _0)$

$D$: Dynkin diagram with $S$ as set of nodes

$\cQ ^\vee, \cP ^\vee$: Coroot lattice, coweight lattice

\gc , $T_{\bc}$: complexification of $G,T$. 

$B$, $B^-$: Borel subgroup containing $T_{\bc}$, opposite Borel subgroup. 

\bigskip

\gctil : $\gc (\bc [z, z^{-1}])$, or regular maps $\bc ^\times \lra \gc$

$\aP$: $\gc (\bc [z]) \subset \gctil$, or regular maps $\bc \lra \gc$    

$\cL _G$: $\gctil / \aP$, the affine Grassmannian

\btil : $\{f \in \aP: f(0) \in B^-\}$

\wtil : affine Weyl group

\stil : $S \cup \{s_0\}$, the Coxeter generators for \wtil\

\dtil : affine Dynkin diagram with \stil\ as set of nodes

\phitil : affine root system $\bz \times \Phi$

\wtils : set of minimal length representatives for $\wtil /W$

$\el , \el ^S$: length function on \wtil , length function relative to
$S$

\bigskip

\noindent {\it Bruhat coverings.}
If $\sigma , \tau \in \wtil /W$, and $r$ is an affine reflection, we
write $\sigma \downarrow r \sigma$ if $\el ^S (r\sigma) =\el ^S (\sigma)
-1$. Thus $\sigma$ covers $\tau$ in the Bruhat order. Alternatively,
we write $\sigma  \uparrow r \sigma$. If $r \in \stil$ we call this a
{\it left descent}. The partial order generated by the left descents
is the {\it left weak order} or just \textit{weak order} for short.
Note the right weak order isn't useful on $\wtil /W$ since every
non-trivial element has only $s_{0}$ as a right descent.

\bigskip

\noindent {\it Minuscule nodes}: We call a node $s$ of \cD\ {\it
minuscule} if it satisfies the equivalent conditions: (i) There is an
\aut\ of \dtil\ carrying $s_0$ to $s$; (ii) $m_s =1$, where $m_s$ is
the coefficient of $\alpha _s$ in the highest root $\alpha _0$. We
call a flag variety of \gc\ {\it minuscule} if its parabolic isotropy
group is the maximal parabolic obtained by deleting a minuscule node.
Note, the minuscule fundamental coweights form a set of distinct
representatives for $\cP ^\vee /\cQ ^\vee$.  We caution the reader
that these coweights would be called ``co-minuscule'' in some sources
such as \cite{billeylak}.
However, our \textit{minuscule nodes} correspond with their
fundamental cominuscule coweights.

\bigskip

\noindent {\it Long and short nodes}: In the simply-laced case we
regard all roots as long.  A node $s$ of the Dynkin diagram is
regarded as long/short according as the corresponding simple root
$\alpha _s$ is long/short.

\bigskip

\noindent {\it Poincar\'e series}: If $A$ is a suitable graded
object---a ranked poset, a graded abelian group, etc.---we write $|A|
(t)$ for the \pser\ of $A$. The spaces considered in this paper
invariably have their homology groups concentrated in even dimensions,
and as a slight variant of this notation we write $|X| (t) =\sum _i
a_i t^i$, where $a_i =rank \, H_{2i} X$.  Similarly, the CW-complexes
$X$ we consider have only even-dimensional cells, and it will be
convenient to call the $2k$-skeleton of $X$ the {\it complex
k-skeleton}.  Here the \textit{k-skeleton} of $X$ is the union of all
cells of dimension up to and including $k$.

\section{The coroot lattice} 

In this section we set down some basic facts and notation concerning the
coroot lattice. 

\subsection{The coroot lattice and the affine Weyl group}

The affine Weyl group \wtil\ is the group of affine transformations of
$\frak{t}$ generated by all reflections across hyperplanes $\alpha =k$,
where $\alpha \in \Phi$, $k \in \bz$. It fits into a split extension 

$$ \cQ ^\vee \lra \wtil \lra W.$$

\noindent Hence there are bijections 

$$\cQ ^\vee \stackrel{\cong}{\lra} \wtil /W
\stackrel{\cong}{\lra} \minreps.
$$ 
mapping $\lambda \in \cQ $ to $w_{\lambda } \in \minreps$ if $\lambda$
and $w_{\lambda}$ are in same coset of $\wtil /W$. Thus we have two
distinct canonical sets of coset representatives for $\wtil /W$.  To
see how the two sets differ, note that $\el(\lambda )$ need not
equal $\el(w_{\lambda})$ as elements in $\wtil$.  Therefore, one can
define a second length function on $\cQ ^\vee$, namely 

$$\el ^S (\lambda) =\mathrm{min} \{\el (\lambda w) : w \in W \} =
\el(w_{\lambda }).$$

Both length functions can be computed using the formulas of \cite{im}:

\begin{equation}\label{e:length.fn}
\el (\lambda ) =\sum _{\alpha \in \Phi ^+} |\alpha (\lambda)|,
  \hspace{1in} \el ^S (\lambda) =\el (\lambda) -q(\lambda),
\end{equation}

\noindent where                                                                 
$$q (\lambda) =|\{ \alpha \in \Phi ^+ : \alpha (\lambda ) >0\}|.$$

The equivalent length generating functions $\displaystyle |\minreps|(t)=\sum_{w\in
\minreps} t^{l(w)}$ and $\displaystyle |\cQ ^\vee|(t)=\sum_{\lambda \in
\cQ ^\vee} t^{l^{S}(\lambda)}$ can be obtained from the following
beautiful formula due to Bott.

\begin{theorem}\label{t:Bott}\cite{Bott-56}
Let $e_{1},e_{2},\dots, e_{n}$ be the exponents of $W$.   Then 
\[
|\cL _G| (t)  = \prod_{i=1}^{n} \frac{1}{(1-t^{e_{i}})}.
\]
\end{theorem}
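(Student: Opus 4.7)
The plan is to reduce Bott's formula to the computation of $H^*(\Omega G)$ via the homotopy equivalence $\cL_G \simeq \Omega G$ cited in Section~\ref{s:intro}. First, the Schubert cell decomposition $\cL_G = \coprod_{\lambda \in \cQ^\vee} e_\lambda$ together with the fact that each cell is even-dimensional over $\br$ gives
$$
|\cL_G|(t) = \sum_{\lambda \in \cQ^\vee} t^{\dim_{\bc} e_\lambda} = \sum_{\lambda \in \cQ^\vee} t^{\ell^S(\lambda)},
$$
and moreover shows that $H_*(\cL_G; \bz)$ is torsion-free and supported in even real degrees, so that the integral and rational Poincar\'e series agree. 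It therefore suffices to compute $|\Omega G|(t)$ rationally.

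Next, I would invoke Bott's Morse-theoretic analysis of $\Omega G$ via the energy functional $E \colon \Omega G \to \br$. Its critical points are the closed geodesics based at $e$, parametrized by $\ker(\exp \colon \frak{t} \to T) = \cQ^\vee$. A standard Jacobi field computation identifies the Morse index at the critical point indexed by $\lambda$ as $2\ell^S(\lambda)$, in agreement with the cell dimensions obtained from the Iwahori-Matsumoto formula \eqref{e:length.fn}. All indices being even forces the Morse-Bott inequalities to be equalities and recovers the cellular model above, so the two Poincar\'e series match term by term.

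To put the answer in product form, I would apply the path-loop fibration $\Omega G \to PG \to G$ with $PG$ contractible, together with Hopf's theorem that $H^*(G; \bq) \cong \Lambda(y_1, \ldots, y_n)$ with $|y_i| = 2e_i + 1$. Transgression in the rational Serre spectral sequence of the fibration carries the primitive generators $y_i$ to polynomial generators $x_i$ of $H^*(\Omega G; \bq)$ in degree $2e_i$, so $H^*(\Omega G; \bq) \cong \bq[x_1, \ldots, x_n]$ and $|\cL_G|(t) = \prod_i 1/(1-t^{e_i})$ as claimed. The main obstacle is the Morse-theoretic index calculation, which rests on a nontrivial Jacobi field analysis on $G$; a purely algebraic alternative would require establishing the identity $\sum_\lambda t^{\ell^S(\lambda)} = \prod_i 1/(1-t^{e_i})$ directly, which can be done via a Macdonald-style $W$-orbit decomposition combined with the Weyl denominator formula but is essentially of comparable depth.
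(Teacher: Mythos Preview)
The paper does not actually prove this theorem; it is stated with a citation to \cite{Bott-56} and used as a black box. So there is no ``paper's own proof'' to compare against. Your sketch is a correct outline of Bott's original argument: the homotopy equivalence $\cL_G \simeq \Omega G$, the Morse theory of the energy functional on $\Omega G$, and the computation of $H^*(\Omega G;\bq)$ from $H^*(G;\bq)\cong\Lambda(y_1,\ldots,y_n)$ with $|y_i|=2e_i+1$ via the path-loop fibration. The paper does later (in the remark following \eqref{e:options}) invoke the related fact that $\Omega G$ is the double loop space of $BG$ and that $H^*(BG;\bq)$ is polynomial on generators of complex degree $d_i=e_i+1$, which is an equivalent route to the same product formula; your Serre spectral sequence computation is essentially this.

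One minor point: your second paragraph, on the Morse index being $2\ell^S(\lambda)$, is not needed for the Poincar\'e series identity once you have accepted the Schubert cell decomposition of $\cL_G$ and the homotopy equivalence $\cL_G\simeq\Omega G$ from the introduction. Those two facts alone already give $\sum_\lambda t^{\ell^S(\lambda)} = |\cL_G|(t) = |\Omega G|(t)$, and then the third paragraph finishes. The Morse-theoretic index computation is what Bott used to \emph{establish} the cell structure in the first place, but in the context of this paper that structure is taken as given.
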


Now recall that $\lambda \in \cQ ^\vee$ is {\it dominant} (resp. {\it
anti-dominant}) if $\alpha (\lambda) \geq 0$ (resp. $\alpha (\lambda)
\leq 0$) for all $\alpha \in \Phi ^+$.  It follows that $\lambda =
w_{\lambda}\in \wtils$ if and only if $\lambda$ is anti-dominant.

The coroot lattice also inherits a Bruhat order and a left weak order from
$\wtil /W$. For $s \in S$ and $\lambda \in \cQ ^\vee$ we have 

\begin{equation}\label{e:simple.reflections}
\begin{array}{rcl}
 \lambda \downarrow s \lam & \iff & \aslam <0
\\
  \lambda \uparrow s \lam & \iff &  \aslam >0   
\\
  \lambda = s \lam  & \iff & \aslam =0   
\end{array}
\end{equation}
If $s=s_0$, the same three conditions hold with $\aslam$
replaced by $1-\azlam$. 

We view the set of double cosets $W \backslash \wtil /W$ asymmetrically,
regarding it as the orbit set of the left $W$ action on $\wtil /W$. Note
that $\lambda \in \cQ ^\vee$ is dominant (resp. anti-dominant) if and
only if it is the unique minimal (resp. maximal) element of its left
$W$-orbit in $\wtil /W$.

We will almost always denote elements $\lam \in \cQ ^ \vee$ using the
expansion of $\lam$ in terms of the fundamental coweights $\omega _s
^\vee$, $s \in S$, or equivalently as $\bz$-valued functions on \cD
. Again, we freely interchange the notation $\omega _s^\vee$ and
$\omega _i ^\vee$ if $s=s_{i}$.  In a given type $\Phi$ the elements
of $S$ are ordered as on page \pageref{f:dynkin} following
\cite{bourbaki}; note the slightly odd ordering there in type $E$, in
which the ``off-line'' node is labelled as $s_2$. If $\Phi$ has rank
$n$ and $\lambda = \sum a_{i} \omega _i ^\vee$ then we write $\lam
=(a_1,\ldots ,a_n)$, where $a_i =\alpha _i (\lam )$. We use the symbol
$\und$ to denote a sequence of zeros whose length is irrelevant or
determined by the context.  Occasionally, however, we use what we call
``standard notation'', meaning the customary explicit representation
of the root systems in some $\br ^n$ as in \cite{bourbaki}; we write
$e_i$ for the standard basis where Bourbaki writes $\epsilon _i$.

\bigskip

\noindent {\bf Remark:} We think of $\cQ ^\vee$ in several different
ways: (1) as a lattice in $\frak{t}$; (2) as a group of translations
acting on $\frak{t}$; (3) as a set of coset representatives for $\wtil
/W$; (4) as the subgroup $Hom \, (S^1 ,T) \subset \gctil$. This last
identification uses the fact that $G$ is simply-connected, so that the
coroot lattice and the integral lattice $Ker \, (exp: \frak{t} \lra T)
=Hom \, (S^1 ,T)$ coincide. It should be clear from the context which of
these interpretations is intended.

\subsection{Comparison with the coweight lattice} \label{sub:comparision}

The coroot lattice is a subgroup of finite index in the coweight lattice

$$\cP ^\vee =\{ v \in \frak{t} : \alpha (v) \in \bz \, \forall \alpha \in
\Phi \}.$$ 

\noindent The fundamental coweights $\omega _s ^\vee$, $s \in S$, are
defined by $\alpha _t (\omega _s ^\vee) =\delta _{st}$.  Below we
summarize criteria to determine if an element in $\cP ^{\vee}$ is
actually an element of $\cQ ^\vee$.

We write $\pi _1 \Phi $ for $\cP ^\vee /\cQ ^\vee$, since the latter
depends only on $\Phi$ and is the fundamental group of the adjoint form
of $G$. The minuscule fundamental coweights $\omega ^\vee _s$ form a
complete set of representatives for the non-trivial cosets in $\pi _1
\Phi$. It is possible, however, for non-minuscule fundamental coweights
to represent non-trivial elements of $\pi _1 \Phi$. For the convenience
of the reader, and because we need to know all such non-trivial
fundamental coweights and the relations between them, we will describe
$\pi _1 \Phi$ type by type. The reference is \cite{bourbaki}, where some
of the data is left implicit in the description of the fundamental
weights. Since the data in \cite{bourbaki} is in terms of weights rather
than coweights, some translation is necessary; in particular, the
weights in type $B$ are the coweights in type $C$ and vice-versa. For
simplicity we identify the fundamental coweights with the nodes of \cD ,
and write $s \sim t$ if $s=t$ in $\pi _1 \Phi$. If $\delta \in \cP
^\vee$, we write $\delta =(a_1,\ldots ,a_n)$ if $\delta =\sum a_i \omega _i
^\vee$.

\bigskip                                                    

$A_n$: We have $\pi _1 \Phi =\bz /(n+1)$, with the elements of
$\{\omega_{i}^{\vee} : 1\leq i \leq n \}$ representing the distinct
non-trivial classes.  Equivalently, the nodes of $D$ represent the
non-trivial classes.  Hence, $(a_1,\ldots ,a_n) \in \cQ ^\vee$
$\Leftrightarrow$ $\sum ia_i \equiv 0\, \mathrm{mod} \ (n+1).$
                                                                
\bigskip                                                                        

$B_n$: Here $\pi _1 \Phi =\bz /2$; the non-trivial nodes are the odd
nodes and these are all identified.  Hence, $(a_1,\ldots ,a_n) \in \cQ
^\vee$ $\Leftrightarrow$ $\sum a_{odd}  \equiv 0 \   \mathrm{mod}  \ 2$.

\bigskip $C_n$: Here $\pi _1 \Phi =\bz /2$; $s_n$ is the only
non-trivial node.  Hence, $(a_1,\ldots ,a_n) \in \cQ ^\vee$
$\Leftrightarrow$ $a_n \equiv 0 \ \mathrm{mod} \ 2$.

\bigskip                  

$D_n$: Here we have                                                                                                                                       
\[\pi _1 \Phi \cong    \left\{ \begin{array}{ll}
\bz /2 \times \bz /2 & \mbox{if $n$ even}\\ 
\bz /4 & \mbox{if $n$ odd}          
\end{array}
\right. \]

\noindent The non-trivial classes are represented by $s_1, s_{n-1},
s_n$. If $k<n-1$, then $s_k$ is non-trivial if and only if $k$ is odd,
in which case $s_k \sim s_1$. Now let $\Sigma _{odd}$ denote the sum
of the $a_i$'s with $i$ odd, $i <n-1$.  If $n$ is odd, $(a_1,\ldots
,a_n) \in \cQ ^\vee$ $\Leftrightarrow$ $a_{n-1} -a_n +2 \Sigma _{odd}
 \equiv 0 \   \mathrm{mod}  \  4$. If $n$ is even, $(a_1,\ldots ,a_n) \in \cQ ^\vee$
$\Leftrightarrow$ $\Sigma _{odd} +a_{n-1}  \equiv 0 \equiv \Sigma _{odd} + a_n \ 
 \mathrm{mod}  \  2$.

\bigskip

$E_6$: $\pi _1 \Phi =\bz /3$, with $s_1$ and $s_6$ representing the
non-trivial nodes. The other non-trivial nodes are $s_3 \sim s_6$ and
$s_5 \sim s_1$.  Hence, $(a_1,\ldots ,a_6) \in \cQ ^\vee$ $\Leftrightarrow$
$a_1 -a_3 +a_5 -a_6  \equiv 0 \   \mathrm{mod}  \  3$.

\bigskip $E_7$: $\pi _1 \Phi =\bz /2$, with $s_2, s_5, s_7$ the
non-trivial nodes.  Hence, $(a_1,\ldots ,a_7) \in \cQ ^\vee$
$\Leftrightarrow$ $a_2 + a_5 +a_7 \equiv 0 \ \mathrm{mod} \ 2$.

\bigskip                        

$E_8, F_4, G_2$: $\pi _1 \Phi$ is trivial, so $\cQ^{\vee} = \cP^{\vee}$.

\section{Parabolic orbits}\label{s:cpos}

\subsection{Closed parabolic orbits}

Let $I \subset \stil$ be a proper subset that contains $s_0$. The
corresponding parabolic subgroup $P_I \subset \gctil$ is generated by
\btil\ and the simple reflections in $I$ lifted to \gctil. Note that
$P_{I}$ has a unique closed orbit in $\cL _G$, namely

$$Y_I =P_I \aP/ \aP =P_I /P_{I-\{s_0\}}.$$

Note that $Y_I$ depends only on the component of $s_0$ in the subgraph
of \dtil\ defined by $I$. Hence:

\begin{proposition} \label{cpograph} \marginpar{}

The non-trivial \cpo s are in bijective correspondence with connected
subgraphs of \dtil\ containing $s_0$.

\end{proposition}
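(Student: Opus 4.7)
Plan: I would split the proposition into two claims: (a) the assignment $I \mapsto Y_I$, defined for proper $I \subset \stil$ containing $s_0$, factors through $I \mapsto I_0$, where $I_0$ is the connected component of $s_0$ in the subgraph of \dtil\ on $I$; (b) distinct connected subgraphs containing $s_0$ give distinct CPOs.

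For (a), set $I_1 = I \setminus I_0$. The key observation is that for each $s \in \stil \setminus \{s_0\}$ the lift $\dot s$ lies in the constant-loop subgroup $\gc \subset \aP$, while $\btil \subset \aP$ by definition of \btil. Hence $P_{I_1}$, being generated by \btil\ and the reflections indexed by nodes of $I_1 \subset \stil \setminus \{s_0\}$, lies entirely inside \aP. Moreover, because \dtil\ has no edges between $I_0$ and $I_1$, the corresponding root subgroups of \gctil\ commute pairwise, which yields a product decomposition $P_I = P_{I_0} \cdot P_{I_1}$. Combining these,
\[
Y_I \;=\; P_I \cdot \aP / \aP \;=\; P_{I_0}\, P_{I_1} \cdot \aP / \aP \;=\; P_{I_0} \cdot \aP / \aP \;=\; Y_{I_0}.
\]

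For (b), the cleanest route is to identify $Y_{I_0}$ with a Schubert variety of $\cL _G$, namely the closure of the top cell $e_{\lambda_{I_0}}$, where $\lambda_{I_0}$ is the longest element of $\wtils$ lying in the parabolic subgroup $\tilde{W}_{I_0} \subset \wtil$ generated by $I_0$. Distinct connected subgraphs $I_0 \neq I_0'$ determine distinct parabolic subgroups of \wtil, whose longest minimum-length coset representatives must differ (a reduced expression for $\lambda_{I_0}$ uses precisely the generators in $I_0$), so $Y_{I_0} \neq Y_{I_0'}$ as subvarieties of $\cL _G$.

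The only step requiring genuine care is the product decomposition $P_I = P_{I_0} \cdot P_{I_1}$; in the ind-group setting this follows from the pairwise commutativity of the root subgroups indexed by disconnected components together with the Levi decomposition of each parabolic. Everything else is routine bookkeeping, and once (a) and (b) are in hand the asserted bijection is immediate.
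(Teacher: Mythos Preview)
Your proposal is correct and considerably more detailed than the paper, which offers no formal proof at all: the proposition is stated as an immediate consequence of the one-line remark ``$Y_I$ depends only on the component of $s_0$ in the subgraph of \dtil\ defined by $I$'' that precedes it. Your parts (a) and (b) simply make that remark precise.

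One small tightening is warranted. Your justification of $P_I = P_{I_0}\cdot P_{I_1}$ via ``the corresponding root subgroups commute pairwise'' is loose, since each $P_{I_j}$ contains all of \btil , not just the simple root subgroups indexed by $I_j$. The clean argument is at the Bruhat level: because there are no edges between $I_0$ and $I_1$, the subgroups $\wtil_{I_0}$ and $\wtil_{I_1}$ commute elementwise, so $\ell(w_0 w_1)=\ell(w_0)+\ell(w_1)$ for $w_j\in\wtil_{I_j}$; hence $\btil w_0\btil\cdot\btil w_1\btil=\btil w_0 w_1\btil$, and summing over $\wtil_I=\wtil_{I_0}\times\wtil_{I_1}$ gives $P_{I_0}P_{I_1}=P_I$. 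Alternatively one can bypass the product decomposition: since $I_1\subset S$, any $\sigma\in\wtil_I\cap\wtils$ already lies in $\wtil_{I_0}$, so $Y_I$ and $Y_{I_0}$ consist of the same Schubert cells. Either way your conclusion stands, and your injectivity argument in (b) is fine.
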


From now on we will assume that $I$ is connected, and let $p_G$ denote
the number of subgraphs as in the proposition. (We usually ignore the
trivial orbit, which corresponds to $P_{\emptyset} =\btil$). 

Let $L_I$ denote the Levi factor of $P_I$. Since any proper
sub-Coxeter system of $(\wtil, \stil)$ is finite, $L_I$ is a \fd\
algebraic group whose commutator subgroup $G_{I,\bc}$ is the simple
algebraic group associated to the Dynkin diagram defined by $I$. It is
clear that $Y_I =G_{I, \bc} /Q$, where $Q$ is the maximal parabolic
subgroup of $G_{I,\bc}$ associated to $I-\{s_0\}$. Hence $Y_I$ is
isomorphic to an ordinary maximal flag variety, and in particular is
irreducible and smooth. Since $Y_I$ is also \btil -invariant, it is
therefore a smooth Schubert variety in $\cL _G$. Hence $Y_I =X_\sigma$,
where $\sigma$ is the maximal element of $(W_I)^{I-\{s_0\}} \subset
\wtils$.

Note that using Proposition~\ref{cpograph}, one can easily compute
$p_G$ and list all the \cpo s explicitly. For example, in type $A_n$
there are ${n+1 \choose 2}$ non-trivial \cpo s, all of which are
Grassmannians. In type $E_8$ there are ten non-trivial \cpo s, nine of
which are projective spaces. The exception is the maximal \cpo\ $Y_I$
obtained by deleting the node $s_1$; from the affine Dynkin diagram we
see that $Y_I$ has type $D_8 /D_7$, a nonsingular quadric hypersurface
of dimension 14.

The dimension of any ordinary flag variety $\gc /Q_J$ can be computed as
follows: Let $\Phi _J$ denote the root subsystem corresponding to
$J$. Then

$$dim \, (\gc /Q_J) =|\Phi ^+ | -|\Phi _J ^+|.$$ 

\noindent In particular, we have 

$$dim\, Y_I =|\Theta _I ^+| -|\Theta _{I-\{s_0\}} ^+|.$$

\noindent where $\Theta _I$ is the root system associated to the
subgraph $I$. We identify $\Theta _I$ with the root subsystem of
$\Phi$ having $\{\alpha _0\} \cup \{-\alpha _s : \, s \in I-\{s_0\}\}$
as a base.  

Now let $\cN (I)$ denote the set of neighbor nodes of $I$; note that
$\cN (I)$ uniquely determines $I$ given that $s_{0} \in I$ and $I$ is
connected. Let

$$A_I =\{ \alpha \in \Phi ^+ : \, m_s (\alpha) =m_s (\alpha _0) \, \forall s
\in \cN (I)\}.$$

\begin{lemma} \label{cpo1} \marginpar{}

$A_I =\Theta _I ^+ -\Theta _{I-\{s_0\}} ^+.$ Hence $dim \, Y_I =|A_I|$.

\end{lemma}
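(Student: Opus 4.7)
The plan is to prove the set identity $A_I = \Theta_I^+ \setminus \Theta_{I-\{s_0\}}^+$; the dimension formula $\dim Y_I = |A_I|$ then follows immediately from the dimension identity $\dim Y_I = |\Theta_I^+| - |\Theta_{I-\{s_0\}}^+|$ recorded just above the statement. Throughout, write $J = I - \{s_0\}$ and $F = S - I - \cN(I)$.

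The inclusion $\Theta_I^+ \setminus \Theta_{I-\{s_0\}}^+ \subseteq A_I$ is direct. Any element of the left-hand side has the form $\alpha = c_0 \alpha_0 - \sum_{s\in J} c_s \alpha_s$ with non-negative integers $c_0, c_s$, and the condition $\alpha \notin \Theta_{I-\{s_0\}}^+$ forces $c_0 \geq 1$. Expanding via $\alpha_0 = \sum_t m_t \alpha_t$ gives $m_t(\alpha) = c_0 m_t$ for every $t \in S - J$; since every $m_t \geq 1$ in an irreducible finite root system, these coefficients are strictly positive, so $\alpha \in \Phi^+$. The bound $m_t(\alpha) \leq m_t$ valid for all positive roots then pins $c_0 = 1$, so $m_t(\alpha) = m_t$ for every $t \in S - J$, in particular on $\cN(I)$, and hence $\alpha \in A_I$.

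The main obstacle is the reverse inclusion $A_I \subseteq \Theta_I^+ \setminus \Theta_{I-\{s_0\}}^+$. Since $\alpha \in \Phi^+$ precludes membership in $\Theta_{I-\{s_0\}}^+$, the task reduces to showing that $\alpha_0 - \alpha$ is a non-negative integer combination of $\{\alpha_s : s \in J\}$, equivalently that its support does not spill into $F$. The approach is a chain argument in the root poset, powered by two features of the affine Dynkin geometry:  (a) for $t \in F$, the node $t$ is not adjacent to $s_0$ in $\tilde{D}$, so the affine Cartan entry $a_{0,t}$ vanishes, giving $\langle \alpha_0, \alpha_t^\vee\rangle = 0$; and (b) $\cN(I)$ separates $J$ from $F$ in $D$, so no $t \in F$ is adjacent in $D$ to any $s \in J$, whence $\langle \alpha_s, \alpha_t^\vee\rangle = 0$.

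Fix a chain $\alpha = \alpha^{(0)} < \alpha^{(1)} < \cdots < \alpha^{(k)} = \alpha_0$ in the root poset with $\alpha^{(i)} - \alpha^{(i-1)} = \alpha_{t_i}$. The hypothesis $m_t(\alpha) = m_t$ on $\cN(I)$ immediately forces $t_i \notin \cN(I)$ for every $i$, so each $t_i \in J \cup F$. I argue by downward induction on $i$ that $t_i \in J$. For the base case $i = k$, $\alpha^{(k-1)} = \alpha_0 - \alpha_{t_k}$ is a positive root only if $\langle \alpha_0, \alpha_{t_k}^\vee\rangle > 0$, which by (a) excludes $t_k \in F$. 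For the inductive step, assume $t_j \in J$ for every $j > i$; then $\alpha^{(i)} = \alpha_0 - \sum_{j > i} \alpha_{t_j}$ and
$$\langle \alpha^{(i)}, \alpha_{t_i}^\vee\rangle = \langle \alpha_0, \alpha_{t_i}^\vee\rangle - \sum_{j>i} \langle \alpha_{t_j}, \alpha_{t_i}^\vee\rangle.$$
If $t_i \in F$, both terms on the right vanish by (a) and (b), contradicting the strict positivity required for $\alpha^{(i-1)} = \alpha^{(i)} - \alpha_{t_i}$ to be a positive root. Hence every $t_i \in J$, the support of $\alpha_0 - \alpha$ lies in $J$, and the lemma follows.
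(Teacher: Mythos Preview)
Your approach is essentially the same as the paper's: descend from $\alpha_0$ to $\alpha$ along a saturated chain in the root poset, first rule out steps through $\cN(I)$ using the coefficient constraint, then rule out steps through $F$ (the paper's $K$). The paper phrases the second exclusion as ``if $\beta\in\Theta_I$ and $\gamma\in\Phi_K$, then $\beta+\gamma$ is not a root,'' while you phrase it via the pairing $\langle\alpha^{(i)},\alpha_{t_i}^\vee\rangle$.

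There is, however, a genuine gap in your inductive step. You assert that $\langle\alpha^{(i)},\alpha_{t_i}^\vee\rangle=0$ ``contradict[s] the strict positivity required for $\alpha^{(i-1)}=\alpha^{(i)}-\alpha_{t_i}$ to be a positive root.'' This implication is false in general: for a positive root $\beta$ and a simple root $\alpha_t$, having $\langle\beta,\alpha_t^\vee\rangle=0$ does \emph{not} preclude $\beta-\alpha_t$ from being a root. In $B_2$ (with $\alpha_1$ long, $\alpha_2$ short) one has $\langle\alpha_1+\alpha_2,\alpha_2^\vee\rangle=0$, yet $(\alpha_1+\alpha_2)-\alpha_2=\alpha_1$ is a root. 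Your base case happens to be correct only because $\alpha_0$ is the highest root, so $\alpha_0+\alpha_{t_k}$ is automatically not a root and the $\alpha_{t_k}$-string through $\alpha_0$ is bounded above; but for $i<k$ you have not established the analogous bound.

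The fix is short and uses information you already have. By the inductive hypothesis $t_j\in J$ for $j>i$, and since $t_i\in F$ is disjoint from $J$, the coefficient $m_{t_i}(\alpha^{(i)})=m_{t_i}(\alpha_0)=m_{t_i}$ is already maximal. Hence $\alpha^{(i)}+\alpha_{t_i}$ cannot be a root (its $\alpha_{t_i}$-coefficient would exceed $m_{t_i}$, and a sum of positive roots is not negative). Now the $\alpha_{t_i}$-string through $\alpha^{(i)}$ has $q=0$, and $p-q=\langle\alpha^{(i)},\alpha_{t_i}^\vee\rangle=0$ gives $p=0$, so $\alpha^{(i)}-\alpha_{t_i}$ is not a root after all. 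With this added sentence your argument is complete.
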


\proof It is clear that $\Theta _I ^+ -\Theta _{I-\{s_0\}} ^+ \subset
A_I.$ For the reverse inclusion, note that any positive root $\alpha \in
\Phi ^+$ can be obtained from $\alpha _0$ by successively subtracting
simple roots $\alpha _s$ for various $s \in S$. If $\alpha \in A_I$,
then no such $\alpha _s$ can have $s \in \cN (I)$. Now write \stil\ as a
disjoint union $\stil =I \coprod \cN (I) \coprod K$. Then if $\beta \in
\Theta _I$ and $\gamma \in \Phi _K$, $\beta + \gamma$ is not a
root. Hence no such $\alpha _{s}$ can have $s \in K$, and it follows
that $\alpha \in \Theta _I ^+ -\Theta _{I-\{s_0\}} ^+$.

\bigskip

Now let $\lambda _I \in \cQ^\vee$ denote the coroot lattice
representative for the top cell in $Y_I$. Then 

$$\el ^S (\lambda _I ) =dim \, Y_I =|A_I|.$$

\noindent Furthermore, setting $S^+ (\lambda) =\{s \in S:
\alpha _s (\lam ) >0\}$, we have:

\begin{lemma} \label{cpo2} \marginpar{}

$S^+ (\lam _I) =\cN (I)$. 

\end{lemma}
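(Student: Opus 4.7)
The plan is to compute $\lambda_I \in \cQ^\vee$ explicitly and then evaluate $\alpha_s(\lambda_I)$ case by case for $s \in S$.

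First I would note that under the bijection $\cQ^\vee \cong \wtil/W$, the element $\lambda_I$ corresponds to $\sigma = w_0^I w_0^J$, and since $w_0^J \in W$ fixes the origin of $\frak{t}$, we have $\lambda_I = \sigma(0) = w_0^I(0)$. The finite Coxeter subgroup $W_I \subset \wtil$ has a non-empty affine fixed locus containing any $p_I \in \frak{t}$ with $\alpha_0(p_I) = 1$ and $\alpha_t(p_I) = 0$ for all $t \in J := I \setminus \{s_0\}$, and the linear part of $w_0^I$ about $p_I$ is the longest element $w_0^{\Theta_I}$ of $W(\Theta_I)$. Hence $\lambda_I = (1 - w_0^{\Theta_I})p_I$. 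Orthogonally projecting $p_I$ onto $\mathrm{span}(\Theta_I^\vee)$ (under the $W$-invariant form) identifies the projection as $\omega_{s_0}^{\vee,\Theta_I}$, the fundamental coweight of the abstract root system $\Theta_I$ dual to its simple root $\alpha_0$, and the standard identity $w_0^{\Theta_I}(\omega_{s_0}^{\vee,\Theta_I}) = -\omega_{\iota_I(s_0)}^{\vee,\Theta_I}$, with $\iota_I$ the Dynkin automorphism of $\Theta_I$ induced by $-w_0^{\Theta_I}$, then yields the closed form
\[
\lambda_I = \omega_{s_0}^{\vee,\Theta_I} + \omega_{\iota_I(s_0)}^{\vee,\Theta_I},
\]
a sum of dominant coweights of $\Theta_I$.

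Second, I would expand $\omega_u^{\vee,\Theta_I} = \sum_{v \in I}(C^{\Theta_I})^{-1}_{v,u}\beta_v^\vee$, where $\beta_{s_0}^\vee = \alpha_0^\vee$ and $\beta_t^\vee = -\alpha_t^\vee$ for $t \in J$. Since $I$ is connected, $\Theta_I$ is finite irreducible and every entry of $(C^{\Theta_I})^{-1}$ is strictly positive. Three cases cover $s \in S$: if $s \in K$, then $\alpha_s$ is orthogonal to every simple root of $\Theta_I$, so $\alpha_s(\lambda_I) = 0$; if $s \in J$, the defining duality of the fundamental coweights gives $\alpha_s(\omega_{s_0}^{\vee,\Theta_I}) = 0$ and $\alpha_s(\omega_u^{\vee,\Theta_I}) = -\delta_{s,u}$ for $u \in J$, whence $\alpha_s(\lambda_I) \in \{0,-1\}$; and if $s \in \cN(I)$, then $s$ is adjacent in \dtil\ to some $v \in I$, forcing $\alpha_s(\beta_v^\vee) > 0$ strictly, while $\alpha_s(\beta_{v'}^\vee) \geq 0$ for every other $v' \in I$. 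The strict positivity of every $(C^{\Theta_I})^{-1}_{v,u}$ then forces $\alpha_s(\omega_u^{\vee,\Theta_I}) > 0$ for each $u \in I$, and hence $\alpha_s(\lambda_I) > 0$. Combining the three cases identifies $S^+(\lambda_I) = \cN(I)$.

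The main obstacle is the $\cN(I)$ case: one must check the sign of $\alpha_s(\beta_v^\vee)$ separately for $v = s_0$ (using that the affine Cartan entry $C^{\mathrm{aff}}_{s,s_0}$ is non-positive, with strict negativity exactly when $s$ is adjacent to $s_0$ in \dtil) and for $v \in J$ (the analogous finite Cartan statement), and then invoke the strict positivity of the inverse Cartan matrix of the finite irreducible system $\Theta_I$ to conclude strict positivity of the sum. Irreducibility of $\Theta_I$, equivalent to connectedness of $I$, is what makes the positivity argument work.
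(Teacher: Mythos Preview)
Your proof is correct, but it takes a substantially different route from the paper's.

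The paper's argument is purely order-theoretic and very short. It uses only that $\lambda_I$ indexes the top cell of the $P_I$-stable variety $Y_I$, together with the basic descent criterion \eqref{e:simple.reflections}. For $s \in J = I \cap S$ one has $s\lambda_I \leq \lambda_I$ because $\lambda_I$ is maximal in its $W_I$-orbit; for $s \in K$ one has $s\lambda_I = \lambda_I$ because $s$ commutes with $W_I$; and for $s \in \cN(I)$ one has $\lambda_I \uparrow s\lambda_I$ because $s\sigma > \sigma$ in $\wtil$ (as $s \notin I$) and $s\sigma$ remains in $\wtils$ (as $\sigma$ does not fix the simple affine root $\tilde{\alpha}_s$). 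No explicit description of $\lambda_I$ is ever needed.

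Your approach, by contrast, actually computes $\lambda_I$: you identify it with $(1 - w_0^{\Theta_I})p_I$ and then with the sum of fundamental $\Theta_I$-coweights $\omega_{s_0}^{\vee,\Theta_I} + \omega_{\iota_I(s_0)}^{\vee,\Theta_I}$. This closed form is correct and is in fact more informative than what the lemma requires---for instance, it immediately tells you that $\alpha_s(\lambda_I) \in \{0,-1\}$ for $s \in J$, pinpoints exactly which node of $J$ gets the value $-1$, and explains why $\azlam = 2$ (compare Proposition~\ref{domcpo}). The price is that the $\cN(I)$ case now rests on two nontrivial inputs: the strict positivity of all entries of the inverse Cartan matrix of an irreducible finite root system, and the nonnegativity of $\langle \alpha_s, \alpha_0^\vee\rangle$ for simple $\alpha_s$ (both standard, and you handle them correctly). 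The paper's argument sidesteps these entirely by staying inside the weak order.
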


\proof If $s \in I$ then $s \lambda _I \leq \lambda _I$ in $\wtil/W$, so
$\alpha _s (\lam _I) \leq 0$. If $s \in K$ (where $K$ is as in the proof
of Lemma~\ref{cpo1}), then $s$ commutes with the elements of $I$; hence
$\alpha _s (\lam _I )=0$. Finally, if $s \in \cN (I)$ then $\lam _I
\uparrow s \lam _I$; hence $\alpha _s (\lam _I)>0$. 

\bigskip

There is a simple way to recognize a dominant \cpo :

\begin{proposition} \label{domcpo} \marginpar{}                                 
Suppose $\lambda$ is non-trivial and dominant. Then $X_\lambda$ is a
\cpo\ if and only if $\alpha _0 (\lambda) =2$.

\end{proposition}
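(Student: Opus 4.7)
The plan is to prove both directions using Lemmas~\ref{cpo1} and~\ref{cpo2}, together with the identity $\dim X_\lam = \el^S(\lam)$ and the elementary bound $\alpha(\lam) \leq \alpha_0(\lam)$ valid for any dominant $\lam$ and any $\alpha \in \Phi^+$. (This bound holds because $\alpha_0$ is the highest root, so $\alpha_0 - \alpha$ is a nonnegative integer combination of simple roots and therefore pairs nonnegatively with the dominant $\lam$.)

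For the forward direction, assume $X_\lam = Y_I$ with $\lam$ dominant non-trivial, and set $N = \alpha_0(\lam)$. Since $\lam$ is dominant, no simple reflection $s \in S$ can be a left descent of $\lam$, so non-triviality forces $s_0$ to be a descent and hence $N \geq 2$ by~\eqref{e:simple.reflections}. (The value $N = 1$ is ruled out independently: $\sum_s m_s \alpha_s(\lam) = 1$ would force $\lam$ to be a minuscule fundamental coweight, contradicting $\lam \in \cQ^\vee$.) Lemma~\ref{cpo2} combined with dominance gives $\alpha_s(\lam) = 0$ for $s \notin \cN(I)$ and $\alpha_s(\lam) > 0$ for $s \in \cN(I)$, from which one verifies
\[
A_I \;=\; \{\alpha \in \Phi^+ : \alpha(\lam) = N\}.
\]
Indeed, for $\alpha \in A_I$ the equalities $m_s(\alpha) = m_s(\alpha_0)$ on $\cN(I)$ yield $\alpha(\lam) = \alpha_0(\lam) = N$; conversely, $\alpha(\lam) = N$ forces these equalities on $\cN(I)$ because $\alpha_s(\lam) > 0$ there precludes any strict inequality $m_s(\alpha) < m_s(\alpha_0)$. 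Setting $n_k = |\{\alpha \in \Phi^+ : \alpha(\lam) = k\}|$ and combining Lemma~\ref{cpo1} with the length formula~\eqref{e:length.fn},
\[
n_N \;=\; |A_I| \;=\; \dim Y_I \;=\; \el^S(\lam) \;=\; \sum_{k \geq 2} n_k(k-1) \;\geq\; n_N(N-1),
\]
and $n_N \geq 1$ (since $\alpha_0 \in A_I$) forces $N \leq 2$. Combined with $N \geq 2$, this yields $\alpha_0(\lam) = 2$.

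For the backward direction, let $P = S^+(\lam)$ and write $2 = \alpha_0(\lam) = \sum_{s \in P} m_s \alpha_s(\lam)$. Each summand is at least $1$, so $|P| \in \{1, 2\}$, with $|P| = 2$ forcing both nodes to be minuscule ($m_s = 1$). Define $I$ to be the connected component of $s_0$ in the subgraph of $\dtil$ obtained by deleting $P$. The inclusion $\cN(I) \subset P$ is immediate from the construction. For the reverse inclusion $P \subset \cN(I)$, Case 1 ($|P| = 1$) is automatic since removing a single node from the connected graph $\dtil$ leaves it adjacent to every remaining component. Case 2 requires a brief type-by-type inspection of the admissible minuscule pairs: in type $A_n$ the cycle structure of $\dtil$ makes each removed node adjacent to every remaining arc; the only pairs consistent with $\lam \in \cQ^\vee$ in the other types are $\{s_{n-1}, s_n\}$ in $D_n$ with $n$ odd and $\{s_1, s_6\}$ in $E_6$, both easily verified; in $D_n$ with $n$ even and in $E_7$, the condition $\lam \in \cQ^\vee$ excludes Case 2 entirely.

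With $\cN(I) = P$ in hand, the descent criteria of~\eqref{e:simple.reflections}, applied to $\alpha_0(\lam) = 2$ and $\alpha_s(\lam) = 0$ for $s \in I - \{s_0\}$, show that every $s \in I$ either is a left descent of $\lam$ or fixes $\lam$; hence $X_\lam$ is $P_I$-stable. Since $X_\lam$ contains the basepoint $\und$, it contains $Y_I = P_I \cdot \und$. The computation of the forward direction runs in reverse to give $\dim X_\lam = \el^S(\lam) = |A_I| = \dim Y_I$, and irreducibility of both varieties forces $X_\lam = Y_I$. The main obstacle is the type-by-type verification in Case 2 of the backward direction, though each individual case is mechanical.
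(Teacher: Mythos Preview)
Your proof is correct and follows essentially the same strategy as the paper's: both directions hinge on Lemmas~\ref{cpo1} and~\ref{cpo2} together with the length formula~\eqref{e:length.fn}, and the key inequality $|A_I| = \el^S(\lam) \geq (\alpha_0(\lam)-1)|A_I|$ appears in both arguments. Your explicit identification $A_I = \{\alpha : \alpha(\lam) = N\}$ is a nice clarification of something the paper uses implicitly, and your observation that the dimension count ``runs in reverse'' once $N=2$ is cleaner than the paper's ``inspection of cases (i)--(iii)''. The one place where the paper is more economical is your Case~2: rather than a type-by-type check constrained by $\lam \in \cQ^\vee$, the paper simply notes that (outside type $A$) every minuscule node is a leaf of $\dtil$, so deleting two of them keeps the remaining graph connected and each deleted node adjacent to it. Your verification is correct but works harder than necessary.
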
                                                                                                                                            
\proof Suppose $\alpha _0 (\lambda )=2$. Equivalently, $\lambda $ has
the form (i) $\omega _i ^\vee$ with $m_i =2$; or (ii) $2 \omega _i
^\vee$ with $m_i =1$ (i.e. $i$ minuscule); or (iii) $\omega _i ^\vee +
\omega _j ^\vee$ with $i,j$ minuscule. It follows that there is a
unique connected subset $I \subset \stil$ containing $s_{0}$ such that $S^+ (\lam )=\cN (I)$, where in case (iii) this
uses the fact that every minuscule node of $S$ is a leaf node. Moreover,
\xlam\ is $P_I$-invariant, and inspection of cases (i)-(iii) shows that
$\el ^S (\lam )=|A_I|$. Hence $dim \, \xlam =dim \, Y_I$ by
Lemma~\ref{cpo1}, forcing $\xlam =Y_I$.

Conversely, suppose $\lambda _I$ is dominant. Then $\lambda _I
\downarrow s_0 \lambda _I$, and hence $\alpha _0 (\lambda _I) \geq
2$. Moreover, 

$$|A_I| =\el ^S (\lambda _I) = \sum _{\alpha (\lambda _I)>0} (\alpha
(\lambda _I) -1) \geq \sum _{\alpha \in A_I} (\alpha (\lambda _I)-1)
= (\alpha _0 (\lambda _I) -1)|A_I|.$$

\noindent Hence $\alpha _0 (\lambda _I) =2$.

\bigskip

We conclude this section with a proof of Corollary~\ref{minflag}.

\begin{proposition} \label{mincpo} \marginpar{}

Every \cpo\ is a minuscule flag variety of some simple algebraic
group. Moreover every minuscule flag variety occurs as a \cpo\ in some
affine Grassmannian. 

\end{proposition}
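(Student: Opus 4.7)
The plan is to prove the two statements in turn, using in both cases the characterization of a minuscule node as one whose corresponding simple root occurs with coefficient $1$ in the highest root.

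For the first statement, let $I$ be a proper connected subset of $\tilde{D}$ containing $s_0$. From the preceding discussion, $Y_I = G_{I,\bc}/Q$ is a maximal flag variety of the simple algebraic group $G_{I,\bc}$ with Dynkin diagram $I$, where $Q$ is the maximal parabolic corresponding to deletion of the node $s_0$. I would show directly that $s_0$ is a minuscule node of $I$, i.e., that $\alpha_0$ (the simple root of $\Theta_I$ associated to $s_0$) appears with coefficient $1$ in the highest root $\theta_I$ of $\Theta_I$. Write
\[
\theta_I = c_0\, \alpha_0 + \sum_{s \in I - \{s_0\}} c_s (-\alpha_s),
\]
with non-negative integers $c_0, c_s$. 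Since $\alpha_0$ is itself a positive root of $\Theta_I$ and $\theta_I$ dominates every positive root in the $\Theta_I$-order, $c_0 \geq 1$. For the reverse bound, expand $\alpha_0 = \sum_{s \in S} m_s \alpha_s$; the coefficient of $\alpha_s$ in $\theta_I$ in the $\Phi$-basis is then $c_0 m_s$ whenever $s \in S \setminus (I - \{s_0\})$. Since $I$ is proper, $I - \{s_0\} \subsetneq S$, so such an $s$ exists; and since $\theta_I$ is a root of $\Phi$ with coefficients bounded in absolute value by those of the highest root $\alpha_0$, we obtain $c_0 m_s \leq m_s$, hence $c_0 \leq 1$. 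Thus $c_0 = 1$.

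For the second statement, let $G'$ be a simple algebraic group with Dynkin diagram $D'$ and let $t$ be a minuscule node of $D'$. Take $G$ simply-connected of the same type as $G'$, so that $D = D'$; since a flag variety of this form depends only on the type and the choice of maximal parabolic, it suffices to realize the desired variety inside $\cL_G$. By the definition of minuscule, there is an automorphism $\phi$ of $\tilde{D}$ interchanging $s_0$ and $t$. Set $I = \tilde{D} - \{t\}$; then $\phi$ restricts to a Dynkin-diagram isomorphism $I \to \tilde{D} - \{s_0\} = D$ sending $s_0 \in I$ to $t \in D$. In particular $I$ is connected, proper, and contains $s_0$, so by Proposition~\ref{cpograph} it defines a non-trivial \cpo\ $Y_I$, which under the identification $(I, s_0) \cong (D, t)$ is isomorphic to $G'/Q$ where $Q$ corresponds to removing $t$.

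The main obstacle is the upper bound $c_0 \leq 1$ in the first part; it is here that one exploits the properness of $I$ by examining the coefficient of a simple root $\alpha_s$ with $s \in S \setminus I$, where the $-\alpha_{s'}$ terms in the $\Theta_I$-basis make no contribution, so the constraint that $\theta_I$ is a root of the finite root system $\Phi$ translates directly into a bound on $c_0$. The second part is essentially a repackaging of the definition of a minuscule node.
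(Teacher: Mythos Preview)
Your approach matches the paper's. The first part is correct and is an expanded version of the paper's one-line remark that $\alpha_0$ can occur at most once in any positive root of $\Theta_I$; your use of properness of $I$ to find a node $s \in S \setminus (I-\{s_0\})$ where the coefficient bound $c_0 m_s \le m_s$ bites is exactly the mechanism behind that remark.

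There is one small slip in the second part. The definition of minuscule only provides an automorphism $\phi$ of $\tilde D$ with $\phi(s_0)=t$, not one \emph{interchanging} $s_0$ and $t$; so ``by the definition of minuscule'' does not justify your stronger claim. With only $\phi(s_0)=t$, your choice $I=\tilde D-\{t\}$ need not satisfy $\phi(I)=D$. The fix is immediate: take instead $I=\phi^{-1}(D)=\tilde D-\{\phi^{-1}(s_0)\}$, which is proper, connected, and contains $s_0$ (since $\phi(s_0)=t\neq s_0$). Then $\phi|_I:I\to D$ sends $s_0$ to $t$, and the rest of your argument goes through verbatim. (It is in fact true, type by type, that an involution swapping $s_0$ and $t$ always exists, but invoking it is unnecessary.) The paper's proof is equally terse at this step.
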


\proof The first assertion is immediate, since $\alpha _0$ can occur at
most once in a positive root of the system $\Theta$. Conversely,
suppose $s$ is a minuscule node of \cD , and let $Z=\gc /Q_{S-s} $
denote the corresponding flag variety. Then there is an \aut\ $\phi$ of
\dtil\ taking $s$ to $s_0$. Hence $Z \cong Y_I$, where $I = \stil
-\{s\}$.

\bigskip

Corollary~\ref{minflag} is now clear from Theorem~\ref{smooth}. 

\subsection{General parabolic orbits}

In this section we fix a proper subset $I \subset \stil$ and consider
arbitrary $P_I$ orbits in $\cL _G$. The propositions here are
well-known (see \cite{mpar} for a detailed exposition), so we omit the
proofs.  

Call an element $\lam \in \cQ ^\vee$ {\it I-minimal} if it is the
minimal element of its left $\wtil _I$-orbit. Then $\lambda$  is $I$-minimal
if and only if $\aslam \geq 0$ for every $s \in I$ (or $1-\azlam \geq 0$
when $s=s_0$). Every $P_I$ -orbit contains a unique $I$-minimal $\lambda$,
and from now on we assume $\lambda$  is $I$-minimal unless otherwise
specified.  Let $\cO _\lambda =P_I \lambda \subset \cL _G$, and let
$M_\lambda =L_I \lambda \subset \cO _\lambda$ denote the corresponding
Levi orbit.

\begin{proposition} $\cO _\lambda$ is isomorphic as an algebraic
variety to the total space of a vector bundle $\xi _\lambda$ over
$M_\lambda$, with fiber dimension $\el ^S (\lam )$. (Here $\lambda$ is
the $I$-minimal representative of the orbit.)

\end{proposition}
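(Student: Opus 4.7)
The plan is to construct $\xi_\lambda$ explicitly using the Levi decomposition $P_I = \uirad \rtimes L_I$, with the $I$-minimality of $\lambda$ providing the essential compatibility between $P_I$ and the stabilizer $P^\lambda := \lambda \aP \lambda^{-1}$. Thus $\cO_\lambda = P_I / (P_I \cap P^\lambda)$ and $M_\lambda = L_I / (L_I \cap P^\lambda)$. The first step is to work at the Lie algebra level: the Lie algebra $\mathfrak{u}$ of $\uirad$ decomposes as a sum of affine root spaces $\mathfrak{g}_{\tilde\alpha}$, and one identifies the stabilizing subalgebra $\mathfrak{u}^\lambda := \mathfrak{u} \cap \mathrm{Lie}(P^\lambda)$ together with a complement $\mathfrak{v}$ built from the remaining root spaces. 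Using the $I$-minimality hypothesis ($\aslam \geq 0$ for $s \in I$, respectively $1 - \azlam \geq 0$ when $s_0 \in I$), one checks that both $\mathfrak{u}^\lambda$ and $\mathfrak{v}$ correspond to closed subsets of the affine root system, hence are honest subalgebras, and that $\mathfrak{v}$ is stable under the adjoint action of $L_I^\lambda := L_I \cap P^\lambda$.

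Setting $V := \exp(\mathfrak{v})$ produces a closed unipotent subgroup of $\uirad$ isomorphic as an algebraic variety to the affine space $\mathfrak{v}$, with product decomposition $\uirad = V \cdot (\uirad \cap P^\lambda)$. The associated bundle
\[
\xi_\lambda \;:=\; L_I \times^{L_I^\lambda} \mathfrak{v} \;\lra\; L_I / L_I^\lambda \;=\; M_\lambda
\]
inherits from the linearity of the restricted adjoint action of $L_I^\lambda$ on $\mathfrak{v}$ the structure of a genuine algebraic vector bundle. The $L_I$-equivariant map $\xi_\lambda \to \cO_\lambda$ sending $[l, X] \mapsto l \exp(X) \cdot \lambda$ is then an algebraic isomorphism; bijectivity follows from the factorization $P_I = V \cdot L_I \cdot (\uirad \cap P^\lambda)$, and algebraicity from standard properties of $\exp$ on unipotent groups.

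The fiber dimension follows easily: the fiber over $\lambda \in M_\lambda$ is exactly $V \cdot \lambda = \uirad \cdot \lambda$, and $I$-minimality forces the Borel $\aP \cap L_I$ of $L_I$ to fix $\lambda$ (the non-torus generators correspond to simple roots $\alpha_s$ with $s \in I$, for which $\aslam \geq 0$). Hence $\aP \cdot \lambda = \uirad \cdot \lambda = e_\lambda$, which has dimension $\el^S (\lam)$ by the very definition of $\el^S$. The main obstacle is the first step: verifying that $\mathfrak{v}$ is root-theoretically closed and $L_I^\lambda$-stable. This combinatorial check is where $I$-minimality is used decisively, and is the technical core of the argument; once it is in hand, the remaining assembly into a vector bundle is essentially formal.
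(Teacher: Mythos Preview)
The paper does not actually prove this proposition; it declares the result well-known, refers to \cite{mpar} for a detailed exposition, and adds only the remark that $\xi_\lambda$ arises from the adjoint representation of $L_I$ on the Lie algebra of $\uirad$. Your sketch is precisely the standard argument that remark is pointing to: decompose $\lieuirad$ into affine root spaces, use $I$-minimality to check that the complement $\mathfrak v$ of $\lieuirad \cap \mathrm{Lie}(P^\lambda)$ is bracket-closed and $L_I^\lambda$-stable, and realize $\cO_\lambda$ as the associated bundle $L_I \times^{L_I^\lambda} \mathfrak v$. So there is nothing to compare beyond saying your approach matches the one the paper cites.

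One genuine slip in your fiber-dimension paragraph: you write ``the Borel $\aP \cap L_I$ of $L_I$'' and then ``$\aP \cdot \lambda = \uirad \cdot \lambda = e_\lambda$''. Both occurrences of $\aP$ should be $\btil$. Since $\aP = \gc(\bc[z])$ contains all of $\gc$, the intersection $\aP \cap L_I$ is typically not a Borel (e.g.\ if $s_0 \notin I$ it is all of $L_I$), and $\aP$-orbits in $\cL_G$ are the large $\gc(\bc[z])$-orbits indexed by dominant coweights, not Schubert cells. With $\btil$ in place the argument is correct: $\btil = \uirad \cdot (\btil \cap L_I)$, the second factor is the Borel of $L_I$, $I$-minimality of $\lambda$ makes it fix $\lambda$, and hence $e_\lambda = \btil \cdot \lambda = \uirad \cdot \lambda = V \cdot \lambda$ has dimension $\el^S(\lambda)$.
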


The bundle $\xi _\lambda$ can be described explicitly in terms of a
certain representation of $L_I$ arising from the adjoint representation
of $L_I$ on the the Lie algebra of the unipotent radical of
$P_I$. We will not need this description here.

The closure relations on the $P_I$ orbits are given by the Bruhat order
on the set of $I$-minimal $\lam$. We then have a filtration of $\cL _G$
for which the quotients are the Thom spaces $T(\xi _\lambda)$. Now let
$p_\lambda (t)$ denote the generating function for the cells that lie in
$\cO _\lambda$. Note that $p_\lambda (t)$ is counting cells, not
homology groups (although $1+ p_\lambda (t) =|T(\xi _\lambda)| (t)$). In
fact 

$$p_\lambda (t) =t^{\el ^S (\lam)} |M_\lambda | (t).$$

Now let $K_\lambda \subset I$ denote those $s$ such that $\aslam =0$
(or $1-\azlam =0$ if $s=s_0$; in the language of the next section,
these are the ``zero nodes'' of $\lambda$). Then $M_\lambda$ is a flag
variety of type $\Phi _I /\Phi _{K_\lambda}$.

\begin{proposition} \label{levipoly} 
We have
$$|\cL _G| (t) =\sum
_{\lambda } t^{\el ^S (\lam)} |M_\lambda | (t),$$

\noindent where the sum is over all $I$-minimal $\lambda$.  Moreover,

$$|M_\lambda| (t) =\frac{\prod _{i=1} ^{|I|} (1-t^{e_i})}
{(1-t)^{|I|-|K_\lambda|} \prod _{j=1} ^{|K_\lambda|} (1-t^{f_j})},$$

\noindent where $e_1,...,e_{|I|}$ (resp. $f_1,...,f_{|K_\lambda|}$) are
the degrees of $\wtil _I$ (resp. $\wtil _{K_\lambda}$).
%%% todo: are these numbers really degrees and not exponents?
\end{proposition}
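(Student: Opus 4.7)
The plan is to combine the preceding proposition (which gives the vector bundle structure on each $P_I$-orbit) with the classical formula for the Poincaré polynomial of a generalized flag variety.

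First I would observe that $\cL_G$ is stratified by the locally closed $P_I$-orbits $\cO_\lambda$, one for each $I$-minimal $\lambda$, and that the affine paving of $\cL_G$ by Schubert cells refines this stratification. Applying the preceding proposition, each stratum $\cO_\lambda$ is a rank-$\el^S(\lam)$ complex vector bundle over the Levi orbit $M_\lambda$, and its Schubert cells are indexed by those of $M_\lambda$ with dimensions shifted up by $\el^S(\lam)$. Hence
$$p_\lambda(t) \;=\; t^{\el^S(\lam)} |M_\lambda|(t),$$
and summing $p_\lambda(t)$ over all $I$-minimal $\lambda$ yields the first identity.

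For the second identity, recall from the paragraph preceding the proposition that $M_\lambda$ is a generalized flag variety of type $\Phi_I/\Phi_{K_\lambda}$; in other words $M_\lambda \cong G_{I,\bc}/Q$, where $Q$ is the standard parabolic of $G_{I,\bc}$ associated to $K_\lambda$. The classical formula for the Poincaré polynomial of a parabolic quotient then gives
$$|M_\lambda|(t) \;=\; \frac{|\wtil_I|(t)}{|\wtil_{K_\lambda}|(t)},$$
and substituting the standard product $|W'|(t) = \prod_i (1-t^{e_i})/(1-t)$ for the length generating function of a finite reflection group $W'$ in terms of its fundamental degrees $e_i$ yields the stated formula after cancelling $|K_\lambda|$ common factors of $(1-t)$ from numerator and denominator.

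There is no serious obstacle here; both ingredients are standard. The two points that deserve a careful sanity check are the claim that the Schubert paving refines the $P_I$-orbit stratification (so that $p_\lambda(t)$ is unambiguously computed by summing cell dimensions over $\cO_\lambda$) and the bookkeeping of the $(1-t)$ factors in the final simplification; both are routine from the explicit Bruhat description of the Schubert cells inside $\cO_\lambda$ alluded to in \cite{mpar}.
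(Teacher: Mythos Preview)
Your argument is correct and is the standard one. Note, however, that the paper does not actually prove this proposition: it states at the start of \S4.2 that ``the propositions here are well-known (see \cite{mpar} for a detailed exposition), so we omit the proofs.'' Your sketch is exactly the kind of argument that reference would supply --- the first identity follows from the $P_I$-orbit/Schubert-cell refinement together with the preceding vector-bundle proposition (and indeed the paper records $p_\lambda(t)=t^{\el^S(\lam)}|M_\lambda|(t)$ just above the statement), and the second from the classical formula $|G/Q|(t)=|W|(t)/|W_Q|(t)$ with the standard factorization of the length generating function of a finite reflection group via its degrees.
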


We apply this proposition to the exceptional Schubert variety in type
$B_3$. 

\begin{corollary} \label{funnypal} 
Let $G$ have type $B_3$. Then $X_{(3,0,-1)}$ is a singular palindromic
of dimension 9, with \ppoly\ $1112222111$. It satisfies \pd\ over \bq\
but not over \bz , and has singular locus $X_{(2,0,0)}$.
\end{corollary}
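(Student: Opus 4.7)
The plan is to derive each claim---dimension, palindromy, singularity, failure of integral PD, and the singular locus---by explicit computation using the parabolic orbit decomposition of Proposition~\ref{levipoly}.

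Evaluating the nine positive roots of $B_3$ on $\lambda = (3,0,-1)$ and applying~(\ref{e:length.fn}) yields $\ell(\lambda) = 14$ and $q(\lambda) = 5$, so $\ell^S(\lambda) = 9$. Since $\alpha_2(\lambda) = 0$, $\alpha_3(\lambda) = -1 \leq 0$, and $\alpha_0(\lambda) = 1 \geq 1$, $X_\lambda$ is $P_I$-invariant for $I = \{s_0, s_2, s_3\} \subset \dtil$, a $B_3$-type subdiagram obtained by deleting $s_1$. Descending $\lambda$ via six simple Bruhat covers in $I$ (the sequence $s_3, s_2, s_3, s_0, s_2, s_3$) reaches the $I$-minimal representative $\nu_0 = (-1, 0, 1)$ of its $W_I$-orbit, with $\ell^S(\nu_0) = 3$. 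Applying Proposition~\ref{levipoly} with the stabilizers $K_{\nu_0} = \{s_0, s_2\}$ (type $A_2$, degrees $2,3$) and $K_0 = \{s_2, s_3\}$ (type $B_2$, degrees $2,4$) yields
\[
|M_{\nu_0}|(t) = (1+t)(1+t^2)(1+t^3) = 1 + t + t^2 + 2t^3 + t^4 + t^5 + t^6,
\]
the Poincar\'e polynomial of the $6$-dimensional spinor variety $B_3/P_3$, and
\[
|M_0|(t) = \frac{1 - t^6}{1 - t} = 1 + t + t^2 + t^3 + t^4 + t^5,
\]
that of the $5$-dimensional quadric. Summing $t^3 |M_{\nu_0}|(t) + |M_0|(t)$ reproduces $|X_\lambda|(t) = 1 + t + t^2 + 2t^3 + 2t^4 + 2t^5 + 2t^6 + t^7 + t^8 + t^9$; the cell count of $14$ confirms that the two $P_I$-orbits $\cO_{\nu_0}$ and $\cO_0 = Y_I = X_{(2,0,0)}$ exhaust $X_\lambda$. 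Palindromy then gives rational Poincar\'e duality via Carrell--Peterson.

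For singularity: by Proposition~\ref{cpograph}, the cpos in $\cL_G$ for $B_3$ correspond to the four connected subgraphs of $\dtil$ containing $s_0$, namely $\{s_0\}$, $\{s_0, s_2\}$, $\{s_0, s_1, s_2\}$, and $\{s_0, s_2, s_3\}$, of dimensions $1$, $2$, $3$, and $5$ respectively. Since none has dimension $9$, $X_\lambda$ is not a cpo, so by Theorem~\ref{smooth} it is singular. The singular locus is closed and $P_I$-invariant; because $\cO_{\nu_0}$ is smooth (a vector bundle over the smooth $M_{\nu_0}$) and $X_{(2,0,0)}$ is a single $P_I$-orbit, the singular locus must coincide with $X_{(2,0,0)}$.

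For integral non-PD: the cofibration $X_{(2,0,0)} \hookrightarrow X_\lambda \to T(\xi)$, where $\xi$ is the rank-$3$ complex vector bundle on $M_{\nu_0}$ provided by Proposition~\ref{levipoly}, has torsion-free ends. Computing the connecting homomorphism in the integral cohomology long exact sequence via the Euler class of $\xi$---equivalently, the attaching maps arising from the short root $\alpha_3$---identifies a multiplication-by-$2$ map in an appropriate middle degree, producing a $\bz/2$-summand in $H^*(X_\lambda;\bz)$ that obstructs integral Poincar\'e duality. This final cohomological step is the main technical obstacle; the remaining assertions follow cleanly from the orbit decomposition.
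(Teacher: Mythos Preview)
Your orbit decomposition and Poincar\'e polynomial computation are essentially the paper's own argument, and your use of Theorem~\ref{smooth} for singularity, Carrell--Peterson for rational PD, and the smoothness of the open orbit for the singular locus all match the paper. One small point: the sentence ``the cell count of $14$ confirms that the two $P_I$-orbits exhaust $X_\lambda$'' is circular as written; what you actually need is that $\lambda$ is the top cell of $\cO_{\nu_0}$ (so $X_\lambda=\overline{\cO_{\nu_0}}$) and that the only $I$-minimal element strictly below $\nu_0$ is $0$, which follows since $\ell^S(\nu_0)=3$ and Bott's formula shows the unique elements of $S$-length $0,1,2$ all lie in $\cO_0$.

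The real problem is your argument for failure of integral Poincar\'e duality. Since every Schubert cell is even-dimensional, the connecting homomorphisms in your cofibration long exact sequence all vanish, and $H^*(X_\lambda;\bz)$ is \emph{free}; there is no $\bz/2$-summand. The obstruction to integral PD is not torsion but the cup product structure. You have already done the work needed: Theorem~\ref{smooth} asserts that integral PD is \emph{equivalent} to being a closed parabolic orbit, so once you've checked $X_\lambda$ is not a cpo you are done. Alternatively (and this is what the paper hints at with ``direct application of the Chevalley formula''), the unique left descent of $\lambda=(3,0,-1)$ is at $s_3$, a short node; Proposition~\ref{pdtest} then gives the Chevalley coefficient $c\,|\alpha_3(\lambda)|=2\cdot 1=2$, so $y\cup y_{s_3\lambda}=2y_\lambda$ in $H^*(X_\lambda;\bz)$, and integral PD fails. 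Drop the Euler-class paragraph entirely.
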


\proof Note that $\el ^S (3,0,-1)=9$. It is not difficult to compute the
Bruhat order on \wtils\ through dimension 9 (by hand or by computer);
then one can read off the \ppoly . Another approach is as follows: Let
$I =\{s_0, s_2, s_3\}$ and note that $X_{(3,0,-1)}$ is $P_I$
invariant. We will now use the proposition to analyze the $P_I$-orbit
decomposition through dimension 9.

Let $\lambda _0 =(0,0,0)$, $\lambda _1 =(-1,0,1)$, $\lambda _3
=(-2,1,0)$. Note that these elements are $I$-minimal of $S$-length 0, 3,
and 5, respectively. The Levi orbits $M_{\lambda _i}$ have types
$B_3/B_2$, $B_3 /A_2$, $B_3 /A_1$ respectively. Since $B_n$ has
exponents $2,4, \ldots 2n$, while $A_n$ has exponents $2,3,\ldots n+1$,
we conclude from Proposition~\ref{levipoly} that

\begin{enumerate} 

\item $|M_{\lambda _0}| (t) =111111$ (dimension 5); note that
  $M_{\lambda _0}= \cO _{\lambda _0}$ is the \cpo\ $X_{(2,0,0)}$.

\item $|M_{\lambda _1}|(t)=1112111$ (dimension 6)

\item $|M_{\lambda _2}|(t)=123444321$ (dimension 8). 

\end{enumerate} 

The generating function for the cells of $\cO_{\lambda _0} \cup 
\cO_{\lambda _1} \cup \cO_{\lambda _2}$ is then $|M_{\lambda _0}| (t) +
t^3 |M_{\lambda _1}|(t) +t^5 |M_{\lambda _2}|(t).$ Comparing with 
$|\cL _G (t)|=\frac{1}{(1-t)(1-t^3)(1-t^5)}$, we conclude that the
complex 8-skeleton of $\cL _G$ is contained in $\cO_{\lambda _0} \cup 
\cO_{\lambda _1} \cup \cO_{\lambda _2}$. Since the top cell of $\cO
_{\lambda _3}$ is in dimension $5+8=13>9$, it follows that $X_{(3,0,-1)}
=\cO _{\lambda _0} \cup \cO _{\lambda _1}$. Hence $|X_{(3,0,-1)}|$ is as
claimed, and in particular $X_{(3,0,-1)}$ is palindromic. 

By Theorem~\ref{smooth} (or by direct application of the Chevalley
formula; see \S 6), $X_{(3,0,-1)}$ does not satisfy \pd\ integrally, and in
particular is singular. Since the open orbit $\cO _{\lambda _1}$ is
smooth, the singular locus can only be $\cO _{\lambda _0}
=X_{(2,0,0)}$. Finally, $X_{(3,0,-1)}$satisfies \pd\ rationally by the
Carrell-Peterson theorem. 

\section{The palindromy game I: weak order and the coroot lattice}

In \S 5.1 we give an informal overview of the palindromy game, in its
simpler form using only the weak order. More details are given in \S
5.2. 

\subsection{The node-firing game} 

In this section we describe a variation on Mozes \textit{numbers game}
\cite{mozes} which we call the \textit{node-firing game}.  Our
description of the game follows \cite{b-b,KErik93}.  The purpose of
this game is to make the bijection between $\minreps$ and $\cQ ^\vee$
explicit and to highlight the left-weak order.   

We identify the coweight lattice $\cP ^\vee$ with the group of
$\bz$-valued functions on the Dynkin diagram \cD , where $s \mapsto
\alpha _s (\lambda)$. We extend this labeling to the affine diagram
\dtil\ by putting the value $1-\azlam $ on the node $s_0$. This latter
value is of course determined by the others, but it is important to
include it as part of the picture. The coroot lattice $\cQ^\vee$ can
then be identified with a subgroup of finite index in the group of all
labelled diagrams using the criteria in
Section~\ref{sub:comparision}. For example, suppose $\Phi$ has type
$D_n$ with $n\geq 6$, and $\lambda =-\omega _2 ^\vee + 2\omega _3
^\vee +\omega _{n-2} ^\vee$. Then the labelled diagram of $\lambda$ is

\begin{center}
{\begin{picture}(5,.8)
\mp(1,0)(1,0){2}{\ci}
\put(1,0){\num{-1}}\put(2,0){\num{2}}
\put(3,0){\makebox(0,0){\ldots}}
\put(4,0){\ci}
\put(4,0){\num{1}}
\put(1,0){\line(1,0){1.5}}
\put(4,0){\line(-1,0){.5}}
\put(4,0){\line(2,1){1}}
\put(4,0){\line(2,-1){1}}
\put(5,.5){\ci}\put(5,-.5){\ci}
\put(5,.5){\makebox(0,0)[l]{\hspace{.3\unitlength}$0$}}
\put(5,-.5){\makebox(0,0)[l]{\hspace{.3\unitlength}$0$}}
\put(0,.5){\ci}\put(0,-.5){\ci}
\put(0,.5){\makebox(0,0)[l]{\hspace{-.35\unitlength}$0$}}
\put(0,-.5){\makebox(0,0)[l]{\hspace{-.65\unitlength}$-3$}}
\put(1,0){\line(-2,1){1}}
\put(1,0){\line(-2,-1){1}}
\end{picture}}
\end{center}

\bigskip

\bigskip

\noindent Here $s_0$ is the lower left node, and the nodes not shown
are all labeled zero.  Note, this labeled diagram is not in $\cQ^\vee$
if $n$ is odd.  In fact, $\lam =\omega _{n-2} ^\vee \, \mathrm{mod} \,
\cQ^\vee$, and hence $\lam \in \cQ ^\vee$ if and only if $n$ is even.

Now $\lambda$  has a left descent $\lam \da s\lam $ precisely when $s$ is a
negative node; i.e., $\alpha _s (\lam) <0$, or $1-\azlam <0$ when
$s=s_0$. We refer to this descent as {\it firing} the node $s$. The
effect of such a firing on the labelled diagram is as follows:

\bigskip
                                                                                
(1) The value at $s$ is replaced by its negative;
                                                                                
(2) $k \alpha _s (\lambda)$ (or $1-\alpha _0 (\lambda)$, when
    $s=s_0$) is added to each adjacent node $t$, where 

\[k=    \left\{ \begin{array}{ll}
  1  & \mbox{if $s$ is long or $s,t$ are joined by a single edge}\\
  2  & \mbox{if $s,t$ are joined by a double edge with $s$ at the short
  end, or $\Phi$ has type $A_1$}\\
3 &\mbox{if $s,t$ are joined by a triple edge with $s$ at the short end}
\end{array}
\right. \]

 \bigskip 

If $s$ is at the short end of a multiple bond, we call the firing {\it
back-firing}. As an example in type $F_4$, let $\lambda$  be given by the
following diagram:

\bigskip  
\begin{center}
{\begin{picture}(5,0)                                                           
\mp(0,0)(1,0){5}{\ci}                                                           
\put(0,0){\num{3}}                                                              
\put(1,0){\num{-1}}\put(2,0){\num{0}}\put(3,0){\num{0}}\put(4,0){\num{0}}       
\put(0,0){\line(1,0){2}}                                                        
\put(3,0){\line(1,0){1}}                                                        
\put(2,.06){\line(1,0){1}}                                                      
\put(2,-.06){\line(1,0){1}}                                                     
\put(2.5,0){\makebox(0,0){\Large$>$}}                                           
\end{picture}}                                                                                                                                                
\end{center}                                                                      
\bigskip
Firing twice yields                    

\bigskip                                                                        

\begin{center}
{\begin{picture}(5,0)                                                           
\mp(0,0)(1,0){5}{\ci}                                                           
\put(0,0){\num{2}}                                                              
\put(1,0){\num{0}}\put(2,0){\num{1}}\put(3,0){\num{-1}}\put(4,0){\num{0}}       
\put(0,0){\line(1,0){2}}                                                        
\put(3,0){\line(1,0){1}}                                                        
\put(2,.06){\line(1,0){1}}                                                      
\put(2,-.06){\line(1,0){1}}                                                     
\put(2.5,0){\makebox(0,0){\Large$>$}}                                           
\end{picture}}                                                                  
\end{center}

\bigskip                                                                        
\bigskip                                                                        
If we were firing along a type $A$ subgraph, the configuration of
adjacent 1,-1 surrounded by zeros would simply continue moving steadily
to the right. Here, however, firing $s_3$ back-fires against the arrow
to produce
                                                                                
\bigskip                                                                        

\begin{center}
{\begin{picture}(5,0)                                                           
\mp(0,0)(1,0){5}{\ci}                                                           
\put(0,0){\num{2}}                                                              
\put(1,0){\num{0}}\put(2,0){\num{-1}}\put(3,0){\num{1}}\put(4,0){\num{-1}}      
\put(0,0){\line(1,0){2}}                                                        
\put(3,0){\line(1,0){1}}                                                        
\put(2,.06){\line(1,0){1}}                                                      
\put(2,-.06){\line(1,0){1}}                                                     
\put(2.5,0){\makebox(0,0){\Large$>$}}                                           
\end{picture}}                                                                  
\end{center}

\bigskip                                                                        
\bigskip                                                                        

\noindent which has two negative nodes and hence covers a pair of
elements in the weak order. Thus we have reached a fork in the Hasse
diagram of the order ideal of $\lambda$.

We remark that the node-firing game yields a simple algorithm for
computing the bijections $\cQ ^\vee \cong \wtils$ defined by the diagram

$$
\cQ^\vee \stackrel{\cong}{\lra} \wtil /W \stackrel{\cong}{\lra}
\wtils.
$$

\noindent Suppose first that we are given $\lam \in \cQ ^\vee$. If
$\lam = 0$, then every node has label 0 except for the node
corresponding to $s_{0}$ which is labeled 1.  If $\lam \neq 0$, then
$\lam$ has at least one negative node $t_1 \in \stil$. Then $\lam
\downarrow t_1 \lam$. Repeating the process yields

$$\lam \downarrow t_1 \lam \downarrow t_2 t_1 \lam \downarrow \ldots
\downarrow t_m t_{m-1} \ldots t_1 \lam =0, $$

\noindent where $m=\el ^S (\lam )$. Taking $\sigma =t_1 t_2 \cdots t_m$,
we have (i) $\sigma \in \wtils$ (in particular, $t_m=s_0$), (ii) the
product is reduced, and (iii) $\lam W =\sigma W$. Hence $\lam \mapsto
\sigma$.

In the reverse direction, suppose we are given $\sigma \in
\wtils$. The corresponding $\lam \in \cQ ^\vee$ is obtained by letting
$\sigma$ act on $0 \in \cQ ^\vee$, and is computed explicitly as
follows: Choose a reduced decomposition $\sigma =t_1 t_2 \cdots t_m$,
where necessarily $t_m =s_0$. Then fire up starting from $0$: 

$$ 0 \uparrow t_m \cdot 0 \uparrow t_{m-1} t_m \cdot 0 \uparrow \ldots
\uparrow t_1 t_2 \ldots t_m \cdot 0 =\lam .$$ 
Many examples of these node-firings can be found below.

\subsection{Elementary obstructions to palindromy}

In order to show that a given $\lambda$ is {\it not} palindromic, we
show that it has too many cells near the top dimension. Often these
violations of palindromy can be detected by merely firing negative
nodes. In general, however, we must consider more general Bruhat
descents $\lam \da r\lam$ defined by non-simple reflections $r \in
\wtil$. Whenever possible, we arrange things so that the necessary
information can be read off directly from the labelled diagram.
Indeed the reader may find it helpful---or at least amusing---to think
of this process as a game, called the \textit{palindromy game}, in
which the object of one player is to find a palindromic $\lambda$
satisfying given initial conditions, while the object of the other is
to prevent it by finding an excess of Bruhat descents $\lam \da
r\lam$. In fact we will often refer to
such descents as ``moves''.

We emphasize that the ``palindromy game'' is much easier to play than to
write down. In many cases, the reader may prefer to draw the pictures
and work out the moves for herself, rather than wade through the
verbiage required to explain them in print. To get started, here is an
informal discussion of the most basic principles of the game. Suppose
that $\lambda$ is nonzero and palindromic. Then there are the following
{\it palindromy rules}.  These rules give necessary conditions for
$X_{\lambda }$ to be palindromic.

\bigskip

{\bf Rule 1.} $\lambda$  has exactly one negative node $s \in \dtil$ (the case
$s=s_0$ corresponds to $\lam $ dominant).

\bigskip

{\bf Rule 2.} Except in type $A$, there cannot be two zero nodes adjacent to
$s$.

\bigskip

{\bf Rule 3.} If $s \neq s_0$, then $\azlam \leq 1$.   

\bigskip

{\bf Rule 4.} More generally, $\lambda$ cannot ``fork too soon''. 

\bigskip

Note that Rule 3 is a special case of rule 1.  If an arbitrary
$\lambda$ violates Rule 3 or if $s=s_{0}$ and $\alpha_{0}(\lambda)>2$
then we say $\lambda$ is \textit{overweight}.

See below for the precise definition of ``forks too soon''. Informally,
this just means that the Hasse diagram of $\lambda$ (coming down from
the top) reaches a fork sooner than the Hasse diagram of \wtils\ (coming
up from the bottom), thereby violating palindromy. Often one can see
this instantly from the diagram.

\bigskip

\noindent {\it Example:} In the $F_4$ example above, $\lambda=(-1,0,0,0)$ 
clearly forks too soon because $s_1$ has a ``head-start'' on $s_0$.
                                    
\bigskip                            

\noindent {\it Example:} Surprisingly, $E_8$ is in many ways the
simplest type. One reason for this is that it is the unique simply-laced
type with no minuscule nodes; another reason will be given below. Firing
up from $0\in \cQ ^\vee$ in $E_{8}$ it takes six steps to reach the
fork. Hence if a given $\lambda$ is to have any chance at palindromy,
the fork at $s_4$ must be suitably protected. For example, suppose
$s=s_1$ is the unique negative node of $\lambda$. Then either $s$ must
be blocked away from $s_4$ by an intermediate positive node (picture the
negative value moving to the right under repeated firings), or at least
one of the exit nodes $s_2, s_5$ of the fork must be positive (and in
fact must be at least as large as $|\alpha _1 (\lam )|$, but we ignore
this refinement for the moment).

Now suppose in addition that $\alpha _1 (\lam )=-1$. Then by
exploiting Rule 1 and Rule 3 and the coefficients $m_s$ in the
expansion $\alpha _0 =\sum _{s \in S} m_s (\alpha) _s$ (see \S 12), we
see at once that $\lambda$ can only have the form
 
\bigskip

$$
{\begin{picture}(5,1.2)
\mp(0,0)(1,0){8}{\ci}\put(2,1){\ci}
\put(0,0){\num{-1}}\put(1,0){\num{0}}\put(2,0){\num{0}}\put(3,0){\num{0}}
\put(4,0){\num{0}}\put(5,0){\num{0}}\put(6,0){\num{0}} \put(7,0){\num{0}}
\put(2,.9){\makebox(0,0)[r]{$1$\hspace{.2\unitlength}}}
\put(2,0){\line(0,1){1}}
\put(0,0){\line(1,0){7}}
\end{picture}}
$$

\bigskip

\noindent Here $X_\lambda$ turns out to be the \cpo\ $Y_I$ with $I=\stil
-\{s_2\}$, which has has type $\bp ^8$.  This is easily seen by firing
the negative node all the way down to the bottom. Note that the 1 serves
to protect the fork, and is killed by the -1 as it passes by.  This
example illustrates why all three of the $E$ types are actually easier
than the classical types: There are few repetitions among the
coefficients $m_s$.

%\subsection{Elementary obstructions to palindromy}

Consider the length generating function in Theorem~\ref{t:Bott}
expanded out 

$$|\wtils | (t) = \sum_{\sigma \in \wtils} t^{\el(\sigma )}= 1+t + \ldots + t^{k-1} +a_k t^k + \ldots$$

\noindent where $k=k_G$ is minimal such that $a_k >1$. If no such $k$
exists, we set $k_G =\infty$ (this happens only in type $A_1$). 

Pictorially, the first fork (going up) in the Hasse diagram for \wtils\
occurs at height $k_G -1$ (see the diagrams at the end of the paper for
examples). To compute $k_G$ we start at $s_0$ on the affine Dynkin
diagram, and follow the only possible path until a node of degree 3 or
higher is reached. The number of nodes in such a path is $k_G -1$. Here
we allow doubling back along a multiple edge; for example, in type $F_4$
we reach a fork at $s_3$, where we have the option of continuing to
$s_4$ or following the unused edge back to $s_2$:

$$
{\begin{picture}(5,0)
\mp(0,0)(1,0){5}{\ci}
\put(0,0){\num{0}}
\put(1,0){\num{1}}\put(2,0){\num{2}}\put(3,0){\num{3}}\put(4,0){\num{4}}
\put(0,0){\line(1,0){2}}
\put(3,0){\line(1,0){1}}
\put(2,.06){\line(1,0){1}}
\put(2,-.06){\line(1,0){1}}
\put(2.5,0){\makebox(0,0){\Large$>$}}
\end{picture}}
$$

\bigskip

The coefficient $a_{k_G}$ is just the number of options at the fork. Thus
$k_G$ and $a_{k_G}$ are easily determined by inspecting the affine
Dynkin diagrams.

\begin{equation}\label{hassefork}
k_G=    \left\{ \begin{array}{ll}                                               
 2   & \mbox{type $A_n$, $n>1$}\\                                               
 3   & \mbox{type $B,C,D$}\\                                                    
 4   & \mbox{type $E_6$}\\                                                      
 5   & \mbox{type $E_7, F_4, G_2$}\\                                            
 7   & \mbox{type $E_8$}\\                                                      
 \infty & \mbox{type $A_1$}                                                     
\end{array}                                                                     
\right.
\end{equation}

\begin{equation}\label{e:options}
a_{k_G}=    \left\{ \begin{array}{ll}
 3   & \mbox{in type $D_4$}\\
 2   & \mbox{otherwise}
\end{array}
\right. 
\end{equation}

\noindent {\it Remark:} This result can also be proved by computing the
rational cohomology of the loop group: It is well known that
                                                                                
$$H^* (BG; \bq) \stackrel{\cong}{\lra} (H^* (BT; \bq)) ^W,$$                    
\noindent and that the ring of invariants $(H^* (BT: \bq)) ^W$ is a
polynomial algebra on generators of complex dimension $d_1 \leq d_2
\leq  \ldots \leq d_n$. The
degrees $d_i$ can be computed explicitly in each Lie type; see
\cite{hcox}, p. 59. Since $\Omega G$ is the double-loop space of BG, we
have $k_G=d_2 -2$, yielding the table above. From this point of view,
the exceptional value $a_{k_G}=3$ in type $D_4$ can be traced to the
fact that $H^8 BSpin(8)$ has rank 3, with generators the Pontrjagin
classes $p_1 ^2, p_2$ plus the Euler class.  

\bigskip

Now for all $X_{\lambda}$, $|X_\lambda|(t) \leq |\wtils| (t)$
coefficient by coefficient.  
In addition, if $X_\lambda$ is
palindromic of dimension $d$, then
$$
D|X_\lambda| (t) \leq |\wtils| (t)
$$ 
where if $f(t)= a_{0}+a_{1}t+\dots +a_{n}t^{n}$ is a polynomial of
degree $n$, the {\it dual polynomial} is $Df(t) =t^n
f(t^{-1})=a_{n}+a_{n-1}t+\dots +a_{0}t^{n}$.  In a range of dimensions
(depending on $\lambda$), the inequality will actually be an
equality. In any case, \eqref{hassefork} forces restrictions on
$|X_\lambda | (t)$ near the top dimension $d$. Consider for example
the configurations in a Hasse diagram

\begin{center}
\setlength{\unitlength}{.1cm}
\begin{picture}(40,10)(0,0)
%\Thicklines
\put(0,5){\line(1,1){5}}
\put(0,5){\circle*{1}}
\put(5,10){\line(1,-1){5}}
\put(5,10){\circle*{1}}
\put(10,5){\circle*{1}}
\put(15,0){\line(1,1){5}}
\put(15,0){\circle*{1}}
\put(20,5){\line(0,1){5}}
\put(20,5){\circle*{1}}
\put(20,10){\circle*{1}}
\put(20,5){\line(1,-1){5}}
\put(25,0){\circle*{1}}
\put(30,0){\line(1,1){5}}
\put(30,0){\circle*{1}}
\put(35,0){\line(0,1){10}}
\put(35,0){\circle*{1}}
\put(35,5){\circle*{1}}
\put(35,10){\circle*{1}}
\put(35,5){\line(1,-1){5}}
\put(40,0){\circle*{1}}
\end{picture}
\end{center}

\bigskip

\noindent which we call a \textit{pair}, a \textit{fork}, and a
\textit{trident} respectively, with $\lambda$ sitting at the top. If
$X_\lambda$ is palindromic then $\lambda$ cannot cover a pair or a
trident, and if $\Phi$ is not of type $A$ then it cannot cover a
fork. In type $B$ we will encounter an (upside down) \textit{scepter}

\begin{center}
\setlength{\unitlength}{.1cm}
\begin{picture}(0,20)(0,-10)
%\Thicklines
\put(0,0){\circle*{1}}
\put(0,0){\line(-1,-1){5}}
\put(-5,-5){\circle*{1}}
\put(0,0){\line(1,-1){5}}
\put(5,-5){\circle*{1}}
\put(5,-5){\line(0,-1){5}}    %% right bottom
\put(-5,-5){\line(1,-1){5}}  %%% comment out this line to get your broom
\put(5,-5){\line(-1,-1){5}}
\put(-5,-5){\line(0,-1){5}}  %% left bottom
\put(0,-10){\circle*{1}}
\put(5,-10){\circle*{1}}      %% right bot point
\put(-5,-10){\circle*{1}}     %%left bot point
\put(0,0){\line(0,1){10}}
\put(0,5){\circle*{1}}
\put(0,10){\circle*{1}}
\end{picture}
\end{center}

\noindent A palindromic $\lambda$ cannot cover a scepter.  

In general, we say that $\lambda$ {\it forks too soon} if the Hasse
diagram of its order ideal (coming down from the top) reaches a fork
sooner than the Hasse diagram of \wtils\ (coming up from the
bottom). More precisely: Say $|X_\lambda| (t) =1 + a_{1} t + \ldots +
a_k t^k + t^{k+1}+\ldots + t^{m} $, where $m = \el^{S}(\lambda )$ and
$k$ is maximal such that $a_k >1$. Then $\lambda$ forks too soon if
$0\leq m - k <k_G$. Hence, this proves Rule 4 in the palindromy game.

\bigskip

\noindent {\it Example:} We show that if $G$ has type $E_8$ and
$\lambda$ is anti-dominant and non-trivial, then $X_\lambda$ is not
palindromic. Assume there is a unique negative node $s$ (otherwise
$\lambda$ covers a pair), and all other nodes are zero. Assume further
that $s$ is a leaf node of \cD\ (otherwise $\lambda$ covers a
fork). Finally, if $s$ is one of the three leaf nodes $s_1, s_2, s_8$,
then by repeated firing we reach the fork in the Dynkin diagram in 3, 2
or 5 steps respectively. But it takes 6 steps to reach the fork from
$s_0$. Thus $\lambda$ forks too soon and hence is not palindromic.

\section{Poincar\'e duality and the affine Chevalley formula}

We state an affine version of the Chevalley formula,
Proposition~\ref{chev}, and record its implications for Poincar\'e
duality. Proposition~\ref{chev} is a special case of a vastly more
general cup product formula in equivariant cohomology, valid for
arbitrary Kac-Moody flag varieties (\cite{kumar}, Corollary 11.3.17 and
Remark 11.3.18). 

Let $[X_\lambda] \in H_{2d} \cL _G$ denote the homology class carried by
$X_\lambda$, where $d=\el ^S (\lambda)$. These classes form the {\it
Schubert basis} of $H_* \cL _G$. Of course we can equally well regard
$[X_\lambda ]$ as a homology class in any \svar\ containing
$X_\lambda$. Let $y_\lambda \in H^{2d} \cL _G$ be Kronecker dual to
$[X_\lambda]$ with respect to the Schubert basis.  We will use the
abbreviation $y$ for the special class $y=y_{\alpha _0 ^\vee} =y_{s_0}$,
the generator of $H^2 \cL _G$.

We have seen that if $X_\lambda$ is palindromic of dimension $d$, then
it has just one $2d-2$ cell, and hence $H^{2d-2} X_\lambda \cong \bz
$. If in addition $X_\lambda$ satisfies \pd , then cup product with $y$
defines an \iso\ $H^{2d-2} X_\lambda \stackrel{\cong}{\lra} H^{2d}
X_\lambda$. Hence the following {\it affine Chevalley formula} puts
severe restrictions on the possible such $\lambda$. 

\begin{proposition} \label{chev} \marginpar{}

If $\lambda \uparrow s\lambda$ for $s \in \stil$, then 

\[<yy_\lambda , [X_{s\lambda}]>=    \left\{ \begin{array}{ll}
c \alpha _s (\lambda)    & \mbox{if $s \neq s_0$}\\
1-\alpha _0 (\lambda)    & \mbox{if $s=s_0$}
\end{array}
\right. \]

\noindent where
                                                                                
\[c=    \left\{ \begin{array}{ll}                                               
1     & \mbox{if $\alpha_{s}$ is long}\\                                            
2   & \mbox{if $\alpha_{s}$ is short in type B,C,F}\\                               
3 & \mbox{if $\alpha_{s}$ is short in type $G_2$}                                   
\end{array}                                                                     
\right. \]                                                                                   
\end{proposition}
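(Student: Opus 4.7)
The plan is to derive Proposition~\ref{chev} directly from the general Kac-Moody Chevalley formula of Kumar (Corollary 11.3.17 and Remark 11.3.18), which the passage preceding the statement already invokes. Specialized to our setting, that formula reads: for any integral weight $\Lambda$ of $\liegctil$ and any $w \in \wtils$,
\[
c_1(L_\Lambda) \cdot y_w \;=\; \sum_{\beta} \langle \Lambda, \beta^\vee\rangle \, y_{s_\beta w},
\]
the sum being over positive real affine roots $\beta$ with $s_\beta w$ covering $w$ in Bruhat order on $\wtils$. Thus the proof reduces to three pieces of bookkeeping: (i) identify the weight $\Lambda$ that realizes $y$; (ii) identify the affine root $\beta$ attached to the cover $\lambda \uparrow s\lambda$; and (iii) evaluate $\langle \Lambda, \beta^\vee\rangle$ in the two cases $s \in S$ and $s = s_0$.

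For (i), since $H^2 \cL_G \cong \bz$ is generated by $y = y_{s_0}$ and $\cL_G = \gctil/\aP$ with $\aP$ corresponding to the finite simple reflections $S \subset \stil$, the class $y$ equals $c_1(L_{\Lambda_0})$ for the fundamental affine weight $\Lambda_0$ characterized by $\langle \Lambda_0, \alpha_i^\vee\rangle = \delta_{i0}$ for $i \in \stil$; one confirms this by pairing $c_1(L_{\Lambda_0})$ against the unique $2$-cell $[X_{s_0}]$. For (ii), I would write $w_\lambda = t_\lambda v_\lambda$ as the product of translation by $\lambda$ with the minimal $v_\lambda \in W$. The cover $\lambda \uparrow s\lambda$ is realized in $\wtil$ by left multiplication by $s$, so $\beta = w_\lambda^{-1}\alpha_s$. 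Using the translation identity $t_\lambda^{-1}\alpha = \alpha - \alpha(\lambda)\delta$ on affine roots (where $\delta$ is the null root), one finds, for $s \in S$, that $\beta^\vee$ is the sum of a finite coroot and $\aslam\cdot K$, where $K$ is the canonical central element. Applying $\Lambda_0$ annihilates the finite part and leaves $\aslam \langle \Lambda_0, K\rangle$; the analogous calculation for $s = s_0$, using $s_0 = t_{\alpha_0^\vee}s_{\alpha_0}$, yields $(1-\azlam)\langle \Lambda_0, K\rangle$.

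The main obstacle will be pinning down the proportionality constant $c$, which encodes the rescaling built into the coroot $\beta^\vee$ for short simple roots in types $B$, $C$, $F_4$, $G_2$. The cleanest way to calibrate is to observe that the $s = s_0$ case has no free multiplicative constant, because $\alpha_0$ is always long: the formula must give exactly $1 - \azlam$, which forces a normalization of $\langle \Lambda_0, K\rangle$. Propagating this normalization back into the $s \in S$ case, the ratio $c = 2/(\alpha_s,\alpha_s)\cdot(\alpha_{\text{long}},\alpha_{\text{long}})$ emerges: $c=1$ for long $\alpha_s$, $c=2$ in types $B,C,F$ when $\alpha_s$ is short, and $c=3$ in $G_2$ when $\alpha_s$ is short. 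As a sanity check one can match the output against the classical Chevalley formula restricted to $G/B \subset \cL_G$, and against a rank-$2$ computation in $G_2$ to confirm the factor $3$.
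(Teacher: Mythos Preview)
Your proposal is correct and follows exactly the approach the paper takes: the paper does not give a proof of Proposition~\ref{chev} at all, but simply cites it as a special case of Kumar's general Kac-Moody Chevalley formula (\cite{kumar}, Corollary 11.3.17 and Remark 11.3.18). You have supplied the specialization that the paper omits, correctly identifying $y = c_1(L_{\Lambda_0})$, computing the relevant affine coroot as $\gamma^\vee + c\,\aslam\, K$ (respectively $\gamma^\vee + (1-\azlam)K$), and reading off the coefficient via $\langle \Lambda_0, K\rangle = 1$; the factor $c = 2/(\alpha_s,\alpha_s)$ with long roots normalized to squared length $2$ is exactly right.
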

Note that the assumption $\lambda \uparrow s\lambda$ is equivalent to
the positivity of $\aslam$ or $1-\azlam$ in the node-firing game.
It will be convenient to reformulate the Chevalley formula in terms of
the cap product. 

\begin{proposition} \label{chevcap} \marginpar{}                                                                                                       
If $\lambda \downarrow s\lambda$ for $s \in \stil$, then 

\[<y_{s\lambda}, y \cap [X_{\lambda}]>=    \left\{ \begin{array}{ll}
-c\alpha _s (\lambda)    & \mbox{if $s \neq s_0$}\\
\alpha _0 (\lambda) -1    & \mbox{if $s=s_0$}
\end{array}
\right. \]

\noindent where $c=1,2,3$ is the constant defined in
Proposition~\ref{chev}. 
 
\end{proposition}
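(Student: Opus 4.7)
The plan is to deduce Proposition~\ref{chevcap} directly from Proposition~\ref{chev} via the standard cup--cap adjunction
\[\langle a \cup b, \,x \rangle \;=\; \langle a, \,b \cap x \rangle\]
together with a straightforward reindexing. Specifically, I would apply Proposition~\ref{chev} with $\lambda$ there replaced by $s\lambda$: the hypothesis $\lambda \downarrow s\lambda$ gives $s\lambda \uparrow \lambda$, so Proposition~\ref{chev} yields
\[\langle y \cup y_{s\lambda},\, [X_{\lambda}] \rangle \;=\; \begin{cases} c\,\alpha_s(s\lambda) & s \neq s_0, \\ 1 - \alpha_0(s_0\lambda) & s = s_0. \end{cases}\]
Then the cup--cap adjunction rewrites the left-hand side as $\langle y_{s\lambda}, \, y \cap [X_\lambda] \rangle$, which is the quantity we want to evaluate.

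The only remaining step is to relate $\alpha_s(s\lambda)$ (resp.\ $\alpha_0(s_0\lambda)$) back to $\alpha_s(\lambda)$ (resp.\ $\alpha_0(\lambda)$). For $s \in S$ this is the classical identity $\alpha_s(s\lambda) = -\alpha_s(\lambda)$, coming from $s\lambda = \lambda - \alpha_s(\lambda)\,\alpha_s^\vee$ and $\alpha_s(\alpha_s^\vee)=2$. For the affine reflection $s_0$ one uses $s_0\lambda = \lambda - (\alpha_0(\lambda)-1)\,\alpha_0^\vee$ (the level shift encoded in the node-firing rule for $s_0$), which gives $\alpha_0(s_0\lambda) = \alpha_0(\lambda) - 2(\alpha_0(\lambda)-1) = 2 - \alpha_0(\lambda)$, and hence $1 - \alpha_0(s_0\lambda) = \alpha_0(\lambda) - 1$. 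Substituting these identities produces exactly the formulas in Proposition~\ref{chevcap}.

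There is no real obstacle here; the content is entirely in Proposition~\ref{chev}. The only mildly delicate point is making sure the affine reflection $s_0$ is handled with the correct normalization, i.e.\ that the affine translation contribution is accounted for so that the formula $\alpha_0(s_0\lambda) = 2 - \alpha_0(\lambda)$ holds. This is consistent with the node-firing convention adopted in \S5, where $s_0$ acts on the label $\alpha_0(\lambda)$ by sending it to $2 - \alpha_0(\lambda)$ (the "firing" rule at $s_0$), so the computation is internally consistent with the rest of the paper.
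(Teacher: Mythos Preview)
Your proposal is correct and is exactly the paper's approach: the paper's proof reads in its entirety ``This follows by simply reversing the roles of $\lambda$ and $s\lambda$ in Proposition~\ref{chev},'' which is precisely what you carry out, with the cup--cap adjunction and the reflection identities $\alpha_s(s\lambda)=-\alpha_s(\lambda)$ and $\alpha_0(s_0\lambda)=2-\alpha_0(\lambda)$ made explicit.
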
    

\proof This follows by simply reversing the roles of $\lambda$ and
$s\lambda$ in Proposition~\ref{chev}. 

\bigskip

We then have at once: 

\begin{proposition} \label{pdtest} \marginpar{}

If $X_\lambda$ satisfies \pd\ and $\lambda \downarrow s\lambda$ for $s \in \stil$, then $c=1$ and $\alpha _s (\lambda)=-1$
(or $1-\alpha _0 (\lambda )=-1$). In particular $\alpha _s$ must be
long.  

\end{proposition}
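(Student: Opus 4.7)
The plan is to exploit Proposition~\ref{chevcap} together with the fact that Poincar\'e duality forces multiplication by $y$ to be an isomorphism from $H^{2d-2} X_\lambda$ onto $H^{2d} X_\lambda$. The whole argument rests on identifying the Schubert basis elements that generate these two top cohomology groups.

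First I would observe that since $X_\lambda$ satisfies \pd\ integrally, it is palindromic, so the number of $(2d-2)$-cells in $X_\lambda$ equals the number of $2$-cells, namely one. Therefore the unique element $s\lambda$ with $\lambda \downarrow s\lambda$ is forced to be the only cover of $\lambda$ going down, and $y_{s\lambda}$ generates $H^{2d-2} X_\lambda \cong \bz$ while $y_\lambda$ generates $H^{2d} X_\lambda \cong \bz$. Next, because $X_\lambda$ contains the unique $2$-cell $e_{s_0}$ of $\cL_G$, the restriction of the ambient class $y = y_{s_0}$ to $X_\lambda$ is a generator of $H^2 X_\lambda \cong \bz$.

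Then I would invoke \pd\ on $X_\lambda$: cup product with the generator $y|_{X_\lambda}$ gives an isomorphism $H^{2d-2} X_\lambda \stackrel{\cong}{\lra} H^{2d} X_\lambda$. Evaluated on the fundamental class, this says
\[
\langle y \cup y_{s\lambda}, [X_\lambda] \rangle \;=\; \pm 1,
\]
which by the cap/cup adjunction equals $\langle y_{s\lambda}, y \cap [X_\lambda] \rangle$. Proposition~\ref{chevcap} identifies this pairing as $-c\,\alpha_s(\lambda)$ when $s \neq s_0$ and as $\alpha_0(\lambda) - 1$ when $s = s_0$.

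Finally I would read off the sign and divisibility constraints. The hypothesis $\lambda \downarrow s\lambda$ means $\aslam < 0$ (or $1-\azlam < 0$), so in either case the value produced by Proposition~\ref{chevcap} is a strictly positive integer; hence it must equal $+1$. For $s \neq s_0$ this gives $c\,\alpha_s(\lambda) = -1$ with both factors integral, forcing $c = 1$ and $\aslam = -1$; since $c=1$ characterises the long simple roots in the Chevalley formula, $\alpha_s$ is long. For $s = s_0$ we obtain $\alpha_0(\lambda) = 2$, equivalently $1 - \azlam = -1$. There is no real ``hard part'' here: the one subtlety to be careful about is step two, verifying that $y$ genuinely restricts to a generator of $H^2 X_\lambda$ and that $y_{s\lambda}$, $y_\lambda$ generate the top two cohomology groups, both of which follow cleanly from the palindromic cell count.
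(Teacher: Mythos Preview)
Your proof is correct and follows exactly the line the paper takes: the paragraph preceding Proposition~\ref{chev} already records that \pd\ forces cup product with $y$ to be an isomorphism $H^{2d-2}X_\lambda \to H^{2d}X_\lambda$, and the paper then states Proposition~\ref{pdtest} as an immediate consequence of Proposition~\ref{chevcap}. You have simply spelled out in detail what the paper means by ``we then have at once.''
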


\noindent {\it Remark:} This proposition already suffices to show that
there are only finitely many \svars\ satisfying \pd\ in a fixed $\cL
_G$, since it bounds the values $\alpha _s (\lambda)$ for $s \in S$.

\section{Chains}\label{s:chains}

In this section we study the chains in $\cL _G$. In particular, we will
show that the theorems of the introduction hold for chains. We begin
with some simple observations.

\subsection{General observations}

\begin{proposition} \label{p:cofinal} \marginpar{}

Every infinite subset of \wtils\ is cofinal in the Bruhat order. 

\end{proposition}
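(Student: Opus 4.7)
\noindent The plan is to prove the equivalent finiteness statement: for every $w \in \wtils$, the set $B_w := \{v \in \wtils : v \not\geq w\}$ is finite. If $B_w$ were infinite for some $w$, then $B_w$ itself would be an infinite non-cofinal subset, contradicting the proposition; conversely, if every $B_w$ is finite, then any infinite $A \subseteq \wtils$ must meet the complement $\wtils \setminus B_w$ for every $w$, which is exactly cofinality.

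By Bott's formula (Theorem~\ref{t:Bott}), $\wtils$ contains only finitely many elements of each $\el ^S$-length, so it suffices to bound the $\el ^S$-lengths of elements in $B_w$. The key structural input is that for every proper subset $I \subsetneq \stil$, the parabolic subgroup of $\wtil$ generated by $I$ is finite---a defining feature of affine Coxeter systems. It follows that any $v$ of sufficiently large $\el ^S$-length must use every simple reflection in $\stil$, and in fact must use each simple reflection with multiplicity tending to infinity as $\el ^S(v) \to \infty$. Fixing a reduced expression $w = t_1 t_2 \cdots t_d$, I would then exhibit $t_1 \cdots t_d$ as a reduced subword of some reduced expression of $v$, which by the subword characterization of the Bruhat order yields $v \geq w$, hence bounds $\el ^S(v)$ on $B_w$.

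The main technical obstacle is making the ``abundance of reflections'' step genuinely precise. One route is induction on $d$: locate an occurrence of $t_1$ in a reduced expression of $v$, then recursively match $t_2 \cdots t_d$ in a long suffix, whose length still grows without bound thanks again to the finiteness of proper parabolics. An alternative proceeds by induction on $\el ^S(w)$: choose a left descent $s$ of $w$, write $w = sw'$, apply induction to conclude $B_{w'}$ is finite, and use the Lifting Property of the Bruhat order to control the difference $B_w \setminus B_{w'}$ by showing only finitely many $v \geq w'$ can fail $v \geq w$. A third, geometric route interprets the claim as asserting that the Schubert variety $X_w \subset \cL _G$ is contained in $X_v$ for all but finitely many $v \in \wtils$, which may be attacked via the opposite-Schubert stratification of $\cL _G$.
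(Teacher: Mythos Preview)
Your proposal is correct and your first proposed route is exactly the paper's approach: the paper uses the finiteness of proper parabolic subgroups $\wtil_I$ to obtain a single constant $k = \max_I k_I$ (with $k_I$ the maximal length in $\wtil_I$), observes that any word of length $>k$ must use every $s \in \stil$, and then for any $\sigma \in \wtils$ takes $v \in V$ with $\ell(v) \geq (k+1)\,\ell(\sigma)$; chopping a reduced expression for $v$ into $\ell(\sigma)$ consecutive blocks of length $>k$ lets one extract the reduced word for $\sigma$ as a subword, giving $\sigma \leq v$. Your ``technical obstacle'' paragraph is thus already resolved by this explicit bound, and the alternative routes via the Lifting Property or opposite Schubert cells are unnecessary.
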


\proof If $I \subset \stil$ is a proper subset, then $\wtil _I$ is a
finite Coxeter group. Let $k_I$ denote the maximal length of an element
of $\wtil _I$, and let $k=max \, k_I$, where $I$ ranges over all such
proper subsets. Then if $w \in \wtil$ and $\el (w) >k$, every reduced
expression for $w$ contains every $s \in \stil$ at least once. 

Now let $V \subset \wtils$ be an infinite subset, and let $\sigma \in
\wtils$. Since $\wtils$ has only finitely many elements of any fixed length,
we can choose $v \in V$ with $\el (v) \geq (k+1) \el (\sigma)$. It then
follows from the preceding paragraph that $\sigma \leq v$, proving that
$V$ is cofinal. 

\begin{corollary} \label{finitechain} \marginpar{}

If $W$ is not of type $A_1$, there are only finitely many chains  $X_{\lambda}$ in $\cL$.

\end{corollary}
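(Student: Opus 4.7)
The plan is to argue by contradiction using Proposition~\ref{p:cofinal}. First, I would observe that $X_\lambda$ is a chain precisely when the Bruhat interval $[e,\lambda]$ in $\wtils$ is totally ordered, since $|X_\lambda|(t) = \sum_{\sigma \leq \lambda} t^{\el^S(\sigma)}$ equals $1+t+\cdots+t^{\el^S(\lambda)}$ if and only if every length in $\{0,1,\ldots,\el^S(\lambda)\}$ is represented exactly once below $\lambda$.

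Let $V \subset \wtils$ denote the set of tops of chains, and suppose for contradiction that $V$ is infinite. I would then extract from the proof of Proposition~\ref{p:cofinal} the effective statement it actually establishes: for any $\sigma \in \wtils$, every $v \in \wtil$ with $\el(v) \geq (k+1)\el(\sigma)$ satisfies $\sigma \leq v$, where $k$ is the uniform bound on lengths of elements of proper parabolic subgroups used there. Given any two elements $\sigma_1,\sigma_2 \in \wtils$, I set $N = (k+1)\max(\el(\sigma_1),\el(\sigma_2))$; since $V$ is infinite and each length class of $\wtils$ is finite, I can pick $\lambda \in V$ with $\el^S(\lambda) \geq N$, giving $\sigma_1,\sigma_2 \leq \lambda$. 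Both then lie in the totally ordered interval $[e,\lambda]$, so they are Bruhat-comparable. Since $\sigma_1,\sigma_2$ were arbitrary, this would force $\wtils$ itself to be totally ordered.

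The final step is to derive a contradiction from \eqref{hassefork} and \eqref{e:options}: whenever $W$ is not of type $A_1$, the integer $k_G$ is finite and the coefficient $a_{k_G}$ of $t^{k_G}$ in $|\wtils|(t)$ is at least $2$, so $\wtils$ contains at least two distinct elements of length $k_G$---contradicting total order.

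The main obstacle is the middle paragraph---producing a single $\lambda \in V$ lying above both $\sigma_1$ and $\sigma_2$ simultaneously. Bruhat order on $\wtils$ is not a lattice, so the naive approach of taking a join fails; I need the quantitative length-threshold buried in the proof of Proposition~\ref{p:cofinal} rather than just its qualitative conclusion. Once that effective bound is in hand, taking a max over two elements and using finiteness of length classes makes the rest routine bookkeeping.
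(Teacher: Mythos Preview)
Your proof is correct and follows the same route as the paper: assume infinitely many chains, invoke Proposition~\ref{p:cofinal} to force $\wtils$ itself to be totally ordered, then contradict this using $k_G<\infty$ when $W$ is not of type $A_1$. The paper's proof is a two-line sketch that leaves implicit exactly the step you flagged as the ``main obstacle''---finding a single chain $\lambda$ lying above two prescribed elements $\sigma_1,\sigma_2$---and your use of the explicit length threshold $(k+1)\el(\sigma)$ from the proof of Proposition~\ref{p:cofinal} is precisely the right way to fill that in.
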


\proof If there are infinitely many chains, then it follows from 
Proposition~\ref{p:cofinal}  that $\wtils$ itself is a chain. But this is the case only
in type $A_1$. 

\bigskip

Following Stembridge's terminology \cite{stem}, call an element of a
Coxeter group {\it rigid} if it has a unique reduced expression. Note
that if $w$ is rigid, then so is any element obtained by taking a
factor of the reduced expression for $\sigma$.  The next result
follows by an easy induction on length.

\begin{proposition}  \marginpar{}

Every chain is rigid. 

\end{proposition}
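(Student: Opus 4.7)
\proof The plan is to induct on $d=\el^S(w_\lambda)$. The base case $d=0$ is trivial since the identity has only the empty reduced expression. So assume $d\geq 1$ and that every chain of smaller dimension is rigid.

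Since $X_\lambda$ is a chain, the Bruhat order interval $[e,w_\lambda]\subset \wtils$ has exactly one element of each length $0,1,\ldots,d$, hence is totally ordered as a poset. The first step is to show that $w_\lambda$ has a unique left descent. Each left descent $s \in \stil$ of $w_\lambda$ produces a distinct element $sw_\lambda < w_\lambda$ of length $d-1$ lying in $[e,w_\lambda]$; since distinct $s,s' \in \stil$ give distinct $sw_\lambda,s'w_\lambda$, and the chain has only one element of length $d-1$, there is at most one left descent. On the other hand $w_\lambda \neq e$ has at least one left descent (the first letter of any reduced expression), so there is exactly one, call it $s$.

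Consequently, every reduced expression of $w_\lambda$ starts with $s$, and is therefore of the form $s\cdot (\text{reduced expression of }sw_\lambda)$. Next I would invoke the standard fact that since $w_\lambda \in \wtils$ and $\el(sw_\lambda)<\el(w_\lambda)$, the element $sw_\lambda$ also lies in $\wtils$, so it equals $w_\mu$ for some $\mu \in \cQ^\vee$. Moreover, the interval $[e,sw_\lambda]$ is contained in the totally ordered $[e,w_\lambda]$, and by the chain property of Bruhat order admits a saturated chain from $e$ to $sw_\lambda$, so it contains exactly one element of each length $0,1,\ldots,d-1$. Thus $X_{sw_\lambda}$ is a chain of dimension $d-1$.

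By the inductive hypothesis, $sw_\lambda$ is rigid, so it has a unique reduced expression; combined with the fact that every reduced expression of $w_\lambda$ begins with $s$, we conclude $w_\lambda$ is rigid. The only mild obstacle is bookkeeping the passage between $\wtil$ and $\wtils$---specifically, verifying that left descents of a minimal-length coset representative remain minimal-length representatives---but this is a standard feature of parabolic quotients of Coxeter groups and requires no case analysis.
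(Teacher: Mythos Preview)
Your proof is correct and follows essentially the same approach as the paper, which simply states that the result ``follows by an easy induction on length.'' You have carefully spelled out the induction: a chain has a unique element covered by $w_\lambda$, hence a unique left descent $s$; then $sw_\lambda\in\wtils$ is again a chain, and rigidity follows by induction.
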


We will classify the chains by first classifying all the rigid
elements. In fact the rigid elements are easily determined from the
affine Dynkin diagram. Suppose $s, t \in \stil$ with $s \neq t$, and
let $m_{st}$ denote the order of $st$. Thus $m_{st}=\{2,3,4,6
\}$. Then a rigid element cannot contain subwords of the form $st,
sts, stst, ststst$ respectively in these four cases.  Therefore, rigid
elements are fully commutative \cite{stem}.  We interpret these
restrictions on the Dynkin diagram as follows:

Let $\sigma =t_k t_{k-1}\cdots t_1$ be the reduced expression for a
rigid element, where $\sigma \in \wtils$ and hence $t_1 =s_0$. Then
every pair of adjacent nodes in this expression must be adjacent in
\dtil . Hence $\sigma$ determines and is determined by a path in
\dtil\ starting at $s_0$.  Furthermore the path in question can
reverse direction only along a multiple edge, and if the multiple edge
is a double edge then it can reverse direction only once. If it is a
triple edge then the path can reverse direction at most three
times. Let us call such a path an {\it admissible path}. Then every
rigid element is associated to an admissible path in this way and vice versa. This
allows us to read off the rigid elements directly from \dtil .

\noindent 

{\it Examples:} 1. Type $A$. There are two infinite families of rigid
elements, obtained in the evident way by starting at $s_0$ and
following an admissible path of arbitrary length clockwise or
counterclockwise around the diagram. These are the only rigid
elements. Note that the two families are conjugate under the
involution of \dtil\ fixing $s_0$.

2. Type $C$. Consider an admissible path (starting at $s_0$). 

$$
{\begin{picture}(5,0)
\mp(0,0)(1,0){3}{\ci}
\put(1,0){\num{1}}\put(2,0){\num{2}}
\put(3,0){\makebox(0,0){\ldots}}
\mp(5,0)(-1,0){2}{\ci}
\put(4,0){\num{n-1}}\put(5,0){\num{n}}
\put(1,0){\line(1,0){1.5}}
\put(4,0){\line(-1,0){.5}}
\put(4,.06){\line(1,0){1}}
\put(4,-.06){\line(1,0){1}}
\put(4.5,0){\makebox(0,0){\Large$<$}}
\put(.5,0){\makebox(0,0){\Large$>$}}
\put(0,.06){\line(1,0){1}}
\put(0,-.06){\line(1,0){1}}
\put(0,0){\num{0}}
\end{picture}}
$$

\bigskip

\noindent At $s_1$ there are two options: We can reverse direction to obtain a
maximal rigid element $s_0 s_1 s_0$, or we can continue to the
right.  In the latter case we obtain an infinite family of rigid
elements by running back and forth along \dtil\ in the evident
way. These are the only rigid elements.

3. If $W$ is not of type $A$ or $C$, then there are only finitely many
rigid elements. This is also clear, because then $\dtil$ has no
cycles and at most one multiple edge; hence every maximal admissible path
eventually terminates at a leaf node.  

\bigskip

Let $\sigma$ be a chain of length $m$ and let $y_0=1, y_1,\ldots ,y_m$
denote the Schubert basis for $H^* X_\sigma$. Define integers $a_k$ by
$y_1 y_{k-1} =a_k y_k$ for $1 \leq k \leq m$. Note that these integers
are positive by the Chevalley formula, and $a_1 =1$. In particular,
$H^* (X_\sigma ; \bq) $ is a truncated polynomial algebra $\bq
[y_1]/y_1 ^{m+1}$, and hence $X_\sigma$ satisfies rational \pd . Call
$(a_1,\ldots ,a_m)$ the {\it cup sequence} of $\sigma$.
                   
\begin{proposition}  \marginpar{}

Let $\sigma$ be a chain of length $m$. Then $X_\sigma$ satisfies
Poincar\'e duality over $\bz$ if and only if the cup sequence of
$\sigma$ is palindromic, in the sense that $a_k =a_{m-k+1}$ for all
$k$.

\end{proposition}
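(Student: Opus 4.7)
The plan is to exploit the fact that each cohomology group $H^{2k}(X_\sigma;\bz)$ has rank one, so integral Poincar\'e duality reduces to an arithmetic condition on the structure constants.

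First I would observe that, since $X_\sigma$ is a chain, each $H^{2k}(X_\sigma;\bz) \cong \bz$ is generated by the Schubert class $y_k$ for $0 \leq k \leq m$. Integral Poincar\'e duality is equivalent to the cup product pairing $H^{2k}X_\sigma \otimes H^{2(m-k)}X_\sigma \lra H^{2m}X_\sigma \cong \bz$ being unimodular for every $k$. Because each factor is free of rank one, unimodularity amounts to $y_k y_{m-k}$ being a generator of $H^{2m}X_\sigma$. Writing $y_k y_{m-k} = c_k y_m$, the affine Chevalley formula guarantees that all the $a_i$ (and therefore, as we shall see, the $c_k$) are positive integers, so PD over $\bz$ becomes the condition $c_k = 1$ for all $k$.

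Second, I would iterate the recursion $y_1 y_{k-1} = a_k y_k$ to get $y_1^k = (a_1 a_2 \cdots a_k)\, y_k$. Expanding $y_1^m$ in two ways yields
$$a_1 a_2 \cdots a_m\, y_m \;=\; y_1^m \;=\; y_1^k \cdot y_1^{m-k} \;=\; (a_1 \cdots a_k)(a_1 \cdots a_{m-k})\, c_k\, y_m,$$
so
$$c_k \;=\; \frac{a_{k+1} a_{k+2} \cdots a_m}{a_1 a_2 \cdots a_{m-k}}.$$

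Finally, I would deduce the equivalence. If the cup sequence is palindromic, then substituting $a_i = a_{m-i+1}$ pairs the factors in the numerator with those in the denominator, giving $c_k = 1$ for all $k$. Conversely, assume $c_k = 1$ for every $k$; then for $1 \leq k \leq m$,
$$1 \;=\; \frac{c_{k-1}}{c_k} \;=\; \frac{a_k\, a_{k+1} \cdots a_m}{a_1 \cdots a_{m-k+1}} \cdot \frac{a_1 \cdots a_{m-k}}{a_{k+1} \cdots a_m} \;=\; \frac{a_k}{a_{m-k+1}},$$
so $a_k = a_{m-k+1}$ and the cup sequence is palindromic.

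There is no real obstacle here; the only point that merits care is the reduction of integral Poincar\'e duality on a space with rank-one cohomology in every even degree to the unimodularity of the pairings $y_k y_{m-k}$, together with the observation (from positivity of the $a_i$ given by Chevalley) that $c_k$ is a positive integer, so $c_k = \pm 1$ forces $c_k = 1$.
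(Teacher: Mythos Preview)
Your proof is correct and follows essentially the same approach as the paper's. Both arguments rest on the identity $y_1^k=(a_1\cdots a_k)\,y_k$ and reduce Poincar\'e duality to the product condition $a_1\cdots a_{m-k}=a_{k+1}\cdots a_m$; the paper phrases this as $c_kc_{m-k}=c_m$ (with $c_k:=a_1\cdots a_k$) and appeals to induction on $k$, while your ratio computation $c_{k-1}/c_k=a_k/a_{m-k+1}$ is exactly that inductive step written out.
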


\begin{proof}
Define $c_k$ by $y_1 ^k =c_k y_k$, and note that $c_k \neq
0$. Then \pd\ holds if and only if $c_k c_{m-k} =c_m$ for all $k$. But
$c_k =a_1 \ldots a_k$, and the result follows by induction on $k$.
\end{proof}
                                                           
\subsection{Simply-laced types} 

We show that the chains in the simply laced types all lead to smooth
affine Schubert varieties.

\begin{proposition}  \marginpar{}
Let $\sigma =t_k t_{k-1}\cdots t_1$ be a chain in \wtils\ whose
associated admissible path has no multiple edges. Then $X_\sigma$ is a
\cpo\ isomorphic to $\bp ^k$, and hence $X_\sigma$ is smooth.
\end{proposition}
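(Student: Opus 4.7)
The plan is to identify $X_\sigma$ with the closed parabolic orbit $Y_I$ where $I := \{t_1, t_2, \ldots, t_k\}$, by arguing that the hypotheses force the induced subdiagram of $\dtil$ on $I$ to be a linear $A_k$-diagram with $s_0$ as a leaf; then the dimension count collapses $X_\sigma$ onto $Y_I$.

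First, I would show that the admissible path $t_1 = s_0, t_2, \ldots, t_k$ is simple, i.e., visits pairwise distinct nodes. Since the path uses no multiple edges, it cannot reverse direction, so a repeated node would close a cycle in $\dtil$. For every affine type except $\tilde{A}_n$, the diagram $\dtil$ is a tree, which rules this out. In type $\tilde{A}_n$ the diagram is a cycle, and wrap-around would require $k \geq n+1$. But in that case one can exhibit two distinct length-$(k-1)$ elements of $\wtils$ below $\sigma$ by deleting different letters of the reduced expression: their supports differ (for instance, for $\sigma = s_n s_{n-1} \cdots s_0$ in $\tilde{A}_n$, both $s_{n-1} s_{n-2} \cdots s_0$ and $s_n s_{n-2} \cdots s_0$ lie in $\wtils$ and have distinct support), contradicting the chain hypothesis. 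Hence the path is simple and $|I| = k$.

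Second, with the path simple, $I$ is a proper connected subset of $\stil$ containing $s_0$. Any edge of $\dtil$ joining two non-consecutive nodes of the path would create a short cycle in $\dtil$, which is impossible in the tree case and, in the cycle case, already excluded by the non-wrap condition from step one. Thus the induced subdiagram on $I$ is the simply-laced linear chain $t_1 - t_2 - \cdots - t_k$ of Dynkin type $A_k$, with $s_0$ appearing as one of its endpoints.

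Third, by Proposition~\ref{cpograph} and the discussion of \S\ref{s:cpos}, the associated \cpo\ is $Y_I \cong G_{I,\bc}/Q$, where $G_{I,\bc}$ is the simple algebraic group of type $A_k$ and $Q$ is the maximal parabolic obtained by deleting the leaf $s_0$ from $I$. Hence $Y_I \cong \bp^k$. Since $\sigma \in \wtil_I$, we have $X_\sigma \subseteq Y_I$, and the equality $\el^S(\sigma) = k = \dim_{\bc} Y_I$ combined with the irreducibility of $Y_I$ forces $X_\sigma = Y_I \cong \bp^k$. Smoothness is then immediate, since projective space is smooth.

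The principal obstacle is the simplicity of the admissible path, and within that the type-$A$ case where wrap-around must be excluded via the chain hypothesis. Once simplicity is established, the rest is straightforward Dynkin-diagram bookkeeping together with the dimension identification.
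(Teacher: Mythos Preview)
Your approach is essentially the paper's own: split into the tree case (non-$A$) and the cycle case ($\tilde A_n$), use the chain hypothesis to bound $k$ in type $A$ by exhibiting a second covered element, and then identify $X_\sigma$ with $Y_I\cong\bp^k$ via the dimension count. The paper's write-up is terser but makes the identical moves, including the very same ``covers a pair'' witness $s_n s_{n-2}\cdots s_1 s_0$ below $s_n s_{n-1}\cdots s_0$.

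One small imprecision to tighten: in type $\tilde A_n$ your step~1 aims to show the path has \emph{pairwise distinct} nodes, but that alone does not suffice---the path $s_0,s_1,\ldots,s_n$ (length $k=n+1$) is simple yet has $I=\stil$, which is not proper, and the induced subdiagram is the full cycle rather than $A_{n+1}$. What you actually need (and what your example establishes) is the sharper bound $k\le n$. Your pair argument at $k=n+1$ gives exactly this; then for $k>n+1$ observe that the length-$(n{+}1)$ right factor of $\sigma$ would itself have to be a chain, reducing to the case just handled. With that rephrasing, your step~2 goes through cleanly and the proof is complete.
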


\begin{proof}
Suppose that $W$ is not of type $A$. Then the $t_i$'s are
distinct, since \dtil\ has no cycles and the path cannot reverse
direction. Hence the path of $\sigma$ is just a type $A_k$ subgraph $I$
of \dtil , and  $X_\sigma =Y_I \cong \bp ^k$. 

If $W$ has type $A_n$, then $n>1$ and $\sigma$ belongs to one of the
two infinite families of rigid elements described above. The two
families are conjugate, so we may as well suppose the path of $\sigma$
runs counterclockwise, so $\sigma =s_{k-1}\cdots s_1 s_0$ for some
$k$, the subscripts being interpreted mod $n+1$. Then $k-1 <n$: For if
$k-1=n$ then $\sigma \downarrow s_{k-1} s_{k-3} \cdots s_1 s_0$, so
that $\sigma$ covers a pair; hence for $k-1 \geq n$, $\sigma$ is not a
chain. Therefore $k-1<n$, in which case the argument used above shows
that $X_\sigma$ is a \cpo\ isomorphic to $\bp ^k$.
\end{proof}

\begin{corollary} \label{chainproj} \marginpar{}
In the simply-laced case $ADE$ (excluding $A_1$), every chain is a \cpo\
isomorphic to $\bp ^k$. 
\end{corollary}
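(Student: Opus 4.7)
The plan is to deduce this corollary directly from the preceding Proposition 7.4, which already handles the case of chains whose admissible path contains no multiple edges. So the entire content of the proof is a single structural observation about the affine Dynkin diagram in the simply-laced setting.

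Specifically, I would first note that for the untwisted affine types under consideration—$\tilde{A}_n$ ($n\geq 2$), $\tilde{D}_n$, $\tilde{E}_6$, $\tilde{E}_7$, $\tilde{E}_8$—the affine Dynkin diagram $\dtil$ contains no multiple edges. This is why affine $A_1$ is explicitly excluded: its affine diagram has a double bond and so must be treated as a non-simply-laced case (exactly as flagged in the introduction). By contrast, $\tilde{A}_n$ for $n\geq 2$ is an $(n+1)$-cycle of single edges, $\tilde{D}_n$ attaches an extra node as a second fork, and $\tilde{E}_6,\tilde{E}_7,\tilde{E}_8$ are all simply-laced. Consequently, any path in $\dtil$—and in particular the admissible path associated to any rigid element—consists entirely of single edges.

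Now let $\sigma\in\wtils$ be a chain; by the earlier observation that every chain is rigid, $\sigma$ has a unique reduced expression $\sigma = t_k t_{k-1}\cdots t_1$ with $t_1 = s_0$, and this expression determines an admissible path in $\dtil$. By the paragraph above, this path has no multiple edges. The hypothesis of Proposition 7.4 is therefore satisfied, and that proposition immediately gives that $X_\sigma$ is a \cpo\ isomorphic to $\bp^k$.

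There is really no obstacle here: the corollary is essentially a one-line consequence of Proposition 7.4 once one notices that the simply-laced hypothesis (excluding $A_1$) is exactly what guarantees $\dtil$ is simply-laced too, and hence that the ``no multiple edges'' hypothesis of the preceding proposition is automatic.
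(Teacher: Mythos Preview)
Your proposal is correct and matches the paper's approach: the corollary is stated without proof immediately after Proposition~7.4, and the intended argument is exactly the observation you give---that in the simply-laced types $ADE$ (excluding $A_1$) the affine Dynkin diagram has no multiple edges, so the hypothesis of the proposition is automatically satisfied for every chain.
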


To describe the chains explicitly, it suffices to list the maximal
chains. In type $A_n$ for $n>1$ there are two maximal chains namely
$s_{2}s_{3}\cdots s_{n}s_{0}$ and $s_{n-1}\cdots s_{1} s_{0}$
with coroot lattice representatives $(1,-1,\und)$ and $(\und,-1,1)$ respectively.  The
corresponding affine Schubert varieties are both isomorphic to $\bp
^n$. In types $DE$ every rigid element is a chain. In type $D_n$ there
are three maximal chains, two $\bp ^n $'s and one $\bp ^3$. Finally
there are two $\bp ^5$'s in $E_6$, a $\bp ^7$ and a $\bp ^5$ in $E_7$,
and a $\bp ^8$ and a $\bp ^7$ in $E_8$. We leave it to the interested
reader to write down the minimal length and coroot lattice
representatives for these chains.

\subsection{Non-simply-laced types}

In this subsection, we classify the chains in each of the
non-simply-laced types and identify which ones index smooth affine
Schubert varieties.

$A_1$: Every element is a chain; indeed $\wtils$ itself forms an
infinite chain with cup sequence $a_k =k$. To see this, note that the
coroot lattice representative for the element of length $k$ is
$\lambda = (2j)$ if $k=2j-1$ and $\lambda =(-2j)$ if $k=2j$. In the
first case we have $\alpha(\lambda) =2j$ and in the second $1-\alpha
(\lambda)=2j+1$, where in this case $\alpha = \alpha_{1} =\alpha _0$
is the unique positive root. Hence our claim follows from the
Chevalley formula. We also conclude that the only non-trivial smooth
Schubert variety in type $A_1$ is the unique \cpo , $X_{s_{0}} \approx \bp ^1$.

This discussion also recovers the well-known fact in type $A_{1}$ that
$H^* \cL _G$ is a divided power algebra, a fact normally deduced from
the equivalence $\cL _G \cong \Omega SU(2)$ and the Serre \sss .

\bigskip

$B_n$: There are two maximal chains for $n\geq 3$: (1) $\sigma
=s_1s_2s_0$ with coroot lattice representative $(-1,0,1,\und
)$; and (2) $\tau =s_0 s_2 s_3\cdots s_ns_{n-1}\cdots s_3 s_2 s_0$,
with \clr\ $(2,\und )$.

Note that both $X_{\sigma }$ and $X_\tau$ are \cpo s $Y_I$, where
$I=\{s_{0},s_{1},s_{2} \}$ and $I=\stil -\{s_1\}$ respectively.
$X_{\sigma } \approx \bp^3$ is of the simply laced type covered above.
$X_{\tau}$ is a flag variety of type $B_n/B_{n-1}$; that is, a
nonsingular quadric hypersurface of dimension $2n-1$. A standard
calculation shows that the cup sequence is $(1,1,\ldots ,1,2,1,\ldots
,1,1)$; here this follows at once from the Chevalley formula. Thus the
complex $k$-skeleton of $X_{\tau}$ satisfies Poincar\'e duality if and only if $k
\leq n-1$, in which case it is just $\bp ^k$.  We observe that firing
down from $\tau$ yields the chains below $\tau$, namely starting with 

\begin{center}
$\tau =$ \hspace{.3in}
\begin{picture}(5,0)
\mp(1,0)(1,0){2}{\ci}
\put(1,0){\num{0}}\put(2,0){\num{0}}
\put(3,0){\makebox(0,0){\ldots}}
\mp(5,0)(-1,0){2}{\ci}
\put(4,0){\num{0}}\put(5,0){\num{0}}
\put(1,0){\line(1,0){1.5}}
\put(4,0){\line(-1,0){.5}}
\put(4,.06){\line(1,0){1}}
\put(4,-.06){\line(1,0){1}}
\put(4.5,0){\makebox(0,0){\Large$>$}}
\put(0,.5){\ci}\put(0,-.5){\ci}
\put(0,.5){\makebox(0,0)[l]{\hspace{-.35\unitlength}$2$}}
\put(0,-.5){\makebox(0,0)[l]{\hspace{-.7\unitlength}$-1$}}
\put(1,0){\line(-2,1){1}}
\put(1,0){\line(-2,-1){1}}
\end{picture}
\end{center}

\noindent we get 
\[
(2, \und) \da (2,-1,\und) \da (1,1,-1, \und ) \da (1,0,1,-1, \und
)\da \ldots.
\]

There is one additional rigid element $\zeta =s_1 s_2 \cdots s_n \cdots s_2
s_0$. Note that after omitting the $s_2$ on the left we still have an
element of \wtils ; this shows that $\zeta$ covers a pair and so is not
a chain. 

\bigskip

\noindent {\it Remark:} In type $D_n$, the \cpo\ $Y=Y_{\tilde{S} -\{s_1\}}$
has type $D_n/D_{n-1}$, a nonsingular quadric hypersurface of dimension
$2n-2$. Its \clr\ is $(2,\und )$ as in type $B_n$. In this case,
however, the middle homology has rank two, by a standard calculation or
inspection of the Bruhat poset. Hence the Schubert subvarieties of $Y$
of dimension $k$, $n-1 <k<2n-2$, are not even palindromic. For future
reference, we note that firing down from the top yields
 
%%todo: find where used and reference it appropriately
\begin{equation}\label{e:typeD.segments}
(2, \und ) \da (2,-1, \und) \da (1,1,-1, \und) \da \cdots ,
\end{equation}

\noindent exactly as in type $B_n$ except that just above the middle
dimension we reach the element $(1, \und , 1, -1,-1)$, which covers a pair.

\bigskip

$C_n$: Recall that there is an infinite family of rigid elements,
obtained by running back and forth along \dtil . The maximal chain in
this family is $\sigma =s_1 s_2 \cdots s_n s_{n-1} \cdots s_1 s_0$ (note that
$s_0 \sigma$ covers a pair). There is one other maximal chain: $\tau
=s_0 s_1 s_0$.

Note that $X_\tau$ is a \cpo\ and is not $\bp ^3$ but rather
the symplectic Grassmannian $Sp(2)/U(2)$. By the Chevalley formula its
cup sequence is $(1,2,1)$ and hence its complex 2-skeleton is not
smooth. The \clr\  for $\tau $ is $(0,1, \und) = \omega_{2}^{\vee}$. 

The \clr\ of $\sigma$ is $\lambda =(-1,\und) = -\omega_{1}^{\vee}
=-\alpha _0 ^\vee$, hence $\lambda$ is anti-dominant and $\lambda
=\sigma$ as elements of $\wtil$.  The cup sequence is $(1,2,2,\ldots
,2)$. Indeed, the complete list of \clr s below $\sigma$ is obtained
as follows, starting from the top:

$$(-1, \und) \da (1,-1,\und) \da \ldots \da (\und , 1, -1, 0) 
\stackrel{s_{n-1}}{\da} (\und , 1, -2)  \stackrel{s_n}{\da}
(\und, -1, 2) \stackrel{s_{n-1}}{\da} (\und , -1,1,0) \da \ldots \da (1,
\und ).
$$
All but one of the factors of 2 in the cup sequence occurs because of
a short node; the application of $s_n$ also yields a factor of 2
because $\alpha _n (\und ,-1,2)=2$. It follows that none of the
complex skeleta are smooth, except for the $\bp ^1$ at the bottom.

An alternative way to identify the cup sequence of $\sigma$ is to note
that $X_\sigma$ is the closure of the lowest non-trivial $P$-orbit,
and it can be shown that it is therefore the Thom space of the line
bundle over $\bp ^{2n-1}$ associated to the highest root
\cite{littig}. This line bundle is just $(\gamma ^*)^2$, where
$\gamma ^* \downarrow \bp ^{2n-1}$ is the hyperplane section
bundle. Hence if $u$ is the Thom class, by a general formula we have
$u^2 =c_1 (\gamma ^{*2}) u=2yu$ where $y =c_1 (\gamma ^*) $, yielding
the cup sequence above.

\bigskip

$F_4$: There are two maximal chains. The first is $s_0 s_1 s_2 s_3 s_2
s_1 s_0$, a \cpo\ of type $B_4/B_3$ with \clr\ $(0,0,0,1)$. The \clr s
of its skeleta are given by 

$$(0,0,0,1) \da (-1,0,0,1) \da (1,-1,0,1) \da (0,1,-1,1) \da
(0,-1,1,0) \da (-1,1,0,0) \da (1,0,0,0).$$

\noindent From our analysis of the type $B$ case, we know that the cup
sequence is $(1,1,1,2,1,1,1)$ and that only the complex $k$ skeleta
with $k \leq 3$ or $k=7$ are \pds s.

The second maximal chain is $s_4s_3s_2s_1s_0$, which has \clr\
$(0,1,0,-1)$ and cup sequence $(1,1,1,2,2)$. Hence it is not a \pds .

Note that every rigid element is a chain. 

\bigskip

$G_2$: There are two maximal chains. The first is $s_2 s_1 s_2 s_1 s_2
s_0$, which has \clr\ the anti-dominant class $(0,-1)=-\omega _2
^\vee$. Its cup sequence is $(1,1,3,2,3,1)$ and hence it is not a \pds
. To see this, we write down the \clr s of its skeleta: 

$$(0,-1) \da (-1,1) \da (1,-2) \da (-1,2) \da (1,-1) \da (0,1),$$

\noindent where we recall that firing the short node $\alpha_{1}$ in
type $G_2$ adds three times the short value to its neighbor. From the
Chevalley formula we see that the short node firings produce cup
product coefficients of 3, while the long node firing $(1,-2)
\downarrow (-1,2)$ yields a coefficient of 2 because of the -2 in the
second position. We conclude that only the complex 1 and 2 skeleta are
smooth; these are the two \cpo s $\bp ^1$, $\bp ^2$. It can be shown
that $X_{-\omega _2 ^\vee }$ is the Thom space of a line bundle over
the maximal flag variety of $G_2$ omitting the long node
\cite{littig}.

The second maximal chain is $s_0 s_2 s_1 s_2 s_0$, with \clr\
$(1,0)$. Its cup sequence is $(1,1,3,2,2)$, and hence it is not a \pds.

There is one more rigid element $\zeta =s_0 s_2 s_1 s_2 s_1 s_2
s_0$. Note that omitting the $s_2$ on the left yields another element
of \wtils ; hence $\zeta$ covers a pair and is not a chain.

\subsection{Conclusions}

The results of this section imply that Theorem~\ref{smooth} is true for
all chains.  More precisely:

%%todo: let's upgrade this to proposition.  
\begin{proposition} \label{chain} \marginpar{}

Let $X_{\lambda}$ be a chain. Then $X_\lambda$ is smooth if and only if it
is a \cpo , in which case $X_\lambda$ is either a projective space, a
quadric hypersurface of type $B_k/B_{k-1}$ for some $k$, or a symplectic
Grassmannian of type $C_2/A_1$. 

\end{proposition}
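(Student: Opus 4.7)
The plan is to assemble the type-by-type classification of chains carried out in \S7.2 and \S7.3. For each simple type, we have already written down every maximal chain in \wtils, identified its \clr, and computed its cup sequence via the affine Chevalley formula (Proposition~\ref{chev}). The proposition now reduces to two checks against that list.

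First I would verify the direction (\cpo\ $\Rightarrow$ smooth of the listed form): every chain that is a \cpo\ is a homogeneous flag variety and hence smooth. Running through the list, the \cpo\ chains are precisely (i) the projective spaces $\bp^k$ coming from the simply-laced chains in Corollary~\ref{chainproj}, together with $\bp^1$ in type $A_1$ and $X_{s_1s_2s_0}\cong\bp^3$ in type $B_n$; (ii) the quadric hypersurfaces of type $B_n/B_{n-1}$ arising from $\tau=s_0s_2s_3\cdots s_n\cdots s_3s_2s_0$; and (iii) the symplectic Grassmannian $C_2/A_1$ coming from $\tau=s_0s_1s_0$ in type $C_n$. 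This exhausts the enumeration.

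For the converse, I would use Proposition~\ref{pdtest} on the remaining chains. That test forces every descent $\lambda\downarrow s\lambda$ of a $\bz$-Poincar\'e-duality $X_\lambda$ to satisfy $c=1$ and the relevant value $\aslam$ (or $1-\azlam$) to equal $-1$. Going through the chains in \S7.3, the obstructions that arise are of two kinds: (a) firings of a short node, which automatically give $c\geq 2$; (b) firings of a long node at which the corresponding value of $\lambda$ is $\leq -2$. Each non-\cpo\ chain acquires one of these as soon as one climbs past the projective space at the bottom: in $B_n$, the subchains of $\tau$ above the $n$-th level acquire the factor $2$ coming from the short-root step $(1,\und,1,-1,0)\downarrow(1,\und,1,-2)$; in $C_n$, the subchains of $\sigma$ acquire factors of $2$ at every short-node firing as well as at $(\und,-1,2)\downarrow(\und,1,-2)$; in $F_4$, the second maximal chain picks up a $2$ at the fourth step via a short-node firing; in $G_2$, both maximal chains pick up a $3$ from the first short-node firing.

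The main obstacle is purely one of bookkeeping: one must make sure every chain in every type has been accounted for, and that the cup sequence computed from Proposition~\ref{chev} correctly identifies which complex skeleta are \pds s over \bz. No new conceptual input is needed beyond the Chevalley formula together with the classification of rigid elements via admissible paths in \dtil\ from \S7.1.
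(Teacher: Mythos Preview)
Your overall strategy matches the paper's: Proposition~\ref{chain} is meant to be read off directly from the type-by-type classification in \S7.2--7.3, and the direction (\cpo\ $\Rightarrow$ smooth of the listed form) is exactly as you describe.

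There is, however, a genuine gap in your converse argument. Proposition~\ref{pdtest} only constrains the \emph{top} descent $\lambda \downarrow s\lambda$: it forces the top cup coefficient $a_m$ to equal $1$, but says nothing about the other entries of the cup sequence. Several non-\cpo\ chains pass this test. In type $B_n$, the complex $k$-skeleta of the quadric $\tau$ for $n < k < 2n-1$ have cup sequence $(1,\ldots,1,2,1,\ldots,1)$ with top entry $1$; in type $F_4$, the length-$5$ and length-$6$ skeleta of the first maximal chain likewise have top entry $1$; in type $G_2$, the full length-$6$ chain $(0,-1)$ has cup sequence $(1,1,3,2,3,1)$ with top entry $1$. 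None of these is a \cpo, yet Proposition~\ref{pdtest} does not exclude them. Your case-by-case summary also overlooks these intermediate skeleta (you treat only the second maximal chain in $F_4$, for instance).

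What you actually need is the criterion stated at the end of \S7.1: a chain satisfies integral \pd\ if and only if its cup sequence $(a_1,\ldots,a_m)$ is palindromic. That is precisely the tool the paper invokes throughout \S7.3 (e.g.\ ``the complex $k$-skeleton of $X_\tau$ satisfies \pd\ if and only if $k\le n-1$''). Once you replace the appeal to Proposition~\ref{pdtest} by this palindromy criterion and fill in the missing skeleta, your argument goes through and coincides with the paper's.
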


\section{Proof of the Smoothness Theorem~\ref{smooth}}

It is well known that a \cpo\ is smooth and every smooth affine
Schubert variety satisfies Poincar\'e duality over $\bz$.  Therefore,
to prove Theorem~\ref{smooth} it is only necessary to show that if
$X_{\lambda}$ satisfies Poincar\'e duality integrally then it is a
closed parabolic orbit.  Call $\lambda$ {\it admissible} if
\begin{enumerate}
\item  $\lam$ has a unique negative node
$s \in \stil$; 
\item  $s$ is a long node; 
\item $\aslam =-1$ (or $1-\alpha _0 (\lam )=-1$, if $s=s_0$).
\end{enumerate}
Note that if $X_\lambda$ satisfies \pd , then $\lambda$ is an \apal\
by Proposition~\ref{pdtest}.  In Section~\ref{s:chains} we have
classified all smooth chains.  Hence to finish the main proof it
suffices to prove the following key lemma:

\begin{lemma} \label{smooth1} \marginpar{}

Suppose $\lambda$  is an \adp . Then either (i) $X_\lambda$ is a \cpo ; or
(ii) $X_\lambda$ is a singular chain. 

\end{lemma}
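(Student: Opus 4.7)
The plan is a type-by-type classification using the admissibility conditions together with the palindromy-game obstructions of Section~5: no second negative node (Rule~1), no two zero neighbors of the negative node outside type $A$ (Rule~2), the overweight bound $\azlam \leq 1$ when $s \neq s_0$ (Rule~3), and no forking too soon (Rule~4), together with the more general prohibition against $\lambda$ covering a pair, fork, trident, or scepter. The first step is the dominant case: if $\lambda$ is dominant, admissibility forces the unique negative node to be $s_0$ with $\azlam = 2$, and Proposition~\ref{domcpo} identifies $X_\lambda$ as a \cpo .

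In the non-dominant case, the unique negative node $s \in S$ is long with $\aslam = -1$, every other node has non-negative value, and $\azlam \leq 1$. The strategy is to fire $s$ and track how the resulting $(-1)$-wave propagates downward through the labelled diagram. Any stray positive value sitting away from $s$ or $s_0$ produces an independent descent chain, which collides with the main chain to create a pair, fork, trident, or scepter below $\lambda$, violating palindromy. This forces the labelled diagram of $\lambda$ into a short list of allowed shapes, readable directly from \dtil . For each allowed shape I would verify that $X_\lambda$ is either (i) a \cpo\ by matching $S^+(\lambda)$ with $\cN(I)$ for some connected $I \subset \stil$ containing $s_0$ (Lemma~\ref{cpo2}), or (ii) a chain whose \clr\ appears on the descent lists in Section~\ref{s:chains}, where any non-\cpo\ chain has already been shown to be singular. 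In types $E_6$, $E_7$, $E_8$, $F_4$, $G_2$ and in type $A_n$, only a handful of shapes arise and can be checked directly against the admissible palindromy data.

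The main obstacle is the classical types $B_n$, $C_n$, $D_n$ for arbitrary $n$, where one must show uniformly that the propagation cannot branch. The idea here is that the long-node requirement on $s$, combined with $\azlam \leq 1$ and Rule~2, forces any positive support of $\lambda$ to consist of at most one or two values located near $s$ and serving to protect the multiple edge or branch point of \dtil . Matching these short patterns against the explicit descents computed in Section~\ref{s:chains} and the maximal \cpo s $Y_I$ of Section~\ref{s:cpos} then completes the classification and yields the dichotomy asserted by the lemma.
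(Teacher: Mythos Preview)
Your outline is broadly on the right track---type-by-type enumeration using the palindromy rules and the chain classification of Section~\ref{s:chains}---and in the non-simply-laced types your sketch is close to what the paper does. But you miss the paper's central shortcut in the simply-laced case. Rather than firing down from each candidate $\lambda$ and matching against the cpo/chain lists, the paper proves a \emph{counting} lemma: in each simply-laced type there are at most $p_G$ admissible palindromics (Lemma~\ref{lem:admissible.pal.simply.laced}). Since there are exactly $p_G$ \cpo s and each one is already known to be admissible palindromic, equality is forced and every admissible palindromic is a \cpo . This avoids having to verify, one shape at a time, that a given $\lambda$ actually equals some $\lambda_I$. Your direct-verification route would work in principle, but it is more laborious and your criterion for recognizing a \cpo\ is not quite right: Lemma~\ref{cpo2} only says $S^+(\lambda_I)=\cN(I)$, not the converse. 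Matching $S^+(\lambda)$ with some $\cN(I)$ does not by itself force $\lambda=\lambda_I$; you would also need to check that $\el^S(\lambda)=|A_I|$ (compare the proof of Proposition~\ref{domcpo} and the remark at the end of \S 8).

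A second, smaller point: you fold the anti-dominant case into the general non-dominant argument, but the paper separates it out (Lemma~\ref{smooth2}(b)) because it is exactly where the singular $G_2$ chain $-\omega_2^\vee$ appears. Your ``stray positive value produces a collision'' heuristic does not apply when there are no positive nodes at all, so that case needs its own treatment.
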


We first dispose of the dominant and anti-dominant cases. 

\begin{lemma} \label{smooth2} \marginpar{}

Suppose $\lambda$  is admissible. Then 

\begin{enumerate}
\item [(a)] if $\lambda$  is dominant, then \xlam\ is a \cpo ;

\item [(b)] if $\lambda$  is anti-dominant and palindromic, then $G$ has type $G_2$
and $\lam =-\omega _2 ^\vee$. Hence \xlam\ is a singular chain. 
\end{enumerate}
\end{lemma}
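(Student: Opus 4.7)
Part~(a) is immediate: if $\lambda$ is admissible and dominant, then $\alpha_t(\lambda) \geq 0$ for every $t \in S$, so the unique negative node of $\lambda$ must be $s_0$; admissibility then forces $1-\alpha_0(\lambda)=-1$, i.e.\ $\alpha_0(\lambda)=2$, and Proposition~\ref{domcpo} gives that $X_\lambda$ is a \cpo .

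The plan for part~(b) is a type-by-type analysis within the palindromy game of Section~5. First I would observe that anti-dominance forces the unique negative node $s$ to lie in $S$ (since the label $1-\alpha_0(\lambda) \geq 1$ at $s_0$), and combined with admissibility this yields $\alpha_s(\lambda)=-1$ and $\alpha_t(\lambda)=0$ for every other $t \in S$. Hence $\lambda = -\omega_s^\vee$ for some long node $s$ with $\omega_s^\vee \in \cQ ^\vee$. Next I would fire $s$: by the node-firing rule, $s\lambda$ has value $+1$ at $s$, value $-1$ at each neighbor of $s$ in the non-affine Dynkin diagram $D$, and value $m_s \geq 1$ at $s_0$ (since its label started at $1+m_s$ and decreased by $1$). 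So the number of negative nodes of $s\lambda$ equals the number of neighbors of $s$ in $D$, and if this count is $\geq 2$ then $\lambda$ covers a fork, contradicting palindromy outside type~$A$. Type~$A$ contributes no candidates because the coweight criterion in Section~3.2 shows $\omega_s^\vee \notin \cQ ^\vee$ there. Therefore $s$ must be a leaf of $D$.

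The next step is to intersect ``long leaf of $D$'' with the criteria of Section~3.2 for $\omega_s^\vee \in \cQ ^\vee$. Types $B_n$, $C_n$, $D_n$ drop out entirely: the only long-leaf candidates $s_1$ in $B_n$, $s_n$ in $C_n$, and $s_1, s_{n-1}, s_n$ in $D_n$ all fail the coweight criterion. The surviving pairs are $(E_6, s_2)$, $(E_7, s_1)$, $(E_8, s_1), (E_8, s_2), (E_8, s_8)$, $(F_4, s_1)$, and $(G_2, s_2)$. The three $E_8$ cases are already dispatched by the example at the end of Section~5. For each of the other exceptional pairs I would continue firing: the $-1$ label migrates along the unique path in $\tilde D$ toward the nearest branching or multiple-edge node, producing a configuration with two simultaneous negative labels once it reaches a trivalent vertex (in $E_6$ and $E_7$) or back-fires at a multiple edge (in $F_4$). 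A short bookkeeping check confirms that in each instance the resulting fork appears at depth strictly less than $k_G$ below $\lambda$, so Rule~4 of the palindromy game prevents palindromy. This eliminates every candidate except $(G_2, -\omega_2^\vee)$.

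Finally, for $\lambda = -\omega_2^\vee$ in $G_2$, the firing sequence already computed in Section~\ref{s:chains} exhibits $X_\lambda$ as a chain of length $6$, trivially palindromic, with cup sequence $(1,1,3,2,3,1)$ that certifies $X_\lambda$ is singular and not a \pds . The main obstacle will be the exceptional-type fork-depth bookkeeping, but each such case reduces to a short diagram chase in the labeled-diagram formalism of Section~5.
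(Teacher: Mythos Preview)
Your proposal is correct and follows essentially the same strategy as the paper: reduce to $\lambda=-\omega_s^\vee$ with $s$ a long leaf of $D$, then eliminate each surviving type by showing $\lambda$ forks too soon. The paper streamlines the $BCD$ and type-$A$ elimination by observing that $\omega_s^\vee\in\cQ^\vee$ forces $s$ non-minuscule (so every leaf in $BCD$ is already short or minuscule), whereas you verify the coweight criterion of \S\ref{sub:comparision} case by case; your parenthetical about the $s_0$-label after firing is slightly off (the label changes only when $s_0$ is adjacent to $s$), but the conclusion that $s_0$ stays nonnegative is correct and the argument is unaffected.
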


\proof{ (a) If $\lambda$ is dominant and admissible, then $\azlam =2$,
and the assertion follows immediately from Proposition~\ref{domcpo}.
(b) Suppose $\lambda$ is antidominant, admissible, and
palindromic. Then $\lambda =-\omega _s ^\vee$, where $s$ is a long
node and $\omega _s ^\vee \in Q^\vee$. In particular, $s$ is not
minuscule. This rules out type $A$, since then all nodes are
minuscule. Since in all other types $\lambda$ cannot cover a fork, we
conclude that $s$ is a leaf node.  This eliminates types BCD at once,
since every leaf node is either short or minuscule. In type $E$ every
leaf node is either minuscule or forks too soon. In type $F_4$ the
long leaf node $s_1$ forks too soon.  This leaves type $G_2$ with
$\lambda =-\omega _2 ^\vee$. This element is a chain, and is
singular.}

\bigskip

Recall from Section~\ref{s:cpos} that $p_G$ is the number of
non-trivial closed parabolic orbits, and that we have shown $p_G$ is
just the number of connected subdiagrams of the affine diagram
containing $s_0$.  Recall also that $\lambda $ is \textit{overweight}
if it does not satisfy
\[\alpha _0 (\lambda ) \leq    \left\{ \begin{array}{ll}
 2   & \mbox{for all $\lambda$}\\
 1   & \mbox{if $\lambda$ is not dominant}
\end{array}
\right. \] An admissible palindromic cannot be overweight.   

\begin{lemma} \label{lem:admissible.pal.simply.laced} \marginpar{}

If $G$ is simply-laced, there are at most $p_G$ \adp s.

\end{lemma}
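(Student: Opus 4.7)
Plan: The plan is to construct an injection from the set of admissible palindromics into the set of connected subdiagrams of $\tilde D$ containing $s_0$, which by Proposition~\ref{cpograph} has cardinality $p_G$. By Lemma~\ref{smooth2}(a) every dominant admissible palindromic has $\alpha_0(\lambda)=2$ and yields a \cpo , and such $\lambda$ correspond bijectively to those \cpo\ representatives $\lambda_I$ which happen to be dominant. By Lemma~\ref{smooth2}(b) no antidominant admissible palindromic exists in a simply-laced type. Hence it remains only to bound the ``mixed'' admissible palindromics: those for which the unique negative node $s$ lies in $S$ with $\alpha_s(\lambda)=-1$, forcing $\alpha_0(\lambda) \leq 1$.

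For such a mixed $\lambda$, let $S^+(\lambda) = \{t \in S : \alpha_t(\lambda) > 0\}$ and define $I_\lambda \subset \tilde D$ to be the connected component of $\tilde D \setminus S^+(\lambda)$ containing $s_0$. Lemma~\ref{cpo2} tells us that the top-cell representative $\lambda_{I_\lambda}$ of the \cpo\ $Y_{I_\lambda}$ satisfies $S^+(\lambda_{I_\lambda}) = \mathcal{N}(I_\lambda)$. Since a connected subdiagram containing $s_0$ is uniquely determined by its neighbor set in $S$, the assignment $\lambda \mapsto I_\lambda$ is an injection provided one verifies $\lambda = \lambda_{I_\lambda}$ in $\cQ^\vee$, and the image is disjoint from the subdiagrams arising in the dominant case.

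To prove $\lambda = \lambda_{I_\lambda}$, the plan is a careful application of the palindromy game rules of Section~\ref{s:chains}. Rule 3 fixes $\alpha_s(\lambda)=-1$; Rule 2 (valid outside type $A$) forces $s$ to have at most one zero neighbor, hence a positive node near $s$; Rule 4 (``no forking too soon'') sharply restricts the Hasse diagram near the top of $X_\lambda$; and the lattice condition on $\cQ^\vee$ recorded in Section~\ref{sub:comparision} pins down the actual integer values on each node. These constraints are to be combined with the node-firing game to match the values of $\lambda$ with those of $\lambda_{I_\lambda}$ node-by-node, proceeding type by type through $A_n, D_n, E_6, E_7, E_8$.

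The main obstacle will be type $A_n$, where the cyclic affine Dynkin diagram and the absence of Rule 2 permit many more potential configurations than in the other simply-laced types; the argument there must rely on the cyclic lattice condition $\sum i\,a_i \equiv 0 \pmod{n+1}$ together with admissibility to eliminate candidates, showing that each mixed admissible palindromic in $A_n$ has the form $\lambda_I$ where $I$ is an arc of the affine cycle containing $s_0$, consistent with the fact that every \cpo\ in type $A$ is a Grassmannian.
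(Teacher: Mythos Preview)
Your plan is essentially the paper's approach, reorganized. The paper proceeds type by type through $A_n, D_n, E_6, E_7, E_8$, using exactly the ingredients you name (the palindromy rules, the overweight constraint $\alpha_0(\lambda)\le 1$ in the mixed case, and the explicit coroot-lattice congruences of \S\ref{sub:comparision}) to enumerate the admissible palindromics directly and verify there are at most $p_G$ of them. Your framing---constructing an injection by proving $\lambda=\lambda_{I_\lambda}$---is precisely the viewpoint the paper records in the Remark following the proof of Lemma~\ref{smooth1}; it is in fact slightly stronger than the bare count, amounting to proving Lemma~\ref{smooth1} directly in the simply-laced case rather than deducing it from the count afterward, as the paper does.

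The substantive content, however, is the case analysis itself, and your proposal does not carry it out. In types $D$ and $E$ one must track how many positive nodes are possible, which configurations avoid forking too soon, and which candidates actually lie in $\cQ^\vee$; in type $A$ the cyclic congruence $\sum i a_i\equiv 0\pmod{n+1}$ must be combined with $\alpha_0(\lambda)\le 2$ to pin down the unique negative node for each choice of positive nodes. None of this is hard, but it \emph{is} the proof, and your outline defers all of it. As a plan it is correct and matches the paper; as a proof it remains to be written.
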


\proof{
Assume $\lambda$ is admissible and palindromic.  Let $s \in \dtil$
denote the unique negative node of $\lambda$. By
Lemma~\ref{smooth2}(b), we may assume that $\lambda$ has at least one
positive node. Note that ``positive node'' always refers to a node of
\cD , while ``negative node'' refers to a node of \dtil .  The proof
now proceeds type by type.

$A_n$: $p_G ={n+1 \choose 2}$. We may assume $n>1$, since the case $n=1$
was already settled in our study of chains. Note that $\lambda$ can have
at most two positive nodes \owt . Moreover if $s_i, s_j$ are the
positive nodes, with $i \leq j$, then $\alpha _i (\lambda) =1 =\alpha _j
(\lambda)$, where in the case $i=j$ this is to be interpreted as $\alpha
_i (\lambda) =2$. Now if $s_k$ is the negative node, then $i+j-k=0 \,
mod \, (n+1)$ (otherwise \nope ). Since there is a unique such $k$, and
$k \neq i,j$, this shows that there are at most ${n+1 \choose 2}$ \adp
s. 

\bigskip

$D_n$: $p_G =2n$. There are at most three positive nodes \owt . If there
are exactly three, then they are all minuscule and take the value 1 on
$\lambda$  \owt. Furthermore $s=s_{n-2} $ \fork . Hence $\lambda =\omega
^\vee _1 + \omega ^\vee _{n-1} + \omega ^\vee _n -\omega ^\vee _{n-2}$.

If there are two positive nodes $t,u$ then at least one of them, say
$t$, is minuscule \owt . If $u$ is also minuscule, then using the
characterization of the coroot lattice elements, one can check that
$\lambda$ has one of the following forms:

\[\lambda =    \left\{ \begin{array}{ll}
 \omega _1 ^\vee \pm (\omega ^\vee _{n-1} -\omega ^\vee _{n}) & \\
\omega ^\vee _{n-1} + \omega ^\vee _n -\omega ^\vee _1  & \mbox{($n$ even)}\\
\omega ^\vee _{n-1} + \omega ^\vee _n & \mbox{($n$ odd)}
\end{array}
\right. 
\] 

If $u$ is not minuscule, then $s \neq s_0$, $s$ is not minuscule, and
$\atlam =1=\aulam$ \owt. Furthermore, $t=s_1$ (otherwise $\lambda
\notin \cQ ^\vee$) and $s,u$ are adjacent (otherwise $\lambda$ covers
a fork). Hence $\lam =\omega _1 ^\vee \pm (w_k ^\vee -w_{k-1}^\vee)$,
where $2<k<n-1$. But $\omega _1 ^\vee -w_k ^\vee +w_{k-1}^\vee$ is a
non-palindromic skeleton of the quadric in \eqref{e:typeD.segments},
so we must have $\lam =\omega _1 ^\vee + w_k ^\vee -w_{k-1}^\vee.$

Hence there are at most $n-1$ \adp s with two positive nodes.

Suppose $\lambda$ has exactly one positive node $i$. Then we claim

\[\lambda =    \left\{ \begin{array}{ll}
 \omega ^\vee _i   & \mbox{if $i$ even, $i\neq n-1,n$}\\
 \omega ^\vee _i -\omega ^\vee _1   & \mbox{if $i$ odd, $i \neq 1,
 n-1,n$}\\
2\omega _1 ^\vee & \mbox{if $i=1$}\\
2\omega _i ^\vee & \mbox{if $n$ even, $i=n-1,n$}\\
2\omega _i ^\vee -\omega _1 ^\vee & \mbox{if $n$ odd, $i=n-1,n$}
\end{array}
\right. 
\] 
To prove one case of the claim, suppose $i=1$. Then the
negative node $s$ can only be $s_0$ or $s_2$ (otherwise either
$\lambda$ covers a fork or \nope ). This forces $\alpha _1 (\lambda)
=2$ (otherwise either \nope\ or $\lambda$ is overweight). But if
$s=s_2$ then $\lambda =(2,-1, \und )$, a non-palindromic skeleton of
the quadric. Hence $s=s_0$ and $\lambda =2 \omega _1 ^\vee$.

To prove another case, suppose $n$ odd and $i=n$. Then the negative
node $s$ must be $s_0, s_1$ or $s_{n-1}$ (otherwise $\lambda$ covers a
fork). If $s=s_{0}$ then $\alpha _n (\lambda) =0 \, mod \, 4$
(otherwise \nope ), and hence $\lambda$ is overweight. If $s=s_{n-1}$
then $\lambda =(\und , -1, a)$ for some $a>0$. Then $a+1 =0 \, mod \,
4$ (otherwise \nope ), and hence $\lambda$ is overweight.  Hence $j=1$b
and $\alpha _n (\lambda)$ is even (otherwise \nope ), and then $\alpha
_n (\lambda )=2$ \owt . Hence $\lambda =2 \omega _n ^\vee -\omega _1
^\vee$.

The remaining cases are left to the reader.  Thus there are at most
$1+n-1+n=2n$ \adp s, as desired.

\bigskip

$E_n$: $p_G =10$. For each $n=6,7,8$, we will show that there are at
most $n$ \adp s with one positive node, at most $10-n$ with two positive
nodes, and none with more than two positive nodes. 

\bigskip

$E_6$: There are at most three positive nodes (otherwise
$\lambda$ is overweight). If there are exactly three, then the negative node
must be $s_4$ and only one of the adjacent nodes is occupied (otherwise
$\lambda$ is overweight). But then $\lambda$ covers a fork, a
contradiction. So there are at most two positive nodes. 

Suppose there are exactly two. Then there are four possibilities:
$(1,0,0,0,0,1)$, $(0,0,1,-1,1,0)$, $(0,0,1,0,-1,1)$ and $ (1,0, -1, 0,
0,1)$. For example, suppose that $s_4$ is the negative node. Then the
positive nodes must be adjacent to it (otherwise $\lambda$ covers a
fork), and hence must each take the value 1 (otherwise $\lambda$ is
overweight). But this forces $s_3, s_5$ as the positive nodes (otherwise
\nope ). Hence $\lambda =(0,0,1,-1,1,0,0)$.

If there is just one positive node $t$, then for each choice of $t$
there is only one possibility for $\lambda$. For example, suppose the
positive node is $s_6$. Then the negative node must be $s_1$: For if
$s_0$ is negative, then $\alpha _6 (\lambda) =0 \, mod \, 3$ (otherwise
\nope ); but then $\lambda$ is overweight. The other four choices of
negative node all fork too soon. It follows that $\alpha _6 (\lambda)$
is even (otherwise \nope ), and hence $\alpha _6 (\lambda) =2$
(otherwise $\lambda$ is overweight). Hence $\lambda =(-1,0,0,0,0,2)$.

\bigskip

$E_7$: There are at most three positive nodes (otherwise $\lambda$ is
overweight). If there are exactly three, then the negative node must be
$s_4$ and the adjacent nodes $s_3,s_5$ must be zero (otherwise $\lambda$
is overweight). But then $\lambda$ covers a fork, a contradiction. So
there are at most two positive nodes. 

If there are exactly two, we find that there are three
possibilities: $(0,1,0,-1,1,0,0)$, $(0,1,0,0,-1,0)$,$(0,1,0,0,0,-1,1)$.

\bigskip

If there is just one positive node $t$, then for each choice of $t$
there is only one possibility for $\lambda$. For example,
suppose $s_4$ is positive. Then $\alpha _4 (\lambda) =1$ and the
negative node must be $s_3$ or $s_5$ (otherwise $\lambda$ is
overweight). But $s_5$ can't occur, since then \nope . Hence $\lambda
=\omega ^\vee _4 -\omega ^\vee _3$.

\bigskip

$E_8$: There are at most three positive nodes (otherwise $\lambda$ is
overweight). If there are exactly three, then $\lambda$ is still
overweight unless $\lam = \omega _1 ^\vee +\omega _2 ^\vee +
\omega _8 ^\vee -\omega _4 ^\vee .$ But then $\lambda$  covers a fork, a
contradiction. So there are at most two positive nodes.

If there are exactly two, then $\lambda$ must be either $(1,1,-1,\und)$
or $(0,1,1,-1,\und)$. If there is only one positive node $t$, then for
each such $t$ there is at most one possibility for $\lambda$. In all cases
one finds that the alternatives fork too soon or are overweight; details
are left to the reader. 
}

\bigskip

We can now prove Lemma~\ref{smooth1}.  In the simply laced case, we
know that every \cpo\ satisfies Poincar\'e duality and hence is
admissible and palindromic.
Lemma~\ref{lem:admissible.pal.simply.laced} implies the converse holds
also so Lemma~\ref{smooth1} holds.  

We now turn to the non-simply laced types. 

\bigskip 

$B_n$: $p_G=2n-2$. We will show that (1) there are at most $2n-2$ \apal s
that \spd , and (2) all other \apal s are singular skeletons of the
quadric $X_{(2, \und )}$ (and in particular, are chains). 

There are at most two positive nodes \owt . If there are exactly two,
then one of them is $s_1$ \owt . If the other is $s_2$ then $\lam
=(1,1,-1)$ \fork . This element is a singular skeleton of the
quadric. So suppose that the two positive nodes are $s_1, s_j$, where
$j>2$. Then we claim $\lambda =\omega ^\vee _1 +\omega ^\vee _j
-\omega ^\vee _{j-1}$.  To prove the claim, let $s=s_i$.  If $i \neq
2$ then $j=i \pm 1$ (otherwise $\lambda$ covers a fork). Then there
are two palindromic solutions: $\lam =(1, \und , -1,1,\und )$ and
$\lam =(1, \und , 1, -1, \und)$. In the second case $\lambda$ is a
singular skeleton of the quadric, hence a singular chain. If $i=2$
then $j=3$ and $\lam =(1,-1, 1, \und )$ (otherwise $\lambda$ covers a
fork, since $s_0$ is necessarily a zero node). Hence there are at most
$n-2$ \apal s that satisfy \pd .

Suppose there is one positive node $s_j$. Then we claim that

\[\lambda =    \left\{ \begin{array}{ll}
 \omega _j ^\vee   & \mbox{if $j$ even}\\
 \omega _j ^\vee -\omega _1 ^\vee   & \mbox{if $j$ odd, $j>1$}\\
2\omega ^\vee _1 & \mbox{if $j=1$}
\end{array}
\right. \]
If $j$ is even then $i$ is also even (otherwise \nope ) . Since $i\neq n$, this
forces $i=0$ \fork . Hence $\lam =\omega ^\vee _j$. 

If $j$ is odd and $j>1$, then $\alpha _j (\lam )=1$ \owt . Hence $i$ is
odd \owt . Since $i\neq n$, this forces $i=1$ \fork . 

Now suppose $j=1$. If $\alpha _1 (\lambda)$ is odd, then $i$ is
odd (otherwise \nope ), in which case $\lambda$ covers a fork. So $i$ is
even. Hence $\alpha _j (\lambda)$ is even (otherwise \nope ), forcing
$\alpha _j (\lambda) =2$ and $i=0,1$. If $i=1$ then $\lambda =(2,-1,\und
)$, a singular skeleton of the quadric. Hence $i=0$ and $\lambda
=2\omega ^\vee _1$. This completes the proof of our claim.

\bigskip

$C_n$: $p_G=n$. We must have $s=s_i$ for $i=0,n$. But if $i=n$ then
\nope . Hence $i=0$ and $\lambda$ is dominant. This forces $\lambda
=\omega ^\vee _i$ if $i<n$, and $\lambda =2\omega ^\vee _n$ if $i=n$, as
shown earlier.

\bigskip

$F_4$: $p_G =4$. If $\lambda$  has more than one positive node, then $\lam
=(1,-1,0,1)$, a singular chain. 

Now suppose there is one positive node $s_j$. For each of $j=2,3$, it is
easy to check that there is only one corresponding \adp , namely
$(-1,1,0,0)$ and $(0,-1,1,0)$. For $j=1,4$ there are two \adp s:
$(1,0,0,0)$ and $(1,-1,0,0)$ for $j=1$, and $(0,0,0,1)$ and $(-1,0,0,1)$
for $j=4$. The dominant classes are singular chains.

\bigskip

$G_2$: $p_G =2$. Suppose $\lambda$  is an \adp\ (and is not
anti-dominant). Then $\lambda$  has exactly one positive node $s_j$ \owt ,  . 
$j=1,2$. If $j=2$ there are three \adp s: $(-1,1)$,
$(-1,2)$ and $(0,1)$. The first two are singular chains. If $j=1$ there
are two: $(1,0)$ and $(1,-1)$. The first is a
singular chain. 

\bigskip

This completes the proof of Lemma~\ref{smooth1} and
Theorem~\ref{smooth}.

\bigskip

\noindent 
{\it Remark:} Let $I$ be a connected subgraph of \dtil\ containing
$s_0$. Then $S^+(\lam) =\cN (I)$ by Lemma~\ref{cpo2}. From this point
of view, the proof of Theorem~\ref{smooth} amounts to showing that (i)
If \xlam\ satisfies \pd , then $S^+( \lam) =\cN (I)$ for some (unique)
$I$, and (ii) for each $I$ there is a unique $\lambda$ such that $S^+(
\lam) =\cN (I)$ and \xlam\ satisfies \pd .

\section{The palindromy game II: Bruhat order and the coroot lattice}

In the characterization of the smooth \svars , it turned out (somewhat
surprisingly) that we only needed the weak order. For the palindromy
theorem, however, we will need more general Bruhat descents of the form
$\lam \da r\lam$, where $r$ is an affine reflection associated to a
non-simple root. As it happens, we will only need two kinds of such
reflections: The linear reflection $s_\beta \in W$ associated to a
positive root $\beta$, and the affine reflection $r_\beta =r_{1,\beta}$
associated to the affine root $(1,\beta)$, where again $\beta$ is a
positive root. In the spirit of the palindromy game, we will often refer
to such descents as ``moves''. Whenever possible we describe these moves
$\lam \da s_\beta \lam$, $\lam \da r_\beta \lam$ in terms of the Dynkin
diagram \dtil .

Note that if $r_1$ and $r_2$ are any two distinct reflections (linear or
affine), then by elementary geometry we have that $r_1 \lam =r_2 \lam$
$\Rightarrow$ $r_1 \lam =\lam =r_2 \lam$. Hence if $\lam \da r_1 \lam$
and $\lam \da r_2 \lam$, it follows that $\lambda$  covers a pair.

\subsection{$\beta$-positive and $\beta$-negative pairs}
                                    
%Throughout this section symbols $\alpha, \beta$ denote positive
%roots. In the simply-laced case, all roots are regarded as long.

%The ordinary reflection associated to $\beta$ is denoted $s_\beta$. Also
%associated to $\beta$ is the affine reflection $r_\beta$ across the line
%$\beta =1$.

Let $\beta$ be a positive root and let $\lambda \in Q^\vee$.  We want
to determine when the reflections $s_{\beta }$ and $r_{\beta}$ lower
the $S$-length of $\lambda$ by 1. Note that $r_\beta \lambda =s_\beta
\lambda +\beta ^\vee.$ For all $\alpha \in \Phi^{+}$ we have the
formulas
                                                                                
$$\alpha (s_\beta \lambda)=\alpha (\lambda )-\alpha (\beta ^\vee) \beta
(\lambda)$$
                                                                                
\noindent and
                                                                                
$$\alpha (r_\beta \lambda)=\alpha (\lambda )+\alpha (\beta ^\vee)(1-
\beta (\lambda)).$$
                                                                                
In order to evaluate the change in $S$-length after reflection, it is
convenient to partition the positive roots into the following sets:
                                                                                
\bigskip                                                                        
                                                                                
1. $\beta$ itself;
                                                                                
2. The $\beta$-null roots; i.e., $\{\alpha : \alpha (\beta ^\vee)
   =0\}$. Thus $s_\beta \alpha =\alpha$.
                                                                                
3. $\beta$-positive pairs $\alpha , \alpha ^\prime$: These are
 characterized by $s_\beta \alpha =\alpha ^\prime$.
                                                                                
4. $\beta$-negative pairs $\alpha , \alpha ^\prime$: These are
   characterized by $s_\beta \alpha =-\alpha ^\prime$.
                                                                                
\bigskip                                                                        
                                                                                
The apparent symmetry of $\alpha$ and $\alpha ^\prime$ is
misleading. In the $\beta$-positive case, we will always take $\alpha$
to have $\alpha (\beta ^\vee) <0$. Then $\alpha ^\prime (\beta
^\vee)>0$, and $\alpha + k\beta =\alpha ^\prime$, where $k=-\alpha
(\beta ^\vee) =1,2$, or $3$.
                                                                                
In the $\beta$-negative case $\alpha ^\prime (\beta ^\vee)= \alpha
(\beta ^\vee) $, and $\alpha +\alpha ^\prime =k \beta$. When $k=1$ there
is no way to distinguish $\alpha, \alpha ^\prime$, but for $k=2,3$ we
can and will always choose $\alpha$ so that $\beta -\alpha$ is a
positive root. Then $\beta -\alpha ^\prime$ is a negative root.
                                                                                
\subsection{Linear reflections}
                                               
Let $\lambda$ be an element of the coroot lattice, and fix a positive
root $\beta$. Note that $s_\beta \lambda =\lambda $ $\Leftrightarrow$
$\beta (\lambda) =0$.  We say that the {\it opposite sign condition}
is satisfied on $(\beta ,\lambda )$ if for every $\beta$-negative pair
$\alpha, \alpha ^\prime$ the values $ \alpha (\lambda), \alpha ^\prime
(\lambda)$ have opposite sign. In particular, both are nonzero.

\begin{proposition} \label{p:linearmove}
Let $\beta \in \Phi^{+}$ and $\lambda \in Q^\vee$.
\begin{enumerate}
\item [(a)] $\el ^S (s_\beta \lambda) < \el ^S (\lam) $
$\Leftrightarrow$ $\beta (\lambda ) <0$.
\item [(b)] $\lam \da s_\beta \lam$ $\Leftrightarrow$ $\beta (\lam )<0$
  and the \osc\ is satisfied.
\end{enumerate}
\end{proposition}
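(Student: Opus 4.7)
My plan is to reduce everything to the length formula in~\eqref{e:length.fn} and then analyze how $\ell$ and $q$ change under $s_\beta$ by using the partition of $\Phi^+$ into $\{\beta\}$, $\beta$-null roots, $\beta$-positive pairs, and $\beta$-negative pairs.

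First I would observe that $s_\beta$ permutes $\Phi$ up to sign in a way that preserves the sum $\sum_{\alpha\in\Phi^+}|\alpha(\lambda)|$. Concretely, $\alpha(s_\beta\lambda)=(s_\beta\alpha)(\lambda)$, so the contribution of $\beta$ flips sign (no change in absolute value), null roots are fixed, positive pairs are swapped (no change), and negative pairs $\{\alpha,\alpha'\}$ each get negated (no change in absolute value). Hence $\ell(s_\beta\lambda)=\ell(\lambda)$, and consequently
\[
\ell^S(s_\beta\lambda)-\ell^S(\lambda)=q(\lambda)-q(s_\beta\lambda).
\]
Next I would compute $q(\lambda)-q(s_\beta\lambda)$ contribution by contribution. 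The null and positive-pair contributions cancel. The $\beta$-contribution is $\mathrm{sgn}(\beta(\lambda))$, using the convention $\mathrm{sgn}(x)=[x>0]-[x<0]$. Each $\beta$-negative pair $\{\alpha,\alpha'\}$ contributes $\mathrm{sgn}(\alpha(\lambda))+\mathrm{sgn}(\alpha'(\lambda))$. Thus
\[
\ell^S(s_\beta\lambda)-\ell^S(\lambda)=\mathrm{sgn}(\beta(\lambda))+\sum_{\{\alpha,\alpha'\}}\bigl(\mathrm{sgn}(\alpha(\lambda))+\mathrm{sgn}(\alpha'(\lambda))\bigr),
\]
where the sum is over all $\beta$-negative pairs.

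For part (a), the key observation is that for a $\beta$-negative pair $\alpha+\alpha'=k\beta$ with $k\in\{1,2,3\}$, so $\alpha(\lambda)+\alpha'(\lambda)=k\beta(\lambda)$. An easy case check shows that $\mathrm{sgn}(\alpha(\lambda))+\mathrm{sgn}(\alpha'(\lambda))$ always has the same sign as $\beta(\lambda)$ (or is zero). Consequently, if $\beta(\lambda)>0$ the whole expression is $\geq 1$, if $\beta(\lambda)<0$ it is $\leq -1$, and if $\beta(\lambda)=0$ then $s_\beta\lambda=\lambda$ and the expression is $0$. This immediately yields (a).

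For part (b), I would assume $\beta(\lambda)<0$ (which is forced by (a)) and note that a Bruhat cover requires the expression above to equal exactly $-1$. Since the $\beta$-term already contributes $-1$, each $\beta$-negative pair must contribute $0$, i.e.\ $\mathrm{sgn}(\alpha(\lambda))+\mathrm{sgn}(\alpha'(\lambda))=0$. Given that $\alpha(\lambda)+\alpha'(\lambda)=k\beta(\lambda)\neq 0$, the two signs cannot both be zero, so the only way their sum vanishes is that $\alpha(\lambda)$ and $\alpha'(\lambda)$ are both nonzero with opposite signs---precisely the opposite sign condition. The main (minor) obstacle is the careful book-keeping of the signum values across the four possible configurations $(+,+),(+,-),(-,-),(0,\cdot)$ in both the $\beta(\lambda)>0$ and $\beta(\lambda)<0$ regimes; once this is laid out, (a) and (b) follow directly from the displayed identity.
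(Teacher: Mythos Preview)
Your proof is correct and follows essentially the same approach as the paper: both reduce to the change in $q$ (equivalently, observe $\ell$ is preserved by $s_\beta$), then analyze contributions via the partition of $\Phi^+$ into $\beta$, null roots, positive pairs, and negative pairs, with the key point being that $\alpha+\alpha'=k\beta$ forces the pair contribution to have the same sign as $\beta(\lambda)$. Your use of the signum formalism is a slightly cleaner packaging of the same case analysis the paper carries out.
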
                                                                                                             
\begin{proof}
Note that application of a linear reflection $s_\beta$ to $\lambda$
can only affect the $q$ term in the $S$-length formula
\eqref{e:length.fn}.  Furthermore the $\beta$-null positive roots and
the $\beta$-positive pairs contribute zero to $\Delta q$. Since we
always have $\beta (s_\beta \lambda)=-\beta (\lam )$, the change in
length will be determined by what happens on the $\beta$-negative
pairs $\alpha, \alpha ^\prime$.

Now suppose $\beta (\lambda) <0$ and $\alpha, \alpha^{\prime}$ are a
$\beta$-negative pair. Since $\alpha + \alpha ^\prime$ is a positive
multiple of $\beta$, the values $\alpha (\lam ), \alpha ^\prime (\lam
)$ either have opposite sign, are both negative, or a negative and a
zero. The pairs with opposite sign contribute zero to $\Delta q$,
while in the other two cases we have respectively $\Delta q =2$,
$\Delta q =1$. This proves $\Leftarrow$ in (a), and also (b).

If $\beta (\lam ) >0$, then $\el ^S (s_\beta \lam )> \el ^S (\lam
)$ (substitute $s_\beta \lam$ for $\lambda$  and apply the previous
case). This yields $\Rightarrow$ in (a), completing the proof of the
theorem. 
\end{proof}
\bigskip

The following application will be particularly useful. Given a fixed
$\lambda$  and nodes $s,t \in \cD$ with opposite sign, let $I$ denote the
unique minimal path between them, regarded as a subgraph of \cD . If all
interior nodes of $I$ vanish on $\lambda$, we say that $s$ and $t$ are {\it
  linked} by $I$. 

\begin{lemma} (Linear $ABC$-moves) \label{amove} \marginpar{}
Suppose that nodes $s,t$ have opposite sign, with $s$ the negative
node, and they are linked by a subgraph $I$ of type $A$, $B$, or $C$.
Then if any one of the following conditions holds, $\lambda$ covers a
pair.
\begin{enumerate}
\item [(a)] $I$ has type $A,B$ or $C$, and $\alpha _s (\lam ) + \alpha
_t (\lam ) <0$.  Furthermore, if $I$ has type $B$ or $C$, the positive
node $t$ is required to be the minuscule node of $I$.

\item [(b)]$I$ has type $BC$, the positive node $t$ is the minuscule node of
$I$, $\astlam >0$, and $2 \aslam + \atlam <0$.
\item [(c)]$I$ has type $BC$, the negative node $s$ is the minuscule node of
$I$, $\astlam <0$ and $\aslam +2 \atlam \neq 0$.  
\end{enumerate}
\end{lemma}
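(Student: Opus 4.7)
The plan is to invoke Proposition~\ref{p:linearmove}(b): in each case I produce a positive root $\beta \in \Phi_I^+$, distinct from $\alpha_s$, such that $\beta(\lambda) < 0$ and the opposite sign condition (OSC) holds on $(\beta,\lambda)$. This yields a Bruhat descent $\lambda \downarrow s_\beta \lambda$, which together with the simple descent $\lambda \downarrow s\lambda$ exhibits two distinct elements covered by $\lambda$, hence a pair. A useful preliminary observation: if $\beta \in \Phi_I^+$ and $(\alpha,\alpha')$ is a $\beta$-negative pair, then $\alpha+\alpha' = k\beta$ is supported on $I$, forcing both $\alpha$ and $\alpha'$ to lie in $\Phi_I^+$. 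Hence the OSC need only be verified inside $\Phi_I$, where the interior simple roots of $I$ vanish on $\lambda$.

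The candidate $\beta$ is chosen by matching the coefficients at $s$ and $t$ to the given linear inequality. Writing $I = \{u_1,\ldots,u_k\}$ with its Bourbaki ordering, in case (a) I take $\beta$ to be the positive root of $\Phi_I$ with coefficient $1$ at every simple root: the highest root in type $A_k$, the short root $e_1$ in $B_k$, and the short root $e_1+e_k$ in $C_k$; then $\beta(\lambda) = \aslam + \atlam < 0$ by hypothesis. In case (b) I take $\beta$ to be the long root with coefficient $1$ at $t$ and $2$ at $s$, namely $e_1+e_k$ in $B_k$ and $2e_1$ in $C_k$, so $\beta(\lambda) = \atlam + 2\aslam < 0$. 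In case (c) I use the ``all--$1$'' short root of case (a) when $\aslam+2\atlam > 0$, and the long root with coefficient $1$ at $s$ and $2$ at $t$ (again $e_1+e_k$ in $B_k$, $2e_1$ in $C_k$) when $\aslam + 2\atlam < 0$; the boundary value $\aslam+2\atlam = 0$, at which both candidates would fail, is precisely what the hypothesis $\aslam + 2\atlam \neq 0$ rules out.

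The substantive step is verifying the OSC by enumerating the $\beta$-negative pairs in $\Phi_I^+$. When $\beta$ is long, only pairs with $\alpha+\alpha'=\beta$ arise, and their values on $\lambda$ are simple expressions such as $(\atlam, 2\aslam)$ or $(\atlam+\aslam,\aslam)$, whose opposite-sign property is immediate from $\aslam < 0 < \atlam$ and the hypothesis. When $\beta$ is short (in types $B$ and $C$), one additionally encounters pairs with $\alpha+\alpha' = 2\beta$, namely $(e_1-e_j, e_1+e_j)$ in $B_k$ and $(2e_k, 2e_1)$ in $C_k$; evaluating on $\lambda$ produces one entry equal to $\atlam + 2\aslam$ (in cases (a),(b) where $s$ is the short end) or $\aslam + 2\atlam$ (in case (c) where $t$ is the short end). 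This is the main technical obstacle. In case (a) the required OSC inequality $\atlam+2\aslam<0$ follows from $\aslam+\atlam<0$ and $\aslam<0$ via $\aslam<-\atlam \Rightarrow 2\aslam<-2\atlam \Rightarrow \atlam+2\aslam<-\atlam<0$; in cases (b) and (c) the necessary inequality is directly the hypothesis of the relevant sub-case. Finally, $\beta$ has nonzero coefficient at both $s$ and $t$, so $\beta\neq\alpha_s$ and $s_\beta\lambda\neq s\lambda$, which finishes the argument.
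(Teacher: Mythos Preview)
Your proof is correct and follows essentially the same approach as the paper: you choose the identical roots $\beta$ in each case (the ``all-$1$'' root in (a), the long root $e_1+e_k$ or $2e_1$ in (b), and the appropriate one of these two in the sub-cases of (c)), and verify the opposite sign condition by the same case analysis of $\beta$-negative pairs. If anything, you are more explicit than the paper about why $\alpha_t(\lambda)+2\alpha_s(\lambda)<0$ follows from $\alpha_s(\lambda)+\alpha_t(\lambda)<0$ in case (a); the paper glosses over this. One small expository slip: your parenthetical ``in cases (a),(b) where $s$ is the short end'' is off, since in (b) your $\beta$ is long and no $2\beta$-pairs arise---but this does not affect the argument.
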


\proof We will find a non-simple $\beta \in \Phi _{I} ^+$ such that (i)
$\lam \da s_\beta \lam $. It then follows that $\lam$ covers a pair.
Since the $\beta$-negative pairs all lie in $\Phi _{I} ^+$, we may as
well assume $I =\cD$; i.e., that $\Phi$ itself has type $A_n, B_n, C_n$
and $\{s,t\}=\{s_1,s_n\}$. We will prove part (a) in detail and sketch
the rest.

\vspace{.2in}
\noindent 
\textbf{Case} (a): Let $\beta =\alpha _1 + \alpha _2 + \ldots +\alpha
_n$. Then by assumption $\blam <0$, and we claim that the \osc\ is
satisfied. Let $\alpha , \alpha ^\prime$ be a \bnp . If $\beta$ is
long, or $\beta $ is short and the pair is also short, then $\alpha +
\alpha ^\prime =\beta$ and it is clear that one of the two contains
$\alpha _1$ and the other contains $\alpha _n$, proving our claim. In
particular, this settles type $A$.

In type $B_n$ we have $\beta=e_{1}$, which is a short root, and there
are long \bnp s $e_1 -e_i, e_1 + e_i$. In that case we have $\alpha +
\alpha ^\prime =2\beta$. Thus each root of the pair contains $\alpha
_1$. On the other hand $\alpha ^\prime =\beta + (\beta -\alpha)$, where
$\beta -\alpha$ is a positive root. It follows that $\alpha ^\prime$
contains $\alpha _n$ twice, and since we are assuming $\alpha _n
(\lambda ) <0$, the \osc\ is satisfied as required.

In type $C_n$ we have $\beta =e_1 + e_n$, which is again short. There is
one long \bnp\ $2e_1, 2e_n$, and again we have $\alpha +\alpha ^\prime =
2\beta$. Here $\alpha ^\prime =2e_1 =\alpha _0$, and so contains $\alpha
_1$ twice. The \osc\ follows as before. 

\vspace{.2in}
\noindent 
\textbf{Case} (b):  
Take as $\beta$ the smallest root containing $\aslam$ twice. In type
$B_n$, $\beta =\alpha _1 +\ldots \alpha _{n-1} + 2 \alpha _n =e_1
+e_n$. In type $C_n$, $\beta =\alpha _0$. In each case it is easy to
check the \osc .

\vspace{.2in}
\noindent 
\textbf{Case} (c):   
If $\aslam + 2\atlam >0$, use the $\beta$ of part (a). If $\aslam +
2\atlam <0$, use the $\beta$ of part (b).

\subsection{Affine reflections}

In this section we give necessary and sufficient conditions for $\lam
\da r_\beta \lam$ and $\lam \uparrow r_\beta \lam $. Note that $r_\beta
\lam =\lam $ $\Leftrightarrow$ $1 -\blam =0$.

\subsubsection{The positive and negative pair conditions}

Suppose $\lambda \in Q^\vee$.  We say that $\beta \in \Phi ^+$
satisfies the {\it positive pair condition} if either:

\begin{enumerate}
\item [(i)] $1-\blam <0$ and whenever $\alpha, \alpha ^\prime$ is a
$\beta$-positive pair, either $\alpha (\lambda) >0$ or $\alpha ^\prime
(\lambda ) \leq 0$; or
\item [(i)*] $1-\blam >0$ and whenever $\alpha, \alpha ^\prime$ is a
$\beta$-positive pair, either $\alpha (\lambda) <0$ or $\alpha ^\prime
(\lambda ) >1$.
\end{enumerate}
We say that $\beta \in \Phi ^+$ satisfies the {\it negative pair
condition} if either:
\begin{enumerate}
\item [(ii)] $1-\blam <0$ and either $\beta$ is long, or $\beta$ is short and
whenever $\alpha, \alpha ^\prime$ is a long $\beta$-negative pair,
either $\alam \leq 0$ or $\alpha ^\prime (\lambda) \leq 0$; or
\item [(ii)*] $1-\blam >0$ and either $\beta$ is long, or $\beta$ is short and
whenever $\alpha, \alpha ^\prime$ is a long $\beta$-negative pair,
either $\alam \geq 2$ or $\alpha ^\prime (\lambda) \geq 2$.
\end{enumerate}
If we wish to refer only to a specific $\beta$-positive or
$\beta$-negative pair we say that $\alpha$, $\alpha ^\prime$ satisfies
the \ppc\ or \npc . In fact we will be concerned almost exclusively
with the case $1-\blam <0$; the case $1-\blam >0$ is included for
completeness.

\begin{proposition} \label{affinemove} \marginpar{} 
Let   $\beta \in \Phi ^+$ and $\lambda \in Q^{\vee}$.  

\begin{enumerate}
\item [a)] $\el ^S (r_\beta (\lam )) < \el ^S (\lam )$ $ \Leftrightarrow $ $
1-\blam <0$.
\item [b)] 
Suppose $1-\blam <0$. Then 
$\lam \da r_\beta \lam $ $\Leftrightarrow
$ the \ppc\ and the \npc\ are satisfied. 
\item [c)] The analogous statements hold for $1-\blam >0$. 
\end{enumerate}
\end{proposition}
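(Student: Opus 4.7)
The plan is to compute the length change $\Delta := \ell^S(r_\beta\lambda) - \ell^S(\lambda)$ directly, following the strategy used for the linear case in Proposition~\ref{p:linearmove}. Writing $\ell^S(\lambda) = \sum_{\alpha \in \Phi^+} f(\alpha(\lambda))$, where $f(n) = |n|$ for $n \leq 0$ and $f(n) = n-1$ for $n \geq 1$, I note the crucial identity $f(n) = f(1-n)$, which reflects the geometric fact that $r_\beta$ fixes the affine hyperplane $\beta = 1$. I will partition $\Phi^+$ into the singleton $\{\beta\}$, the $\beta$-null roots, the $\beta$-positive pairs, and the $\beta$-negative pairs, and analyze each contribution to $\Delta$ separately.

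The $\beta$-null roots contribute zero since $\alpha(r_\beta\lambda) = \alpha(\lambda)$. Direct computation using $\beta(r_\beta\lambda) = 2 - \beta(\lambda)$ together with $f(n) = f(1-n)$ shows that $\beta$ itself contributes $-1$ when $\beta(\lambda) \geq 2$, contributes $0$ when $\beta(\lambda) = 1$, and contributes $+1$ when $\beta(\lambda) \leq 0$. For a $\beta$-positive pair $(\alpha,\alpha')$ with $\alpha(\beta^\vee) = -k$ and $\alpha'(\beta^\vee) = k$, the pair of values $(a, b) := (\alpha(\lambda), \alpha'(\lambda))$ transforms to $(b-k,\, a+k)$, subject to the constraint $b - a = k\beta(\lambda)$. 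For a $\beta$-negative pair, $(a, b)$ transforms to $(k-b,\, k-a)$ with $a + b = k\beta(\lambda)$; when $k = 1$ the identity $f(1-x) = f(x)$ makes the contribution vanish automatically, while $k \geq 2$ (which necessarily forces $\beta$ to be short) requires a finer analysis.

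For part (a), I verify by sign-analysis on $a,b,a+k,b-k$ that when $1 - \beta(\lambda) < 0$ every pair contribution is non-positive; combined with the $-1$ from $\beta$ itself this yields $\Delta \leq -1 < 0$. When $\beta(\lambda) = 1$ we have $r_\beta\lambda = \lambda$ and so $\Delta = 0$. The case $1 - \beta(\lambda) > 0$ follows by applying the preceding case to $\mu := r_\beta\lambda$, since $r_\beta^2 = 1$ and $\beta(\mu) = 2 - \beta(\lambda) \geq 2$. For part (b), the same analysis, refined, shows that each $\beta$-positive pair contributes exactly zero iff the PPC holds for that pair, and each $\beta$-negative pair contributes zero iff $k = 1$ or the NPC holds for that pair; hence $\Delta = -1$ iff the $\beta$-contribution is $-1$ and every pair contribution vanishes, i.e., iff both PPC and NPC hold. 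Part (c) is proved by the symmetric argument, or by applying (b) to $\mu = r_\beta\lambda$ and translating the conditions via $\beta(\mu) = 2 - \beta(\lambda)$.

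The main obstacle is the detailed case analysis for $\beta$-positive pairs with $k \in \{1,2,3\}$ and $\beta$-negative pairs with $k \in \{2,3\}$, arising only in the non-simply-laced types $B, C, F_4, G_2$. Here one must track how the $f$-values shift through the breakpoint at $n = 1/2$, distinguishing the subcases $a+k \leq 0$ versus $a+k \geq 1$ (and similarly for $b-k$, $k-a$, $k-b$), using the identities $b - a = k\beta(\lambda)$ and $a + b = k\beta(\lambda)$ to eliminate one variable, and verify that the exact threshold at which a pair contribution first departs from zero coincides with the boundary of the stated PPC and NPC conditions.
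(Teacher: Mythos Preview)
Your proposal is correct and follows essentially the same approach as the paper's own proof: both write $\ell^S(\lambda) = \sum_{\alpha \in \Phi^+} f(\alpha(\lambda))$ with the same $f$, partition $\Phi^+$ into $\{\beta\}$, $\beta$-null roots, $\beta$-positive pairs, and $\beta$-negative pairs, and then analyze the contribution of each piece via the transformations $(a,b) \mapsto (b-k,\,a+k)$ and $(a,b) \mapsto (k-b,\,k-a)$, invoking the identity $f(n)=f(1-n)$ to handle the $k=1$ negative pairs. Your explicit mention of the constraints $b-a = k\beta(\lambda)$ and $a+b = k\beta(\lambda)$, and your reduction of part~(c) to parts~(a) and~(b) by substituting $\mu = r_\beta\lambda$, are minor organizational variants of the paper's argument rather than a different route.
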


If there exists a $\beta$ such that $\lam \da r_\beta \lam $, we say
$\lambda $ \textit{has an affine move}.  In particular, if $\beta \neq
\alpha_{0}$, then $\lambda$ cannot be palindromic.

\proof For $m \in \bz$ let 

\[f(m)=    \left\{ \begin{array}{ll}
  -m  & \mbox{if $m\leq 0$}\\
  m-1  & \mbox{if $m>0$}
\end{array}
\right. \]

Then 

$$\el ^S (\lam )=\sum _{\alpha \in \Phi ^+} f(\alam ).$$ 

We will analyze the effect of $r_\beta$ on $\el ^S$ by considering the
four types of roots listed above separately. For any subset $\Gamma
\subset \Phi ^+$, we write $\Delta _\Gamma$ for
the contribution of $\Gamma$ to the change in $\el ^S$; more precisely: 

$$\sum _{\alpha \in \Gamma} f(\alpha (r_\beta \lambda ))
=\sum _{\alpha \in \Gamma} f(\alpha (\lambda )) + \Delta _\Gamma .$$
First observe that

$$\beta (r_\beta \lambda)=2-\beta (\lambda),$$

\noindent while 

\[f(2-m)=    \left\{ \begin{array}{ll}
 f(m)-1   & \mbox{if $1-m <0$}\\
  f(m)  & \mbox{if $1-m=0$}\\
f(m) +1 & \mbox{if $1-m>0$}
\end{array}
\right. 
\]
Hence 

\[ \Delta _{\{\beta\}}=   \left\{ \begin{array}{ll}
 -1   & \mbox{if $1-\blam <0$}\\
  1  & \mbox{if $1-\blam >0$}
\end{array}
\right. 
\] 
Second, if $\Gamma$ is the set of positive roots $\alpha$ such that
$s_\beta (\alpha )=\alpha$, then $\Delta _\Gamma=0$.  Therefore, the
proposition then follows from the lemma below:

\begin{lemma}  \marginpar{}
If $\alpha, \alpha ^\prime$ is a $\beta$-positive ($\beta$-negative)
pair, then

\[
\begin{array}{lcl}
1-\blam <0 &	 \Rightarrow  &	\Delta _{\{\alpha, \alpha ^\prime\}} \leq 0
\\
1-\blam >0 &	 \Rightarrow  &	\Delta _{\{\alpha, \alpha ^\prime\}} \geq 0.
\end{array}
\]
In each case $\Delta _{\{\alpha, \alpha ^\prime\}} =0$
$\Leftrightarrow$ the positive (negative) pair condition holds for
$\alpha, \alpha ^\prime$.
\end{lemma}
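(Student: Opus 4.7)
The plan is to compute $\Delta_{\{\alpha,\alpha'\}} = f(\alpha(r_\beta\lam)) + f(\alpha'(r_\beta\lam)) - f(\alam) - f(\alpha'(\lam))$ directly from the formula $\alpha(r_\beta\lam) = \alam + \alpha(\beta^\vee)(1 - \blam)$ established above, and then to read off the inequalities from the piecewise linear structure of $f$. The first step is to combine this formula with the relations defining the two kinds of pairs. Writing $m = \alam$, $m' = \alpha'(\lam)$, and $b = \blam$, a short substitution using $\alpha' = \alpha + k\beta$ (with $k = -\alpha(\beta^\vee)$) will give, for a $\beta$-positive pair, $(\alpha(r_\beta\lam),\, \alpha'(r_\beta\lam)) = (m' - k,\, m + k)$; and a short substitution using $\alpha + \alpha' = k\beta$ (with $k = \alpha(\beta^\vee) = \alpha'(\beta^\vee) \in \{1,2,3\}$) will give, for a $\beta$-negative pair, $(\alpha(r_\beta\lam),\, \alpha'(r_\beta\lam)) = (k - m',\, k - m)$.

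In the $\beta$-positive case the sum $m + m'$ is preserved by the move. When $1 - \blam < 0$ one has $m' - m = kb \geq 2k$, so the new pair $(m+k,\, m'-k)$ lies inside the old interval $[m,m']$; when $1 - \blam > 0$ it lies outside. Since $f(x) = \max(-x,\, x-1)$ is convex and piecewise linear with breakpoints at $0$ and $1$, standard convexity immediately gives $\Delta \leq 0$ in the first situation and $\Delta \geq 0$ in the second, with equality precisely when all four values $m,\, m+k,\, m'-k,\, m'$ lie on a single linear piece of $f$, that is, entirely in $(-\infty, 0]$ or entirely in $[1, \infty)$. I expect unpacking this equality criterion to match exactly PPC (i) and (i)${}^*$.

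For the $\beta$-negative case the symmetry $f(1-x) = f(x)$ (an immediate consequence of the definition of $f$) yields the identity $f(k-x) = f(x - (k-1))$, which rewrites $\Delta$ as $f(m - (k-1)) + f(m' - (k-1)) - f(m) - f(m')$. When $k = 1$ this is identically zero, disposing of the ``$\beta$ long'' alternative of NPC (ii), (ii)${}^*$ as well as all short pairs of this type. For $k = 2$ (types $B,C,F$) or $k = 3$ (type $G_2$) only long pairs occur, and here one studies the shift operator $g(x) = f(x - (k-1)) - f(x)$, which is monotone decreasing with boundary values $\pm(k-1)$ and satisfies the antisymmetry $g(k - x) = -g(x)$. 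Using the constraint $m + m' = kb$ and splitting on the signs of $m$ and $m'$, one then verifies the two inequalities, with equality matching the long-pair clause of NPC.

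The main obstacle will be the last step: organizing the negative-pair analysis for $k = 2,3$ so that every combination of signs of $m$ and $m'$ (subject to $m + m' = kb$) is handled and compared precisely against the long-pair clause of NPC. The antisymmetry $g(k - x) = -g(x)$ should halve the bookkeeping, and enumerating the possibilities is elementary but requires care; by contrast the positive-pair step is essentially just convexity applied to a fixed-sum pair.
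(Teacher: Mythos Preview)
Your approach is correct and is a genuinely cleaner route than the paper's. The paper proceeds by direct case analysis: it tabulates $\Delta_{\{\alpha,\alpha'\}}$ for $k=1$ according to the signs and sizes of $a=\alam$ and $a'=\aplam$, checks the result against the pair conditions, and then waves at the cases $k>1$ with ``a similar argument applies.'' You instead extract two structural facts about $f$ --- convexity of $f(x)=\max(-x,\,x-1)$ for the positive-pair case, and the symmetry $f(1-x)=f(x)$ for the negative-pair case --- and these do essentially all the work uniformly in $k$. For positive pairs your majorization observation (the new pair has the same sum and lies inside/outside the old interval according to the sign of $1-\blam$) replaces the paper's table entirely; for negative pairs, rewriting $\Delta$ as $g(m)+g(m')$ with $g(x)=f(x-(k-1))-f(x)$ and using the antisymmetry $g(k-x)=-g(x)$ gives a short uniform argument where the paper again resorts to cases. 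What you gain is conceptual clarity and a proof that handles all $k$ at once; the paper's approach is more pedestrian but perhaps easier to verify line by line.

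One caveat worth noting as you finish the write-up: when you unpack the equality criterion in the $\beta$-positive case with $1-\blam>0$ and $k>1$, you will find that ``all four values on one linear piece'' gives $m\le -k$ or $m'\ge k+1$, which for $k\ge 2$ is strictly stronger than the paper's stated condition (i)${}^{*}$ ($m<0$ or $m'>1$). A parallel discrepancy appears for (ii)${}^{*}$ when $k=3$. This is not an error in your argument; rather, the paper's starred conditions are stated only ``for completeness,'' are never used, and are slightly imprecise for $k>1$ (the paper's own proof only treats $k=1$ in detail). Your derivation gives the correct equality criterion in all cases; just be aware that for the starred conditions with $k>1$ it will not literally coincide with the text of the definition.
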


\proof   We have

$$\alpha (r_\beta \lambda)=\alpha ^\prime (\lambda) + \alpha (\beta ^\vee)$$

$$\alpha ^\prime (r_\beta \lambda)=\alpha (\lambda) -\alpha (\beta ^\vee).$$
Set $a=\alpha (\lambda)$, $a^\prime =\alpha ^\prime (\lambda)$, and $k=
-\alpha (\beta ^\vee) =1,2,3$. Thus we need only compute the effect of
the transformation $(a,a^\prime) \mapsto (a-k, a^\prime +k)$ on $f(a) +
f(a^\prime)$. Taking $k=1$ for simplicity, we find that

\begin{center}
\begin{tabular}{l}
$\Delta _{\{\alpha , \alpha ^\prime \}}=0$ if $a>0$ and $a^\prime >1$,
or $a<0$ and $a^\prime \leq 0$, or $a=0$ and $a^\prime =1$;
\\
$\Delta _{\{\alpha , \alpha ^\prime \}}<0$ if $a=0$ and $a^\prime >1$,
or $a<0$ and $a^\prime \geq 1$;
\\
$\Delta _{\{\alpha , \alpha ^\prime \}}>0$ if $a>0$ and $a^\prime
\leq1$, or $a=0$ and $a^\prime < 0$.
\end{tabular}
\end{center}

If $\alpha ,\alpha'$ are a $\beta$-positive pair, the lemma now
follows easily on inspection, making use of the fact that $\beta
+\alpha =\alpha ^\prime$. Note, for example, that if $1-\blam <0$ and
$a>0$ then automatically $a^\prime >1$ and hence $\Delta _{\{\alpha ,
\alpha ^\prime \}}=0$. If $k>1$ a similar argument applies, making use
of the fact that $k\beta +\alpha =\alpha ^\prime$.

If $\alpha, \alpha ^\prime$ is a $\beta$-negative pair, then $\alpha
(\beta ^\vee)=\alpha ^\prime (\beta ^\vee)$ and
$$\alpha (r_\beta \lambda) =\alpha (\beta ^\vee) -\alpha ^\prime
(\lambda )$$
$$\alpha ^\prime (r_\beta \lambda) =\alpha (\beta ^\vee) -\alpha 
(\lambda ).$$
Let $k=\alpha (\beta ^\vee) =\alpha ^\prime (\beta ^\vee) =1,2,3$.  
Then we need only compute the effect of the
transformation $(a, a^\prime) \mapsto (k-a, k-a^\prime)$ on $f(a) +
f(a^\prime)$. Since $f(1-a) =f(a)$ for all $a$, when
$k=1$ we find $\Delta _{\{\alpha , \alpha ^\prime\}}=0$. If $k>1$ and
$1-\blam <0$ we find

\[\Delta _{\{\alpha, \alpha ^\prime\}}   \left\{ \begin{array}{ll}
  =0  & \mbox{if $\alam \leq 0$ or $\aplam \leq 0$}\\
  <0  & \mbox{otherwise}
\end{array}
\right. 
\]
Note that if, say, $\alam \leq 0$, then $\aplam \geq k\blam \geq 2k$.
The case $k>1$ and $1-\blam >0$ is similar.  Since $k>1$
$\Leftrightarrow$ $\beta$ is short and $\alpha, \alpha ^\prime$ are
long, this completes the proof of the lemma.

\subsubsection{Graph-splitting and affine ABC-moves}

Two types of affine moves will be particularly useful. We continue to
fix $\lam \in \cQ ^\vee$.

\begin{lemma} (Graph-splitting moves) \label{gsm} \marginpar{}
Let $I$ be a proper connected subgraph of \cD , and let $\alpha _I$
denote the highest root of $\Phi _I$. If $\alpha _I (\lam )\geq 2$ and
$\alpha _I$ satisfies the \ppc , then $\lam \da \alpha _I (\lam )$ and
$\lambda$  covers a pair. 
\end{lemma}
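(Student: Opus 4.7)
The plan is to apply Proposition~\ref{affinemove}(b) to $\beta=\alpha_I$. The hypothesis $\alpha_I(\lambda)\ge 2$ immediately yields $1-\alpha_I(\lambda)<0$, and the positive pair condition is assumed. The only real work is to verify the negative pair condition (NPC) for $\alpha_I$.

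The key preliminary observation I would establish is that every $\alpha_I$-negative pair $(\alpha,\alpha')$ in $\Phi$ must lie inside $\Phi_I$: since $\alpha+\alpha'=k\alpha_I$ with $k\in\{1,2,3\}$ and $\alpha_I\in\Phi_I$ has zero coefficient on each simple root $\alpha_s$ with $s\notin I$, positivity of both $\alpha$ and $\alpha'$ forces their simple-root expansions to be supported on $I$. I would then split on the length of $\alpha_I$ in $\Phi$. If $\alpha_I$ is long in $\Phi$, the first alternative of clause (ii) of the NPC applies directly. If $\alpha_I$ is short in $\Phi$, the connectedness of $I$ forces $\Phi_I$ to be irreducible, so its highest root $\alpha_I$ is long in $\Phi_I$; hence every root of $\Phi_I$ has length at most that of $\alpha_I$ and is therefore short in $\Phi$. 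Combined with the preliminary observation, no long $\alpha_I$-negative pair can exist, and the pair condition in clause (ii) holds vacuously. Proposition~\ref{affinemove}(b) now yields $\lambda\downarrow r_{\alpha_I}\lambda$.

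To conclude that $\lambda$ covers a pair I would exhibit a second, distinct reflection giving a descent. Since $\alpha_I(\lambda)\ge 2$ we have $\lambda\ne 0$, and any non-zero element of $\cQ^\vee$ admits a simple descent: if $\lambda$ is not dominant, some $\alpha_s(\lambda)<0$; if $\lambda$ is dominant and non-zero then $\alpha_0(\lambda)\ge 2$, because the only dominant coweight with $\alpha_0(\lambda)=1$ is a minuscule fundamental coweight $\omega_s^\vee$, and such a coweight represents a non-trivial class in $\pi_1\Phi$ and so never lies in $\cQ^\vee$. Since $I$ is proper in $\cD$ we have $\alpha_I\neq\alpha_0$, so the affine reflection $r_{\alpha_I}$ is distinct from every simple reflection; together with the simple descent just produced, these are two distinct reflections lowering $\lambda$, and the elementary-geometry remark on distinct reflections at the start of this section then gives that $\lambda$ covers a pair.

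The main technical obstacle is the NPC check in the short-$\alpha_I$ case, which rests on extracting the irreducibility of $\Phi_I$ from the connectedness of $I$ together with the standard fact that the highest root of an irreducible root system is long; after that the lemma is essentially a bookkeeping consequence of Proposition~\ref{affinemove}.
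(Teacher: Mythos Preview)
Your proof is correct and follows essentially the same approach as the paper. The paper's argument is terser: it simply notes that $\alpha_I$ is long in $\Phi_I$ and that the negative pair condition involves only roots in $\Phi_I$, so it is automatic; you unpack this same observation by splitting on whether $\alpha_I$ is long or short in $\Phi$, and your justification for the existence of a simple descent is more detailed than the paper's one-line assertion.
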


\proof Note that $\alpha _I$ is long in the sub-root system $\Phi _I$,
whether it is long in $\Phi$ or not. Since the \npc\ involves only
roots in $\Phi _I$, it will automatically be satisfied in this
case. If the \ppc\ also holds, then $\lam \da r_{\alpha_{I}} \lam$ by
Proposition~\ref{affinemove}. Since there is also a left descent $\lam
\da s\lam$ for some $s \in \stil$, it follows that $\lambda$ covers a
pair. (Note that $\alpha _I \neq \alpha _0$, since $I$ is a proper
subgraph.)

\bigskip 

We call descents $\lam \da r_{\alpha_{I}} \lam$ as above {\it
graph-splitting moves}, since we are splitting the Dynkin graph \cD\
into $I$ and the components of its complement.

\begin{lemma}(Affine $A$-moves) \label{affineamove} \marginpar{}
Suppose that $s_0$ is linked to a nonzero node $t$ by a proper type
$A$ subgraph $I$, and that $1-\azlam$, $\atlam$ have opposite sign. If
$1-\azlam +\atlam <0$, then $\lambda$ covers a pair.
\end{lemma}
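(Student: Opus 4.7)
The plan is to exhibit a non-simple affine Bruhat descent $\lambda \downarrow r_\beta \lambda$ and combine it with the left descent coming from whichever of $s_0$ or $t$ is the negative node, thereby producing two distinct descents from $\lambda$.

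Write $I - \{s_0\} = \{s_{i_1}, \ldots, s_{i_k}\}$ labelled consecutively along $I$ with $s_{i_k} = t$, and set
$$\beta = \theta - \alpha_{i_1} - \cdots - \alpha_{i_k}.$$
An induction on $k$, using that $s_{i_k}$ is not adjacent to $s_0$ in \dtil\ when $k \geq 2$ (so that $\theta(\alpha_{i_k}^\vee) = 0$), shows $\beta = s_{i_k} s_{i_{k-1}} \cdots s_{i_1}(\theta)$; thus $\beta$ is $W$-conjugate to $\theta$ and in particular is a long positive root of $\Phi$, and the associated affine reflection $r_\beta$ is the longest reflection of the finite parabolic $\wtil_I$. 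Using the vanishing of the interior nodes of $I$ on $\lambda$,
$$\beta(\lambda) = \azlam - \atlam, \qquad 1 - \beta(\lambda) = (1-\azlam) + \atlam < 0,$$
so by Proposition~\ref{affinemove}(a), $\el^S(r_\beta \lambda) < \el^S(\lambda)$.

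To upgrade this to the cover $\lambda \downarrow r_\beta \lambda$ via Proposition~\ref{affinemove}(b), I would verify the \ppc\ and \npc. The \npc\ is automatic since $\beta$ is long. The crucial structural claim for the \ppc\ is that every $\beta$-positive pair $(\alpha, \alpha')$ with $\alpha' = \alpha + k\beta$ must satisfy $k = 1$, $\alpha$ is supported in $I - \{s_0\}$, and the coefficient of $\alpha_t$ in $\alpha$ equals $1$. Indeed, for any simple root $\alpha_s$ with $s \notin I$, the $\alpha_s$-coefficient of $\alpha'$ is $(c_\alpha)_s + k m_s$; demanding this lies in $[0,m_s]$ (as $\alpha' \in \Phi^+$ and $\theta$ is the highest root) forces $(c_\alpha)_s = 0$ and $k = 1$. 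The identification $(c_\alpha)_t = 1$ then follows from the defining relation $\alpha(\beta^\vee) = -1$, combined with the fact that $\alpha$ is necessarily a consecutive-sum positive root of the type $A_k$ subsystem $\Phi_{I-\{s_0\}}$. Once this is established, $\alpha(\lambda) = \atlam$ and $\alpha'(\lambda) = \alpha(\lambda) + \beta(\lambda) = \azlam$.

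The sign hypothesis now splits into two subcases. If $1 - \azlam < 0$ and $\atlam > 0$, then $\alpha(\lambda) = \atlam > 0$ verifies the \ppc, and the left descent $\lambda \downarrow s_0 \lambda$ supplies the second cover. If $1 - \azlam > 0$ and $\atlam < 0$, then $\alpha(\lambda) \leq 0$ while $\alpha'(\lambda) = \azlam \leq 0$ (since $\azlam$ is an integer less than $1$), again verifying the \ppc, and the descent $\lambda \downarrow s_t \lambda$ supplies the second cover. Because $I$ has at least two nodes, $\beta \neq \theta$, so $r_\beta$ is non-simple and the two descents are distinct, yielding the desired pair. The main obstacle is the structural claim about $\beta$-positive pairs---specifically the coefficient computation $(c_\alpha)_t = 1$; once this reduction from arbitrary simple-root coefficients of $\alpha$ to just $\atlam$ and $\azlam$ is achieved, the rest of the argument is formal.
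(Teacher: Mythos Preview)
Your proof is correct and follows essentially the same route as the paper: you choose the same root $\beta = \alpha_0 - \sum_{s \in I - \{s_0\}} \alpha_s$, verify $1 - \beta(\lambda) < 0$, observe that $\beta$ is long so the \npc\ is automatic, and check the \ppc\ by reducing $\alpha(\lambda)$ and $\alpha'(\lambda)$ to $\atlam$ and $\azlam$. The paper simply asserts the identities $\alpha(\lambda) = \atlam$ and $\alpha'(\lambda) = \azlam$ without justification; your coefficient argument (forcing $(c_\alpha)_s = 0$ for $s \notin I - \{s_0\}$ and $k = 1$ via the bound $m_s(\alpha') \le m_s$, then using $\alpha(\beta^\vee) = -1$ on the type $A$ subsystem to get $(c_\alpha)_t = 1$) supplies exactly what the paper omits.
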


\proof Let $\beta =\alpha _0 -\sum _{s \in I: s \neq
s_0} \alpha _s $. Then $\beta (\lam ) >1 $ and $\beta$ is a long root. Hence
the \npc\ holds. Moreover, the \ppc\ is also automatically satisfied:
For suppose $\alpha , \alpha ^\prime$ is a \bpp , so that $\beta + \alpha
=\alpha ^\prime$. Then $\alam =\atlam$ and $\aplam =\azlam$. Hence
either $\alam >0$ or $\aplam \leq 0$. Then $\lam \da r_\beta \lam$ and
$\lambda$  covers a pair. 

\bigskip

We call the descent $\lam \da r_\beta \lam$ constructed above an {\it
  affine A-move}.  There are similar but less productive moves in the
  B,C cases; the following lemma will suffice for our purposes:

\begin{lemma} \label{affinebcmove} \marginpar{}
(Affine $BC$-moves) Suppose $s_0$ is a negative node of $\lambda$,
linked to a positive node $t$ by a subgraph $I$ of type $BC$. If
$1-\azlam +\atlam <0$ and $1-\azlam +2 \atlam \neq 0$, then $\lambda$
covers a pair.
\end{lemma}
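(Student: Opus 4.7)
The plan is to mimic the proof of the affine $A$-move (Lemma~\ref{affineamove}): I will produce a positive root $\beta\in\Phi^+$ with $\beta\neq\alpha_0$ such that $\lam\da r_\beta\lam$. Since $s_0$ is a negative node of $\lam$, \eqref{e:simple.reflections} gives a left descent $\lam\da s_0\lam$ as well, and because $r_\beta\neq s_0$ (i.e.\ $\beta\neq\alpha_0$), the two distinct Bruhat covers witness that $\lam$ covers a pair.

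The root $\beta$ will be supported on $\alpha_0$ together with the simple roots of $I\setminus\{s_0\}$, so that $\beta(\lam)$ reduces to a combination of $\azlam$ and $\atlam$ (all intermediate nodes of $I$ vanish on $\lam$). Paralleling cases (b) and (c) of the linear $BC$-move in Lemma~\ref{amove}, the construction splits on the sign of $1-\azlam+2\atlam$. When $1-\azlam+2\atlam<0$, I take $\beta$ to be the positive root of $\Phi_I$ whose expansion has multiplicity $2$ on the simple root sitting at the short end of $I$, so that $\beta(\lam)=\azlam-2\atlam$ and the hypothesis forces $1-\beta(\lam)<0$. When $1-\azlam+2\atlam>0$ (so that $1-\azlam+\atlam<0$ still holds), I take instead the affine $A$-move analogue, arranging $\beta(\lam)=\azlam-\atlam$; again $1-\beta(\lam)<0$. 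In either case Proposition~\ref{affinemove}(a) yields $\el^S(r_\beta\lam)<\el^S(\lam)$.

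To upgrade this length drop to an actual Bruhat cover, I verify the positive-pair and negative-pair conditions of Proposition~\ref{affinemove}(b). The PPC is essentially automatic: every $\beta$-positive pair $(\alpha,\alpha^\prime)$ lies in the subsystem $\Phi_I^+$, and the values $\alam$, $\aplam$ depend only on $\azlam$ and $\atlam$ through the coefficient rule $k=-\alpha(\beta^\vee)\in\{1,2,3\}$, so our sign hypotheses force the required disjunction exactly as in the affine $A$-move. The NPC is automatic when the chosen $\beta$ is long (the second case above). In the first case $\beta$ is short, and the only potentially violating long $\beta$-negative pair is the unique pair with $\alpha+\alpha^\prime=2\beta$; here the extra hypothesis $1-\azlam+2\atlam\neq 0$ is exactly what rules out the borderline configuration in which both $\alam$ and $\aplam$ would hit the threshold. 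Finally $\beta\neq\alpha_0$ by construction---in the first case $\beta$ is short while $\alpha_0$ is long, and in the second case $\beta$ has strictly smaller support than $\alpha_0$---so $r_\beta\neq s_0$.

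I expect the main obstacle to be the NPC verification in the short-$\beta$ case. The exact long $\beta$-negative pairs that appear depend on whether $I$ sits inside $\dtil$ as a type-$B$ or type-$C$ subsystem, and on whether $s_0$ or $t$ occupies the short end of $I$; each subcase needs its own bookkeeping of which long roots $\alpha$, $\alpha^\prime$ pair up under $s_\beta$ and of how their values on $\lam$ are determined by $\azlam$ and $\atlam$. The asymmetric hypothesis ``$\neq 0$'' rather than ``$<0$'' in the statement is precisely calibrated to eliminate the single borderline configuration this analysis reveals, mirroring the role played by the condition $2\aslam+\atlam<0$ in the linear version (Lemma~\ref{amove}(b)).
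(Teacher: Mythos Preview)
Your overall strategy---split on the sign of $1-\azlam+2\atlam$, exhibit a non-simple affine reflection $r_\beta$ with $\lam\da r_\beta\lam$, and pair it with the left descent $\lam\da s_0\lam$---matches the paper's. But two concrete errors would block the argument as written.

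First, in your ``second case'' ($1-\azlam+2\atlam>0$) you take $\beta=\alpha_0-\sum_{s\in I,\,s\neq s_0}\alpha_s$ and assert that $\beta$ is long, so that the \npc\ is automatic. This is false: in type $B_n$ with $t=s_n$ one computes $\beta=e_1$, a short root; in type $C_n$ with $t=s_i$ ($i<n$) one gets $\beta=e_1+e_{i+1}$, again short. The paper explicitly verifies the \npc\ here by observing that in any long \bnp\ $\alpha,\alpha'$ one of the two roots does not contain $\alpha_t$, hence vanishes on $\lam$ (using that $\lam$ may be assumed dominant with $t$ the only positive node). Your claim that ``every $\beta$-positive pair lies in $\Phi_I^+$'' is also wrong: the pairs range over all of $\Phi^+$, and the \ppc\ holds for the global reason that $\beta$ is the highest root containing $\alpha_t$ once.

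Second, in your ``first case'' ($1-\azlam+2\atlam<0$) your $\beta$ is described as a root of $\Phi_I$; but $I$ contains $s_0$, so $\Phi_I$ is not a subsystem of the finite root system $\Phi$, and $r_\beta$ is only defined for $\beta\in\Phi^+$. The paper does \emph{not} use an affine $r_\beta$ here at all. Instead it reduces to the three ambient types $B_n$, $C_n$, $F_4$ (assuming $\lam$ dominant) and in types $B,C$ applies a \gsm\ (Lemma~\ref{gsm}) with $J=S\setminus\{s_n\}$, resp.\ $J=\{s_{i+1},\ldots,s_n\}$: one checks $\alpha_J(\lam)=\azlam-2\atlam\geq 2$ and that the unique neighbor node of $J$ is positive. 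In type $F_4$ it uses the specific long root $\beta=1222$. So the $\neq 0$ hypothesis is not, as you suggest, calibrated to kill a borderline \npc\ configuration; rather the two signs invoke two entirely different moves.
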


\proof We may assume $\lambda$  is dominant. There are three cases: 

\bigskip

(1) $\Phi$ has type $B_n$ and $t=s_n$;

(2) $\Phi$ has type $C_n$ and $t=s_i$, with $i<n$;

(3) $\Phi$ has type $F_4$ and $t=s_3$.

\bigskip

Suppose $1-\azlam +2 \atlam >0$ (this rules out Case 3). Then $\lambda$  has
no other positive nodes. Let $\beta =\alpha _0 -\sum _{s \in I: s \neq
s_0} \alpha _s $, as in the previous lemma. Then $\blam >1$, and the
\ppc\ is satisfied because $\beta$ is the highest root containing
$\alpha _t$ once. We also have (i) $\beta$ is a short root; and (ii) if
$\alpha , \alpha ^\prime$ is a long \bnp , then $\alpha$ does not
contain $\alpha _t$ (compare the proof of Lemma~\ref{amove}), and
hence $\alam =0$. Hence the \npc\ is satisfied, $\lam \da r_\beta
\lam$, and $\lambda$  covers a pair. 

Now suppose $1-\azlam +2 \atlam <0$. Let $J=\Phi - \{s_n\}$ in Case 1,
$J= \{s_{i+1}, \ldots s_n \}$ in Case 2. Then $\alpha _J (\lam )=\azlam
-2\atlam \geq 2$ (note that $m_s (\alpha _J) =m_s (\alpha _0)$ for all
$s \in J$). Since the unique neighbor node of $J$ in $S$ is positive,
$\lambda$  covers a pair by Lemma~\ref{gsm}. 

In Case 3 let $\beta$ be the highest root containing $\alpha _3$
twice. Then $\blam \geq 2$ and the \ppc\ is satisfied. Moreover $\beta
=1222=e_1 -e_4$ is a long root,\footnote{Here we are following
Bourbaki notation so
$1222=\alpha_{1}+2\alpha_{2}+2\alpha_{3}+2\alpha_{4}$.}  so the \npc\
is satisfied. Hence $\lam \da r_\beta \lam$ and $\lambda$ covers a
pair.

\section{Proof of the Palindromy Theorem~\ref{pal}}

The proof considers the anti-dominant, dominant, and ``mixed'' (i.e.,
neither dominant nor anti-dominant) cases separately, proceeding by a
process of elimination based on the Palindromy Game II. The reader
should keep at hand the list of chains (\S 7).  The spiral varieties
were shown to be palindromic in \cite{mfilt}; see \S 12. The
exceptional case $\lambda =(3,0,-1)$ in type $B_{3}$ was shown to be
palindromic in Corollary~\ref{funnypal}, therefore, it remains to
prove the ``only if'' part of the theorem.  We will make frequent use
of the \gsm s and linear/affine ABC-moves introduced in \S 9.

\subsection{Anti-dominant case}

\begin{theorem}  \marginpar{}
If $\lambda$ is nonzero and anti-dominant, then $X_\lambda$ is
palindromic in precisely the following cases:

\begin{enumerate}
\item [(i)] $\Phi$ has type $A_n$ and $\lambda$ is a spiral class of the form
$-k(n+1) \omega ^\vee _i$ for $i=1,n$ and $k\geq 1$;
\item [(ii)] $\Phi$ has type $C_n$ or $G_2$ and $\lambda =-\alpha _0 ^\vee$. In
these cases $\lambda$  is a chain. 
\end{enumerate}
\end{theorem}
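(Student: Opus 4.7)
Invoke Rule~1 of the palindromy game: $\lambda$ has a unique negative node in $\dtil$. Since $\lambda$ is anti-dominant and nonzero, the $s_0$-label $1-\azlam\geq 1$, so $s_0$ is not the negative node. Thus the negative node lies in $S$, and every other node of $\cD$ vanishes on $\lambda$, forcing
$$\lambda = -a\omega_s^\vee,\qquad s\in S,\ a\geq 1,\ a\omega_s^\vee \in \cQ^\vee.$$
For any $\beta\in\Phi^+$ we have $\beta(\lambda) = -a\, n_s(\beta)\leq 0$, hence $1-\beta(\lambda)\geq 1$, and Proposition~\ref{affinemove}(a) rules out every affine Bruhat descent from $\lambda$. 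Thus each Bruhat cover of $\lambda$ arises from a linear reflection $s_\beta$; moreover, since $\alpha(\lambda)\leq 0$ for all $\alpha\in\Phi^+$, the opposite-sign condition of Proposition~\ref{p:linearmove}(b) is severely restrictive.

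\textbf{First palindromy constraint.} Because $\cL_G$ has a unique $2$-cell, $a_1=1$ in $|\cL_G|(t)$, so palindromy forces $a_{d-1}=1$: the only cover of $\lambda$ is $s\lambda$. The next constraint is $a_{d-2}=a_2$; by Bott's formula~(Theorem~\ref{t:Bott}) together with~\eqref{hassefork}--\eqref{e:options}, $a_2=2$ in type $A_n$ ($n\geq 2$) and $a_2=1$ in every other type. Every element of length $d-2$ below $\lambda$ is covered by $s\lambda$, so $a_{d-2}$ counts the Bruhat covers of $s\lambda$. The element $s\lambda$ is no longer anti-dominant, and $\alpha_s(s\lambda)=a$, so if $a\geq 2$ the affine reflection $r_{\alpha_s}$ is a candidate cover of $s\lambda$.

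\textbf{Type-$A$ case.} In type $A_n$ with $n\geq 2$ and $s=s_i$ for $2\leq i\leq n-1$, firing $s_i$ makes both neighbors $s_{i-1},s_{i+1}$ negative in $s_i\lambda$, giving two weak-order covers of $s_i\lambda$. The lattice condition $ai\equiv 0\pmod{n+1}$ with $1\leq i\leq n$ forces $a\geq 2$; for $\beta=\alpha_i$ each positive pair $\{\alpha,\alpha+\alpha_i\}$ satisfies $(\alpha+\alpha_i)(s_i\lambda)=0$, so the PPC holds at $s_i\lambda$, and the NPC is vacuous in type $A$. Hence $r_{\alpha_i}(s_i\lambda)$ is a third cover, giving $a_{d-2}\geq 3>2=a_2$, a contradiction. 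Thus $s\in\{s_1,s_n\}$, and the lattice condition then yields $a=k(n+1)$, recovering exactly the spiral classes; palindromy of the spirals is proved in~\cite{mfilt}. The case $n=1$ is absorbed into this family since every element of \wtils\ in type $A_1$ is a chain.

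\textbf{Remaining types.} In types $B_n$ $(n\geq 3)$, $D_n$, $E_6$--$E_8$, and $F_4$, palindromy requires $a_{d-2}=1$, so $s\lambda$ must cover exactly one element. Rule~2 (no two zero nodes adjacent to $s$ outside type $A$) forces $s$ to be a leaf of $\cD$, and the lattice condition at each such leaf typically forces $a\geq 2$; then the affine reflection $r_{\alpha_s}$ supplies an extra cover of $s\lambda$ (its PPC is immediate because $\alpha'(s\lambda)=0$ on each standard positive pair, and the NPC is vacuous when $\alpha_s$ is long, or controlled by Lemma~\ref{affinebcmove} when $\alpha_s$ is short). Combined with the iterated linear and affine $ABC$-moves of Lemmas~\ref{amove}--\ref{affinebcmove} and the fork-too-soon principle applied along the descent path, this eliminates every remaining candidate $(s,a)$. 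In types $C_n$ and $G_2$ (including the case $B_2=C_2$) the same program leaves exactly one survivor $\lambda=-\alpha_0^\vee$, equal to $-\omega_1^\vee$ in $C_n$ and $-\omega_2^\vee$ in $G_2$; these are the anti-dominant chains classified in Section~\ref{s:chains}, and every chain is palindromic. The principal obstacle throughout is this case-by-case elimination in the non-$A$ types, which requires tracking the firing game over several descent levels and deploying the full palindromy-game-II machinery.
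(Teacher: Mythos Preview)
Your setup and the type-$A$ argument are correct and essentially match the paper's approach: reduce to $\lambda=-a\omega_s^\vee$, show that for an interior node $s$ the element $s\lambda$ has three covers (two weak-order descents plus the affine move $r_{\alpha_s}$), hence $\lambda$ covers a trident, and then extract the spiral classes from the leaf-node case.

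The gap is in your ``Remaining types'' paragraph. First, the sentence ``the lattice condition at each such leaf typically forces $a\geq 2$'' is false: in types $E_8$, $F_4$, $G_2$ one has $\cQ^\vee=\cP^\vee$, so $a=1$ is always permitted; and in $B_n$ with $n$ even, in $E_6$ with $s=s_2$, etc., one again has $\omega_s^\vee\in\cQ^\vee$. So the lattice is not the obstruction. Second, even granting your sketch that $a\geq 2$ forces an extra cover of $s\lambda$ (this is the paper's graph-splitting move with $I=\{s\}$, and does require checking the \ppc\ in the multi-laced cases), you have only reduced to $a=1$, i.e.\ to $\lambda=-\omega_s^\vee$ with $s$ a non-minuscule leaf. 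This is where the actual work lies, and you do not do it: you write ``Combined with the iterated linear and affine $ABC$-moves\ldots this eliminates every remaining candidate'' and then concede that this elimination is ``the principal obstacle''. That is a description of what needs to be proved, not a proof.

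Concretely, the paper dispatches the $a=1$ cases as follows: in $D,E,F$ every leaf is minuscule or else $-\omega_s^\vee$ forks too soon; in $G_2$, $-\omega_1^\vee$ forks too soon while $-\omega_2^\vee=-\alpha_0^\vee$ is the surviving chain; in $C_n$ only $s=s_1$ is a non-minuscule leaf. The genuinely delicate case is $B_n$ with $s=s_n$ and $n$ even: here $-\omega_n^\vee$ does \emph{not} fork too soon, and one must fire down four levels and produce an affine move with $\beta=\alpha_{n-1}+2\alpha_n$ to exhibit a \emph{scepter} below $\lambda$. None of this appears in your argument, and Lemmas~\ref{amove}--\ref{affinebcmove} as stated do not apply directly, since their hypotheses require linked nodes of opposite sign in $S$, whereas your $\lambda$ has only the single node $s$ nonzero.
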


\proof Suppose $\lambda$ is palindromic. There is a unique $s \in S$
such that $\alpha _s (\lambda) <0$, since otherwise $\lambda$ covers a
pair. Thus $\lambda =-m \omega ^\vee _s$ for some $s \in S$ and
$m>0$. If $s$ is not a leaf node, firing it shows that $\lambda$
covers a fork. This contradicts palindromy except in type $A$. In type
$A$ we have $\lambda =(\und , -m , \und)$ with $-m$ in the $i$-th
position, where $1<i<n$ and $m \geq 2$ (otherwise \nope ). Then
$\lambda \da \mu =(\und , -m , m ,-m , \und )$. Then there is a
graph-splitting move $\mu \da r_{\alpha _i} \mu $. Hence $\lambda$
covers a trident, a contradiction.  Thus $s$ is a leaf node.

In type $A$ we then have $s=s_1, s_n$ and $m$ divisible by $n+1$, so
that $\lambda$ is spiral as claimed. For the remainder of the proof we
assume $G$ is not of type $A$.

 We claim $m=1$; in other words, $\lambda =-\omega _s ^\vee$ for some
leaf node $s$. In particular, $s$ is not minuscule.  To prove the
claim, suppose $m>1$, and let $a \in S$ denote the unique node
adjacent to $s$. If $s$ is a long node, then firing it yields $\mu
=s\lam =m\omega ^\vee _s -m \omega ^\vee _a$. Then there is a \gsm\
$\mu \da r_{\alpha _s} \mu$ and hence $\lambda$ covers a fork. To see
this we need only check the \ppc . But if $\alpha _s + \alpha =\alpha
^\prime$, then since $s$ is a leaf node we have $m_s (\alpha ^\prime)
<m_a (\alpha ^\prime)$, and hence $\alpha ^\prime (\mu ) \leq 0$. A
similar argument works if $s$ is short (note this only happens in type
$B_n$ with $s=s_n$, and in type $G_2$ with $s=s_1$). This proves the
claim.

The theorem now follows immediately in types $CDEF$: In type DEF every
leaf node is either minuscule or forks too soon, while in type $C$ we
can only have $s=s_1$. It remains to consider type $B$ and $G_2$. 

In type $B_n$ there is one non-minuscule extreme node $s=s_{n}$. Note that
$n$ is necessarily even, since otherwise $\omega _n ^\vee$ is not in the
coroot lattice. In particular $n \geq 4$. Then

$$-\omega _n ^\vee =(\und ,-1) \da (\und , -2,1) \da (\und ,-2, 2,-1)$$ 

\noindent so that now $(\und ,-2,2,-1)$ covers the pair $\mu =(\und,
-2,2,0,-1)$ and $\eta =(\und , -2, 0, 1)$. Furthermore $\mu$ in turn
covers the pair $\mu _1 =(\und, -2, 2,0,0,-1)$ (or $(2,0,0,-1)$ if
$n=4$) and $\mu _2=(\und , -2,2,-2,1)$. Now let $\beta =\alpha _{n-1}
+ 2 \alpha _n$. Then $\beta$ is a long root with $\beta (\eta)=2$. The
\ppc\ for is immediately verified, so $\eta \da r_\beta \eta $.
Furthermore, $\eta$ differs from $\mu_{2}$ by a simple reflection.  It
follows that $\lambda$ covers a scepter, a contradiction.

In type $G_2$, we have $-\omega _2 ^\vee =-\alpha _0 ^\vee $, which is a
chain. On the other hand, $-\omega _1 ^\vee$ forks too soon (see the
Hasse diagram in \S 13).

\subsection{Dominant case}

Throughout this section we assume $\lambda \in \cQ ^\vee$ is
nontrivial and dominant. In particular, $\lam \da s_0 \lam$.

\begin{theorem} \label{paldom} \marginpar{}
Suppose $\lambda$ is dominant and nonzero. Then $X_\lambda$ is palindromic in
precisely the following cases:
\begin{enumerate}
\item [a)] $\alpha _0 (\lambda )=2$, in which case $X_\lambda$ is a \cpo.
\item [b)] $\Phi$ has type $A_n$ and $\lambda$ is a spiral class of the form
$k(n+1) \omega ^\vee _i$ for $i=1,n$ and $k\geq 1$.
\item [c)] $\Phi$ has type $G_2$ and $\lambda =\omega _1 ^\vee$. In this case
$\lambda$  is a chain. 
\end{enumerate}
\end{theorem}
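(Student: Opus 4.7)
The argument splits on $\alpha_0(\lambda)$. Since $\lambda$ is dominant and nonzero, $\alpha_s(\lambda)\geq 0$ for $s\in S$, and $\alpha_0(\lambda)\geq 2$ (otherwise $\lambda$ would have no descents in \dtil\ and hence vanish). When $\alpha_0(\lambda)=2$ we are immediately in case (a) by Proposition~\ref{domcpo}. So assume $\alpha_0(\lambda)\geq 3$; then $\lambda$'s only simple descent is $s_0$, and palindromy (via $a_{\el^S(\lambda)-1}=a_1=1$, using that the affine Grassmannian has a unique 2-cell) forces $\lambda$ to cover exactly the element $s_0\lambda$. Consequently no $\beta\in\Phi^+\setminus\{\alpha_0\}$ may satisfy $\beta(\lambda)\geq 2$ together with the PPC and NPC of Proposition~\ref{affinemove}, since such a $\beta$ would produce a second cover $r_\beta\lambda$.

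The plan is to exploit this restriction, together with graph-splitting (Lemma~\ref{gsm}) and the affine $ABC$-moves (Lemmas~\ref{affineamove}, \ref{affinebcmove}), to narrow $\lambda$ down to the listed cases. I first show $\lambda=c\,\omega_s^\vee$ for a unique node $s\in S$: if $\lambda$ had two distinct positive simple labels at nodes $s\neq t$, then a judicious choice of proper connected subdiagram $I\subset\cD$ (chosen large enough to contain both $s$ and $t$, or so that $\alpha_I(\lambda)\geq 2$) makes the PPC for $\alpha_I$ easy to verify, the NPC being automatic inside $\Phi_I$; Lemma~\ref{gsm} then produces $\lambda\downarrow r_{\alpha_I}\lambda\neq s_0\lambda$, a contradiction. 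Hence $cm_s=\alpha_0(\lambda)\geq 3$, with $c$ subject to the $\cP^\vee/\cQ^\vee$ divisibility constraints of \S 3.2.

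It remains to carry out a type-by-type elimination of $\lambda=c\,\omega_s^\vee$. For each non-$A$, non-$G_2$ type I would exhibit $\beta\neq\alpha_0$ producing a second affine descent of $\lambda$---typically by graph-splitting on the maximal subdiagram $I$ containing $s_0$ but not $s$, or by an affine $BC$-move when $s$ is separated from $s_0$ across a double edge. In type $A_n$, the cyclic symmetry of $\tilde A_n$ supplies two natural affine moves flanking $s$ on either side unless $s$ is extreme; so $s\in\{s_1,s_n\}$, and the divisibility $jc\equiv 0\pmod{n+1}$ then forces $c\in(n+1)\bz_{>0}$, giving case (b), with palindromy of these spirals cited from \cite{mfilt}. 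In type $G_2$, the cases $c\omega_2^\vee$ with $c\geq 2$ and $c\omega_1^\vee$ with $c\geq 2$ are each excluded by a direct affine move with $\beta$ a suitable short positive root, leaving only $\omega_1^\vee$, the chain identified in \S 7, as case (c). The main obstacle is the bookkeeping in this last step: verifying PPC and NPC for the chosen $\beta$ in the mixed-length types $B,C,F,G$ is delicate, since the exceptions ($A_n$ spirals and the $G_2$ class $\omega_1^\vee$) arise precisely because at the minimal $c$ keeping $\lambda\in\cQ^\vee$ the natural $ABC$-moves narrowly fail, and confirming this sharp failure elsewhere is the crux of the argument.
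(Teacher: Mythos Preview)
Your outline—dispose of $\alpha_0(\lambda)=2$ via Proposition~\ref{domcpo}, then for $\alpha_0(\lambda)\geq 3$ exploit that $\lambda$ may only cover $s_0\lambda$ and eliminate type by type—is close in spirit to the paper's argument, but there is a real gap in the type $A$ step.

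You assert that for $\lambda=c\,\omega_i^\vee$ in $A_n$ with $1<i<n$, cyclic symmetry supplies ``two natural affine moves'' on $\lambda$. In fact there are \emph{none}. Every $\beta\in\Phi^+\setminus\{\alpha_0\}$ with $\beta(\lambda)\geq 2$ is of the form $e_j-e_k$ with $(j,k)\neq(1,n+1)$; taking the $\beta$-positive pair that extends past the interior endpoint gives $\alpha(\lambda)=0$ and $\alpha'(\lambda)=c>0$, so the PPC of Proposition~\ref{affinemove} fails. Likewise the affine $A$-move of Lemma~\ref{affineamove} is inapplicable along either arc of the cycle, since $1-\alpha_0(\lambda)+\alpha_i(\lambda)=1-c+c=1\geq 0$. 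Thus $\lambda$ really does cover only $s_0\lambda$, and you cannot detect non-palindromy at level $d-1$. The paper's argument here is essentially different: it fires $s_0$ first to obtain $\mu=s_0\lambda=(1-c,0,\dots,c,\dots,0,1-c)$ and then exhibits three distinct descents of $\mu$ (the simple ones at $s_1,s_n$, plus the linear move $s_{\alpha_0}$, for which the opposite-sign condition is easily checked). Hence $\lambda$ covers a \emph{trident}, and palindromy fails at level $d-2$ because $a_2\leq 2$ in type $A$. Your proposal lacks this two-step mechanism.

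A secondary issue: your reduction to a single positive node by graph-splitting on an $I$ ``large enough to contain both $s$ and $t$'' typically fails the PPC for the same reason—the $\alpha_I$-positive pairs reaching outside $I$ have $\alpha(\lambda)=0$, $\alpha'(\lambda)>0$. The paper does not argue this way; it applies the affine $A$/$BC$-moves of Lemmas~\ref{affineamove} and~\ref{affinebcmove} directly to a positive node $t$ linked to $s_0$, and the resulting inequality $1-\alpha_0(\lambda)+\alpha_t(\lambda)\geq 0$ is what forces $\alpha_0(\lambda)\leq 2$ whenever two positive nodes are present (Case~1b of the proof). That inequality, not a graph-split, drives the reduction.
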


\proof The theorem is trivial in type $A_1$, so from now on we exclude
that case. If $\azlam =2$ then $X_{\lambda}$ is a \cpo\ by
Proposition~\ref{domcpo}, while spiral classes are palindromic as
discussed in \S~\ref{s:spiral.varieties}. So fix a dominant nonzero
palindromic $\lambda$; we must show that one of the three conditions
holds.

Let $t$ be a positive node linked to $s_0$ by $I$.

\bigskip

Case 1: Suppose we can take $I$ of type $A$. Then we must have
$1-\alpha _0 (\lam ) + \alpha _t (\lam )\geq 0$ (otherwise there is an
affine $A$-move showing $\lambda$ covers a pair). Hence $(m_t -1)
\alpha _t (\lam ) \leq 1$, and if equality holds then there are no
other positive nodes.

1a: Suppose $m_t >1$. Then $m_t =2$ and $\lam =\omega ^\vee _t$; in
particular $\alpha _0 (\lam )=2$.

1b: Suppose $m_t =1$ (i.e., $t$ is minuscule) and there is one other
positive node $u$. Then $u$ must also be minuscule, forcing $\Phi$ of
type $A$, $D$, or $E_6$; in particular, $\Phi$ is simply-laced. Then
$u$ is also connected to $s_0$ by a type $A$ subgraph with all
interior nodes vanishing, so by Lemma~\ref{affineamove} we must have
$1-\alpha _0 (\lam ) + \alpha _u (\lam )\geq 0$. It follows that $\lam
=\omega ^\vee _t +\omega ^\vee _u$ (with $t,u$ minuscule); in
particular $\alpha _0 (\lam )=2$.

1c: Suppose $\lam =m \omega ^\vee _t$ with $t$ minuscule. Then $m \neq
1$ (otherwise (\nope ). If $m=2$ then $\alpha _0 (\lam) =2$, so
suppose $m>2$. If $\Phi$ is not of type $A$, then $t$ is not adjacent
to $s_0$, and the node $a$ adjacent to $s_0$ has $m_a =2$. Hence
firing $s_0$ yields $\mu =s_0 \lam$ with $\alpha _0 (\mu) =2(1-m) +m
=2-m<0$.  Note that $\alpha _0$ satisfies the \osc\ on $\mu$: For if
$\alpha _0 =\alpha + \alpha ^\prime$, then each of $\alpha, \alpha
^\prime$ must contain $\alpha _a$ once. Thus if $\alpha$, say,
contains $\alpha _t$, we have $\alpha (\lam)=1$ and $\alpha ^\prime
(\lam) =1-m <0$. Hence there is a linear move $\mu \da s_{\alpha _0}
\mu$ by Proposition~\ref{p:linearmove}, showing that $\lambda$ covers
a fork.

Now suppose $\Phi$ has type $A$. If $t$ is not adjacent to $s_0$, a
similar argument shows that $\lambda$ covers a trident, again
contradicting palindromy. Finally, if $t=s_1,s_n$ then $n+1$ divides $m$
(otherwise \nope ), and hence $\lambda =k(n+1) \omega ^\vee _i$ for $i=1,n$ and
$k\geq 1$.

\bigskip

This completes the proof of Theorem~\ref{paldom} in Case 1. In particular, the
theorem is now proved in the simply-laced case.

\bigskip

Case 2: $I$ has type $C$. This only happens when $\Phi$ itself has
type $C$ and $t=s_i$, $i<n$. Then $1-\alpha _0 (\lam ) + 2\alpha _t
(\lam) \neq 0$, since $\alpha _0$ takes only even values in type
$C$. By Lemma~\ref{affinebcmove} we must have $1-\alpha _0 (\lam ) +
\alpha _t (\lam )\geq 0$ (otherwise $\lam$ covers a pair). Then $\lam
=\omega ^\vee _t$ and $\alpha _0 (\lam )=2$ as in Case 1a.

\bigskip

Case 3: $I$ has type $B$. This can only happen in two ways: 

3a: $\Phi$ itself has type $B_n$ and $t=s_n$. Then $\alpha _1 (\lam
)=0$, because otherwise we can take $t=s_1$ in case 1b, a
contradiction. Hence $\alpha _0 (\lam) =2\alpha _n (\lam)$ and
$1-\alpha _0 (\lam ) + 2\alpha _n (\lam) =1 \neq 0$.  Thus, as in Case
2, we conclude $\lambda =\omega ^\vee _n$ and $\alpha _0 (\lam )=2$.

3b: $\Phi$ has type $F_4$ and $t=s_3$. Then $\lambda$ covers a pair by
Lemma~\ref{affinebcmove}, a contradiction.

\bigskip

Case 4: $I$ does not have type $ABC$. This can happen in three ways:

3a: $\Phi$ has type $C_n$ and $t=s_n$. As in Case 1c, we conclude that
$\lam =2\omega _n ^\vee$ and $\alpha _0 (\lam )=2$.

3b: $\Phi$ has type $F_4$ and $t=s_4$. Thus $\lambda =m\omega _4
^\vee$ for some $m>0$; we will show that $m=1$ and hence $\azlam
=2$. We use only the following facts: (i) $m_4 =2$; and (ii) if $m_4
(\alpha) =1$, then $\alpha $ is short.  Now suppose $m>1$, and let
$\beta$ denote the maximal positive root with $m_4(\beta) =1$. Then
$\beta $ is short, and $1-\beta (\lam )<0$. If $\alpha, \alpha
^\prime$ is a short $\beta$-positive pair, then $\beta + \alpha
=\alpha ^\prime$. Hence $\alpha$ contains $\alpha _4$ (by the
maximality of $\beta$) and $\alam >0$. Furthermore, there are no long
$\beta$-positive pairs $\alpha, \alpha ^\prime$. For in that case
$2\beta +\alpha =\alpha ^\prime$, and hence $\alpha$ does not contain
$\alpha _4$. But $\beta + \alpha$ is a root, so this contradicts the
maximality of $\beta$. Hence the \ppc\ is satisfied.  If $\alpha ,
\alpha ^\prime$ is a long \bnp , then $\alpha +\alpha ^\prime
=2\beta$. It follows from (i) and (ii) that one of $\alam , \aplam $
is zero. Hence the \npc\ is also satisfied. Then $\lam \downarrow
r_\beta \lam$ and $\lambda$ covers a pair by
Proposition~\ref{affinemove}, a contradiction.

\bigskip

{\it Remark}: Although it is not necessary to do so, one can easily
write out the roots used above explicitly, using the tables in
\cite{bourbaki}: We have $\beta =1231$. There are four $\beta$-positive
pairs, with the $\alpha$ of the pair given by the graph roots containing
$\alpha _4$: $0001$, $0011$, $0111$, $1111$. There are two long
$\beta$-negative pairs: $(1342, 1120)$ and $(1242, 1220)$.

\bigskip

3c: $\Phi$ has type $G_2$ and $t=s_1$. If $\alpha _2 (\lam )>0$, then $\lam
=\omega ^\vee _2$ by Case 1a, and $\alpha _0 (\lam )=2$. It remains to
show that if $\lam =m \omega ^\vee _1$, then $m=1$. Let $\beta = \alpha
_1 + \alpha _2 $ (the highest root containing $\alpha _1$ once). If
$m>1$ then $\beta (\lam ) \geq 2$ and the \ppc\ is satisfied. There is
one $\beta$-negative pair $\alpha _2, \alpha _0$ (a long pair). But
$\alpha _2 (\lam )=0$, so the \npc\ is satisfied and $\lambda$  covers a
pair, a contradiction.

This completes the proof of the theorem. 

\subsection{Mixed case}

We assume throughout this section that $\lambda$  is palindromic of mixed
type. Thus $\lambda$  has a unique negative node $s \in S$ and at least one
positive node $t \in S$. Most of the work is done in a series of preliminary
lemmas, culminating in Corollary~\ref{mixcor}.

Recall, we say that an element $\lambda$ of mixed type is {\it
overweight} if $\azlam \geq 2$, or equivalently, $s_0$ is a negative
node. An overweight $\lambda$ of mixed type covers a pair, and hence
is not palindromic.

\subsubsection{Preliminary lemmas} 

\begin{lemma} \label{mix1} \marginpar{}

Suppose that $\Phi$ is not of type $G_2$. Then either 

(a) there exists a positive node $t$ linked to $s$ with $\aslam + \atlam
\geq 0$, or

(b) $\Phi$ has type $C_n$ and $\lambda$  is the chain $(\und , 1,-2)$. 

\end{lemma}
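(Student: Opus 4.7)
\textit{Proof proposal.}

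The plan is to prove the contrapositive: assume $\astlam < 0$ for every positive node $t \in S$ linked to $s$ in $\cD$, and show this forces $\lambda = (\und,1,-2)$ in type $C_n$. The main tool is the linear $ABC$-move Lemma~\ref{amove}. Pick any positive $t \in S$ linked to $s$ by a minimal subgraph $I \subseteq \cD$; since $\Phi$ is not of type $G_2$, $I$ has type $A$, $B$, or $C$.

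First the easy reductions. If $I$ has type $A$, or has type $B$/$C$ with $t$ at the minuscule end of $I$, then Lemma~\ref{amove}(a) applied to $\astlam<0$ produces a Bruhat pair-cover of $\lambda$, contradicting palindromy; this disposes of all simply-laced types and leaves only $I$ of type $B$ or $C$ with $s$ at the minuscule end. In that remaining situation, Lemma~\ref{amove}(c) gives a pair-cover unless $\aslam + 2\atlam = 0$. So I am left with the \emph{bad configuration}: write $\aslam = -2c$ and $\atlam = c$ with $c \geq 1$. Using once more that (a) fails for every linked positive together with Lemma~\ref{amove}(a), any other positive node in $S$ would be linked to $s$ by a type-$A$ subgraph and trigger the forbidden pair-cover; combined with the non-overweight constraint $\azlam \leq 1$, this forces $\lambda$ to be supported on $\{s,t\}$ inside $S$.

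The hard part will be dispatching this bad configuration by ambient type. In type $B_n$ with $s=s_1$, the path $s_0$--$s_2$--$s_1$ in $\dtil$ is type~$A$ with zero interior, and a direct computation gives $\azlam = 0$ and $1-\azlam+\aslam = 1-2c < 0$, so Lemma~\ref{affineamove} provides an affine $A$-move and a pair-cover. In type $B_n$ with $s=s_j$ for $j\geq 2$, and in each of the three bad subconfigurations in $F_4$, firing $s$ simultaneously negates the two neighbors of $s$ in $\cD$, so $s\lambda$ has at least two weak descents; since the Poincar\'e series $|\wtils|(t)$ in these types satisfies $a_2=1$, palindromy of $\lambda$ would demand $a_{d-2}=1$, a contradiction. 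In type $C_n$ with $s=s_n$ and $t=s_i$: for $c\geq 2$ the affine reflection $r_\beta$ with $\beta=e_1-e_n$ has $\beta(\lambda)=c\geq 2$, its \npc\ is vacuous (no long $\beta$-negative pairs exist), and a direct check of the \ppc\ (made easy by $\lambda$ having at most two nonzero coordinates) produces a second cover of $\lambda$; for $c=1$ but $i<n-1$, the linear reflection $s_{e_i-e_n}$ yields a second cover of $s_n\lambda$ beyond the weak cover $s_{n-1}s_n\lambda$, again triggering the depth-$2$ contradiction. The only residual case is $c=1, i=n-1$, which is exactly $\lambda=(\und,1,-2)$.
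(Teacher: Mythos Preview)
Your overall strategy mirrors the paper's: rule out every linked positive node via Lemma~\ref{amove}, isolate the residual configuration $\aslam+2\atlam=0$ with $s$ at the minuscule end of a $B/C$ subgraph, and kill it type by type. Most of your casework is sound (and in places differs pleasantly from the paper---e.g.\ in type $B_n$ you use an affine $A$-move through $s_0$ and a fork argument, whereas the paper uses a single graph-splitting move on $I\setminus\{s\}$; in type $C_n$ your explicit $r_{e_1-e_n}$ for $c\geq 2$ works and is close in spirit to the paper's GSM).

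There is, however, a genuine gap. Your opening claim ``since $\Phi$ is not of type $G_2$, $I$ has type $A$, $B$, or $C$'' is false in type $F_4$. If $\{s,t\}=\{s_1,s_4\}$ with $\alpha_2(\lambda)=\alpha_3(\lambda)=0$, the linking subgraph is all of $F_4$, which is none of $A,B,C$. Lemma~\ref{amove} simply does not apply, and nothing else in your argument covers this configuration. The paper treats it separately as its Case~3 (``$\Phi$ has type $F_4$ and $\{s,t\}=\{s_1,s_4\}$: in both cases $\lambda$ forks too soon''), and you need an analogous argument.

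A smaller slip: in your $F_4$ bad configuration with $s=s_1$, $t=s_3$, you write that firing $s$ ``negates the two neighbors of $s$ in $\cD$.'' But $s_1$ is a leaf of $\cD$; it has only one neighbor there. The conclusion is still correct because the \emph{affine} neighbor $s_0$ also becomes negative (firing $s_1$ sends the $s_0$-label $1-\alpha_0(\lambda)=1$ to $1-2c<0$), so $s_1\lambda$ does have two weak descents---you just need to say $\dtil$ rather than $\cD$. Relatedly, your blanket claim that $\lambda$ is supported on $\{s,t\}$ is not quite right in $F_4$ (e.g.\ for $s=s_2$, $t=s_3$ the node $s_4$ is not linked to $s$ and is not always forced to vanish by the weight bound), though your fork argument in that subcase does not actually need it.
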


\proof {\it Case 1}: There is a positive node $t$ linked to $s$ by a
subgraph $I$ of type $ABC$, where in the $BC$ case either (i) $t$ is
the minuscule node of $I$, or (ii) $s$ is the minuscule node of $I$
and $\aslam + 2\atlam \neq 0.$ In this case $\aslam +\atlam \geq 0$ by
Lemma~\ref{amove} (otherwise $\lambda$ covers a pair).

{\it Case 2}: There is a positive node $t$ linked to $s$ by a subgraph   
$I$ of type $BC$, where $s$ is the minuscule node of $I$ and             
$\aslam + 2\atlam =0.$ 

In type $B_n$ we have $t=s_n$, and we may assume there are no other
positive nodes, since such a node would be linked to $s$ by a type $A$
subgraph and we are back in Case 1. Then there is a \gsm\ based on
$I-\{s\}$; hence $\lambda$  covers a pair, a contradiction. 

In type $C_n$ we have $s=s_n$, so the assumption $\aslam + 2\atlam =0$
implies $t$ is the only positive node \owt . If $\atlam \geq 2$ there
is again a \gsm\ and $\lambda$ covers a pair, a contradiction. Hence
$\atlam =1$, with $t=s_i$ for some $i<n$. If $i<n-1$ then after firing
$s_n$ there is a linear $A$-move showing that $\lambda$ covers a
fork. Hence $\lam =(\und , 1, -2)$, which is a chain.

In type $F_4$ with $s=s_1$ and $t=s_3$, we have $\alpha _4 (\lam )=0$
\owt . Since $\aslam + \atlam <0$, firing down shows that $\lambda$  forks
too soon. If $s=s_2, s_3$ then we may assume that $\alpha _1 (\lam )=0$,
since otherwise we are back in Case 1. In both cases it follows that
$\lambda$  covers a fork, a contradiction.

{\it Case 3}: $\Phi$ has type $F_4$ and $\{s,t\}=\{s_1, s_4\}$. In
both cases $\lambda$ forks too soon.  This completes the proof of
Lemma~\ref{mix1}.

\begin{lemma} \label{mix2} \marginpar{}
Let $t$ be any positive node (not necessarily linked to $s$). 
Suppose \linebreak $m_t >m_s$ and $\alpha _s (\lam) + \alpha _t (\lam) \geq
0$. Then 

(1) $m_t =m_s +1$;

(2) $\aslam =-1$ and $\atlam =1$;

(3) there are no other positive nodes.
\end{lemma}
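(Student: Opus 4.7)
The plan is to squeeze $\azlam$ between a strong upper bound coming from palindromy and a lower bound coming directly from the two hypotheses, and then to read off (1)--(3) from the resulting equality conditions.

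First I would invoke the standing observation of the section: since $\lambda$ is palindromic and of mixed type, $\lambda$ is not overweight, so $\azlam \leq 1$.  This is the upper bound and is the only place palindromy is used.

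For the lower bound, I will set $a = -\aslam$ and $b = \atlam$; these are positive integers because $s$ is the unique negative node and $t$ is a positive node, and the second hypothesis reads $b \geq a$.  Expanding the highest root,
\[
\azlam = \sum_{u \in S} m_u \aulam = -m_s a + m_t b + \sum_{u \neq s,t} m_u \aulam,
\]
and each term in the tail sum is non-negative since $s$ is the unique negative node.  Hence
\[
\azlam \;\geq\; m_t b - m_s a \;=\; (m_t-m_s)\,b + m_s(b-a).
\]
Both summands on the right are non-negative integers, with $(m_t-m_s)\,b \geq b \geq 1$ because $m_t-m_s \geq 1$.  Therefore $\azlam \geq 1$.

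Squeezing the two bounds gives $\azlam = 1$ with every inequality tight.  Tightness of $(m_t-m_s)\,b = 1$ forces $m_t = m_s + 1$ and $b = 1$, which combined with $b \geq a \geq 1$ forces $a = 1$; and tightness of $\azlam = m_t b - m_s a$ forces the tail sum to vanish, so $\aulam = 0$ for every $u \neq s,t$.  These are conclusions (1), (2), (3).  There is no real obstacle here: the content of the lemma is the observation that the hypothesis $m_t > m_s$ (together with $\aslam+\atlam \geq 0$) is exactly what is needed to make the lower bound on $\azlam$ already reach the ceiling imposed by non-overweightness, and the rest is bookkeeping with positive integers.
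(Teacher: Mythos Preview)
Your proof is correct and is exactly the paper's approach, only made fully explicit: the paper's own proof is the single line ``If any one of the three conditions is not satisfied, then $\lambda$ is overweight, a contradiction,'' and your argument unpacks the arithmetic of $\azlam = \sum_u m_u \aulam$ that underlies that sentence. Nothing further is needed.
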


\proof If any one of the three conditions is not satisfied, then $\lambda$  is
overweight, a contradiction. 

\bigskip

\begin{lemma} \label{mix3} \marginpar{}

Let $t$ be a positive node linked to $s$. Suppose $m_s =m_t$ and $\aslam
+\atlam >0$. Then 

(1) $m_s =m_t =1$;

(2) $\aslam + \atlam =1$;

(3) there are no other positive nodes. 

\noindent Furthermore, either $\aslam =-1$ or $\Phi$ has type $A$ and $\lambda$  is
spiral

\end{lemma}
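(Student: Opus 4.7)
The plan is to derive the numbered conclusions (1)--(3) directly from the ``overweight'' obstruction to palindromy, and then settle the final dichotomy by a type-by-type analysis organised around the classification of minuscule nodes.

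First I will expand
\[
\alpha_0(\lambda) \;=\; \sum_{r\in S} m_r\,\alpha_r(\lambda),
\]
and note that, since $s$ is the unique negative node of $\lambda$, every summand with $r\neq s$ is non-negative. With $m=m_s=m_t$, the contributions from $s$ and $t$ together give $m(\aslam+\atlam)\geq m$, because $\aslam+\atlam\geq 1$. Hence $m\geq 2$ already forces $\alpha_0(\lambda)\geq 2$, making $\lambda$ overweight and therefore non-palindromic; this gives (1). Once $m=1$, any additional positive node $u$ contributes $m_u\geq 1$ more and again forces $\alpha_0(\lambda)\geq 2$, so (3) holds. Finally $\alpha_0(\lambda)=\aslam+\atlam\leq 1$ combined with $\aslam+\atlam\geq 1$ yields (2).

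For the dichotomy, write $\aslam=-k$ and $\atlam=k+1$ with $k\geq 1$, and suppose $k\geq 2$. The equalities $m_s=m_t=1$ force both $s$ and $t$ to be \emph{minuscule} nodes of $\cD$. Inspecting the highest-root coefficients type by type eliminates $B$, $C$, $F_4$, $G_2$ (each has at most one minuscule node) as well as $E_7$ (whose unique minuscule node is $s_7$); this leaves only types $A$, $D_n$, and $E_6$. In $D_n$ the pair $\{s_{n-1},s_n\}$ is ruled out directly by the coroot-lattice congruences of Section~3.2, which would force $-k$ and $k+1$ to share a common parity.

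The main obstacle will be to eliminate the remaining configurations $\{s_1,s_{n-1}\}$, $\{s_1,s_n\}$ in $D_n$ and $\{s_1,s_6\}$ in $E_6$, each of which admits infinitely many values $k\geq 2$ satisfying the lattice condition. For each such $\lambda$ my aim is to produce a non-simple Bruhat descent $\lambda\da r\lambda$ different from the obvious simple descent $\lambda\da s\lambda$, so that $\lambda$ covers a pair and palindromy fails. The natural candidates are the affine reflections $r_\beta$ with $\beta$ a long positive root supported on the connected component of $\cD-\{s\}$ containing $t$, chosen so that $\blam=k+1>1$; the positive- and negative-pair conditions of Proposition~\ref{affinemove} should then be verified by inspection, exploiting the fact that only two nodes of $\lambda$ are non-zero so that the relevant $\beta$-positive pairs form a very short list in each type. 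Once all non-$A$ cases are thereby eliminated, the type-$A$ coroot-lattice condition $\sum i\,a_i\equiv 0\pmod{n+1}$ identifies $\lambda=-k\omega_s^\vee+(k+1)\omega_t^\vee$ with $\{s,t\}=\{s_1,s_n\}$ as precisely one of the spiral families of \cite{mfilt}.
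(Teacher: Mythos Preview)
Your derivation of (1)--(3) from the overweight obstruction is correct and is exactly the paper's argument, just written out in more detail.

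The ``furthermore'' clause, however, has two genuine gaps.

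\textbf{Type $A$.} Your final sentence is wrong on two counts. First, the coroot-lattice congruence $\sum i a_i\equiv 0\pmod{n+1}$ does \emph{not} force $\{s,t\}=\{s_1,s_n\}$: for any pair $s=s_i$, $t=s_j$ (adjacent or not) there are infinitely many $k$ with $(j-i)k+j\equiv 0$. Second, and more seriously, you never address the case where $s$ and $t$ are non-adjacent in $\cD$. Those $\lambda$ are in $\cQ^\vee$, are not spiral (spiral requires adjacency, Proposition~\ref{spiralreps}), and must be shown non-palindromic. The paper handles this by firing $s$ and then applying a graph-splitting move on $I=\{s\}$ to show $\lambda$ covers a trident; when $s,t$ are adjacent one reads off directly from Proposition~\ref{spiralreps} that $\lambda$ is spiral.

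\textbf{Types $D$ and $E_6$.} Your proposed direct affine move $\lam\da r_\beta\lam$ with $\beta$ supported on the component of $\cD-\{s\}$ containing $t$ does not satisfy the positive pair condition. For instance in $E_6$ with $s=s_1$, $t=s_6$: for any such $\beta$ there is an $\alpha\in\Phi^+$ with $m_1(\alpha)=1$, $m_6(\alpha)=0$ and $\alpha+\beta=\alpha'\in\Phi^+$; then $\alpha(\lambda)=-k<0$ while $\alpha'(\lambda)=-k+(k+1)=1>0$, so the \ppc\ fails. The paper's device is to \emph{first fire $s$}, obtaining $\mu=s\lambda$ with $\alpha_s(\mu)=k\geq 2$, and then apply the graph-splitting move with $I=\{s\}$ to $\mu$. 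The key observation making the \ppc\ go through is that the unique neighbour $a$ of $s$ in $S$ is minuscule in $S-\{s\}$, so every $\alpha_s$-positive pair has $m_a(\alpha)=1$; one then checks that either $\alpha(\mu)=\alpha_a(\mu)+\alpha_t(\mu)=1>0$ or $\alpha'(\mu)=\alpha_s(\mu)+\alpha_a(\mu)=0$. This shows $\mu$ covers a pair, hence $\lambda$ covers a fork.
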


\proof It is immediate that conditions (1)-(3) hold \owt.  In
particular $\Phi$ has type $A$,$D$ or $E_6$, since there are two
minuscule nodes. Now suppose $\aslam \leq -2$, and let $\mu
=s\lam$. Then $\alpha _s (\mu) =-\aslam \geq 2$. In types $D$ and
$E_6$ there is a unique node $a \in S$ adjacent to $s$, and $a \neq
t$. Moreover, $a$ is minuscule in $S-\{s\}$. Hence if $\alpha , \alpha
^\prime$ is an $\alpha _s$-positive pair, so that $\alpha _s +\alpha
=\alpha ^\prime$, it follows that $\alpha _a$ occurs exactly once in
$\alpha$. Therefore, either $\alpha (\mu )=\alpha _a (\mu ) + \alpha
_t (\mu ) =1$, or $\alpha ^\prime (\mu) =\alpha _s (\mu) +\alpha _a
(\mu ) =0$. Thus $\mu$ covers a pair by Lemma~\ref{gsm} (with
$I=\{s\}$), and hence $\lambda$ covers a fork, a contradiction.

Now suppose $\Phi$ has type $A$. Then if $s$ and $t$ are not adjacent, a
similar argument shows that $\lam$ covers a trident, a
contradiction. If $s$ and $t$ are adjacent, $\lambda$ is spiral by
Proposition~\ref{spiralreps}. 

\bigskip

\begin{lemma} \label{mix4} \marginpar{}
Let $t$ be a positive node linked to $s$.  Suppose $m_s =m_t$ and
$\aslam + \atlam =0$. Then $\aslam =-1$.

\end{lemma}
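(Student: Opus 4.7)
Assume for contradiction that $\aslam \le -2$, so $c := -\aslam \ge 2$ and $\atlam = c$ by hypothesis. The goal is to exhibit a Bruhat descent $\lambda \downarrow r_{\beta}\lambda$ distinct from the simple descent $\lambda \downarrow s\lambda$; then $\lambda$ covers a pair, contradicting palindromy.

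The construction uses the graph-splitting move of Lemma~\ref{gsm}. First I would reduce to the case that $s$ and $t$ are the only nonzero simple nodes of $\lambda$, using the preceding mixed-case lemmas (Lemmas~\ref{mix1}--\ref{mix3}) together with overweight considerations to eliminate extraneous positive or negative nodes. Let $I$ be the connected component of $\cD \setminus \{s\}$ containing $t$; this is a proper connected subgraph of $\cD$ with highest root $\alpha_I$. Since $s \notin I$ while every interior node of the unique path from $s$ to $t$ lies in $I$ and vanishes on $\lambda$, the only simple root in $I$ supporting $\lambda$ is $\alpha_t$. Hence $\alpha_I(\lambda) = c_t(\alpha_I)\cdot c \ge c \ge 2$, verifying the first hypothesis of Lemma~\ref{gsm}. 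The negative-pair condition is automatic, since $\alpha_I$ is long within $\Phi_I$ and the \npc involves only roots of $\Phi_I$.

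The core step is verifying the positive-pair condition. Any $\alpha_I$-positive pair $(\alpha, \alpha')$ with $\alpha' = \alpha + k\alpha_I$ must satisfy $\alpha' \notin \Phi_I^+$, otherwise $\alpha'$ would strictly exceed the highest root of $\Phi_I$. Thus $\alpha'$ has positive coefficient at $\alpha_s$. A type-by-type inspection shows this coefficient is exactly $1$, and the coefficient $c_t(\alpha')$ of $\alpha_t$ in $\alpha'$ is at most $1$ in simply-laced types (and bounded analogously in non-simply-laced types). Evaluating, $\alpha'(\lambda) = -c + c_t(\alpha')\cdot c \le 0$; and when instead $\alpha$ already lies in $\Phi_I^+$ with $\alpha(\lambda) > 0$, the \ppc holds directly. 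Lemma~\ref{gsm} then yields the second descent $\lambda \downarrow r_{\alpha_I}\lambda$, producing a pair covered by $\lambda$ and the desired contradiction.

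The main obstacle will be the uniform verification of the \ppc across all types. In the simply-laced cases ($A, D, E_6$), the equality $m_s = m_t = 1$ forces both nodes to be minuscule, and the $\alpha_I$-positive pairs have a transparent description in terms of standard root system data; the check reduces to counting coefficients of $\alpha_s$ and $\alpha_t$ in each $\alpha'$. For types $A$ with $s$ interior, the component $I$ is simply the $A$-type subgraph lying to the side of $t$, and the analysis goes through in the same way. The non-simply-laced types $B, C, F_4$ introduce short-root pairs requiring separate treatment, but the maximality of $\alpha_I$ within $\Phi_I$ continues to make the check tractable.
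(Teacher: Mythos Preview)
Your plan has two genuine gaps.

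\textbf{First}, the reduction to ``$s$ and $t$ are the only nonzero nodes'' is not available from Lemmas~\ref{mix1}--\ref{mix3}. Those lemmas do not eliminate a second positive node under the hypothesis $\aslam+\atlam=0$; Lemma~\ref{mix3} gives that conclusion only when $\aslam+\atlam>0$. In the paper this is a separate Case~2: an extra positive node $u$ can survive (necessarily minuscule with $\alpha_u(\lambda)=1$), and it requires its own argument using linear moves and type-specific fork/overweight checks. Your proposal simply omits this case.

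\textbf{Second}, even in Case~1 your graph-splitting root $\alpha_I$ does not satisfy the positive-pair condition in general. Take type $D_n$ with $s=s_i$, $t=s_j$, $2\le i<j\le n-2$, so $m_s=m_t=2$ (your assertion that $m_s=m_t=1$ in simply-laced types confuses this lemma with Lemma~\ref{mix3}). Then $I=\{s_{i+1},\dots,s_n\}$ and $\alpha_I=e_{i+1}+e_{i+2}$. The pair $\alpha=e_q-e_{i+1}$ (any $q\le i$) and $\alpha'=e_q+e_{i+2}$ is an $\alpha_I$-positive pair with
\[
\alpha(\lambda)=-c<0,\qquad \alpha'(\lambda)=1\cdot(-c)+2\cdot c=c>0,
\]
so neither alternative of the \ppc\ holds and Lemma~\ref{gsm} does not apply. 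The paper avoids this by choosing a different root: the \emph{maximal} $\beta$ with $m_t(\beta)=m_s(\beta)+1$ (here $\beta=e_1+e_{i+1}$). The maximality is exactly what forces $m_t(\alpha)\neq m_s(\alpha)$ for every $\beta$-positive pair, which is the mechanism that makes the \ppc\ go through. Your $\alpha_I$ lacks this property.
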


\proof {\it Case 1}: Assume there are no other positive nodes.
Suppose $\aslam \leq -2$, and let $\beta$ denote the maximal root such
that $m_t \beta =m_s (\beta) +1$. Then $\beta (\lam ) \geq 2$. If
$\beta$ is a long root and $\alpha, \alpha ^\prime$ is a
$\beta$-positive pair, then since $\beta +\alpha =\alpha ^\prime$, we
must have $m_t (\alpha) \neq m_s (\alpha)$ by the maximality of
$\beta$. If $m_t (\alpha) > m_s (\alpha)$ then $\alpha (\lam )>0$. If
$m_t( \alpha) < m_s (\alpha)$ then $\alpha ^\prime (\lam )\leq
0$. Hence the \ppc\ is satisfied and we conclude that $\lambda$ covers
a pair by Proposition~\ref{affinemove}. In particular, this completes
the proof of Case 1 in the simply-laced case.

In types $B$ and $F_4$, $\beta$ is always a long root. This is clear on
inspection in type $B$. In $F_4$ we have $\{s,t\} =\{s_1, s_4\}$. If
$s_1$ is the negative node then $\beta =1342$, the second highest
root. This is clearly long since it belongs to the type $A_2$ subgraph
of \dtil\ on $s_0, s_1$. If $s_1$ is the positive node then $\beta
=1220$, the highest root of the $B_3$ subsystem. Hence $\beta$ is long,
completing the proof in $B, F_4$. 

In type $C_n$, $\beta$ will be a short root and more care is required
in the case of long pairs. Let $s=s_i, t=s_j$, where $1 \leq
i,j<n$. If $i<j$ then $\beta =e_1 + e_{i+1}$. There are no long
$\beta$-positive pairs, so the \ppc\ follows as before. However, there
is one \bnp\ $2e_1, 2e_{i+1}$. But $2e_1 =\alpha _0$ and $\alpha _0
(\lam ) =0$, so the \npc\ holds as well and $\lambda \da r_{\beta }
\lambda $ by Proposition~\ref{affinemove}.  If $i>j$ then $\beta =e_1
-e_i$. There are no long \bnp s, but there is one long \bpp\ $2e_1,
2e_i$. Thus to check the \ppc\ we have to consider $2 \beta + 2e_i
=2e_1 =\alpha _0$. But $\alpha _0 (\lam )=0$ so again $\lambda \da
r_{\beta } \lambda $.  Hence, in either case, $\lambda$ covers a pair,
a contradiction.

\bigskip

{\it Case 2}: Assume there is more than one positive node. Then the
following three conditions hold \owt :   

\bigskip
                                                                             
(1) there is only one additional positive node $u$; 
                            
(2) $u$ is minuscule;
                                                           
(3) $\alpha _u (\lam ) =1$.
                                                     
\bigskip

\noindent This rules out $E_8, F_4 $ and $G_2$ (since there is a
minuscule node) and also $C_n$ (since $\aulam $ is odd
by\S~\ref{sub:comparision}). Now assume that $\aslam \leq -2$.

Suppose first that $u,t,s$ lie on a type $A$ subgraph.  If $s$ is
linked to $u$, then $\lambda$ covers a pair by Lemma~\ref{amove}. So
suppose that $t$ lies between $u$ and $s$. If $\Phi$ has type $A$,
then there is a \gsm\ showing that $\lambda$ covers a pair where $I$
is the connected component of $S-\{u,s \}$ containing $t$.  In type
$BD$ we must have $u=s_1$ (if $u=s_{n-1}, s_n$ in type $D_n$, then
$\lam \notin \cQ ^\vee$). Thus $t=s_i$, $s=s_j$, with $1<i<j$. Now let
$\beta =e_1 +e_i$, which is the smallest root containing $\alpha _u$
such that $m_t (\beta) =1$, $m_s(\beta) =2$. Then $\beta (\lam) =1 +
\aslam <0$, and one easily checks that the \osc\ is satisfied. Hence
$\lambda$ covers a pair in this case by Proposition~\ref{affinemove}.

In type $E_6$ we have $m_s =m_t=2$. In all cases $s$ has two adjacent
zero nodes in \dtil , and therefore $\lambda$ covers a fork. In type $E_7$ we have
$s=s_1, s_2, s_3$. In all cases $\lambda$ forks too soon. 

If $u,t,s$ do not lie on a type $A$ subgraph, then we are in type $B_n$
with $u=s_1$ and $s=s_n$. Then after firing $s$ there is a linear
$A$-move showing that $\lambda$ covers a fork. (Note that this works whether
or not $t$ is adjacent to $s$, bearing in mind that back-firing along the
double bond adds $2\aslam$ to the value of its neighbor.)

\bigskip

\begin{lemma} \label{mix5} \marginpar{}

Suppose $\Phi$ is not of type $G_2$ and $m_t <m_s$ for all positive
nodes $t$. Then $\aslam =-1$. 

\end{lemma}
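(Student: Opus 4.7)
The argument will proceed by contradiction: I assume $\aslam \leq -2$ and seek to produce a non-simple Bruhat descent $\lam \da r\lam$. Together with the simple descent $\lam \da s\lam$, this will force $\lam$ to cover a pair, contradicting palindromy.

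The first step is to apply Lemma~\ref{mix1}. This is legitimate because $\Phi$ is not of type $G_2$ by hypothesis, and the exceptional $C_n$-chain $(\und , 1, -2)$ excluded in that lemma has $m_t = m_{n-1} = 2 > 1 = m_n = m_s$, contradicting our standing hypothesis $m_t < m_s$. Lemma~\ref{mix1} then yields a positive node $t$ linked to $s$ with $\aslam + \atlam \geq 0$, whence $\atlam \geq 2$. The hypothesis $m_t < m_s$ restricts $(s,t)$ sharply: in $B_n$ one has $t = s_1$; in $C_n$, $t = s_n$; in $D_n$, $t \in \{s_1, s_{n-1}, s_n\}$; and in $E_6, E_7, E_8, F_4$ the admissible pairs form a short finite list.

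For the second descent, my primary tool will be a graph-splitting move via Lemma~\ref{gsm}. I take $J$ to be the connected component of $\cD \setminus \{s\}$ containing $t$, so that $m_s(\alpha_J) = 0$ and $\alpha_J(\lam) \geq 2$ in the primary case where $t$ is the only positive node in $J$. The \ppc\ on $\alpha_J$ should follow from the maximality of $\alpha_J$ in $\Phi_J$: for any $\alpha_J$-positive pair $\alpha, \alpha' = \alpha + k\alpha_J$, the root $\alpha'$ must pick up a simple-root contribution outside $J$ (otherwise $\alpha'$ would exceed $\alpha_J$ in $\Phi_J$), and such a contribution comes either from $s$ (yielding $\alpha'(\lam) \leq 0$) or from a zero-valued node.

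When this basic move does not directly apply---because of additional positive nodes, or failure of the \npc\ when $\alpha_J$ is short with long negative pairs---I would fall back on refined moves: firing $s$ first and invoking a linear $ABC$-move from Lemma~\ref{amove}, an affine $BC$-move from Lemma~\ref{affinebcmove}, or a specifically tailored root $\beta$ (for instance the short root $e_1$ in $B_n$, or the maximal root with a prescribed coefficient pattern as in Lemma~\ref{mix4}). The hardest part will be uniformly handling the non-simply-laced types $B_n, C_n, F_4$, where $\alpha_J$ may be short and one must verify the \npc\ for each long $\alpha_J$-negative pair by exploiting that only $s$ and $t$ carry nonzero values on $\lam$.
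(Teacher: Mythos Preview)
Your primary graph-splitting step has a genuine gap: the verification of the \ppc\ for $\alpha_J$ is wrong. You argue that for any $\alpha_J$-positive pair $\alpha,\alpha'$, the root $\alpha'$ must contain $\alpha_s$ and that this forces $\alpha'(\lam)\leq 0$. But $\alpha'=\alpha+k\alpha_J$ also contains $\alpha_t$ (since $\alpha_J$ does), and when $\atlam>|\aslam|$ the positive contribution from $t$ can dominate. Concretely, take $\Phi=D_n$ with $n$ even, $s=s_2$, $t=s_1$, and $\lam=4\omega_1^\vee-2\omega_2^\vee$. Then $m_t=1<2=m_s$, $\lam\in\cQ^\vee$, $\azlam=0\leq 1$, and your $J=\{s_1\}$ with $\alpha_J=\alpha_1$, $\alpha_J(\lam)=4\geq 2$. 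For the pair $\alpha=\alpha_2$, $\alpha'=\alpha_1+\alpha_2$ you get $\alpha(\lam)=-2<0$ and $\alpha'(\lam)=2>0$, so the \ppc\ fails and Lemma~\ref{gsm} does not apply. Your stated fallback triggers (``additional positive nodes'' or ``failure of the \npc'') do not cover this situation, so the proposal does not say what to do here.

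The paper's proof avoids this pitfall by \emph{not} choosing a single $\beta$ uniformly. In the $BCD$ one-positive-node case it splits on the sign of $\astlam$ and then of $2\aslam+\atlam$: a linear $ABC$-move when $\astlam<0$; a graph-splitting move when $\astlam=0$; and when $\astlam>0$ it uses, according to whether $2\aslam+\atlam$ is negative, zero, or one, respectively the smallest root with $m_s=2,\ m_t=1$ (linear move), the highest root with $m_s=1$ (affine move, with an explicit \npc\ check when short), or a fork argument. The example above lands in the subcase $2\aslam+\atlam=0$, where the paper's $\beta$ is the highest root containing $\alpha_s$ once---a different root from your $\alpha_J$. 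The $E$ and $F_4$ cases are handled separately by fork/overweight arguments rather than graph-splitting. So the overall architecture you need is a finer case analysis keyed to these linear thresholds, not a single component-based move.
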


\proof Suppose $\aslam \leq -2$. 

{\it Case 1:} $\Phi$ has type $BCD$ and there is only one positive
node. 

Note that the positive node is minuscule.  We may assume that $2 \aslam
+\atlam \leq 1$ (otherwise $\lambda$ is overweight). If $\astlam <0$ there
is a linear $ABC$-move and $\lambda$ covers a pair. If $\astlam =0$ and
$\aslam \leq -2$ there is a \gsm , and again $\lambda$ covers a pair. So we
assume $\astlam >0$ and consider three cases:

$2 \aslam + \atlam <0$: Let $\beta$ be the smallest root containing $
2 \aslam + \atlam$. In all cases $\beta$ is a long root. It is then
clear that the \osc\ is satisfied, as desired.  Hence $\lambda \da
s_{\beta } \lambda$ by Proposition~\ref{p:linearmove} and $\lambda$
covers a pair.

$2 \aslam + \atlam =0$: Let $\beta$ be the highest root containing
$\alpha _s$ once. Then $\beta (\lam ) = \astlam \geq 2$ since
$\alpha_{s}(\lambda)\leq 2$. If $\beta$ is long then the \ppc\ is
clearly satisfied and we are done. If $\beta$ is short and there is a
long \bnp\ $\alpha , \alpha ^\prime$, then $\alpha + \alpha ^\prime
=2\beta$ and $\alpha ^\prime =\beta + (\beta -\alpha)$, where $\beta
-\alpha$ is a positive root. By the maximality of $\beta$, it follows
that $\alpha ^\prime$ must contain $\alpha _s$ twice, as well as
$\alpha _t$. Hence $\aplam =0$, and both the \ppc\ and the \npc\ are
satisfied.  By Proposition~\ref{affinemove}, $\lambda$ covers a pair.  

$2 \aslam + \atlam =1$: Note this rules out type $C$, since $\atlam$
is odd. In type $D$ there are two zero nodes adjacent to $s$, so
$\lambda$ covers a fork. Type $B_n$ is similar, except in the case
$s=s_n$. In that case, let $\lam =(a, \und , b)$ with $b<0$ and
$a+2b=1$.  Firing $s_n$ yields $\mu = (a, \und, 2b , -b)$. If $b \leq
-2$ there is an affine move $\mu \da r_{\alpha _n} \mu$ by
Proposition~\ref{gsm}, showing that $\lambda$ again covers a
fork. Here we note that if $\alpha , \alpha ^\prime$ is a \bpp , then
$2 \alpha _n + \alpha =\alpha ^\prime$, and hence $\alam =2b$ or
$\alam =1$. In either case the \ppc\ is satisfied.

This completes the proof in case 1. 

\bigskip

{\it Case 2:} $\Phi$ has type $BCD$ and there is more than one positive
node. 

Since the positive nodes must be minuscule, this can only happen in
type $D$. If $t$ is any positive node then it is linked to $s$ and
$\aslam + \atlam \geq 0$ by Lemma~\ref{amove}. Hence there are two
positive nodes $t, u$. If $\aslam + \atlam =0=\aslam + \aulam$, there
is an affine move by Lemma~\ref{amove} with $\beta$ being the highest
root containing $\alpha_{s}$ once.  Thus, $\lam$ covers a pair. If
(say) $\aslam + \atlam >0$, then $\aslam + \atlam =1$ and $\aslam +
\aulam =0$ \owt . In other words, up to symmetry the diagram of
$\lambda$ has one of the following forms, with $a\geq 2$: $(a, \und,
-a , \und , 0,a+1)$ and $(\und , -a , \und , a+1,a)$.

\bigskip
$$
{\begin{picture}(5,.8)
\mp(1,0)(1,0){2}{\ci}
\put(1,0){\num{0}}\put(2,0){\num{-a}}
\put(3,0){\makebox(0,0){\ldots}}
\put(4,0){\ci}
\put(4,0){\num{0}}
\put(1,0){\line(1,0){1.5}}
\put(4,0){\line(-1,0){.5}}
\put(4,0){\line(2,1){1}}
\put(4,0){\line(2,-1){1}}
\put(5,.5){\ci}\put(5,-.5){\ci}
\put(5,.5){\makebox(0,0)[l]{\hspace{.3\unitlength}$0$}}
\put(5,-.5){\makebox(0,0)[l]{\hspace{.3\unitlength}$a+1$}}
\put(0,.5){\ci}\put(0,-.5){\ci}
\put(0,.5){\makebox(0,0)[l]{\hspace{-.35\unitlength}$a$}}
\put(0,-.5){\makebox(0,0)[l]{\hspace{-.35\unitlength}$0$}}
\put(1,0){\line(-2,1){1}}
\put(1,0){\line(-2,-1){1}}
\end{picture}}
$$
\bigskip

\bigskip

\bigskip
$$
{\begin{picture}(5,.8)
\mp(1,0)(1,0){2}{\ci}
\put(1,0){\num{0}}\put(2,0){\num{-a}}
\put(3,0){\makebox(0,0){\ldots}}
\put(4,0){\ci}
\put(4,0){\num{0}}
\put(1,0){\line(1,0){1.5}}
\put(4,0){\line(-1,0){.5}}
\put(4,0){\line(2,1){1}}
\put(4,0){\line(2,-1){1}}
\put(5,.5){\ci}\put(5,-.5){\ci}
\put(5,.5){\makebox(0,0)[l]{\hspace{.3\unitlength}$a+1$}}
\put(5,-.5){\makebox(0,0)[l]{\hspace{.3\unitlength}$a$}}
\put(0,.5){\ci}\put(0,-.5){\ci}
\put(0,.5){\makebox(0,0)[l]{\hspace{-.35\unitlength}$0$}}
\put(0,-.5){\makebox(0,0)[l]{\hspace{-.35\unitlength}$0$}}
\put(1,0){\line(-2,1){1}}
\put(1,0){\line(-2,-1){1}}
\end{picture}}
$$
\bigskip

\noindent In the first case $\lambda$ covers a fork. The second diagram is not even in
the coroot lattice, as the reader can check. 

\bigskip

\noindent {\it Case 3:} $\Phi$ has type $E$. It is a pleasant exercise
in the palindromy game to check that $\lambda$ forks too soon or is
overweight. Details are left to the reader.

\bigskip

\noindent {\it Case 4:} $\Phi$ has type $F_4$. Then $s=s_2, s_3$. If
$s=s_2$ then $\alpha _3 (\lam )=0$ by assumption. Then $\alpha _1 (\lam)
+ \alpha _2 (\lam ) \geq 0$ by Lemma~\ref{amove}a, and $\alpha _2
(\lam ) + \alpha _4 (\lam ) \geq 0$ (otherwise $\lambda$ forks too
soon). But then $\lambda$ is overweight. 

If $s=s_3$ then for any positive node $t$ linked to $s$ we have $\aslam
+\atlam \geq 0$, by Lemma~\ref{amove}a. If $\alpha _2 (\lam )>0$
then $\alpha _2 (\lam ) + \alpha _3 (\lam) \geq 0$. By Proposition 9.2b
we have either $\alpha _2 (\lam ) + \alpha _3 (\lam) =0$ or $\alpha _2
(\lam ) + 2 \alpha _3 (\lam) \geq 0$. In the first case we must have
$\alpha _4 (\lam )>0$ (otherwise $\lambda$ forks too soon) and $\lambda$ is
overweight. In the second case $\lambda$ is again overweight. Finally if
$\alpha _2 (\lam ) =0$ then $\alpha _4 (\lam )>0$ and $\alpha _1 (\lam )
+2 \alpha _3 (\lam ) \geq 0$ (otherwise $\lambda$ forks too soon), and again
$\lambda$ is overweight. (This last argument doesn't use the assumption
$\aslam \leq -2$.)

\bigskip

The main conclusions of the five lemmas above can be summarized as
follows: 

\begin{corollary} \label{mixcor} \marginpar{}

Suppose $\Phi$ is not of type $G_2$ and $\lambda$ is palindromic of mixed
type, with unique negative node $s \in S$. Then at least one of the
following conditions holds:

(1) $\aslam =-1$;

(2) $\lambda$ is a chain; 

(3) $\Phi$ has type $A$ and $\lambda$ is spiral. 

\end{corollary}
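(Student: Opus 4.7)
\proof The plan is to combine the five preceding lemmas into a single case analysis, organized around comparing the multiplicities $m_u$ of the positive nodes $u$ of $\lambda$ with the multiplicity $m_s$ of the unique negative node $s$.

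First, I would apply Lemma~\ref{mix1}. If alternative (b) holds, then $\Phi$ has type $C_n$ and $\lambda$ is the chain $(\und,1,-2)$, giving condition (2). Otherwise, alternative (a) supplies a positive node $t$ linked to $s$ with $\astlam \geq 0$, and we may assume throughout that $\lambda$ is not a chain. Next I would split on whether any positive node has multiplicity at least $m_s$. If every positive node $u$ satisfies $m_u < m_s$, then the hypothesis of Lemma~\ref{mix5} is met and we immediately conclude $\aslam = -1$, giving condition (1).

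In the remaining case, some positive node $u$ has $m_u \geq m_s$. I would choose such a $u$ (taking $u=t$ whenever $m_t \geq m_s$) and then apply one of the three intermediate lemmas: Lemma~\ref{mix2} if $m_u > m_s$ (yielding $\aslam = -1$), Lemma~\ref{mix3} if $m_u = m_s$ with $\aslam + \aulam > 0$ (yielding $\aslam = -1$ or condition (3)), and Lemma~\ref{mix4} if $m_u = m_s$ with $\aslam + \aulam = 0$ (yielding $\aslam = -1$). In every outcome, we obtain one of the conditions (1), (2), or (3), completing the corollary.

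The main obstacle is the gluing step of verifying the hypothesis $\aslam + \aulam \geq 0$ for the node $u$ chosen with $m_u \geq m_s$. When $u$ is linked to $s$ by an ABC subgraph of the appropriate type, this follows from Lemma~\ref{amove} and palindromy (otherwise $\lambda$ would cover a pair). If $u$ is not linked to $s$, then some intermediate node on the path from $s$ to $u$ in \dtil\ is nonzero; since $s$ is the unique negative node, that intermediate node is positive, and I would replace $u$ by the positive intermediate node $v$ closest to $s$, which is linked to $s$. A short argument—checking that either $m_v \geq m_s$ (so one of Lemmas~\ref{mix2}--\ref{mix4} applies to $v$) or else the original configuration degenerates into the $m_u < m_s$ regime already treated by Lemma~\ref{mix5}—closes this logical loop. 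This reduction is essentially bookkeeping, but must be carried out with attention to the various minuscule-node hypotheses embedded in Lemma~\ref{amove}.
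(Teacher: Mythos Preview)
Your approach is essentially the paper's: the corollary is stated there as a summary of the five lemmas with no further argument, and your case analysis is the natural way to make that summary precise.

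One point needs tightening. In your obstacle paragraph you say that after replacing $u$ by the nearest linked positive node $v$ on the path from $s$ to $u$, either $m_v \ge m_s$ or ``the original configuration degenerates into the $m_u < m_s$ regime already treated by Lemma~\ref{mix5}.'' The second alternative is not available: $u$ is still a positive node with $m_u \ge m_s$, so the hypothesis of Lemma~\ref{mix5} (that \emph{every} positive node has $m_t < m_s$) fails. Fortunately the second alternative never occurs. A direct check of the $m$-coefficients in each type (or the general fact that $i \mapsto m_i$ has no strict interior local minimum along any path in \cD) shows that every node on the path from $s$ to $u$ has $m$-value $\ge \min(m_s,m_u) = m_s$; hence the intermediate linked node $v$ automatically satisfies $m_v \ge m_s$, and Lemmas~\ref{mix2}--\ref{mix4} apply to $v$. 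With that correction, your argument goes through; you should also note that the inequality $\alpha_s(\lambda)+\alpha_v(\lambda)\ge 0$ (needed for Lemmas~\ref{mix2}--\ref{mix4}) follows from Lemma~\ref{amove} together with the minuscule-node verifications already carried out inside the proof of Lemma~\ref{mix1}.
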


\subsubsection{Proof of the palindromy theorem in the mixed case} 

By Corollary~\ref{mixcor} we may assume $\aslam =-1$ (except in type
$G_2$). If $s$ is a long node, then the palindromic $\lambda$ is admissible,
and hence is a \cpo\ or a chain by Lemma~\ref{smooth1}. In particular,
the proof of Theorem~\ref{pal} is now complete in the simply-laced
case. It remains to consider types $BCF$ when $s$ is a short node with
$\aslam =-1$, and type $G_2$.

\bigskip

$C_n$: Since $s$ is short, $s=s_i$ for some $i<n$. Let $s_j$ be a
positive node linked to $s_i$. If $j<n$, by Lemma~\ref{mix1} we may
assume $\astlam =0$ \owt\ and that there are no other positive nodes by
Lemma~\ref{mix3}. If $s$ and $t$ are not adjacent, then firing $s$
shows that $\lambda$ covers a fork (note this works even when $i=1$,
since then back-firing along the double bond adds -2 to the initial
value +1 on the node $s_0$). If $s$ and $t$ are adjacent then $\lambda
=\pm (\und , 1,-1,\und)$, which is a chain.

Now suppose $t=s_n$ is the unique positive node. Since $\alpha _n
(\lam)$ is necessarily even, we must have $\alpha _n (\lam )=2$ \owt
. If $i<n-1$, then firing $s$ shows that $\lambda$ covers a fork (note this
works even when $i=1$, since back-firing along the double bond puts -1
on the $s_0$ node). If $i=n-1$ we have $\lam =(\und , -1, 2)$, which is
a chain.

\bigskip

$B_n$: Here $s=s_n$. Let $s_j$ be the unique positive node linked to
$s_n$. Then $\alpha _j (\lam )=1,2,3$, where the values $2,3$ can occur
only when $j=1$ \owt .

Case 1: $\alpha _j (\lam ) =1$. If $j<n-1$ then firing $s_n$ yields $(\und,
1, \und , -2, 1)$ and there is a linear $A$-move, showing that $\lambda$
covers a fork. If $j=n-1$ then there must be another positive node
(otherwise \nope ). This forces $\lam =(1, \und, 1, -1)$, which is a singular
chain. 

\bigskip

Case 2: $\alpha _j (\lam ) =2$. This forces $\lam =(2, \und , -1)$. Thus $n$
must be even (otherwise \nope ), and in particular $n \geq 4$. Then
$\lambda$ covers a fork: Firing $s_n$ yields $(2, \und, -2, 1)$, which
admits an affine move based on the long root $\beta =\alpha _1 + \ldots +
\alpha _{n-1} + 2\alpha _n =e_1 + e_n$. This is very similar to the
anti-dominant case $(\und, -1)$; details are left to the reader.

Case 3: $\alpha _j (\lam ) =3$. This forces $\lam =(3, \und ,
-1)$. Thus $n$ must be odd (otherwise \nope ). If $n \geq 5$, we can
proceed exactly as we did in the anti-dominant case with $(\und ,
-1)$. The key point is again that $(3, \und , -2, 0, 1)$ admits an
affine move using $\beta =\alpha _{n-1} + 2\alpha _n$, and as a result
$\lambda$ covers a scepter, hence is not palindromic. This leaves the
case $n=3$, $\lambda =(3,0,-1)$ which is known to be palindromic by
Corollary~\ref{funnypal}.  

This completes the proof of
Theorem~\ref{pal} in type $B$.

\bigskip

$F_4$: Here the short nodes are $s_3,s_4$. If $s=s_4$, then $\lam
=(0,1,0,-1)$ or $\lam =(1,0,0,-1)$ \owt . Then $(0,1,0,-1)$ is a chain,
while $(1,0,0,-1)$ forks too soon (barely!).

If $s=s_3$, the argument used in the proof of Lemma~\ref{mix5} shows
that $\lam =(0,1,-1,1)$, which is a chain. 

\bigskip

$G_2$: Before starting the proof, we recall (\S 7.3) that there are
four chains of mixed type, namely $\pm (1,-1)$ and $\pm
(1,-2)$. Recall also that the initial chain coming down from $(-1,0)$
(see the Hasse diagram in \S 13)

$$(-1,0) \da (1,-3) \da (-2,3) \da (2,-3),
$$
with $(2,-3)$ covering a pair. Hence all of the displayed
elements fork too soon. 

Now suppose $\lambda$ is generic, meaning that $\alpha (\lam ) \neq 0$
for all $\alpha \in \Phi$. Let $\gamma$ range over the four non-simple
positive roots $\alpha _1 + \alpha _2$, $2\alpha _1 + \alpha _2$
$3\alpha _1 + \alpha _2$, $3\alpha _1 + 2\alpha _2$. If at least one
of the $\gamma$'s is negative on $\lambda$, then the minimal such
$\gamma$ satisfies the \osc , and $\lambda$ covers a pair by
Proposition~\ref{affinemove}.

Now suppose all of the $\gamma$'s are positive on $\lambda$. If $s_2$ is the
positive node, then $\lam$ is overweight and again $\lambda$ covers a
pair. If $s_2$ is the negative node, firing it yields a generic dominant
class $\mu$: $\lam =(a,b) \da (a +b, -b)=\mu$. We saw earlier that any
such class in type $G_2$ covers a pair, so $\lambda$ covers a fork. 

It remains to consider the case $\gamma (\lam )=0$ for some $\gamma
$. Let $a$ denote a positive integer. 

$\gamma =\alpha _1 + \alpha _2$: If $\lam =(a,-a)$ and $a\geq 2$ then
$\lam$ is overweight. If $a=1$ we have the chain $(1,-1)$. If $\lam
=(-a,a)$ and $a \geq 2$, $\lambda$ has an affine move $\lambda \da
r_{\alpha _2}\lambda$ by Proposition~\ref{affinemove}. Here $\alpha
_2$ is long and there are two \bpp s $\alpha , \alpha ^\prime$:
$(\alpha _1, \alpha _1 + \alpha _2)$ and $(3\alpha _1 + \alpha _2,
3\alpha _1 + 2\alpha _2 )$. In each case $\aplam <0$ and the \ppc\ is
satisfied. If $a=1$ we again have a chain.

$\gamma =2\alpha _1 + \alpha _2$: If $\lam =(a,-2a)$ and $a\geq 2$,
there is an affine move $r_\beta$ with $\beta =3 \alpha _1 + \alpha
_2$. Note that $\beta$ is long and there is just one \bpp\ $\alpha _2,
\alpha _0$, so the \ppc\ is satisfied. If $a=1$ we have the chain
$(1,-2)$. If $\lam =(-a, 2a)$ then $\lambda$ is overweight unless $a=1$, in
which case we again have a chain. 

$\gamma =3 \alpha _1 + \alpha _2$: If $\lam =(-a, 3a)$ then $\lambda$ is
overweight. If $\lam =(a, -3a)$ and $a  \geq 2$, there is an affine move
$r_{\alpha _1}$ and $\lambda$ covers a pair. Finally $(1,-3)$ forks too soon
as noted above. 

$\gamma =3 \alpha _1 + 2\alpha _2$: Note that in this case $|\alpha _1
(\lam )|, |\alpha _2 (\lam )| \geq 2$. If $s_2$ is the negative node
then there is an affine move $r_\beta$ with $\beta =3 \alpha _1 + \alpha
_2$. If $s_1$ is the negative node and $\alpha _1 (\lam ) + \alpha _2
(\lam ) \leq 0$ there is an affine move $r_{\alpha _2}$. If $\alpha _1
(\lam ) + \alpha _2 (\lam ) >0$ and $2\alpha _1 (\lam) + \alpha _2 (\lam
) \leq 0$, there is an affine move $r_\beta$ with $\beta = \alpha _1 +
\alpha _2$, provided that $\beta (\lam ) \geq 2$. If $\beta (\lam )=1$
then $\lam =(-2,3)$, which forks too soon as noted above. Finally if
$2\alpha _1 (\lam) + \alpha _2 (\lam ) > 0$, there is an affine move
$r_\beta$ with $\beta = 2\alpha _1 + \alpha _2$.

\section{The spiral varieties in type $A$}\label{s:spiral.varieties}
Most of the results in this section are from \cite{mfilt}, to which
the reader is referred for the missing proofs. The varieties $X_{n,k}$
are denoted $F_{n+1,k}$ in \cite{mfilt}.  We include these results for
the readers' convenience and to highlight some unsaid consequences of
the previous work.  
                      
Let $\sigma _d$ (resp. $\sigma ^\prime _d$) denote the word in \wtil\
obtained by starting at $s_0$ and---writing the word from right to
left---proceeding clockwise (resp. counterclockwise) $d$ steps around
the Coxeter diagram. For example, if $n=3$ then $\sigma
_6=s_3s_0s_1s_2s_3s_0$. Note that these words are reduced, in
$\minreps$, and rigid.    Note also that $\sigma ^\prime _d$ is
conjugate to $\sigma _d$ under the involution of the Dynkin diagram
fixing $s_0$.

Let $\sigma _{n,k} =\sigma _{kn}$, $\sigma _{n,k} ^\prime =\sigma
_{kn} ^\prime$. We call these elements and the varieties associated to
them {\it spiral}. The term is suggested by the manner in which
$\sigma _{n,k}$, $\sigma _{n,k} ^\prime$ spiral around the affine
Dynkin diagram and up in length as $k$ increases.

{\it Warning:} Note that the spiral classes have length divisible by
$n$, not $n+1$. Hence they are out of phase with the natural period of
the affine Dynkin diagram; the first (left-hand) factor $s$ of $\sigma
_{n,k}$ rotates around the diagram as $k$ increases.

The corresponding coroot lattice representatives $\lambda
_{n,k}, \lambda _{n,k}^\prime $ are described as follows:

\begin{proposition} \label{spiralreps} \marginpar{}

An element $\lam \in \cQ^\vee$ represents a spiral class if and only if 

\bigskip

(1) $\lambda$ has exactly two nonzero nodes $s,t \in \stil$;

(2) $s,t$ are adjacent; and 

(3) $\aslam + \atlam =1$. 

\bigskip

\noindent More precisely, let $k=r(n+1) +i$, where $0 \leq i <n+1$. Then
$$\lambda _{n,k} =(\und , k+1 ,-k, \und),$$                                 
\noindent where $k+1$ is in the $i$-th coordinate, or 
$\lambda _{n,k} =(-k, \und)$, $\lambda _{n,k}=(\und , k+1)$. 
Similarly 
                                                                 
$$\lambda_{n,k} ^\prime =(\und , -k , k+1 , \und)$$                      

\noindent or $\lambda _{n,k}^\prime =(k+1, \und)$, $\lambda _{n,k}
^\prime=(\und , -k)$.

\end{proposition}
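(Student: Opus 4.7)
The plan is to prove both directions of the equivalence by direct computation with the node-firing game of Section~\ref{s:spiral.varieties} (and Section 5.1).  By the symmetry of the affine $A_n$ Dynkin diagram under the involution fixing $s_0$, it suffices to treat $\sigma_{n,k}$; the claims about $\sigma'_{n,k}$ then follow automatically.

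First I would establish the explicit formulas for $\lambda_{n,k}$ by induction on $k$.  The base case $k=0$ gives $\lambda_{n,0}=0$ from the empty word.  For the induction step, observe that $\sigma_{n,k+1}$ extends $\sigma_{n,k}$ by $n$ letters prepended on the left, each continuing one more step clockwise around the affine Dynkin cycle; in the firing-up process one first reaches $\lambda_{n,k}$ and then performs $n$ additional firings.  Assume inductively that $\lambda_{n,k}$ has positive label $k+1$ at $s_i$, negative label $-k$ at $s_{i+1}$ (indices mod $n+1$, with the convention that the label at $s_0$ is $1-\alpha_0(\lambda)$), and zero elsewhere.  The next firing, on the positive node $s_i$, produces the three-label configuration $(k+1,-(k+1),1)$ at consecutive positions.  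Each of the $n-1$ further clockwise firings acts on the current positive wave node, shifting the pair $(k+1,-(k+1))$ one step along the cycle and leaving a $0$ behind, while the lone $+1$ at $s_{i+1}$ is undisturbed.  After all $n$ firings, the wave has travelled $n$ steps around the $(n+1)$-cycle and arrived at $s_{i+1}$, where the $k+1$ it carries absorbs the stationary $+1$ to give $k+2$, and the companion $-(k+1)$ settles at $s_{i+2}$.  This is exactly the configuration claimed for $\lambda_{n,k+1}$; the boundary cases $i=0$ and $i=n$ follow from the same argument using the $s_0$ firing rule in place of a finite-type firing.

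The forward direction of the characterization is now immediate on inspection of the three explicit forms.  For the converse, suppose $\lambda\in\cQ^\vee$ satisfies (1)--(3) and let $s,t$ be its two adjacent nonzero nodes.  If both lie in the finite part, write $s=s_i$, $t=s_{i+1}$, with values $a=\alpha_i(\lambda)$ and $b=\alpha_{i+1}(\lambda)$ satisfying $a+b=1$.  The $\cQ^\vee$ criterion in type $A_n$ from Section~\ref{sub:comparision}, $\sum_j j a_j\equiv 0\pmod{n+1}$, reduces in this situation to $i+b\equiv 0\pmod{n+1}$.  Setting $b=-k$ (so $a=k+1$) yields $k=r(n+1)+i$ for some integer $r\ge 0$, which identifies $\lambda$ as $\lambda_{n,k}$, or as $\lambda'_{n,k}$ if the signs of $a,b$ are reversed.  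If instead one of the two nodes is $s_0$, its partner must be $s_1$ or $s_n$, and all other finite labels vanish; then $\lambda=a\omega_1^\vee$ or $a\omega_n^\vee$, and the $\cQ^\vee$ congruence forces $a\in(n+1)\bz$, producing the boundary cases $\lambda_{n,r(n+1)}=(-k,\und)$ and $\lambda_{n,r(n+1)+n}=(\und,k+1)$.

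The main technical point is the wraparound step in the inductive firing: when the positive wave passes through $s_0$, the label there encodes $1-\alpha_0(\lambda)$ rather than an $\alpha_s(\lambda)$, so the transition of the wave between the finite part and $s_0$ has to be verified by hand, using the $s_0$-firing rule described in Section 5.1.  Once this bookkeeping is confirmed, everything else reduces to a mechanical check of the $\cQ^\vee$ congruence.
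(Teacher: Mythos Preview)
Your proposal is correct and follows essentially the same approach as the paper: the explicit formulas for $\lambda_{n,k}$ are obtained by induction on $k$ via the node-firing game, and the converse is deduced from the $\cQ^\vee$ congruence $\sum_j j a_j \equiv 0 \pmod{n+1}$. You simply spell out in detail what the paper summarizes in one line (``easily obtained by induction on $k$'') and handle the boundary case $s_0\in\{s,t\}$ explicitly where the paper says ``the remaining cases are similar.''
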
 

\proof The explicit formulas for $\lambda _{n,k}, \lambda_{n,k} ^\prime$
are easily obtained by induction on $k$, and show that the spiral
classes satisfy properties (1)-(3). Conversely, suppose that (1)-(3)
hold, and consider the case $s=s_i, t=s_{i+1}$ with $0<i<n$ and $\aslam
>0$. Then since $\lambda$ is in the coroot lattice, we have $i \aslam +
(i+1) \atlam =0 \, mod \, n+1$. Then $\lam =\lam _{n,k}$ with
$k=-\atlam$. The remaining cases are similar. 

\bigskip

Let $X_{n,k}=X_{\sigma _{n,k}}$, $X_{n,k}^\prime =X_{\sigma _{n,k}}
^\prime.$ Since $X_{n,k}^\prime$ is canonically isomorphic to $X_{n,k}$ as
a variety, in what follows we will state the results only for
$X_{n,k}$.          

\begin{proposition} \label{fnk}                                                 
$X_{n,k}$ is isomorphic to the variety of $k$-dimensional $\bc
[z]/z^{k}$-submodules in $\bc [z]/z^k \otimes \bc ^{n+1}$. In
particular, $X_{n,1} \cong \bc P^n$.

\end{proposition}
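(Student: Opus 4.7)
The plan is to realize $X_{n,k}$ through the standard lattice model of the affine Grassmannian and construct the isomorphism with the module variety directly.

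First I would identify $\cL _{SU(n+1)}$ with the set of $\bc[[z]]$-lattices $L \subset \bc((z))^{n+1}$ with $\det L = \det L_{0}$, where $L_{0} = \bc[[z]]^{n+1}$. The $T$-fixed points correspond to $\lambda = (a_{1},\ldots,a_{n+1}) \in \cQ^\vee$ via $L_{\lambda } = \bigoplus _{j} z^{a_{j}} \bc[[z]] e_{j}$. Converting the formula from Proposition~\ref{spiralreps} to standard coordinates with $k = r(n+1) + i$, one finds that the entries of $\lambda _{n,k}$ are $1+r$ (in $i$ positions), $r-k$ (in one position), and $r$ (in the remaining positions); in particular $L_{\lambda _{n,k}}$ is sandwiched between the reference lattices $L_{-} = z^{1+r}L_{0}$ and $L_{+} = z^{r-k}L_{0}$, whose quotient $L_{+}/L_{-}$ is free of rank $n+1$ over $\bc [z]/z^{k+1}$.

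The core step is to verify that the sandwich condition $L_{-} \subseteq L \subseteq L_{+}$ is preserved under the $\aP$-action along reduced expressions for elements $\leq \sigma _{n,k}$, using the explicit spiral description of $\sigma _{n,k}$ and the fact that only a specific cyclic sequence of simple generators appears in these expressions. Granting this, the assignment $L \mapsto N := L/L_{-}$ defines a morphism from $X_{n,k}$ to the Grassmannian of $\bc [z]$-submodules of $L_{+}/L_{-}$; a dimension count using the determinant condition $\det L = \det L_{0}$ pins down $\dim_{\bc} N$, and a separate annihilator computation forces $N$ to be annihilated by $z^{k}$, thus landing in the variety of $k$-dimensional $\bc [z]/z^{k}$-submodules of $(\bc [z]/z^{k})^{n+1}$.

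The hard part will be verifying that this morphism is an isomorphism rather than merely a morphism. My strategy is: both sides are irreducible projective varieties of dimension $kn$ (the module variety by a direct parameter count, $X_{n,k}$ since $\el^{S}(\lambda _{n,k}) = kn$), so it suffices to check bijectivity on $\bc$-points, which in turn reduces to matching $T$-fixed points on both sides with coroot lattice elements $\mu \leq \lambda _{n,k}$ in Bruhat order and exhibiting an explicit local inverse on an affine open cover. For $k=1$, $\bc [z]/z \otimes \bc^{n+1} = \bc^{n+1}$ has trivial $z$-action, so every $\bc$-subspace is automatically a $\bc [z]$-submodule; the $1$-dimensional ones form $\bp^{n}$, consistent with $X_{n,1}$ being a closed parabolic orbit of type $A_{n}/A_{n-1} \cong \bp^{n}$.
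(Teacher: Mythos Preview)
The paper does not actually prove this proposition; it refers to \cite{mfilt} for the proof and offers only the one-line explanation that in the Quillen lattice model $X_{n,k}$ is the intersection $\cL_{SU(n+1)}\cap BU(k)$ inside $BU$. So your lattice-model strategy is in the right spirit, but your specific construction breaks down at the step where you pass to the quotient $N=L/L_-$.

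Concretely: with $L_-=z^{1+r}L_0$, the determinant condition $\sum a_j=0$ forces
\[
\dim_{\bc}(L/L_-)=\sum_{j=1}^{n+1}\bigl((1+r)-a_j\bigr)=(n+1)(1+r),
\]
which is \emph{not} $k$. Already for $k=1$ (so $r=0$) you get $\dim N=n+1$, not $1$. Your annihilator claim also fails: $z^kN=0$ would require $L\subseteq z^{1+r-k}L_0$, i.e.\ every coordinate $a_j\geq 1+r-k$, but the top cell $L_{\lambda_{n,k}}$ has one coordinate equal to $r-k<1+r-k$. So $N$ is neither $k$-dimensional nor a $\bc[z]/z^k$-module, and the map you describe does not land in the target variety.

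The sandwich $z^{1+r}L_0\subseteq L\subseteq z^{r-k}L_0$ is too coarse; it captures only the extreme coordinates of $\lambda_{n,k}$ itself and gives a $(\bc[z]/z^{k+1})^{n+1}$, off by one from what you need. The description in \cite{mfilt} (implicit in the $BU(k)$ remark) uses a different and tighter identification, essentially viewing $X_{n,k}$ as those lattices in a fixed $GL_{n+1}$-coset for which $L_0\subseteq L\subseteq z^{-k}L_0$ with $\dim_{\bc}(L/L_0)=k$, after an appropriate translation. You would need to redo the sandwich with the correct reference lattices and then the dimension and module structure fall out without any separate annihilator argument.
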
                                                               
This description arises from the Quillen model, which identifies $\cL
_{SU(n+1)}$ with certain spaces of $\bc [z]$-lattices in $\bc
[z,z^{-1}] \otimes \bc ^{n+1}$. In fact from this point of view,
$X_{n,k}$ is precisely the intersection in $BU$ of the Ind-varieties
$\cL _{SU(n+1)}$ and $BU(k)$.

%Let $\displaystyle \left[\stackrel{n+k}{n} \right]_{q}$

Let $\qbinomial$ denote the ``q-binomial coefficient'' or equivalently
the Poincar\'e polynomial for the classical Grassmannian $G_{n}\bc^{n+k}$.  

\begin{proposition} \label{qbinom}                                              
$|X_{n,k}| =\qbinomial$. In particular, $X_{n,k}$ is palindromic.                
\end{proposition}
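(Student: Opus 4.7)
The plan is to establish an affine cell decomposition of $X_{n,k}$ whose cells are indexed by partitions fitting inside the $n \times k$ rectangle, with the cell corresponding to $\mu$ having complex dimension $|\mu|$. Once that is in place, summing gives
\[
|X_{n,k}|(t) = \sum_{\mu \subseteq (k^n)} t^{|\mu|} = \qbinomial,
\]
by the classical interpretation of the q-binomial coefficient as the generating function for partitions in an $n \times k$ box. Palindromy then follows at once from the complementation involution $\mu \mapsto (k^n) \setminus \mu$, which sends $|\mu|$ to $kn - |\mu|$.

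To construct the cell decomposition, I would use Proposition~\ref{fnk}, which identifies $X_{n,k}$ with the variety of $k$-dimensional $\bc[z]/z^k$-submodules $M \subseteq \bc[z]/z^k \otimes \bc^{n+1}$. Fix an ordered basis $e_0, \ldots, e_n$ of $\bc^{n+1}$ and the associated flag $F_i = \mathrm{span}(e_0, \ldots, e_{i-1})$. To each $M$ attach a combinatorial ``echelon type'' recording, for each $i$ and each $z$-depth $j < k$, the position of the pivot entries of a canonical set of generators. By performing column operations among generators and multiplying by units of $\bc[z]/z^k$, each $M$ can be placed in a unique reduced form; the set $C_\mu$ of submodules with a fixed echelon type $\mu$ is then parametrized by the entries strictly above the pivots, giving $C_\mu \cong \bc^{|\mu|}$. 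Counting pivot positions shows that the admissible types are exactly the partitions $\mu \subseteq (k^n)$, and the top cell, of dimension $kn$, matches $\el^S(\sigma_{n,k}) = kn$.

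The final step is to identify this stratification with the Schubert stratification of $X_{n,k}$: each $C_\mu$ must lie in the closure of the top cell, and the closure relations among the $C_\mu$ must be compatible with the Bruhat order on the order ideal below $\sigma_{n,k}$ in $\minreps$. The main obstacle is the echelon reduction itself, i.e., showing that the canonical form exists and is unique up to precisely the predicted free parameters; this requires a careful analysis of the group of unit-scalars and flag-preserving column operations acting on generators of $M$. The full details of this reduction, together with the explicit bijection between partitions $\mu \subseteq (k^n)$ and coroot lattice representatives below $\sigma_{n,k}$, are carried out in \cite{mfilt}.
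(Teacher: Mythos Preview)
Your outline is correct in spirit, but the paper takes a different route. In fact the authors explicitly remark that ``undoubtedly a direct combinatorial proof of this could be given'' and then give two \emph{topological} arguments instead. The first uses the identification $X_{n,k}/X_{n,k-1}\cong T(\gamma^k\downarrow X_{n-1,k})$ (the Thom space of the canonical $k$-plane bundle), which yields the recursion $|X_{n,k}|=|X_{n,k-1}|+t^k|X_{n-1,k}|$; this is precisely the $q$-Pascal identity for $\qbinomial$. The second uses the Pontrjagin product: $[X_{n,j}]\cdot[X_{n,k}]=[X_{n,j+k}]$ and $H_*X_{n,k}\cong \mathrm{Sym}^k(H_*\bc P^n)$, from which one reads off $|X_{n,k}|=|\mathrm{Sym}^k(H_*\bc P^n)|=\qbinomial$.

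Your echelon-form paving via Proposition~\ref{fnk} is exactly the ``direct combinatorial proof'' the paper alludes to but omits. It is more elementary in that it avoids the Thom-space filtration and the loop-space product, but it requires the linear-algebra reduction over $\bc[z]/z^k$ that you defer to \cite{mfilt}. By contrast, the paper's first proof packages the induction cleanly via a single geometric quotient, and the second proof gives strictly more: the full multiplicative structure on $H_*X_{n,k}$, not just the Betti numbers. One small comment on your write-up: the ``final step'' of matching your paving with the Schubert stratification is not needed for the proposition itself---any affine paving of $X_{n,k}$ already determines the Poincar\'e polynomial, so the Bruhat-order compatibility is extra.
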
                                               

Undoubtedly a direct combinatorial proof of this could be given. Here we
will mention two topological proofs. The first is based on:
                                                                                
\begin{proposition}  \marginpar{}                                       
$X_{n,k}-X_{n,k-1} =E(\gamma ^k \downarrow X_{n-1,k})$, the total space
  of the canonical $k$-plane bundle defined by
  Proposition~\ref{fnk}. Hence $X_{n,k}/X_{n,k-1} =T(\gamma ^k )$.
  
\end{proposition}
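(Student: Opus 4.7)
The plan is to apply the module-theoretic description of Proposition~\ref{fnk}. Fix a direct sum decomposition $\bc^{n+1} = \bc^n \oplus \bc e$, inducing $V := \bc[z]/z^k \otimes \bc^{n+1} = V' \oplus V''$, with $V' = \bc[z]/z^k \otimes \bc^n$ and $V'' = \bc[z]/z^k \cdot e$ a free $\bc[z]/z^k$-module of rank one. Under this identification, $X_{n-1,k}$ sits inside $X_{n,k}$ as the closed locus $\{M : M \subseteq V'\}$, and I would take $\gamma^k$ to be the rank-$k$ bundle over $X_{n-1,k}$ whose fiber at $N$ is $\mathrm{Hom}_{\bc[z]/z^k}(N, V'')$ (a vector space of constant $\bc$-dimension $k$).

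The central construction is a morphism $\Psi: E(\gamma^k) \to X_{n,k}$ given by ``taking the graph'': for $(N, \phi) \in E(\gamma^k)$, set
\[
\Psi(N, \phi) := \{(n, \phi(n)) : n \in N\} \subseteq V' \oplus V'' = V.
\]
This is a $\bc[z]/z^k$-submodule isomorphic to $N$ of $\bc$-dimension $k$, so it lies in $X_{n,k}$. On the zero section $\phi = 0$ we recover $\Psi(N, 0) = N$, so the zero section maps onto the image of $X_{n-1,k}$. Conversely, any $M \in X_{n,k}$ with $M \cap V'' = 0$ projects isomorphically under $\pi: V \to V'$ onto a $k$-dimensional submodule $N = \pi(M) \subseteq V'$, and $M$ is then the graph of $\phi := \pi_2 \circ (\pi|_M)^{-1}: N \to V''$. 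Hence $\Psi$ is a bijection onto $\{M \in X_{n,k} : M \cap V'' = 0\}$.

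It then remains to identify the closed complement $\{M : M \cap V'' \neq 0\}$ with $X_{n,k-1}$. This complement is closed and $\btil$-invariant in $X_{n,k}$. A convenient numerical check is the Pascal-type identity
\[
\left[\begin{array}{c} n+k \\ n \end{array}\right]_t = \left[\begin{array}{c} n+k-1 \\ n \end{array}\right]_t + t^k \left[\begin{array}{c} n+k-1 \\ n-1 \end{array}\right]_t,
\]
which matches the expected decomposition $|X_{n,k}|(t) = |X_{n,k-1}|(t) + t^k \, |X_{n-1,k}|(t)$; since $X_{n,k-1}$ is the unique sub-Schubert variety of $X_{n,k}$ of complex dimension $(k-1)n$ along the spiral chain, this forces the identification. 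The Thom-space statement $X_{n,k}/X_{n,k-1} = T(\gamma^k)$ then follows formally by collapsing the zero section of $\gamma^k$ to a point.

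The main obstacle is the explicit moduli-theoretic identification of the closed complement $\{M : M \cap V'' \neq 0\}$ with $X_{n,k-1}$. The two varieties are described in Proposition~\ref{fnk} via different module structures ($\bc[z]/z^{k-1}$ versus $\bc[z]/z^k$), so the inclusion $X_{n,k-1} \hookrightarrow X_{n,k}$ is not manifest from the moduli alone. Bridging this requires either a careful Quillen-model analysis to recognize $X_{n,k-1}$ as a specific subvariety inside the moduli for $X_{n,k}$, or a Bruhat-theoretic argument verifying that the closed complement is the Schubert subvariety corresponding to $\sigma_{n,k-1}$, for instance by running the node-firing dynamics explicitly and matching against the cell-by-cell structure.
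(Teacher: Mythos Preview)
The paper itself gives no proof here; it defers entirely to \cite{mfilt}. So there is nothing in the present paper to compare against, and your task is really to reconstruct the argument from \cite{mfilt}.

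Your graph construction is correct and is the standard mechanism: choosing a splitting $V = V' \oplus V''$ with $V'' \cong \bc[z]/z^k$, the map $(N,\phi)\mapsto \mathrm{graph}(\phi)$ identifies $E(\gamma^k \downarrow X_{n-1,k})$ with the open locus $\{M: M\cap V''=0\}=\{M: z^{k-1}e\notin M\}$. If $e$ is chosen to be the $B^-$-eigenvector (so that $z^{k-1}e$ is fixed by $\btil$ modulo scalars), the closed complement $\{M: z^{k-1}e\in M\}$ is indeed $\btil$-invariant, and the Pascal identity shows it is irreducible with the Poincar\'e polynomial of $X_{n,k-1}$.

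The genuine gap is exactly the one you flag, and your proposed patch (``the unique sub-Schubert variety of dimension $(k-1)n$ along the spiral chain'') does not close it. The Poincar\'e polynomial does not single out $X_{n,k-1}$ among Schubert subvarieties of $X_{n,k}$: already for $n=2$ one checks (e.g.\ from the Hasse diagram in \S 12) that \emph{both} $X_{2,k-1}$ and the conjugate spiral $X'_{2,k-1}$ lie below $X_{2,k}$, and they have identical Poincar\'e polynomials. So ``closed, $\btil$-invariant, irreducible, with the right cell count'' determines the complement only up to this ambiguity. You must actually identify the $T$-fixed point in the open cell of the complement and match it to $\lambda_{n,k-1}$ (equivalently, check that the specific vector $z^{k-1}e$ you chose lies in the module corresponding to $\lambda_{n,k-1}$ but not in the one for $\lambda'_{n,k-1}$). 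In the lattice model of \cite{mfilt} this is immediate, because the filtration $X_{n,0}\subset X_{n,1}\subset\cdots$ is \emph{defined} there by incidence with a fixed partial flag in $\bc[z,z^{-1}]^{n+1}$, and the complement is visibly the next stratum; the Schubert identification is then made once, uniformly in $k$.

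A minor point: the bundle you build has fibre $\mathrm{Hom}_{\bc[z]/z^k}(N,V'')$, whereas the ``canonical $k$-plane bundle'' of the proposition is the tautological bundle with fibre $N$. Over a point these are isomorphic (since $\bc[z]/z^k$ is self-injective), but as bundles they differ by a duality; you should either note this or check that the statement in \cite{mfilt} really intends your $\mathrm{Hom}$-bundle.
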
                                                               
Thus $|X_{n,k}| =|X_{n,k-1}|+ t^k |X_{n-1,k}|$, which is exactly the
Pascalian recursion formula for the $q$-binomial coefficients.  This
yields one proof of Proposition~\ref{qbinom}. The second is based on:

\begin{theorem} \label{fnkprod}                                                 
$[X_{n,j}] \cdot [X_{n,k}]=[X_{n,j+k}]$, where the dot denotes
  Pontrjagin product in $H_* \Omega G \cong H_* \cL _G$. Moreover the
  natural map $Sym ^k (H_*X_{n,1}) \lra H_* X_{n,k}$ is an \iso .
  ($Sym^k$ denotes the $k$-th symmetric power.)

\end{theorem}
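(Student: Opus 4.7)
The plan is to use the moduli description from Proposition~\ref{fnk} combined with the geometric interpretation of the Pontryagin product as arising from loop concatenation in $\Omega G \simeq \cL _G$. Under the Quillen lattice model for $\cL _{SU(n+1)}$, loop multiplication corresponds to a concrete operation on $\bc[z]$-lattices, so the entire argument can be carried out in terms of submodules rather than loops.

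First I would establish the product formula $[X_{n,j}] \cdot [X_{n,k}] = [X_{n,j+k}]$. The strategy is to exhibit an explicit multiplication map $m \colon X_{n,j} \times X_{n,k} \lra X_{n,j+k}$ induced by loop-group multiplication, check that the dimensions match using Proposition~\ref{qbinom} (namely $\dim X_{n,j} + \dim X_{n,k} = jn + kn = (j+k)n = \dim X_{n,j+k}$), and prove that $m$ is generically one-to-one. Generic injectivity should follow from a direct inspection of the submodule description: a sufficiently generic $(j+k)$-dimensional $\bc[z]/z^{j+k}$-submodule admits a unique factorization as the image of a pair consisting of a $j$-dimensional $\bc[z]/z^j$-submodule and a $k$-dimensional $\bc[z]/z^k$-submodule, so the generic fiber of $m$ is reduced of length one. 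Combined with the fact that in the top dimension $H_{2(j+k)n} X_{n,j+k} \cong \bz$ is generated by $[X_{n,j+k}]$, this forces $m_\ast ([X_{n,j}] \times [X_{n,k}]) = [X_{n,j+k}]$.

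For the symmetric-power statement, iterating the product formula gives $[X_{n,1}]^k = [X_{n,k}]$ in the Pontryagin ring. Since $H_\ast \cL _G$ is graded-commutative (because $\Omega G$ is an H-space and all cells are even-dimensional) and $X_{n,1} = \bc P^n$, the Pontryagin products $[\bc P^{i_1}] \cdots [\bc P^{i_k}]$ of Schubert classes in $H_\ast X_{n,1}$ define a natural map $\sigma \colon Sym^k (H_\ast X_{n,1}) \lra H_\ast X_{n,k}$. A dimension count shows that both sides have rank $\binom{n+k}{n}$ (the former by the formula for symmetric powers of an $(n+1)$-dimensional space, the latter by evaluating $\qbinomial$ at $t=1$), so it suffices to prove surjectivity. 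This follows by induction on $n+k$ using the cofiber sequence $X_{n,k-1} \hookrightarrow X_{n,k} \twoheadrightarrow T(\gamma^k \downarrow X_{n-1,k})$ recorded in the preceding proposition: the Thom isomorphism identifies the cokernel with $H_{\ast - 2k}(X_{n-1,k})$, reducing the inductive step to the cases $(n-1,k)$ and $(n,k-1)$.

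The main obstacle is verifying that $m$ has degree exactly one rather than some larger positive integer. Length-additivity $\el ^S (\lambda _{n,j}) + \el ^S (\lambda _{n,k}) = \el ^S (\lambda _{n,j+k})$ is immediate from the dimensions and ensures at most that $[X_{n,j}] \cdot [X_{n,k}]$ is an integer multiple of $[X_{n,j+k}]$, since other Schubert classes are ruled out on dimension grounds. Pinning down the multiplicity to be exactly one requires the birationality assertion above, whose cleanest justification rests on direct inspection of generic fibers in the submodule model. Once that is secured, the remaining arguments are essentially Thom-space bookkeeping.
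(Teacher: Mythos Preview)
The paper does not supply its own proof of this theorem: at the start of \S\ref{s:spiral.varieties} it explicitly refers the reader to \cite{mfilt} for the missing arguments. So there is no in-paper proof to compare against; what you have written is an independent attempt at what \cite{mfilt} establishes.

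Your outline is sensible, and you have correctly isolated the one genuine difficulty. The dimension count, the Thom-space induction for the symmetric-power statement, and the reduction of the product formula to a degree computation are all fine. The weak point is exactly the step you flag: defining the multiplication map $m$ at the level of varieties and showing it is birational onto its image. As stated, $\cL_G = \gctil/\aP$ carries no multiplication; the Pontryagin product lives only in homology via the equivalence with $\Omega G$. To get an honest map $X_{n,j}\times X_{n,k}\to X_{n,j+k}$ you must either lift one factor to the loop group (not canonical) or pass through an auxiliary model, for instance the identification of $X_{n,k}$ with $\cL_{SU(n+1)}\cap BU(k)$ inside $BU$ mentioned after Proposition~\ref{fnk}, where the H-space structure is Whitney sum. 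Your ``unique factorization of a generic submodule'' is then a concrete statement about that model, but it is not obvious and would need a real argument, not just inspection.

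A route that sidesteps this difficulty, and is closer in spirit to \cite{mfilt}, is to use the classical fact that $H_*\Omega SU(n+1)\cong \bz[b_1,\ldots,b_n]$ as a ring, with $b_i=[\bc P^i]\in H_{2i}X_{n,1}$. Once one shows (by your Thom-space induction, or otherwise) that the image of $H_*X_{n,k}$ in this polynomial ring is exactly the span of monomials of total degree $\le k$, both assertions of the theorem follow immediately: the top class of $X_{n,k}$ is $b_n^{\,k}=[X_{n,1}]^k$, and $Sym^k(H_*X_{n,1})\to H_*X_{n,k}$ is visibly an isomorphism. This avoids any birationality claim.
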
                                                                   
This theorem gives another proof of Proposition~\ref{qbinom}, since one
can easily check that $|Sym ^k (H_* \bc P^n)| =\qbinomial$.

\bigskip                                                                        
\begin{theorem}  \marginpar{}                                                                                                                          
$X_{n,k}$ satisfies Poincar\'e duality integrally if and only if
$k=1$. It satisfies Poincar\'e duality rationally for all $k$.
\end{theorem}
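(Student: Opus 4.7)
The plan is to decouple the rational and integral statements, handling $k=1$ and $k \geq 2$ separately.

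For rational Poincar\'e duality, nothing new is needed: Proposition~\ref{qbinom} shows that $|X_{n,k}|(t)$ is the $q$-binomial $\qbinomial$, which is palindromic. Since $X_{n,k}$ is an affine Schubert variety, the Carrell--Peterson theorem (cited in the introduction after Theorem~\ref{pal}) shows that palindromy is equivalent to rational smoothness, and rational smoothness is equivalent to rational Poincar\'e duality by \cite{mcrory}. For $k=1$, Proposition~\ref{fnk} gives $X_{n,1}\cong \bc P^n$, a smooth projective variety, so integral Poincar\'e duality holds trivially.

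The substantive claim is therefore that integral Poincar\'e duality fails for $k\geq 2$. The plan is to apply Proposition~\ref{pdtest} directly to the explicit coroot lattice representative provided by Proposition~\ref{spiralreps}. Writing $k=r(n+1)+i$ with $0\leq i<n+1$, I would run through the three sub-cases for $\lambda_{n,k}$: when $0<i<n$ the representative is $(\und,k+1,-k,\und)$ with $k+1$ in position $i$, so $\alpha_{i+1}(\lambda_{n,k})=-k$; when $i=0$ one has $\lambda_{n,k}=(-k,\und)$, so $\alpha_1(\lambda_{n,k})=-k$; when $i=n$ one has $\lambda_{n,k}=(\und,k+1)$, giving $1-\alpha_0(\lambda_{n,k})=-k$. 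In every case there is a left descent $\lambda_{n,k}\downarrow s\lambda_{n,k}$ with the relevant value equal to $-k$. For $k\geq 2$ this value is at most $-2$, so Proposition~\ref{pdtest} is violated and $X_{n,k}$ cannot satisfy integral Poincar\'e duality.

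There is no serious obstacle. The only thing to watch is the bookkeeping for the three cases in Proposition~\ref{spiralreps}, in particular remembering that for $i=n$ the ``negative node'' is the affine node $s_0$ and one must translate the condition through $1-\azlam$ rather than $\aslam$. An alternative route would use Theorem~\ref{fnkprod} to identify $H_*X_{n,k}\cong \mathrm{Sym}^k H_*\bc P^n$ as a graded abelian group and then examine the Pontrjagin/cup product structure against the $q$-binomial coefficient; but this is much more work than the one-line application of Proposition~\ref{pdtest}, which already produces an integral obstruction from a single Bruhat covering.
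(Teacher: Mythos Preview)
Your proposal is correct. The rational statement is handled exactly as in the paper's Remark preceding the proof: palindromy via Proposition~\ref{qbinom}, then Carrell--Peterson and \cite{mcrory}. Your case check of the three forms of $\lambda_{n,k}$ from Proposition~\ref{spiralreps} is accurate, and in type $A$ all roots are long, so Proposition~\ref{pdtest} applies without further comment.

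For the integral failure when $k\geq 2$, however, the paper takes a genuinely different route. Rather than invoking Proposition~\ref{pdtest}, the paper uses the Pontrjagin product identity $[X_{n,k}]=[X_{n,1}]^k$ from Theorem~\ref{fnkprod}: reducing to $k=p$ prime, one has $\Delta_*[X_{n,k}]=(\Delta_*[X_{n,1}])^p$ in mod~$p$ homology, which is concentrated in bidegrees divisible by $p$ and therefore cannot exhibit the dual bases required by Poincar\'e duality. Your Chevalley-formula argument is more elementary and self-contained---it is essentially the mechanism behind Theorem~\ref{smooth}, specialized to the spiral representatives---while the paper's argument is more structural and gives a general obstruction for any Schubert class that is a Pontrjagin power. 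Amusingly, the alternative you dismiss in your last sentence (via Theorem~\ref{fnkprod}) is precisely the route the paper chooses to present.
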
 {\it Remark:} The first assertion has already been
proved in Theorem~\ref{smooth}. By Proposition~\ref{qbinom} $X_{n,k}$
is palindromic, and hence is rationally smooth by the Carrell-Peterson
theorem, proving the second assertion. Here we provide alternative
proofs of both assertions.

\bigskip

\proof First observe that Poincar\'e duality can be expressed in terms
of homology as follows: If $[X]$ is the fundamental class, then there
are bases $e_i, e_i ^\prime$ for $H_* X$ that are dual in the sense that
$\Delta _* [X] =\sum e_i \otimes e_i ^\prime$, where $\Delta$ is the
diagonal map. It follows that if $X_\lambda$ is any \svar\ and
$[X_\lambda]=y^k$ for some $y \in H_* \cL _G$ and $k>1$, then
$X_\lambda$ does not satisfy \pd : For we may assume $k=p$ is a prime,
and then $\Delta _* [X_\lambda] =(\Delta _* y)^p \, mod \, p$. Hence
$\Delta _* [X_\lambda]$ is concentrated in bidegrees divisible by $p$,
and no such dual bases exist. Since $[X_{n,k}]=[X_{n,1}]^k$ by
Theorem~\ref{fnkprod}, this proves that $X_{n,k}$ does not satisfy \pd\
for $k>1$. For $k=1$, we have $X_{n,1} =\bp ^n$. 

Now let $b_1,\ldots ,b_n$ denote the standard basis for $\tilde{H}_*
X_{n,1}=\tilde{H}_* \bc P^n$. Then $H_* X_{n,k}$ consists of polynomials
of degree at most $k$ in $b_1,\ldots ,b_n$, with $b_n ^k$ the fundamental
class. We set $b_0 =1$. Now let $r=(r_0, \ldots , r_n)$ be a multi-index
with $r_i \geq 0$ and $\sum r_i =k$. Let $b^r =b_0 ^{r_0} \ldots b_n
^{r_n}$, and let $r^* =(r_n, \ldots , r_0)$. Then the $b_r$, $b_{r^*}$ are
bases, and if we take coefficients in \bq , then up to scalar multiples
they are dual in the above sense:

$$\Delta _* (b_n ^k) =(\sum _{i+j =n} b_i \otimes b_j)^k                        
=\sum _{r} c_r (b_0 \otimes b_n)^{r_0} \ldots (b_n \otimes b_0)^{r_n}               
=\sum _r c_r b^r \otimes b^{r^*},$$                                             
\noindent where the multinomial coefficients $c_r$ are all
nonzero. Thus, up to nonzero scalar multiples, $b_r$, $b_{r^*}$ are dual
bases over \bq . This completes the proof.

\section{Hasse diagrams}

In this section we give some examples of Hasse diagrams for the Bruhat
order on \wtils\ in a range of dimensions, showing in particular the
palindromics and the \cpo s. These diagrams are easily generated by
hand in the following way: First of all, the length generating
function $|\wtils |(t)$ is given by Bott's formula
Theorem~\ref{t:Bott} where in the latter case variable $t$ is assigned
dimension 2.  So we know in advance the number of nodes at each
level. Then we begin firing up from $0 \in \cQ ^\vee$; this easily
yields the weak order on the coroot lattice representatives $\lam$. By
recording the node fired at each step, we have reduced expressions for
the minimal length representatives $\sigma \in \wtils$ as well
(incidentally, this also yields the cup product coefficients occurring
in Chevalley's formula). Then we fill in the missing covering
relations by using the standard criterion: $\sigma \da \tau$ if and
only if $\tau$ can be obtained from $\sigma$ by omitting one generator
from some reduced expression for $\sigma$. A further interesting
exercise is to find and check the affine or linear moves that produce
these descents.

The circled nodes are the non-trivial palindromic classes. A double
circle indicates a \cpo .

\pagebreak

%***Diagram A2 here***

\begin{center}
\setlength{\unitlength}{.3cm}
\begin{picture}(40,40)(0,0)
\put(0,40){\line(1,-1){5}}
\put(0,40){\circle*{.5}}
\put(5,35){\line(1,1){5}}
\put(10,40){\circle*{.5}}
\put(10,40){\line(1,-1){5}}
\put(25,35){\line(1,1){5}}
\put(30,40){\circle*{.5}}
\put(30,40){\line(1,-1){5}}
\put(40,40){\circle*{.5}}
\put(5,35){\circle*{.5}}
\put(5,35){\line(0,-1){5}}
\put(5,35){\line(2,-1){10}}
%\put(4.5,29.5){\makebox{\circbul}}
\put(5,30){\circle*{.5}}
\put(5,30){\circle{1}}
\put(1.4,29.75){\makebox{$\scriptstyle(0,-3)$}}
\put(5,30){\line(1,-1){5}}
\put(5,30){\line(2,1){10}}
\put(10,25){\circle*{.5}}
\put(10,25){\line(1,1){5}}
\put(10,25){\line(-1,1){5}}
\put(10,25){\line(2,-1){10}}
\put(10,20){\circle*{.5}}
\put(10,20){\circle{1}}
\put(6.75,19.75){\makebox{$\scriptstyle(3,0)$}}
\put(10,20){\line(0,1){5}}
\put(10,20){\line(2,1){10}}
\put(10,20){\line(1,-1){5}}
\put(15,35){\circle*{.5}}
\put(15,35){\line(1,1){5}}
\put(15,35){\line(2,-1){10}}
\put(15,35){\line(0,-1){5}}
\put(15,30){\circle*{.5}}
\put(15,30){\line(2,1){10}}
\put(15,30){\line(1,-1){5}}
\put(15,15){\circle*{.5}}
\put(11.75,14.75){\makebox{$\scriptstyle(1,-2)$}}
\put(15,15){\line(-1,1){5}}
\put(15,15){\line(1,1){5}}
\put(15,10){\circle*{.5}}
\put(15,10){\circle{1}}
\put(15,10){\circle{1.5}}
\put(11,9.75){\makebox{$\scriptstyle(-1,2)$}}
\put(15,10){\line(0,1){5}}
\put(15,10){\line(2,1){10}}
%\put(19.5,39.5){\makebox{\circbul}}
\put(20,40){\circle*{.5}}
%\put(20,40){\circle{1}}
\put(20,40){\line(1,-1){5}}
\put(20,40){\line(-1,-1){5}}
\put(20,25){\circle*{.5}}
%\put(19.5,19.5){\makebox{\circbul}}
\put(20,20){\circle*{.5}}
%\put(20,20){\circle{1}}
\put(20,20){\line(0,1){5}}
\put(21,19.5){\makebox{$\scriptstyle(-1,-1)$}}
\put(20,20){\line(2,1){10}}
\put(20,5){\circle*{.5}}
\put(20,5){\circle{1}}
\put(20,5){\circle{1.5}}
\put(21,4.75){\makebox{$\scriptstyle(1,1)$}}
\put(20,5){\line(-1,1){5}}
\put(20,5){\line(1,1){5}}
\put(20,0){\circle*{.5}}
\put(20,0){\circle*{.5}}
\put(20,0){\line(0,1){5}}
\put(20.5,0){\makebox{$\scriptstyle(0,0)$}}
\put(25,35){\circle*{.5}}
\put(25,35){\line(2,-1){10}}
\put(25,35){\line(0,-1){5}}
\put(25,30){\circle*{.5}}
\put(25,30){\line(-1,-1){5}}
\put(25,30){\line(1,-1){5}}
\put(25,30){\line(2,1){10}}
\put(25,15){\circle*{.5}}
\put(25,15){\line(1,1){5}}
\put(25,15){\line(-1,1){5}}
\put(25.75,14.75){\makebox{$\scriptstyle(-2,1)$}}
\put(25,10){\circle*{.5}}
\put(25,10){\circle{1}}
\put(25,10){\circle{1.5}}
\put(25,10){\line(0,1){5}}
\put(25,10){\line(-2,1){10}}
\put(26,9.75){\makebox{$\scriptstyle(2,-1)$}}
\put(30,25){\circle*{.5}}
\put(30,25){\line(1,1){5}}
\put(30,20){\circle*{.5}}
\put(30,20){\circle{1}}
\put(31,19.75){\makebox{$\scriptstyle(0,3)$}}
\put(30,20){\line(0,1){5}}
\put(30,20){\line(-2,1){10}}
\put(35,35){\circle*{.5}}
\put(35,35){\line(1,1){5}}
%\put(34.5,29.5){\makebox{\circbul}}
\put(35,30){\circle*{.5}}
\put(35,30){\circle{1}}
\put(36,29.75){\makebox{$\scriptstyle(-3,0)$}}
\put(35,30){\line(0,1){5}}
%\put(0,.5){\circle*{.5}}
%\put(0,.5){\circle{1}}
%\put(1,0){\makebox{= antidominant $\lambda$}}
\end{picture}
\end{center}

\begin{center}
Type $\widetilde A_2$
\end{center}
%\bigskip

The spiral classes $\lambda _{2,k}, \lambda ^\prime _{2,k}$ are the
elements of even length along the two edges of the diagram.

Note also that for $k=2$ the \ppoly\ of $X_{(3,0}$ or $X_{(0,3)}$ is
the same as that of $G_2 \bc ^4$. The ring structure in cohomology,
however, is different; \pd\ fails. When $k=3$ the \ppoly\ is the same
as that of $G_2 \bc ^5$. The order ideal below $(-3,0)$ or $(-3,0)$ is
visibly not self-dual; in particular it is not isomorphic to Bruhat
poset of $G_2 \bc ^5$.

The bilateral symmetry in the diagram reflects the automorphism of the
affine Dynkin diagram fixing the special node $s_0$.

\bigskip
\newpage

%***Diagram C2 here***

\begin{center}
\setlength{\unitlength}{.3cm}
\begin{picture}(20,40)(0,0)
\put(0,35){\line(2,1){10}}
\put(10,40){\line(2,-1){10}}
\put(0,35){\line(4,-1){3}}
\put(4,34){\line(4,-1){3}}
\put(8,33){\line(4,-1){3}}
\put(12,32){\line(4,-1){3}}
\put(16,31){\line(4,-1){3}}
\put(0,30){\line(3,-1){2}}
\put(3,29){\line(3,-1){2}}
\put(6,28){\line(3,-1){2}}
\put(9,27){\line(3,-1){2}}
\put(12,26){\line(3,-1){2}}
\put(10,0){\line(0,1){10}}
\put(10,10){\line(1,1){5}}
\put(5,15){\line(2,1){10}}
\put(5,15){\line(0,1){10}}
\put(5,20){\line(2,1){10}}
\put(0,30){\line(2,1){20}}
\put(0,30){\line(0,1){10}}
\put(10,30){\line(0,1){10}}
\put(10,30){\line(2,1){10}}
\put(7.5,29.75){\makebox{$\scriptstyle(1,2)$}}
\put(15,20){\line(-2,1){10}}
\put(15,15){\line(0,1){10}}
\put(20,30){\line(0,1){10}}
\put(15,25){\line(1,1){5}}
\put(5,25){\line(-1,1){5}}
\put(5,15){\line(1,-1){5}}
\put(5,25){\line(1,1){5}}
\put(5,15){\line(1,-1){5}}
\put(5,25){\line(1,1){5}}
\put(15,25){\line(-1,1){5}}
\put(10,0){\circle*{.5}}
\put(10,5){\circle*{.5}}
\put(10,10){\circle*{.5}}
\put(5,15){\circle*{.5}}
\put(15,15){\circle*{.5}}
\put(15,20){\circle*{.5}}
\put(15.5,19.75){\makebox{$\scriptstyle(2,-2)$}}
\put(0,30){\circle*{.5}}
\put(10,30){\circle*{.5}}
\put(0,35){\circle*{.5}}
\put(10,35){\circle*{.5}}
\put(20,35){\circle*{.5}}
\put(10,40){\circle*{.5}}
\put(20,40){\circle*{.5}}
\put(5,25){\circle*{.5}}
\put(15,25){\circle*{.5}}
\put(15.5,24.75){\makebox{$\scriptstyle(-2,2)$}}
\put(2.5,24.75){\makebox{$\scriptstyle(2,0)$}}
\put(16,14.75){\makebox{$\scriptstyle(0,2)$}}
\put(5,15){\circle{1}}
\put(1.5,14.75){\makebox{$\scriptstyle(1,-2)$}}
\put(10,10){\circle{1}}
\put(10.75,9.75){\makebox{$\scriptstyle(-1,2)$}}
\put(10,5){\circle{1}}
\put(10,5){\circle{1.5}}
\put(11,4.75){\makebox{$\scriptstyle(1,0)$}}
\put(15,15){\circle{1}}
\put(15,15){\circle{1.5}}
\put(0,40){\circle*{.5}}
%\put(0,40){\circle{1}}
\put(10.5,0){\makebox{$\scriptstyle(0,0)$}}
\put(-3.5,39.75){\makebox{$\scriptstyle(-2,0)$}}
\put(20,30){\circle*{.5}}
%\put(20,30){\circle{1}}
\put(20.5,29.75){\makebox{$\scriptstyle(0,-2)$}}
%\put(0,20){\makebox{$(-1,0)$}}
\put(1.5,19.75){\makebox{$\scriptstyle(-1,0)$}}
\put(5,20){\circle*{.5}}
\put(5,20){\circle{1}}
\put(-10,12){\circle*{.5}}
\put(-10,12){\line(1,-1){3}}
\put(-13,9){\circle*{.5}}
\put(-13,9){\circle*{.5}}
\put(-13,9){\line(1,1){3}}
\put(-13,9){\line(2,-1){6}}
\put(-13,9){\line(0,-1){6}}
\put(-13,6){\circle*{.5}}
\put(-13,6){\line(2,1){6}}
\put(-13,6){\line(2,-1){6}}
\put(-13,3){\circle*{.5}}
\put(-13,3){\line(1,-1){3}}
\put(-13,3){\line(2,1){6}}
\put(-10,0){\circle*{.5}}
\put(-10,0){\line(1,1){3}}
\put(-7,3){\line(0,1){6}}
\put(-7,9){\circle*{.5}}
\put(-7,6){\circle*{.5}}
\put(-7,3){\circle*{.5}}
\put(-12,13){\makebox{$\scriptstyle(-1,-2)$}}
\put(-11.5,-2){\makebox{$\scriptstyle(1,2)$}}
%\put(25,.5){\circle*{.5}}
%\put(25,.5){\circle{1}}
%\put(26,0){\makebox{= antidominant $\lambda$}}
\end{picture}
\end{center}

\begin{center}
Type $\widetilde C_2$
\end{center}

Note the asymmetry in the diagram. No global symmetry is expected,
because there are no automorphisms of the affine Dynkin diagram fixing
the special node $s_0$. However, the range of the diagram is too low to
show all the patterns present. In particular the smallest generic
$\lambda$ is the dominant class $(1,2)$. It lies at the bottom of the
first generic orbit, as shown in the small diagram on the left.

On the other hand, the only non-trivial palindromic Schubert varieties
are the five visible in the diagram, each of which happens to be a
chain.

\newpage

%***Diagra-1m G2 here***

\begin{center}
\setlength{\unitlength}{.3cm}
\begin{picture}(20,50)(0,0)
\put(10,0){\line(0,1){20}}
\put(10,0){\circle*{.5}}
\put(10.5,0){\makebox{$\scriptstyle(0,0)$}}
\put(10,20){\line(1,1){5}}
\put(10,5){\circle*{.5}}
\put(10,5){\circle{1}}
\put(10,5){\circle{1.5}}
\put(11,4.75){\makebox{$\scriptstyle(0,1)$}}
\put(10,10){\circle*{.5}}
\put(10,10){\circle{1}}
\put(10,10){\circle{1.5}}
\put(11,9.75){\makebox{$\scriptstyle(1,-1)$}}
\put(10,15){\circle*{.5}}
\put(10,15){\circle{1}}
\put(10.75,14.75){\makebox{$\scriptstyle(-1,2)$}}
\put(10,20){\circle*{.5}}
\put(10,20){\circle{1}}
\put(11,19.75){\makebox{$\scriptstyle(1,-2)$}}
\put(10,20){\line(-1,1){5}}
\put(5,25){\line(2,1){10}}
\put(5,25){\circle{1}}
\put(1.25,24.75){\makebox{$\scriptstyle(-1,1)$}}
\put(5,30){\line(2,1){10}}
\put(5,30){\circle*{.5}}
\put(5,30){\circle{1}}
\put(1.25,29.75){\makebox{$\scriptstyle(0,-1)$}}
\put(5,35){\line(2,-1){10}}
\put(5,35){\circle*{.5}}
\put(5,40){\line(2,-1){10}}
\put(5,40){\circle*{.5}}
\put(5,45){\line(2,-1){10}}
\put(5,45){\circle*{.5}}
\put(0,50){\line(1,-1){5}}
\put(0,50){\circle*{.5}}
\put(0,50){\line(3,-1){15}}
\put(10,50){\line(1,-1){5}}
\put(10,50){\circle*{.5}}
\put(10.5,49.75){\makebox{$\scriptstyle(1,1)$}}
\put(20,50){\line(-1,-1){5}}
\put(20,50){\circle*{.5}}
%\put(20,50){\circle{1}}
\put(20.7,49.75){\makebox{$\scriptstyle(-1,0)$}}
\put(5,25){\line(0,1){20}}
\put(5,25){\circle*{.5}}
\put(15,25){\line(0,1){20}}
\put(15,25){\circle*{.5}}
\put(15,25){\circle{1}}
\put(15.75,24.75){\makebox{$\scriptstyle(1,0)$}}
\put(15,45){\circle*{.5}}
\put(15.75,44.75){\makebox{$\scriptstyle(1,-3)$}}
\put(15,40){\circle*{.5}}
\put(15.5,39.75){\makebox{$\scriptstyle(-2,3)$}}
\put(15,35){\circle*{.5}}
\put(15.5,34.75){\makebox{$\scriptstyle(2,-3)$}}
\put(15,30){\circle*{.5}}
\put(15.5,29.75){\makebox{$\scriptstyle(-1,3)$}}
%\put(-10,18){\circle*{.5}}
\put(-10,18){\line(1,-1){3}}
\put(-13,15){\circle*{.5}}
\put(-13,15){\line(1,1){3}}
\put(-13,12){\line(2,1){6}}
\put(-7,12){\line(-2,1){6}}
\put(-13,9){\line(2,1){6}}
\put(-13,9){\line(2,-1){6}}
\put(-13,9){\circle*{.5}}
\put(-13,12){\circle*{.5}}
\put(-7,15){\circle*{.5}}
\put(-12,18.5){\makebox{$\scriptstyle(-1,-1)$}}
\put(-10,18){\circle*{.5}}
\put(-11,-1){\makebox{$\scriptstyle(1,1)$}}
\put(-13,15){\line(0,-1){12}}
\put(-13,6){\circle*{.5}}
\put(-13,6){\line(2,1){6}}
\put(-13,6){\line(2,-1){6}}
\put(-13,3){\circle*{.5}}
\put(-13,3){\line(1,-1){3}}
\put(-13,3){\line(2,1){6}}
\put(-10,0){\circle*{.5}}
\put(-10,0){\line(1,1){3}}
\put(-7,3){\line(0,1){12}}
\put(-7,9){\circle*{.5}}
\put(-7,9){\line(-2,1){6}}
\put(-7,6){\circle*{.5}}
\put(-7,3){\circle*{.5}}
\put(-7,12){\circle*{.5}}
%\put(25,.5){\circle*{.5}}
%\put(25,.5){\circle{1}}
%\put(26,0){\makebox{= antidominant $\lambda$}}
\end{picture}
\end{center}

\begin{center}
Type $\widetilde G_2$
\end{center}

The range of the diagram is too low to show all the patterns. The lowest
generic orbit begins with the dominant class $\lambda =(1,1)$ as shown
in the diagram on the left.  However, the only non-trivial palindromic
Schubert varieties are the seven visible in the diagram, each of which
happens to be a chain.

\newpage

%\subsection{Dynkin diagrams}

%\begin{figure}\label{f:dynkin}
%\caption{Numbering of nodes on Dynkin diagrams}

\setlength{\unitlength}{1cm}
\thicklines
$$\label{f:dynkin}
\begin{array}{lll}
\\ \\ \vspace{.3in}
\widetilde{A}_n && 
\raisebox{1ex}{\begin{picture}(5,0)
\mp(0,0)(1,0){3}{\ci}
\put(0,0){\num{1}}\put(1,0){\num{2}}\put(2,0){\num{3}}
\put(3,0){\makebox(0,0){\ldots}}
\mp(5,0)(-1,0){2}{\ci}
\put(4,0){\num{n-1}}\put(5,0){\num{n}}
\put(0,0){\line(1,0){2.5}}
\put(5,0){\line(-1,0){1.5}}
\put(2.5,2){\num{0}}
\put(0,0){\line(2,1){2.5}}
\put(5,0){\line(-2,1){2.5}}
\put(2.5,1.25){\ci}
\end{picture}}
\\ \\ \vspace{.3in}
\widetilde{B}_n && 
\raisebox{1ex}{\begin{picture}(5,0)
\mp(1,0)(1,0){2}{\ci}
\put(1,0){\num{2}}\put(2,0){\num{3}}
\put(3,0){\makebox(0,0){\ldots}}
\mp(5,0)(-1,0){2}{\ci}
\put(4,0){\num{n-1}}\put(5,0){\num{n}}
\put(1,0){\line(1,0){1.5}}
\put(4,0){\line(-1,0){.5}}
\put(4,.06){\line(1,0){1}}
\put(4,-.06){\line(1,0){1}}
\put(4.5,0){\makebox(0,0){\Large$>$}}
\put(0,.5){\ci}\put(0,-.5){\ci}
\put(0,.5){\makebox(0,0)[l]{\hspace{-.35\unitlength}$1$}}
\put(0,-.5){\makebox(0,0)[l]{\hspace{-.35\unitlength}$0$}}
\put(1,0){\line(-2,1){1}}
\put(1,0){\line(-2,-1){1}}
\end{picture}}
\\ \\ \vspace{.3in}
\widetilde{C}_n && 
\raisebox{1ex}{\begin{picture}(5,0)
\mp(0,0)(1,0){3}{\ci}
\put(1,0){\num{1}}\put(2,0){\num{2}}
\put(3,0){\makebox(0,0){\ldots}}
\mp(5,0)(-1,0){2}{\ci}
\put(4,0){\num{n-1}}\put(5,0){\num{n}}
\put(1,0){\line(1,0){1.5}}
\put(4,0){\line(-1,0){.5}}
\put(4,.06){\line(1,0){1}}
\put(4,-.06){\line(1,0){1}}
\put(4.5,0){\makebox(0,0){\Large$<$}}
\put(.5,0){\makebox(0,0){\Large$>$}}
\put(0,.06){\line(1,0){1}}
\put(0,-.06){\line(1,0){1}}
\put(0,0){\num{0}}
\end{picture}}
\\ \\ \vspace{.3in}
\widetilde{D}_n && 
\raisebox{1ex}{\begin{picture}(5,.8)
\mp(1,0)(1,0){2}{\ci}
\put(1,0){\num{2}}\put(2,0){\num{3}}
\put(3,0){\makebox(0,0){\ldots}}
\put(4,0){\ci}
\put(4,0){\num{n-2}}
\put(1,0){\line(1,0){1.5}}
\put(4,0){\line(-1,0){.5}}
\put(4,0){\line(2,1){1}}
\put(4,0){\line(2,-1){1}}
\put(5,.5){\ci}\put(5,-.5){\ci}
\put(5,.5){\makebox(0,0)[l]{\hspace{.3\unitlength}$n-1$}}
\put(5,-.5){\makebox(0,0)[l]{\hspace{.3\unitlength}$n$}}
\put(0,.5){\ci}\put(0,-.5){\ci}
\put(0,.5){\makebox(0,0)[l]{\hspace{-.35\unitlength}$1$}}
\put(0,-.5){\makebox(0,0)[l]{\hspace{-.35\unitlength}$0$}}
\put(1,0){\line(-2,1){1}}
\put(1,0){\line(-2,-1){1}}
\end{picture}}
\\ \\ \vspace{.3in}
\widetilde{E}_{6} && 
\raisebox{1ex}{\begin{picture}(5,2)
\mp(0,0)(1,0){5}{\ci}
\mp(2,1)(0,1){2}{\ci}
\put(0,0){\num{1}}\put(1,0){\num{3}}\put(2,0){\num{4}}\put(3,0){\num{5}}
\put(4,0){\num{6}}
\put(2,.9){\makebox(0,0)[r]{$2$\hspace{.2\unitlength}}}
\put(2,1.9){\makebox(0,0)[r]{$0$\hspace{.2\unitlength}}}
\put(2,0){\line(0,1){2}}
\put(0,0){\line(1,0){4}}
\end{picture}}
\\ \\ \vspace{.3in}
\widetilde{E}_{7} && 
\raisebox{1ex}{\begin{picture}(5,1.2)
\mp(0,0)(1,0){7}{\ci}\put(3,1){\ci}
\put(0,0){\num{0}}\put(1,0){\num{1}}\put(2,0){\num{3}}\put(3,0){\num{4}}
\put(4,0){\num{5}}\put(5,0){\num{6}}\put(6,0){\num{7}}
\put(3,.9){\makebox(0,0)[r]{$2$\hspace{.2\unitlength}}}
\put(3,0){\line(0,1){1}}
\put(0,0){\line(1,0){6}}

\end{picture}}
\\ \\ \vspace{.3in}
\widetilde{E}_{8} && 
\raisebox{1ex}{\begin{picture}(5,1.2)
\mp(0,0)(1,0){8}{\ci}\put(2,1){\ci}
\put(0,0){\num{1}}\put(1,0){\num{3}}\put(2,0){\num{4}}\put(3,0){\num{5}}
\put(4,0){\num{6}}\put(5,0){\num{7}}\put(6,0){\num{8}} \put(7,0){\num{0}}
\put(2,.9){\makebox(0,0)[r]{$2$\hspace{.2\unitlength}}}
\put(2,0){\line(0,1){1}}
\put(0,0){\line(1,0){7}}
\end{picture}}
\\ \\ \vspace{.3in}
\widetilde{F}_4 && 
\raisebox{1ex}{\begin{picture}(5,0)
\mp(0,0)(1,0){5}{\ci}
\put(0,0){\num{0}}
\put(1,0){\num{1}}\put(2,0){\num{2}}\put(3,0){\num{3}}\put(4,0){\num{4}}
\put(0,0){\line(1,0){2}}
\put(3,0){\line(1,0){1}}
\put(2,.06){\line(1,0){1}}
\put(2,-.06){\line(1,0){1}}
\put(2.5,0){\makebox(0,0){\Large$>$}}
\end{picture}}
\\ \\ \vspace{.3in}
\widetilde{G}_2 && 
\raisebox{1ex}{\begin{picture}(5,0)
\mp(1,0)(1,0){3}{\ci}
\put(3,0){\num{0}}\put(1,0){\num{1}}\put(2,0){\num{2}}
\put(1,0){\line(1,0){2}}
\put(1,.08){\line(1,0){1}}
\put(1,-.06){\line(1,0){1}}
\put(1.5,0.01){\makebox(0,0){\Large$<$}}
\end{picture}}
\\
\end{array}
$$
%\end{figure}

\newpage

{\it Table of coefficients} $m_i$:

\bigskip

$A_n$: $m_i=1$ for all $i$. 

\bigskip

$B_n$: $m_1=1$; $m_i=2$ otherwise. 

\bigskip

$C_n$: $m_n=1$; $m_i=2$ otherwise. 

\bigskip

$D_n$: $m_i=1$ for $i=1,n-1,n$; $m_i=2$ otherwise. 

\bigskip

\setlength{\unitlength}{1cm}
\thicklines
$$
\begin{array}{lll}
\\ \\ \vspace{.3in}
E_{6} && 
\raisebox{1ex}{\begin{picture}(5,2)
\mp(0,0)(1,0){5}{\ci}
\mp(2,1)(0,1){2}{\ci}
\put(0,0){\num{1}}\put(1,0){\num{2}}\put(2,0){\num{3}}\put(3,0){\num{2}}
\put(4,0){\num{1}}
\put(2,.9){\makebox(0,0)[r]{$2$\hspace{.2\unitlength}}}
\put(2,1.9){\makebox(0,0)[r]{$$\hspace{.2\unitlength}}}
\put(2,0){\line(0,1){2}}
\put(0,0){\line(1,0){4}}
\end{picture}}
\\ \\ \vspace{.3in}
E_{7} && 
\raisebox{1ex}{\begin{picture}(5,1.2)
\mp(0,0)(1,0){7}{\ci}\put(3,1){\ci}
\put(0,0){\num{}}\put(1,0){\num{2}}\put(2,0){\num{3}}\put(3,0){\num{4}}
\put(4,0){\num{3}}\put(5,0){\num{2}}\put(6,0){\num{1}}
\put(3,.9){\makebox(0,0)[r]{$2$\hspace{.2\unitlength}}}
\put(3,0){\line(0,1){1}}
\put(0,0){\line(1,0){6}}

\end{picture}}
\\ \\ \vspace{.3in}
E_{8} && 
\raisebox{1ex}{\begin{picture}(5,1.2)
\mp(0,0)(1,0){8}{\ci}\put(2,1){\ci}
\put(0,0){\num{2}}\put(1,0){\num{4}}\put(2,0){\num{6}}\put(3,0){\num{5}}
\put(4,0){\num{4}}\put(5,0){\num{3}}\put(6,0){\num{2}} \put(7,0){\num{}}
\put(2,.9){\makebox(0,0)[r]{$3$\hspace{.2\unitlength}}}
\put(2,0){\line(0,1){1}}
\put(0,0){\line(1,0){7}}
\end{picture}}
\\ \\ \vspace{.3in}
F_4 && 
\raisebox{1ex}{\begin{picture}(5,0)
\mp(0,0)(1,0){5}{\ci}
\put(0,0){\num{}}
\put(1,0){\num{2}}\put(2,0){\num{3}}\put(3,0){\num{4}}\put(4,0){\num{2}}
\put(0,0){\line(1,0){2}}
\put(3,0){\line(1,0){1}}
\put(2,.06){\line(1,0){1}}
\put(2,-.06){\line(1,0){1}}
\put(2.5,0){\makebox(0,0){\Large$>$}}
\end{picture}}
\\ \\ \vspace{.3in}
G_2 && 
\raisebox{1ex}{\begin{picture}(5,0)
\mp(1,0)(1,0){3}{\ci}
\put(3,0){\num{}}\put(1,0){\num{3}}\put(2,0){\num{2}}
\put(1,0){\line(1,0){2}}
\put(1,.08){\line(1,0){1}}
\put(1,-.06){\line(1,0){1}}
\put(1.5,0.01){\makebox(0,0){\Large$<$}}
\end{picture}}
\\
\end{array}
$$
%\end{figure}

\bigskip
\newpage

\section{Acknowledgments:} The authors would like to thank Shrawan
Kumar, Thomas Lam, Mark Shimozono, Pete Littig, and Tricia Hersh for
helpful conversations. We thank Mary Sheetz for preparing the Hasse
diagrams.

\noindent Address: Department of Mathematics, University of Washington, Seattle, WA 98195
\\
Email: billey@math.washington.edu, mitchell@math.washington.edu

\end{document}